\documentclass[a4paper,11pt]{amsart}

\usepackage{ifpdf}
\usepackage{amsmath, amsthm, amsfonts, amssymb, amscd}
\usepackage[active]{srcltx}
\usepackage{color}
\usepackage[ansinew]{inputenc}

\newtheorem{theorem}{Theorem}[section] 
\newtheorem{corollary}[theorem]{Corollary}
\newtheorem{lemma}[theorem]{Lemma}

\newtheorem{proposition}[theorem]{Proposition}
\newtheorem*{proposition*}{Proposition}

\newtheorem*{question*}{Question}
\newtheorem*{theorem*}{Theorem}
\newtheorem*{claim*}{Claim}

\newtheorem*{corollary*}{Corollary}
\newtheorem{maintheorem}{Theorem}

\theoremstyle{definition}
\newtheorem{definition}[theorem]{Definition}

\theoremstyle{remark}
\newtheorem{remark}[theorem]{Remark}

\newtheorem*{remark*}{Remark}



\newcommand{\R}{\mathbb{R}}\newcommand{\N}{\mathbb{N}}
\newcommand{\Z}{\mathbb{Z}}\newcommand{\Q}{\mathbb{Q}}
\newcommand{\T}{\mathbb{T}}\newcommand{\C}{\mathbb{C}}
\newcommand{\D}{\mathbb{D}}\newcommand{\A}{\mathbb{A}}
\renewcommand{\S}{\mathbb{S}}

  \def\cG{\mathcal{G}}  
    
   \def\cO{\mathcal{O}} 
    
\def\cE{\mathcal{E}}    
   \def\cR{\mathcal{R}}

\DeclareMathOperator{\Diff}{Diff}


\begin{document}

\author[P. Le Calvez]{Patrice Le Calvez}
\address{Institut de Math\'ematiques de Jussieu-Paris Rive Gauche, IMJ-PRG, Sorbonne Universit\'e, Universit\'e Paris-Diderot, CNRS, F-75005, Paris, France \enskip \& \enskip Institut Universitaire de France}
\curraddr{}
\email{patrice.le-calvez@imj-prg.fr}

\author[M. Sambarino]{Mart\'{\i}n Sambarino}
\address{CMAT, Facultad de Ciencias, Universidad de la Rep\'ublica, Uruguay.}
\curraddr{Igua 4225 esq. Mataojo. Montevideo, Uruguay.}
\email{samba@cmat.edu.uy}

\title[Homoclinic orbits for area preserving diffeomorphisms]{Homoclinic orbits for area preserving diffeomorphisms of surfaces}
\thanks{M.S was partially supported by CNRS-France and CSIC group 618-Uruguay.}
\begin{abstract}
We show that $C^r$ generically in the space of $C^r$ conservative diffeomorphisms of a compact surface, every hyperbolic periodic point has a transverse homoclinic orbit
\end{abstract}

\maketitle
\tableofcontents
\section{Introduction}

The existence of the so called \textit{homoclinic orbits} (or biasymtotic solutions) was detected by H. Poincaré in the study of the restricted three-body problem \cite{Po}. He was amazed by the dynamical complexity. Indeed, he stated \textit{"Rien n'est plus propre à nous donner une idée de la complication du problème des trois corps et en général de tous les problèmes de dynamique …"}. And about the picture generated by the presence of homoclinic orbits Poincaré also said \textit{"On sera frappé de la complexité de cette figure, que je ne cherche même pas à tracer."}. It was not until Smale and his creation of the horseshoe \cite{S} that a full comprehension of the dynamical complexity of the presence of a (transverse) homoclinic orbit was achieved. Afterwards, the question \textit{how common is the presence of a transverse homoclinic in the set of dynamical systems} have drawn the attention of many researchers.
For instance, Takens \cite{T} proved that $C^1$-generically in the space of $C^1$ conservative diffeomorphisms of a compact manifold, every hyperbolic periodic point has a transverse homoclinic orbit (for the dissipative case see \cite{C}). This results uses dramatically perturbation techniques available in the $C^1$ category, as the $C^1$ Closing Lemma of Pugh (\cite{Pu}). These techniques are completely unknown in the $C^r$ category, if $r\geq 2$. Our goal is to treat this case for surface dynamics.  Our main result is:

\begin{maintheorem}\label{t.mainintro}
For any compact boundaryless orientable surface $S$ and $1\leq r\leq \infty$, $C^r$-generically  in the space of $C^r$ conservative diffeomorphisms it holds that
there exist hyperbolic periodic points, and every such point has a transverse homoclinic intersection.
\end{maintheorem}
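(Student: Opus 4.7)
The plan is to split the theorem into two separate generic properties and then combine them via a standard Baire category argument, with the main effort going into producing topological intersections between stable and unstable manifolds using purely surface-dynamical methods.

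\textbf{Generic existence of hyperbolic periodic points.} By Poincar\'e recurrence on a compact surface, any $C^r$ conservative $f$ has a dense set of non-wandering points; combined with Brouwer-type arguments for appropriate lifts, one deduces an abundance of periodic points. A local area-preserving $C^r$-small perturbation near an elliptic periodic orbit (performed using a generating function or a local symplectic normal form, adjusting the quadratic part of the Hamiltonian) moves its eigenvalues off the unit circle into a hyperbolic pair. Hence the set $\cU_n$ of diffeomorphisms admitting a hyperbolic periodic point of period at most $n$ is open and dense, and $\bigcap_n \cU_n$ is residual.

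\textbf{Transverse homoclinic intersection for a fixed hyperbolic periodic point.} Let $p$ be a hyperbolic periodic point, which persists continuously under $C^r$-small perturbation. Since transversality of $W^u(p)$ and $W^s(p)$ is an open condition, it suffices to produce a \emph{topological} intersection point: a further $C^r$-small area-preserving local perturbation supported near the intersection then makes it transverse. To obtain the topological intersection I would invoke Le~Calvez's theory of maximal isotopies and equivariant transverse foliations, combined with the Le~Calvez--Tal forcing theory for surface homeomorphisms. The input is pure recurrence, supplied by area preservation: long arcs of $W^u(p)$ must return arbitrarily close to $p$ by Poincar\'e recurrence applied to a suitable thickening. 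Such returns produce admissible transverse paths to the foliation whose iterates wind around $p$ and link the unstable and stable branches. The forcing theorem then promotes this topological linking into an actual crossing of branches of $W^u(p)$ and $W^s(p)$, since the only alternative---an invariant topological configuration separating the branches---is incompatible with area preservation on a compact surface.

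\textbf{Combining the two steps and the main obstacle.} A standard Baire argument indexed by the period and the ``order'' of a desired transverse intersection yields a residual set on which every hyperbolic periodic point has a transverse homoclinic orbit; intersecting with the residual set from the first step gives the theorem. The principal obstacle is the production of a topological intersection in the second step: in the $C^1$ setting used by Takens, a near-return of $W^u(p)$ to $W^s(p)$ could be closed into an actual intersection via Pugh's closing lemma, but for $r\ge 2$ no such perturbative tool exists. The intersection must therefore be extracted \emph{topologically} from the existing dynamics, with area preservation and the Le~Calvez--Tal forcing machinery as the sole drivers; the technical core is choosing the right admissible transverse paths and handling the branched structure of $W^u(p)$ and $W^s(p)$ for periodic saddles of arbitrary period.
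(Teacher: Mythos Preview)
Your Step~2 contains a genuine gap: the claim that forcing theory plus area preservation must produce a topological intersection of $W^u(p)$ and $W^s(p)$ is false without further input. The paper exhibits (and analyzes in detail) diffeomorphisms $f\in\cG^r_\omega(S)$ with $g\geq 2$ and exactly $2g-2$ hyperbolic periodic points, all of whose branches are dense yet \emph{pairwise disjoint} (Proposition~\ref{p.dynamicalproperties}); the model example is the time-one map of the flow in a minimal direction on a translation surface in the principal stratum. In such examples recurrence and density of branches hold, admissible transverse paths wind everywhere, and still no stable/unstable branches cross. So ``the only alternative is incompatible with area preservation'' is simply not true, and Le~Calvez--Tal forcing alone cannot close the argument.

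The paper's route is quite different from yours. It never tries to force a homoclinic intersection for a single saddle in isolation. Instead it proves (Proposition~\ref{p.cycle}) a purely homological criterion: given more than $2g'$ saddles in a regular domain of genus $g'$, one builds loops $\lambda_i^u,\lambda_i^s$ from branch arcs closed through small disks and uses the $\Z_2$ intersection form on $H_1$ to extract a heteroclinic cycle, hence a homoclinic class. A finite-cover trick (Proposition~\ref{p.finitecovering}) amplifies the number of lifted saddles faster than the genus grows, so the criterion applies whenever $\#\kappa>\max(0,2g(\kappa)-2)$. This settles every case except the rigid boundary case $\#\mathrm{per}_h(f)=2g-2$. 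That case is then shown, via Nielsen--Thurston and a Dehn-twist analysis, to force a power of $f$ to be isotopic to the identity (or a Dehn twist power if $g=1$), and is destroyed by an explicit $C^r$ perturbation (composition with a small rotation along a well-chosen loop) controlled through rotation vectors (Theorems~\ref{t.numberminhyp} and~\ref{t.isotopy}). Your plan misses both the $\Z_2$-homology/covering mechanism that actually produces intersections and the separate perturbative treatment of the exceptional $2g-2$ case.
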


 We recall that in the case of the sphere, this theorem was proved  by Pixton \cite{P} (using ideas by Robinson \cite{R2}).  The same result was proved by Oliveira \cite{O1} for the torus.  None of the authors actually proved the generic existence of hyperbolic periodic points but, as we will see soon in this introduction, such a property is not difficult to get if $g\leq 1$ (see also Weiss \cite{W}). Later, Oliveira \cite{O2} proved Theorem  \ref{t.mainintro}  in higher genus, as soon as the action of $f$ on the first group of homology is irreducible. Finally, the fact that generically in the space of Hamiltonian diffeomorphisms of class $C^r$, every hyperbolic periodic orbit  has a transverse homoclinic intersection has been announced by Xia \cite{X2} (see also Theorem \ref{t.numberhyp}).

One could say that the proof of our main result is divided into two parts. On one hand, under some explicit $C^r$ generic assumptions,  we prove that if there are enough number of hyperbolic periodic points (i.e. larger than $2g-2$ where $g$ is the genus of the surface $S$) then \textit{every} hyperbolic periodic point has a transverse homoclinic orbit. On the other hand, there are examples where such condition on the number of hyperbolic periodic points fails. Here it comes an argument with perturbation flavour: we have to show that these examples can be $C^r$ perturbed in order to satisfy the above condition. To do this, we show that these situations can happen only  in the following cases:

\begin{itemize}
\item $g=1$ and the map is isotopic to the identity or to a power of a Dehn twist map,
\item $g>1$ and a power of the map  is isotopic to the identity.
\end{itemize}

The  former can be handled with a result by Addas-Zanata (\cite{Ad}). To deal with the latter, with the help of \textit{transverse foliations} developed by the first author (\cite{Lec1}), we show that the way how the stable and unstable manifolds wrap around the surface is so rigid that can be easily destroyed by a $C^r$ perturbation, by composing with a local rotation around  a special loop.   In the next subsection we will be more precise.

\subsection{Precise statements}\label{ss.statements}

In this article, $S$ will denote a smooth compact boundaryless orientable surface of genus $g$, furnished with a smooth area form $\omega$.  For $1\leq r\leq \infty$, denote $\Diff^r_{\omega}(S)$ the set of $C^r$ diffeomorphisms of $S$ preserving $\omega$ endowed with the $C^r$-topology. Recall that for a $C^r$ diffeomorphism $f:S\to S$ and a hyperbolic periodic point $p$, the  stable and unstable sets are $C^r$ injectively immersed manifolds. A (transverse) homoclinic point associated to a hyperbolic periodic point $p$ is a point (different of $p$) of (transverse) intersection between the stable and unstable manifolds of $p$.
We just restate Theorem 1:

\begin{theorem}\label{t.main}
 For $1\leq r\leq \infty$, there exists a residual set $\cR_r\subset\Diff^r_{\omega}(S)$ such that if $f\in \cR_r$, there exist hyperbolic periodic points, and every such point has a transverse homoclinic intersection.
\end{theorem}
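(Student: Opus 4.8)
The plan is to prove Theorem~\ref{t.main} by separating it into two logically independent pieces: a \emph{counting/structural} step showing that enough hyperbolic periodic points force a transverse homoclinic intersection for \emph{every} such point, and a \emph{perturbative} step producing these points in the exceptional cases. Concretely, fix a countable basis and aim to write $\cR_r$ as a countable intersection: for each $n$, the set of $f$ for which every hyperbolic periodic point of period $\le n$ is hyperbolic with transverse (rather than tangential or absent) homoclinic orbit, together with the Kupka--Smale-type conditions (hyperbolicity and transversality of intersections being generic, by the conservative $C^r$ Kupka--Smale theorem). The key point is that "having a transverse homoclinic orbit" is \emph{not} obviously open, so I would instead show it holds on a dense $G_\delta$ by a Baire argument: first establish it on an open dense set under explicit generic hypotheses, using the structural dichotomy.

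First I would isolate the generic hypotheses: $f$ is Kupka--Smale, all hyperbolic periodic points have one-dimensional stable/unstable branches whose closures are "typical", and the number $N(f)$ of hyperbolic periodic points is either $> 2g-2$ or else $f$ falls into one of the two rigid isotopy classes listed in the introduction. The main structural lemma (which I would prove using the theory of transverse foliations of \cite{Lec1} and an Euler-characteristic/index count on $S$) asserts: if $N(f) > 2g-2$ and the stable and unstable manifolds are pairwise non-intersecting or only tangentially intersecting, one derives a contradiction with $\chi(S) = 2-2g$ by constructing a foliation transverse to the dynamics whose singularities are forced at the hyperbolic points, contradicting the index formula. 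Hence transverse homoclinic (or heteroclinic, then upgraded to homoclinic via the $\lambda$-lemma and genericity) intersections must occur; a further argument shows that if \emph{some} hyperbolic point has one, then after a $C^r$-small perturbation (not destroying the others) \emph{every} one does, giving density.

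Next I would handle the exceptional isotopy classes. For $g=1$ with $f$ isotopic to the identity or to a power of a Dehn twist, I invoke Addas-Zanata's result \cite{Ad} to produce hyperbolic periodic points and the required homoclinic behavior directly, after a generic perturbation. For $g>1$ with a power of $f$ isotopic to the identity, the argument from the introduction applies: the transverse foliation forces the stable/unstable manifolds to wrap around $S$ in a rigid combinatorial pattern, and composing $f$ with a local rotation supported near a suitably chosen essential loop destroys this rigidity, creating the needed transverse intersections while keeping the map in $\Diff^r_\omega(S)$. Combining: on a dense set of $f$ (those satisfying the generic hypotheses \emph{and}, after the above perturbations, having $N(f) > 2g-2$), every hyperbolic periodic point has a transverse homoclinic orbit and such points exist; intersecting with the $G_\delta$ coming from Kupka--Smale and from a standard argument that "transverse homoclinic orbit for every periodic point of period $\le n$" contains an open dense set for each $n$, yields the residual set $\cR_r$.

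The hard part will be the structural lemma: making precise how a transverse foliation adapted to $f$ (in the sense of \cite{Lec1}) interacts with the one-dimensional invariant manifolds of \emph{all} hyperbolic periodic points simultaneously, and extracting from the absence of transverse homoclinic intersections a global obstruction contradicting $\chi(S)=2-2g$ whenever $N(f)>2g-2$. Controlling the closures of stable/unstable branches (e.g.\ that a branch accumulating on itself yields a homoclinic point) and ensuring the perturbations in the exceptional cases create \emph{transverse} rather than merely topological intersections — which is exactly where $C^r$ techniques for $r\ge 2$ are delicate, since no closing lemma is available — are the other genuine difficulties; everything else is bookkeeping with the Baire category theorem and the conservative Kupka--Smale theorem.
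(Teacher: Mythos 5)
Your high-level decomposition matches the paper's: a counting step, a structural lemma saying ``many hyperbolic periodic points $\Rightarrow$ every one has a transverse homoclinic,'' and a perturbative step for the exceptional isotopy classes (Addas-Zanata for $g=1$, local rotation along a loop for $g>1$). The residuality argument you sketch is also essentially correct once the density of $\{\#\mathrm{per}_h(f)>\max(0,2g-2)\}$ is established, since that set is open.

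The genuine gap is the structural lemma. You propose to prove ``$N(f)>2g-2$ and no transverse homoclinics $\Rightarrow$ contradiction with $\chi(S)=2-2g$'' via a transverse foliation in the sense of \cite{Lec1} whose singularities sit at the hyperbolic points, and then an index count. This does not work as stated, for two reasons. First, a foliation transverse to a maximal isotopy $I$ is singular precisely on $\mathrm{fix}(I)\subset \mathrm{fix}(f)$; it sees fixed points of that isotopy, not hyperbolic periodic points of all periods, so the Hopf/Lefschetz index identity $\sum_{z\in\mathrm{fix}(I)} i(\mathcal F,z)=\chi(S)$ cannot bound $N(f)=\#\mathrm{per}_h(f)$ from above — higher-period saddles are invisible to that count. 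Second, there is no reason the (possibly dense and non-proper) collection of stable/unstable branches should form any foliation at all, nor that the putative singularities have index $-1$. The paper's actual argument for Theorem~\ref{t.numberhyp} does not go through transverse foliations at all: it is a $\Z_2$-homological intersection argument in a regular domain $V$ (constructing loops $\lambda^u_i,\lambda^s_i$ closing up the branches through small disjoint disks, and exploiting linear dependence in $H_1(\check V,\Z_2)$, Proposition~\ref{p.cycle}), combined with a covering-space reduction via Hall's theorem (Proposition~\ref{p.finitecovering}) to raise the effective number of points. This step, moreover, requires more than Kupka--Smale genericity: it leans on Zehnder's condition (G3), Mather's prime-end theorem (Theorem~\ref{t.pe-genericconsequences}) and its consequences for regular domains (Corollary~\ref{c.e-genericconsequences}, used in Proposition~\ref{p.localLefschetz}), none of which appears in your sketch. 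Transverse foliations do appear in the paper, but only later, to control rotation vectors in the exceptional case $g>1$, $f^q$ isotopic to identity (Propositions~\ref{prop:Brouwerlines}, \ref{prop:nonhomologicalintersection}, \ref{prop:nonhomologicalintersectiongeneralized}).

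A secondary point: your plan to ``upgrade'' the statement from \emph{some} hyperbolic point having a transverse homoclinic to \emph{every} one by a further $C^r$-small perturbation is unnecessary and would re-introduce exactly the $C^r$ perturbation difficulties you flag. Once $f\in\cG^r_\omega(S)$ and $\#\mathrm{per}_h(f)>\max(0,2g-2)$, the paper proves directly — no perturbation — that every hyperbolic periodic point has a transverse homoclinic, via the equivalence relation of Section~\ref{s:An equivalence relation on the set of hyperbolic periodic points} and the $\lambda$-lemma: one only needs \emph{some} point in each equivalence class (homoclinic class) to have a transverse self-intersection, and then the whole class inherits it.
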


Using the fact that the existence of a hyperbolic periodic point with a transverse homoclinic intersection is an open property that implies the positiveness of the entropy, we immediately deduce:

\begin{corollary}\label{l.entropy}
 For $1\leq r\leq \infty$, there exists a dense open set $\cO_r\subset\Diff^r_{\omega}(S)$ such that the topological entropy of every element of $ \cO_r$ is positive.\end{corollary}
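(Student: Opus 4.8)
The plan is to obtain the corollary as a soft consequence of Theorem~\ref{t.main}, using only two classical facts: that a transverse homoclinic intersection of a hyperbolic periodic point is a $C^1$-robust phenomenon, and that (by Smale's horseshoe construction, \cite{S}) it forces positive topological entropy. First I would set
\[
\cO_r = \{\, f\in\Diff^r_{\omega}(S) : f \text{ has a hyperbolic periodic point with a transverse homoclinic point} \,\}.
\]
By Theorem~\ref{t.main} the residual set $\cR_r$ is contained in $\cO_r$; since residual sets are dense, $\cO_r$ is dense. So the only things left to verify are that $\cO_r$ is open and that every $f\in\cO_r$ has positive topological entropy.

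For openness, suppose $f\in\cO_r$ has a hyperbolic periodic point $p$ of period $k$ and a transverse homoclinic point $z\in W^s(p)\cap W^u(p)$ with $z\neq p$. For $g$ that is $C^r$-close to $f$, the implicit function theorem applied to $g^k$ near $p$ produces a hyperbolic periodic point $p_g$ of period $k$ depending continuously on $g$, and the $\lambda$-lemma together with continuous dependence of local invariant manifolds implies that suitable compact pieces of $W^s(p_g)$ and $W^u(p_g)$ --- one containing $p_g$, the other a neighbourhood of $z$ --- converge in the $C^1$ topology to the corresponding pieces for $f$. Since transversality of compact submanifolds is an open condition, these pieces still meet transversally at some $z_g\neq p_g$ near $z$, so $g\in\cO_r$. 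This argument is $C^1$-open, hence a fortiori $C^r$-open.

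For positive entropy, the Birkhoff--Smale theorem applied to a transverse homoclinic point of a periodic point of period $k$ yields an integer $N\geq 1$ and a compact $f^{Nk}$-invariant subset on which $f^{Nk}$ is topologically conjugate to a full shift on at least two symbols; hence $h_{\mathrm{top}}(f^{Nk})\geq\log 2>0$ and therefore $h_{\mathrm{top}}(f) = \frac{1}{Nk}\,h_{\mathrm{top}}(f^{Nk})>0$. This shows every element of $\cO_r$ has positive topological entropy, which completes the proof.

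There is no genuine obstacle here: all the analytic inputs (persistence of hyperbolic periodic points, $C^1$-continuity of local invariant manifolds, the $\lambda$-lemma, Smale's horseshoe) are standard, and the entire novelty is carried by Theorem~\ref{t.main}, which supplies the density of the open set $\cO_r$. The only point deserving a word of care is the passage from $C^1$-openness to $C^r$-openness of the defining property, which holds simply because the $C^r$ topology refines the $C^1$ topology.
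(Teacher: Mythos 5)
Your proof is correct and follows exactly the paper's approach: the paper derives the corollary in a single line by invoking that a transverse homoclinic intersection of a hyperbolic periodic point is an open property implying positive entropy, which is precisely the argument you spell out. The details you supply (persistence via the implicit function theorem and continuity of invariant manifolds, entropy via the Birkhoff--Smale horseshoe) are the standard content behind that one-line remark.
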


Note that, by a well-known result of Katok \cite{K}, in the case of surface diffeomorphims of class $C^r$, $r\geq 2$, the positiveness of the entropy is equivalent to the existence of a transverse homoclinic intersection. Consequently, if $2\leq r\leq +\infty$, the previous corollary tells us that the set of $f\in \mathrm{Diff}^r_{\omega}(S)$  with positive topological entropy is open and dense. We will give now a more precise statement by breaking down the statement of Theorem \ref{t.main}.
Let $f$ be a diffeomorphism in $\Diff_\omega^r(S)$ and let $z$ be a periodic point of period $q$ \footnote{In the whole article, period will mean smallest period.}. We say that $z$ is {\it elliptic} if $Df^q(z)$ has two non real complex eigenvalues of modulus one, and we say that it is {\it hyperbolic} if $Df^q(z)$ has two real eigenvalues of modulus different from one. An {\it unstable branch} is a connected component of $W^u(z)\setminus\{z\}$ and a {\it stable branch} a connected component of $W^s(z)\setminus\{z\}.$ We refer as a branch any of the stable or unstable branches.
We first explicit the generic conditions we are going to use. We denote $\cG^r_{\omega}(S)\subset\Diff^r_\omega(S)$ the set of diffeomorphisms satisfying the following conditions.

\begin{itemize}

\item[(G1):]\label{n.genericity} Every periodic point is either elliptic or hyperbolic. Moreover, if $z$ is an elliptic periodic point of period $q$, then the eigenvalues of $Df^q(z)$ are not roots of unity.

 \item[(G2):]\label{h.genericity}  Stable and unstable branches of hyperbolic points that intersect must also intersect transversally  (in particular there is no saddle connection).
  \item[(G3):]\label{e.genericity} If $U$ is a neighborhood of an elliptic periodic point $z$, then there
is a topological closed disk $D$ containing $z$, contained in $U$, and bordered by finitely
many pieces of stable and unstable manifolds of some hyperbolic periodic
point $z'$.
\end{itemize}

Robinson \cite{R1} proved that, for any $r \ge 1$, properties (G1) and (G2) are $C^r$-generic  (it is easy to see that the no unity root condition is generic among elliptic periodic points), and (G3) is $C^r$-generic due to Zehnder \cite{Z}. Thus $\cG^r_{\omega}(S)$ is residual in $\Diff^r_\omega(S).$ The following simple facts will often be used in the article for a map $f\in\Diff^r_\omega(S)$:

\begin{itemize}
\item  if $q\not=0$, then $f\in \cG^r_{\omega}(S)$ if and only if $f^q\in\cG^r_{\omega}(S)$;
\item if $\pi:S'\to S$ is a finite covering and $f'$ a lift of $f$ to $S'$, then $f\in \cG^r_{\omega}(S)$ if and only if $f'\in\cG^r_{\pi^*(\omega)}(S')$.\end{itemize}

Denote  $\mathrm{fix}_h(f)$ the set of hyperbolic fixed points of $f\in \Diff^r_\omega(S)$ and $\mathrm{per}_h(f)$ the set of hyperbolic periodic points.
\medskip

The first result is folklore, consequence of Lefschetz formula:

\begin{proposition}\label{p.numberhyp} If $f\in \cG^r_{\omega}(S)$, then $\# \mathrm{per}_h(f)\geq \max(0,2g-2)$. Moreover, the inequality is strict if there exists at least one elliptic periodic point.
\end{proposition}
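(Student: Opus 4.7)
I would apply the Lefschetz fixed-point formula to a carefully chosen iterate $f^N$. By (G1) every periodic point of $f$ is nondegenerate as a fixed point of each iterate: an elliptic periodic point has eigenvalues $e^{\pm i\theta}$ with $\theta/2\pi$ irrational, hence persists as an elliptic fixed point of index $+1$ for every iterate; a hyperbolic periodic point is a saddle and contributes $\pm 1$ to the Lefschetz number of each iterate (the sign is determined by $\mathrm{sign}\det(I-Df^N)$).

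For $g\leq 1$ the bound $\#\mathrm{per}_h(f)\geq 0$ is vacuous. If moreover $f$ has an elliptic periodic point $z$, then (G3) furnishes a topological closed disk around $z$ bordered by pieces of stable and unstable manifolds of some hyperbolic periodic point $z'$, whence $\#\mathrm{per}_h(f)\geq 1$.

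Assume now $g\geq 2$ and set $A:=f_*\vert H_1(S;\Z)\in\mathrm{Sp}(2g,\Z)$. The crux is to produce $N\geq 1$ with $\mathrm{tr}(A^N)\geq 2g$, which, when strictness is sought, can additionally be taken to be a multiple of the period of any prescribed elliptic periodic point. This is a statement about integral symplectic matrices. Eigenvalues of $A$ come in reciprocal pairs; by Kronecker's theorem, either (i) every eigenvalue is a root of unity, in which case some power $A^k$ is unipotent and $\mathrm{tr}(A^{km})=2g$ for every $m\geq 1$, or (ii) some eigenvalue has modulus $\neq 1$, in which case Weyl equidistribution of the arguments of the dominant eigenvalues yields an arithmetic progression of $N$'s along which $\mathrm{tr}(A^N)\to+\infty$. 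The additional divisibility requirement is easily accommodated in both cases by applying the same dichotomy to $A^q$.

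With such an $N$ fixed, Lefschetz gives $L(f^N)=2-\mathrm{tr}(A^N)\leq 2-2g$. Splitting $L(f^N)=E(f^N)+\Sigma_H$, where $E(f^N)\geq 0$ is the number of elliptic fixed points of $f^N$ (each of index $+1$) and $\Sigma_H$ is the signed hyperbolic contribution with $|\Sigma_H|\leq\#\mathrm{fix}_h(f^N)$, one obtains
\[
\#\mathrm{per}_h(f)\;\geq\;\#\mathrm{fix}_h(f^N)\;\geq\; E(f^N)-L(f^N)\;\geq\; E(f^N)+(2g-2),
\]
which is $\geq 2g-2$ and strictly greater once an elliptic periodic point exists (forcing $E(f^N)\geq 1$). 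The only genuine technical point is the spectral argument producing the right $N$; the index bookkeeping is routine.
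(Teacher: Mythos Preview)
Your proposal is correct and follows essentially the same strategy as the paper: apply the Lefschetz formula to a well-chosen iterate $f^N$, use the spectrum of $f_{*,1}$ to force $\mathrm{tr}(f_{*,1}^N)\geq 2g$, and read off the bound from the index bookkeeping. The paper organizes the spectral step slightly differently---rather than invoking Kronecker's theorem, it uses recurrence of the torus rotation $(\arg\xi_1,\dots,\arg\xi_{2g})$ to find $n$ with $\mathrm{tr}(f_{*,1}^n)\geq(1-\varepsilon)\sum_j|\xi_j|^n$, concluding that either $\#\mathrm{per}_h(f)=\infty$ or all $|\xi_j|=1$, and in the latter case integrality of the trace forces $\mathrm{tr}(f_{*,1}^n)=2g$ (hence $\xi_j^n=1$, recovering the root-of-unity conclusion without Kronecker). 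Your explicit use of (G3) to handle strictness when $g=0$ is a small improvement: the paper's displayed inequality $\#\mathrm{fix}_h(f^{nq})>2g-2$ is vacuous for $g=0$, and (G3) is indeed the clean way to supply the missing hyperbolic point there. One minor remark: recurrence does not produce an arithmetic progression of good $N$'s, but as you note, the divisibility constraint is handled more simply by applying the whole argument to $A^q$, so this does not matter.
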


The second result asserts that Theorem \ref{t.main} is true if the number of hyperbolic points is greater that this lower bound.

\begin{theorem}\label{t.numberhyp} If $f\in \cG^r_{\omega}(S)$ and $\# \mathrm{per}_h(f)>\max(0,2g-2)$, then every hyperbolic periodic point of $f$ has a transverse homoclinic intersection.

\end{theorem}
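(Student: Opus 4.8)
The plan is to argue by contradiction: suppose $f\in\cG^r_\omega(S)$ has a hyperbolic periodic point $p$ with no transverse homoclinic intersection, and yet $\#\,\mathrm{per}_h(f)>\max(0,2g-2)$. Replacing $f$ by a power (which by the remarks after (G3) stays in $\cG^r_\omega(S)$, and does not change whether a point is hyperbolic or has a homoclinic orbit), I may assume $p$ is a fixed point. By hypothesis (G2) the branches of $p$ that intersect do so transversally and there are no saddle connections; so ``no transverse homoclinic'' means in fact that $W^u(p)$ and $W^s(p)$ meet only at $p$, i.e. each unstable branch is disjoint from each stable branch of $p$. The first step is to exploit this: a branch $\Gamma$ of $p$ that has no intersection with the opposite-type branches of $p$ must accumulate somewhere, and a Poincar\'e--Bendixson / Birkhoff-type argument (using area preservation, which forbids a branch from being attracted to a sink or bounding a trapping disk) forces $\overline{\Gamma}$ to contain other periodic points or other branches. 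The heart of the matter is to convert the ``extra'' hyperbolic periodic point guaranteed by the counting hypothesis into a topological obstruction.

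The key mechanism I would use is an index/Euler-characteristic count. Each branch of a hyperbolic periodic point carries a natural ``stable/unstable'' structure, and when one assembles the closure of the invariant manifolds of $p$ together with the periodic points they accumulate on, one gets a finite invariant ``graph-like'' object $X$ in $S$. Using (G3), a neighborhood of any elliptic periodic point is enclosed by a disk bounded by pieces of stable and unstable manifolds of some hyperbolic periodic point; this lets me assume, after enlarging $X$, that $S\setminus X$ consists of topological disks (no elliptic island escapes). Then the Euler characteristic of $S$ is computed by an Euler-formula-type count $\chi(S)=V-E+F$ over this decomposition, where the vertices are the hyperbolic periodic points involved and the contribution of each hyperbolic point $z$ is governed by the local separatrix picture at $z$ (four separatrices, giving the familiar saddle contribution $-1$ to an index sum, as in the Poincar\'e--Hopf formula with $\sum \mathrm{ind}=2-2g$). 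If $p$ genuinely has no homoclinic point, its four branches behave like ``free'' separatrices, and the book-keeping shows that the total contribution of all hyperbolic periodic points forces $\#\,\mathrm{per}_h(f)\le 2g-2$ (with equality only in the absence of elliptic points), contradicting the hypothesis. This is exactly the borderline case excluded by Proposition \ref{p.numberhyp}, so the philosophy is that Theorem \ref{t.numberhyp} is the ``strict inequality'' refinement of that counting statement, with the transverse homoclinic playing the role of the missing index.

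The main obstacle, and where I expect the real work to be, is making the object $X=\overline{W^u(\mathrm{per}_h(f))\cup W^s(\mathrm{per}_h(f))}$ (or the relevant sub-object containing $p$) tractable: branches of invariant manifolds can accumulate on themselves and on each other in very complicated, non-locally-connected ways, so it is not literally a finite graph and one cannot naively apply $V-E+F$. The standard fix is to pass to a ``filtration'' or prime-end / quotient compactification in which each branch is replaced by a tame arc, or to work with the complementary domains of $X$ directly and bound their number via area (each complementary component that is a disk is ``charged'' an area, and there are finitely many of positive area) — but ensuring that no wandering or recurrent behavior of a branch is lost in this process, and that area preservation genuinely rules out the pathological attracting configurations, requires care. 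A secondary technical point is handling the case where $p$'s branches accumulate on \emph{other} hyperbolic points whose branches in turn might or might not have homoclinic points; one needs an induction or a global argument that treats all of $\mathrm{per}_h(f)$ at once rather than $p$ in isolation. I would organize the proof around a single global ``index = Euler characteristic'' identity for the invariant lamination, deducing the theorem as the case where one saddle contributes nothing because it is truly disjoint from its opposite branches.
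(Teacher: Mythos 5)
Your proposal sketches a \emph{philosophy} rather than a proof, and the central mechanism you propose does not actually work as stated, as you yourself partly acknowledge. The idea of attaching a Poincar\'e--Hopf/Euler-characteristic bookkeeping to the closure $X$ of the invariant manifolds runs into the obstruction you flag and then do not resolve: $\overline{W^u\cup W^s}$ is not a finite graph, $S\setminus X$ need not consist of disks, and there is no CW structure on which $V-E+F$ makes sense. Moreover, the local Poincar\'e--Hopf index of a saddle is $-1$ regardless of whether that saddle has a homoclinic intersection, so an index sum alone cannot detect the dichotomy ``homoclinic vs.\ not.'' The claim that ``if $p$ genuinely has no homoclinic point, its four branches behave like free separatrices, and the book-keeping shows $\#\mathrm{per}_h(f)\le 2g-2$'' is precisely the step that would need a new idea, and none is supplied. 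The suggested fixes (prime-end compactification to tame the branches, or bounding complementary components by area) are not developed to the point of producing the needed inequality.

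The paper's proof goes by an entirely different route that sidesteps the lamination-is-not-a-graph problem. It first groups hyperbolic periodic points into \emph{equivalence classes} (same closure of branches) and proves a local Lefschetz-type lower bound $\#\kappa>2g(\kappa)-2$ for each class $\kappa$ unless there is a single class of cardinality $2g-2$ filling the whole surface (Propositions \ref{p.localLefschetz} and \ref{p.regulardomain-minimalgenus}, using the end and prime-end compactifications of regular domains). Then, and this is the key idea your sketch misses, it proves the homoclinic criterion (Proposition \ref{p.cycle}) by constructing, for each hyperbolic point $z_i$ in a regular domain $V$, two loops $\lambda_i^u,\lambda_i^s$ made from a stable/unstable arc closed off inside a small disjoint disk, arranged so that $\lambda_i^u\cap\lambda_j^s\subset W^u(z_i)\cap W^s(z_j)$. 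Intersections are then read off in $H_1(\check V,\Z_2)$: if there are more than $2g(V)$ points, the classes $[\lambda_i^s]$ are linearly dependent, and a minimal dependent subfamily produces a cycle of heteroclinic intersections, hence a homoclinic class. Finally, the positive-genus case of Proposition \ref{p.finitecovering} is handled by passing to a cyclic $m$-sheeted cover where the number of lifted periodic points grows linearly in $m$ while the genus bound grows more slowly, making the $\Z_2$-homology criterion applicable. None of this is an Euler-formula count on a decomposition of $S$; it is an intersection-form argument on explicit loops, plus a covering-space amplification. You should look for the loop/intersection-number mechanism rather than trying to triangulate $S$ by invariant manifolds.
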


 Note that Theorem \ref{t.numberhyp} already implies Theorem \ref{t.main} in the case of Hamiltonian diffeomorphisms, because every Hamiltonian diffeomorphism $f$ that belongs to $\cG^r_{\omega}(S)$ satisfies $\# \mathrm{per}_h(f)\geq 2g$, as a consequence of Arnold's conjecture  (see Floer \cite{Fl} or Sikorav \cite{Si}).

It remains to study the properties of diffeomorphisms $f\in \cG^r_{\omega}(S)$ such that $\# \mathrm{per}_h(f)=\max(0,2g-2)$. The proof of Theorem \ref{t.numberhyp} will give us:

\begin{proposition}\label{p.transitivity}
If $f\in \cG^r_{\omega}(S)$ and $\# \mathrm{per}_h(f)=\max(0, 2g-2)$ then

\begin{itemize}
\item $g\not =0$;

\item every periodic point is hyperbolic;

\item $f$ is transitive;

\item every stable or unstable branch of a hyperbolic periodic point of $f$ is dense

\item two different branches do not intersect.

\end{itemize}
\end{proposition}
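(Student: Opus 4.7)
The plan is to revisit the argument underlying Theorem~\ref{t.numberhyp} and extract the sharp information it yields in the tight case $\#\mathrm{per}_h(f)=\max(0,2g-2)$.

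First I would dispose of the two algebraic bullets. If $g=0$, then $\#\mathrm{per}_h(f)=0$; by the second half of Proposition~\ref{p.numberhyp} the existence of an elliptic periodic point would force strict inequality, so $f$ has no elliptic periodic points either, and under (G1) no periodic points at all. This contradicts the Lefschetz formula on $S^2$, which gives $L(f)=1+\deg(f)=2$ for any orientation-preserving diffeomorphism and hence forces a fixed point. Thus $g\geq 1$. The same appeal to Proposition~\ref{p.numberhyp} rules out elliptic periodic points in the equality case, and (G1) then forces every periodic point to be hyperbolic.

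For the statement that two distinct branches do not intersect, I would argue by contradiction. Suppose a branch $\Gamma_1$ of a hyperbolic periodic point $z_1$ meets a branch $\Gamma_2$ of some $z_2$; by (G2) the intersection is transverse. The engine of the proof of Theorem~\ref{t.numberhyp} is that, in the $C^r$-generic area-preserving setting, a single transverse inter-branch crossing propagates, through the $\lambda$-lemma and Poincar\'e recurrence, into a transverse homoclinic intersection at every hyperbolic periodic point, the obstruction to this propagation being an Euler-characteristic/index accounting (using the transverse foliation technology of \cite{Lec1}) that bounds the number of hyperbolic periodic points against $2g-2$. Under our equality hypothesis the accounting is saturated, so such a crossing is incompatible with $\#\mathrm{per}_h(f)=2g-2$ and cannot exist.

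Once all branches are pairwise disjoint, I would prove density of every branch as follows. If some branch $\Gamma$ had $\overline{\Gamma}\neq S$, the closed $f$-invariant set built from the finitely many branches in its $f$-orbit would, by the same accounting, remain a proper subset of $S$; its open complement would then be a nonempty $f$-invariant set carrying, by area preservation and Poincar\'e recurrence, additional periodic points. These would be hyperbolic by the previous bullet, and their branches, being disjoint from $\Gamma$, would contribute extra terms in the Euler-characteristic count, again contradicting the saturation $\#\mathrm{per}_h(f)=2g-2$. Hence every branch is dense, and density of any unstable branch $W^u(z)\setminus\{z\}$ yields a dense forward orbit of $f$, giving transitivity.

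The main obstacle is to make the Euler-characteristic/index bookkeeping underlying Theorem~\ref{t.numberhyp} quantitative enough that saturation of Proposition~\ref{p.numberhyp} rigidifies into both statements: no two branches cross, and no branch is confined to a proper closed invariant set. Once this tight accounting is in place, the three remaining bullets follow formally.
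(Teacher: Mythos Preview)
Your first two bullets are fine and match the paper. The remaining three contain real gaps.

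The most serious one is your density argument: from an open $f$-invariant set you invoke ``area preservation and Poincar\'e recurrence'' to produce additional periodic points. Recurrence gives recurrent points, not periodic ones; closing recurrent orbits into periodic orbits in class $C^r$, $r\geq 2$, is precisely the unavailable step that the entire paper is built to circumvent. So this part of the argument does not work. A second error: density of an unstable branch does \emph{not} yield a dense forward orbit (the branch is a union of orbits, each backward-asymptotic to the saddle; none need be forward-dense), so your derivation of transitivity fails too. Finally, your non-intersection argument is a gesture rather than a proof: you assert that a single transverse crossing ``propagates'' to a homoclinic point with obstruction measured by an Euler count, but you give no mechanism, and in fact the paper's mechanism requires density of branches as an input, so your order (non-intersection first, density second) cannot be made to work as stated.

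The paper's route is structurally different. The engine behind Theorem~\ref{t.numberhyp} is a dichotomy (Proposition~\ref{p.localLefschetz}): either every equivalence class $\kappa$ satisfies $\#\kappa>2g(\kappa)-2$ (and then is homoclinic by Proposition~\ref{p.finitecovering}), or the only periodic regular domain is $S$ itself. In the equality case one lands in the second alternative. From ``no proper periodic regular domain'' one gets immediately that every periodic continuum is a point or all of $S$ (since the components of its complement would be proper periodic regular domains). Density of each branch follows at once (its closure is a nontrivial periodic continuum, hence $S$), and transitivity follows because every invariant open set must be dense. Only then, using density, does the paper prove non-intersection: from a transverse stable/unstable crossing one builds a simple loop; if it is null-homologous, density of all branches forces a homoclinic intersection directly; if not, one passes to the associated $2$-fold cover and repeats. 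The missing idea in your proposal is the regular-domain step, which replaces the illegitimate use of recurrence.
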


The next result concerns isotopy classes of such diffeomorphisms:

\begin{theorem}\label{t.numberminhyp}
If $f\in \cG^r_{\omega}(S)$ is such that $\# \mathrm{per}_h(f)=\max(0, 2g-2)$ then:
\begin{itemize}
\item either  $g>1$ and there exists $q\geq 1$ such that $f^q$ is isotopic to the identity;
\item or $g=1$ and $f$ is  isotopic to a power of a Dehn twist map, meaning a homeomorphism conjugated to an automorphism $(x,y)\mapsto (x+my, y)$, $m\in\Z$.
\end{itemize}
\end{theorem}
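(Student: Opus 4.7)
By Proposition~\ref{p.transitivity} we have $g\geq 1$ and every periodic point of $f$ is hyperbolic, so $\#\mathrm{per}(f)=\max(0,2g-2)<\infty$. In particular $\#\mathrm{Fix}(f^n)\leq\max(0,2g-2)$ uniformly in $n\geq 1$, and the strategy is to extract the isotopy class of $f$ directly from this bound on periodic points of the iterates.

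\smallskip

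For $g=1$, every $n\geq 1$ gives $\mathrm{Fix}(f^n)=\emptyset$, hence $L(f^n)=0$. Writing $A=f_\ast\in SL(2,\Z)$, the Lefschetz formula reads $L(f^n)=2-\mathrm{tr}(A^n)$, so $\mathrm{tr}(A^n)=2$ for every $n\geq 1$. The case $n=1$ already forces $A$ to have a double eigenvalue $1$, so $A$ is conjugate in $GL(2,\Z)$ to $\left(\begin{smallmatrix}1 & m \\ 0 & 1\end{smallmatrix}\right)$ for some $m\in\Z$, which is exactly the claim that $f$ is isotopic to the $m$-th power of a Dehn twist. The other mapping classes are discarded at once: an Anosov $A$ would make $\mathrm{tr}(A^n)$ diverge, while a non-trivial finite-order $A\neq I$ has $\mathrm{tr}(A)\in\{-2,-1,0,1\}$ and hence $L(f)\neq 0$, forcing a hyperbolic fixed point and contradicting $\mathrm{per}_h(f)=\emptyset$.

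\smallskip

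For $g>1$ I would apply the Nielsen--Thurston classification to $[f]\in\mathrm{Mod}(S)$ and prove that $[f]$ is torsion, so that $[f^q]=[\mathrm{id}]$ for some $q\geq 1$. If $[f]$ were pseudo-Anosov, the Nielsen number $N(f^n)$ would grow exponentially in $n$; combined with $\#\mathrm{Fix}(f^n)\geq N(f^n)$ this contradicts our uniform bound. If $[f]$ is reducible, take a canonical reduction multicurve $\Gamma$ and pass to a power $f^q$ that preserves each piece of $S\setminus N(\Gamma)$; a pseudo-Anosov component on any piece is again ruled out by the same Nielsen-number argument restricted to that piece. If every piece is periodic but some Dehn twist along a curve $c\subset\Gamma$ has non-zero power $m$, pass to a further power so that $f^{q'}$ is isotopic to a pure multi-twist; on an annular neighborhood $A$ of $c$ the map is area-preservingly isotopic to $T_c^m$, so its rotation set lifted to the universal cover of $A$ is a non-degenerate interval of length $|m|$, and a Poincar\'e--Birkhoff / Franks argument yields infinitely many periodic orbits of $f^{q'}$ inside $A$, contradicting finiteness of $\mathrm{per}(f)$. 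The only remaining possibility is that all twists vanish and all pieces are periodic, so $[f^{q'}]=[\mathrm{id}]$, as required.

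\smallskip

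The main obstacle is the last sub-case of the $g>1$ analysis: since $f$ is not literally in Thurston canonical form, promoting the isotopy-class twist structure to an actual invariant-annulus dynamics with rotation interval of positive length (so as to apply a Poincar\'e--Birkhoff-type theorem for area-preserving maps merely isotopic to, and not equal to, a twist) requires care. One may alternatively invoke Nielsen-class persistence to realize each rational in the interior of the rotation interval as a genuine periodic orbit of $f^{q'}$, thereby contradicting finiteness of $\mathrm{per}(f)$ in a direct way.
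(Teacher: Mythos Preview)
Your treatment of $g=1$ is correct and essentially the same as the paper's: Lefschetz forces $\mathrm{tr}(A^n)=2$ for all $n$, hence $A$ is unipotent. The overall architecture for $g>1$ (Nielsen--Thurston, exclude pseudo-Anosov components by periodic-point growth) also matches the paper.

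The genuine gap is precisely the one you flag at the end but do not close: the Dehn-twist case for $g>1$. There is no invariant annulus for $f$, so one must lift to the open annular cover $\widehat S=\widetilde S/T_0$ associated to a twist curve $c$ and work with the compactification by the circle at infinity; the paper does exactly this and obtains a boundary twist condition of width $|m|$. But the lifted measure on $\widehat S$ is only locally finite, and area-preservation of $f$ on $S$ does \emph{not} imply the intersection property for $\widehat f$ on $\widehat S$ when $[c]\neq 0$ in $H_1(S)$. Indeed, the paper's Proposition~\ref{prop:intprop} shows the opposite implication: if $f$ has only finitely many periodic points, then $\widehat f$ \emph{fails} the intersection property. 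So Poincar\'e--Birkhoff cannot be applied directly, and your proposed shortcut collapses.

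The paper splits into two sub-cases. If $c$ is null-homologous, a dual-function argument (Proposition~\ref{prop:zerohom}) pushes the free essential loop on $\widehat S$ down to a free loop on $S$ bounding a subsurface, contradicting area-preservation. If $c$ is not null-homologous, one only gets a free loop $\check\lambda''$ in the cyclic cover $\check S$ (Proposition~\ref{prop:nonzerohom}), and the contradiction (Proposition~\ref{prop:nonzerohomperiodic}) then requires the specific dynamics available in $\cG^r_\omega(S)$ under the minimality hypothesis $\#\mathrm{per}_h(f)=2g-2$: the branches are dense and pairwise disjoint (Proposition~\ref{p.dynamicalproperties}), fixed points lift to fixed points of $\check f$ (Lemma~\ref{lemma:liftedtofixed}), and one analyzes how stable and unstable branches of these lifts cross the translates $\check T^{kr}(\check\lambda'')$ to produce a forced intersection in the universal cover. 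Your ``Nielsen-class persistence'' suggestion does not obviously substitute for this: the relevant Nielsen classes live in an infinite covering, and converting rotation-interval information into essential Nielsen classes of $f$ on $S$ is exactly the step that the paper's annular-cover machinery is designed to perform.
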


The fact that $g\not=0$ is obvious. Indeed, every orientation preserving diffeomorphism $f$ of the sphere contains a fixed point. So, if $f$ satisfies $(G3)$, it  contains a hyperbolic periodic point.

Suppose now that $g=1$. One of the following situation occurs

\begin{enumerate}
\item $f$ is isotopic to a hyperbolic torus automorphism;

\item  $f$ is isotopic to a non trivial periodic torus automorphism;

\item  $f$ is isotopic to a non trivial power of a Dehn twist  map;

 \item $f$ is isotopic to the identity.

\end{enumerate}
The diffeomorphism $f$ cannot be isotopic to a hyperbolic torus automorphisms, otherwise it would have infinitely many periodic points. It cannot be isotopic to a non trivial periodic torus automorphism, otherwise it would have a fixed point (by Lefschetz formula). So $f$ is isotopic to a Dehn twist  map or to the identity. The proof of Theorem \ref{t.numberminhyp} will exclusively concern surfaces of genus greater than $1$.

To get Theorem \ref{t.main}, it remains to prove that a diffeomorphism $f\in \cG^r_{\omega}(S)$ such that $\# \mathrm{per}_h(f)=\max(0,2g-2)$ can be perturbed into a diffeomorphim $f'\in \cG^r_{\omega}(S)$ such that $\# \mathrm{per}_h(f')>\max(0,2g-2)$. An example of a diffeomorphism satisfying the hypothesis of Theorem \ref{t.numberminhyp} and the second conclusion is the map  $(x,y)\mapsto (x+my, y+\rho)$, where $\rho$ is irrational. How to perturb such a map to obtain a periodic orbit has been done by Addas-Zanata \cite{Ad}:

\begin{theorem}\label{t.dehn}
If $g=1$ and if $f\in \Diff^r_{\omega}(S)$ is isotopic to a non trivial power of a Dehn twist  map, then there exists $f'\in \cG^r_{\omega}(S)$, arbitrarily close to $f$, such that $\mathrm{per}_h(f')\not=\emptyset$.
\end{theorem}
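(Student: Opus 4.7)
My plan is to reduce the theorem to producing any periodic orbit after an arbitrarily small perturbation, then to promote it to a hyperbolic periodic orbit by a further local perturbation, and finally to use the residuality of $\cG^r_\omega(S)$ to land inside that set.

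Since $\cG^r_\omega(S)$ is residual in $\Diff^r_\omega(S)$ and the property ``there exists a hyperbolic periodic point'' is $C^r$-open, it is enough to construct, arbitrarily close to $f$, some $f_1\in\Diff^r_\omega(S)$ possessing a hyperbolic periodic orbit: intersecting the resulting nonempty open set with $\cG^r_\omega(S)$ then yields the required $f'$. By a preliminary arbitrarily small perturbation using Robinson's theorem, I may further assume that $f$ itself satisfies (G1) and (G2), so that all periodic orbits are either hyperbolic or elliptic with non-root-of-unity eigenvalues, and the same holds for all nearby maps.

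The core step is producing a periodic orbit after an arbitrarily small $C^r$ conservative perturbation; this is the content of Addas-Zanata's result \cite{Ad}. Identifying $S$ with $\T^2=\R^2/\Z^2$ and choosing the lift $\tilde f\colon\R^2\to\R^2$ associated with an isotopy between $f$ and a non-trivial power $T^m\colon(x,y)\mapsto(x+my,y)$ of the Dehn twist, one has $\tilde f(x,y)=(x+my,y)+g(x,y)$ with $g$ bounded and $\Z^2$-periodic. Thus $\tilde f$ exhibits an \emph{unbounded horizontal twist}, while area preservation forces the vertical displacement to have zero mean. This is precisely the setting treated in \cite{Ad}, which adapts the Poincar\'e-Birkhoff machinery to this non-compact ``infinite annulus'' in order to deliver, after an arbitrarily small $C^r$ conservative perturbation, a periodic orbit with vanishing vertical rotation number and rational horizontal rotation number lying in the interior of the horizontal twist interval.

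Let $z$ be such a periodic orbit, of period $q$, of the perturbed map. If $z$ is already hyperbolic, the proof is complete. Otherwise $z$ is elliptic with eigenvalues $e^{\pm 2\pi i\alpha}$, $\alpha$ irrational, and $f^q$ admits a Birkhoff normal form in a neighbourhood of $z$. A standard local Poincar\'e-Birkhoff argument, applied after an arbitrarily small conservative perturbation supported in a Darboux chart around the orbit of $z$, creates nearby pairs of periodic orbits of high period; for a generic such perturbation at least one of these orbits is hyperbolic. Intersecting the resulting open set with the residual set $\cG^r_\omega(S)$ finishes the construction. The main obstacle is clearly the first step: the annular cover of $\T^2$ associated with the Dehn twist direction is non-compact and carries no natural invariant boundary circles, so the classical Poincar\'e-Birkhoff theorem does not apply directly, and the novelty of \cite{Ad} lies precisely in controlling the rotation set and performing suitable truncations in order to invoke twist-map techniques in this non-compact setting.
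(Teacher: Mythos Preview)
Your proposal is correct and matches the paper's approach: both cite Addas-Zanata \cite{Ad} to obtain a periodic orbit after an arbitrarily small conservative perturbation (the paper phrases this as perturbing the \emph{vertical rotation number} into $\Q/\Z$ and then invoking the key result of \cite{Ad} that a rational vertical rotation number forces a periodic orbit), and then pass to $\cG^r_\omega(S)$ by residuality. Your elliptic-to-hyperbolic step via Birkhoff normal form and a local Poincar\'e--Birkhoff argument is correct but unnecessary: once the map lies in $\cG^r_\omega(S)$, condition (G3) directly supplies a hyperbolic periodic point in any neighbourhood of an elliptic one.
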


So, to get Theorem \ref{t.main}, it remains to prove:

\begin{theorem}\label{t.isotopy}
If $f\in \Diff^r_{\omega}(S)$ and if there exists $q\geq 1$ such that $f^q$ is isotopic to the identity, then there exists $f'\in \cG^r_{\omega}(S)$, arbitrarily close to $f$, such that $\# \mathrm{per}_h(f')>\max(0, 2g-2)$.
\end{theorem}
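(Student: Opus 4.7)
Assume $f\in\cG^r_\omega(S)$ (after an arbitrarily small generic approximation) and $\#\mathrm{per}_h(f)=\max(0,2g-2)$ (otherwise the conclusion is immediate). Setting $F=f^q$, the map $F$ is isotopic to the identity and $\mathrm{per}_h(F)=\mathrm{per}_h(f)$. By Proposition~\ref{p.transitivity} applied to $f$, the map $F$ is transitive, every periodic point is hyperbolic, every stable or unstable branch is dense, and any two distinct branches of $F$ are disjoint. Applying the Lefschetz formula to $F$ and to all its iterates forces $\mathrm{Fix}(F)=\mathrm{per}_h(F)$, of cardinality $\max(0,2g-2)$, so $F$ has no periodic point outside this finite set of saddles. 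The aim is thus to construct an arbitrarily $C^r$-small perturbation $f'$ of $f$ such that two distinct branches of $F'=(f')^q$ meet transversally: any such transverse homoclinic or heteroclinic intersection produces a horseshoe and hence infinitely many hyperbolic periodic points, giving $\#\mathrm{per}_h(f')>\max(0,2g-2)$.

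\textbf{Transverse foliation and special loop.} Choose an isotopy $I$ from $\mathrm{id}_S$ to $F$ whose contractible fixed-point set contains $\mathrm{Fix}(F)$, and apply Le Calvez's theorem \cite{Lec1} to obtain a singular oriented foliation $\cF$ on $S$, singular precisely at $\mathrm{Fix}(F)$, positively transverse to every isotopy trajectory of a regular point. The local model near each saddle is the standard four-pronged hyperbolic picture, the four separatrices of $\cF$ being tangent, in the leaf sense, to the branches at the saddle. On $S\setminus\mathrm{Fix}(F)$, the foliation $\cF$ then encodes the global winding of every branch around $S$, and the density of branches together with their pairwise disjointness translates into a highly constrained combinatorial pattern. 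This pattern forces the existence of an essential simple closed curve $\gamma\subset S\setminus\mathrm{Fix}(F)$ positively transverse to $\cF$ and crossed by a chosen unstable branch $W^u$ of some saddle $z_0$ coherently, i.e.\ transversally and always with the same sign. This $\gamma$ is the special loop.

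\textbf{Perturbation.} Let $A$ be a thin annular tubular neighbourhood of $\gamma$ disjoint from $\mathrm{Fix}(F)$, and let $\phi_\varepsilon$ be an area-preserving $C^r$-small diffeomorphism of $S$ supported in $A$, obtained as the time-$\varepsilon$ map of a flow that rotates $A$ along $\gamma$. Set $f'=\phi_\varepsilon\circ f$. Then $(f')^q$ differs from $F$ by a product of $q$ conjugates of $\phi_\varepsilon$, and along any $F$-trajectory that crosses $A$ coherently $N$ times, this introduces a cumulative transverse displacement of order $N\varepsilon$. The rigidity of $\cF$ (which itself changes only by an amount $O(\varepsilon)$) together with the density of $W^u$ and of a prescribed second branch imply that for generic small $\varepsilon>0$ the corresponding branches of $F'$ must cross transversally somewhere inside $A$, producing the sought intersection. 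A final generic perturbation brings $f'$ back into $\cG^r_\omega(S)$ while preserving this open condition, and the horseshoe argument then yields $\#\mathrm{per}_h(f')>\max(0,2g-2)$.

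\textbf{Main obstacle.} The heart of the matter is the last step: showing that the accumulated displacement of order $N\varepsilon$, made possible by the coherence of the crossings of $\gamma$, genuinely forces a transverse intersection of two originally disjoint dense branches. This is where the transverse foliation $\cF$ is indispensable, as it provides global ``angular'' coordinates along $\gamma$ in which the local rotation $\phi_\varepsilon$ manifests as a genuine translation; without these coordinates, the displacement could a priori be absorbed by a reparametrization of the $\cF$-leaves and produce no new intersection. One must also verify that the newly created intersection is transverse and not cancelled by other geometric accidents along $\gamma$, which requires a case analysis on the winding of the branches dictated by $\cF$.
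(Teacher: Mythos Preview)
Your outline captures the right strategy at the coarsest level --- perturb $f$ by composing with the time-$\varepsilon$ map of a divergence-free flow supported near a special essential loop --- and this is exactly what the paper does. The existence of the special loop $\gamma$ (in the paper, denoted $\phi$) lifting to singular Brouwer lines is also correct, and is the content of Lemma~\ref{l:loops} and Proposition~\ref{prop:Brouwerlines}.

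The genuine gap is precisely the one you flag as the ``main obstacle'', and your proposed resolution does not work as stated. The heuristic that ``cumulative transverse displacement of order $N\varepsilon$'' forces a crossing of branches is not what the paper proves, and there is no evident way to make it rigorous: the branches are dense immersed lines with no a priori control on how they sit relative to each other along $\gamma$, so a small rotation could in principle simply slide one dense set past another without creating any intersection. The foliation $\mathcal F$ does not provide the ``angular coordinate'' you invoke in any usable sense for this purpose.

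The paper's mechanism is entirely different and indirect. Setting $f_t=h_t\circ f$ with $\mathrm{rot}_{h_t}(\mu_\omega)=t[\phi]$, Corollary~\ref{cor:periodic} computes
\[
\mathrm{rot}_{f_t^q}(\mu_\omega)\wedge\mathrm{rot}_{f^q}(\mu_\omega)=qt\,[\phi]\wedge\mathrm{rot}_{f^q}(\mu_\omega)>0
\]
for $t>0$. One then argues by contradiction: suppose the whole segment $\{f_t:t\in[0,\varepsilon]\}$ remained in $\mathcal G^r_\omega(S)$ with exactly $2g-2$ periodic points. This family satisfies the hypotheses of Proposition~\ref{prop:nonhomologicalintersectiongeneralized}, whose proof occupies all of Section~\ref{s:dynamics-isotopic-identity} and relies on a delicate analysis of how branches cross the lifts of $\phi$ (the no-crossing Lemma~\ref{lemma:noncrossing}, the comparability Lemma~\ref{lemma:comparable}, and the bounded-length Lemma~\ref{lemma:boundedness}). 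That proposition yields $\mathrm{rot}_{f_t^q}(\mu_t)\wedge\mathrm{rot}_{f^q}(\nu_t)=0$ for suitable ergodic components, contradicting the displayed inequality. Hence some $f_t$ must either acquire an extra periodic point, or lose genericity in a way that can be exploited. The argument never exhibits a specific branch intersection; it shows that the rotation-vector constraint imposed by the rigid structure of the branches is violated by the perturbation.

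In short: your perturbation is the right one, but the reason it works is homological (via rotation vectors and a rigidity theorem for the $\phi$-trajectories of branches), not the local geometric displacement picture you sketch.
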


Here again, suppose that $g=1$. As explained above, one can suppose that $q=1$ otherwise $f$ itself would have a fixed point.  The result is well known and related to the {\it rotation vector } $\mathrm{rot}_{f}(\mu_{\omega})\in \R^2/\Z^2$ (we will recall later the definition). The Conley-Zehnder theorem \cite{CZ} implies that $f$ has a periodic orbit  if $\mathrm{rot}_{f}(\mu_{\omega})\in \Q^2/\Z^2$.  So, one can compose $f$ with a rotation arbitrarily small to get a diffeomorphism $f''\in \mathrm{Diff}^r_{\omega}(S)$ such that $\mathrm{rot}_{f''}(\mu_{\omega})\in \Q^2/\Z^2$ and then approximate $f''$ by an element $f'$ of  $\mathcal{G}^r_{\omega}(S)$ having a periodic orbit. It must be noted that Addas-Zanata's theorem uses a similar argument. One can define the {\it vertical rotation number}, which belongs to $\R/\Z$ and the key result is that an element of $\mathrm{Diff}^r_{\omega}(S)$ isotopic to a non trivial power of the Dehn twist has a periodic orbit if its vertical rotation number is rational. Here again,  the proof of Theorem \ref{t.isotopy} will exclusively concern surfaces of genus greater than $1$.
The canonical example of a diffeomorphism $f\in \cG^r_{\omega}(S)$ such that $\# \mathrm{per}_h(f)=2g-2$ is given by the time one map of the flow of a minimal direction for a translation surface in the principal stratum.

\medskip

To conclude, recall that generically in $\Diff_\omega^r(S)$, the union of stable (or unstable) branches is dense (see \cite{FrLec}, \cite{X1}, \cite{KLecN}) and  that generically in the space of Hamiltonian diffeomorphisms of class $C^r$, periodic points are dense (Asaoka-Irie \cite{AsI}).
\medskip

\textit{Organization of the  article:} In Section \ref{s.preliminaries} we recall some notions and tools (as rotation vectors, transverse foliations, regular domains and the Nielsen-Thurston Classification Theorem) and some results regarding them which are important in our context. In Section \ref{s:An equivalence relation on the set of hyperbolic periodic points} we state an equivalence relation among periodic points. Proposition \ref{p.numberhyp} as well as a local version for  equivalent classes of periodic points are proved in Section \ref{s:minoration}. We give a criteria for the existence of a homoclinic class in Section \ref{s.criteria}. Theorem \ref{t.numberhyp} is proved in Section \ref{s:proof}. In Section \ref{s.dehn} we study  diffeomorphisms isotopic to Dehn twist maps and in Section \ref{s.numberminhyp} we prove Theorem \ref{t.numberminhyp} by proving that a generic  diffeomorphisms isotopic to a  Dehn twist map has infinitely many periodic points. Section \ref{s:dynamics-isotopic-identity} is devoted to the study of generic maps isotopic to the identity with a minimal number of periodic points. Theorem \ref{t.isotopy} (and hence Theorem \ref{t.main}) are proved in Section \ref{s:proof-isotopic-identity}. We give an outline of an alternate and  previous proof of Theorem \ref{t.isotopy} using Forcing Theory and a result by Lellouch \cite{Lel} in the final Section \ref{s:alternate}.

\section{Preliminaries}\label{s.preliminaries}

In this section we will state and develop some concepts, results and tools which are basics along the paper.

\subsection{Lefschetz index}\label{ss.lefschtez}

Let $f:S\to S$ be a diffeomorphism in $\mathcal{G}^r_\omega(S)$. The \textit{Lefschetz index} of a fixed point $i(f,z)$ is:
\begin{itemize}
\item equal to $1$ if $z$ is elliptic;
\item equal to $-1$ if $z$ is hyperbolic and its branches are fixed by $f$;
\item equal to $1$ if $z$ is hyperbolic and its branches are fixed by $f^2$ but not by $f$.


\end{itemize}


Lefschetz formula says that
$$\sum_{z\in \mathrm{fix}(f)}i(f,z)=\sum_{i=0}^2(-1)^i\mathrm{tr}(f_{*,i})=2-\mathrm{tr}(f_{*,1}),$$
where $f_{*,i}$  is the endomorphism of the $i$-th homology group $H_i(S,\R)$ induced by $f.$ Therefore, if $f$ is isotopic to the identity,
$$\#\mathrm{fix}_\mathrm{h}(f)\ge- \sum_{z\in \mathrm{fix}(f)}i(f,z)=\mathrm{tr}(f_{*,1})-2=2g-2,$$
with strict inequality in the presence of an elliptic fixed point or a hyperbolic fixed point with negative eigenvalues. In particular if $\#\mathrm{fix}(f)=2g-2$ then they are all hyperbolic. Moreover, if $\#\mathrm{per}_h(f)=2g-2$ then, they are all fixed.

\subsection{Regular domains and generic conservative diffeomorphisms}\label{ss.regular}

A {\it regular domain} of $S$ is a connected open set $V$ of finite type whose complement has no isolated point. Note that $V$ has finitely many ends and that its complement has finitely many connected components, none of them reduced to a point. Conversely, if $K$ is the union of finitely many connected closed sets, none of them reduced to a point, then every connected component of $S\setminus K$ is regular. Observe also that every connected component of the intersection of two (or finitely many) regular domains is a regular domain. It is a consequence  of Mayer Vietoris sequence,
 or of the previous characterisation of the complement of a regular domain. If $V$ is a regular domain, it can be compactified in three natural ways.
\begin{itemize}

\item The {\it ambient compactification} is the closure $\overline V$ of $V$ in $S$.

\item The {\it end compactification} is obtained by adding every end of $V$: one gets a boundaryless compact surface $\check V$.
 \item
The {\it prime end compactification} is obtained by blowing up every end by the circle of prime ends (see Mather \cite{Ma}): one gets a compact surface with boundary $\widehat V$.

\end{itemize}

If $V$ is invariant by an orientation preserving homeomorphism of $f$, then $f_{\vert \overline V}$ is an extension of $f_{\vert V}$ to $\overline V$.  Moreover there exists a natural extension $\check f$ of $f_{\vert V}$ to $\check V$ that permutes the ends. An important property of the prime end compactification is that $f_{\vert V}$ admits an extension by a homeomorphism $\widehat f$ of $\widehat V$: every added circle $C$ is periodic and if $q$ is its period, then $f^{q}{}_{\vert C}$ is orientation preserving, so the rotation number $\mathrm{rot}(C)\in\R/\Z$ of $f^{q}{}_{\vert C}$ can be defined if $C$ is endowed with the induced orientation.

\medskip

 The following proposition, due to Mather \cite{Ma}, is the key result of this article.

\begin{theorem}\label{t.pe-genericconsequences}
If $V$ is a regular domain invariant by $f\in \cG^r_{\omega}(S)$, then $\widehat f$ has no periodic point on the boundary of $\widehat V$. Equivalenty, for every added circle $C$, one has $\mathrm{rot}(C)\not\in \Q/\Z$.
\end{theorem}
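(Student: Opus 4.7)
\medskip

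\noindent\textbf{Proof proposal.} I first reduce to the fixed-point case: if $C$ is an added circle of period $k$ and $\mathrm{rot}(C)=p/q\in\Q/\Z$, then $\widehat{f^{kq}}|_C$ is orientation preserving with zero rotation number, hence has fixed points on $C$. Since $f\in\cG^r_\omega(S)$ if and only if $f^{kq}\in\cG^r_\omega(S)$, and $V$ is still invariant by $f^{kq}$, it suffices to argue by contradiction: assume $\widehat f$ itself fixes $C$ pointwise-or-not and has at least one fixed prime end $\xi_0\in C$, and derive a contradiction with (G1)--(G3).

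The plan is to exploit that, for an area preserving homeomorphism, Mather's theory of the prime end compactification guarantees that the set of fixed prime ends on $C$ cannot consist only of non-accessible (``bouncing'') prime ends: using the push-forward of the area on $V$ and the fact that non-accessible prime ends form a meager, measure-zero set in $C$, a Poincar\'e recurrence style argument produces at least one \emph{accessible} fixed prime end $\xi\in C$. An accessible fixed prime end corresponds to a point $z\in\partial V\subset S$ that is the endpoint of a simple arc $\gamma:[0,1)\to V$ with $\lim_{t\to 1}\gamma(t)=z$, and the fact that $\widehat f(\xi)=\xi$ combined with invariance of $V$ and continuity forces $f(z)=z$; so $z$ is a genuine fixed point of $f$ on $\partial V$.

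By (G1), $z$ is either elliptic or hyperbolic. If $z$ were elliptic, (G3) would supply an arbitrarily small topological closed disk $D\ni z$ contained in any chosen neighborhood, bounded by finitely many arcs of stable and unstable manifolds of some hyperbolic periodic point $z'$; since the interior of $D$ is an open neighborhood of $z$ disjoint from those manifolds, and since $V$ reaches $z$ through the accessing arc $\gamma$, the arc $\gamma$ would have to cross $\partial D$, and then iteration of $\gamma$ under $f$ (which fixes $D$ to itself because $z'$ is periodic and by choosing the appropriate iterate) would force either an intersection of $\overline V$ with the elliptic point from inside or a tangential/saddle-connection configuration, contradicting either the absence of saddle connections in (G2) or the openness of the complement of $W^s(z')\cup W^u(z')$. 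So $z$ must be hyperbolic. Now the accessibility of $\xi$ together with the local dynamics near the hyperbolic fixed point $z$ forces one of the four local branches at $z$ to bound $V$ at $z$: more precisely, a standard Carath\'eodory--$\lambda$-lemma analysis shows that one stable and one unstable branch of $z$ accumulate along $\partial V$ near $z$. Iterating the same argument at a second accessible fixed prime end $\xi'\in C$ (which exists because each added circle has at least two accessible fixed prime ends by a Lefschetz-type count on $\widehat V$, using that $\widehat f$ preserves orientation on $C$) yields a second hyperbolic fixed point $z'\in\partial V$ whose branches also lie in $\overline V$; following these branches along $\partial V$ produces a stable branch of one of $z,z'$ meeting an unstable branch of the other along $\partial V$ without transverse crossing, i.e.\ a saddle connection, contradicting (G2).

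The main obstacle I expect is the rigorous link between accessibility of a fixed prime end and the fact that a whole \emph{branch} of the stable or unstable manifold of the corresponding hyperbolic fixed point lies in $\overline V$ and accumulates on $\partial V$ near $z$. This is the geometric heart of Mather's argument: one has to combine the local normal form at $z$ with the topology of prime ends to exclude the possibility that $\gamma$ accesses $z$ along a ``non-branch'' direction, and then use $f$-invariance of $V$ together with area preservation to rule out the alternative that the accumulating branch escapes into the interior of $V$. Once this geometric dictionary is established, the contradiction with (G2) is essentially immediate from the two-prime-end argument above.
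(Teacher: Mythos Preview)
The paper does not actually prove this theorem: it attributes the result to Mather \cite{Ma} and notes that the extension from Mather's original genericity hypothesis to the present condition (G3) was carried out in \cite{KLecN}. So there is no in-paper proof to compare your proposal against; what you have written is a sketch of (roughly) Mather's strategy.

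As a sketch of that strategy, your outline has the right overall shape---reduce to a fixed prime end, produce a genuine fixed point on $\partial V$, rule out the elliptic case, and in the hyperbolic case force a connection violating (G2)---but two of the steps do not stand as written. First, your mechanism for producing an \emph{accessible} fixed prime end is not valid: there is no natural $\widehat f$-invariant probability measure on the prime-end circle $C$ obtained by ``pushing forward the area on $V$'', so a Poincar\'e recurrence argument on $C$ is not available. Accessible prime ends are dense in $C$, but denseness together with closedness of the fixed-prime-end set does not by itself give an accessible fixed prime end. Mather's route instead analyzes the \emph{principal set} (or impression) of a fixed prime end, which is an $f$-invariant continuum in $\partial V$, and extracts a fixed point of $f$ on $\partial V$ from that; accessibility in your sense is neither assumed nor directly proved. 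Second, the hyperbolic endgame is where the real work lies and your version is too coarse: the assertion that there are ``at least two accessible fixed prime ends by a Lefschetz-type count on $\widehat V$'' is unjustified (Lefschetz on $\widehat V$ gives a total index, not a count on a single circle, and the boundary circle contribution need not produce two fixed prime ends), and the passage from ``branches accumulate on $\partial V$'' to ``a stable branch of one meets an unstable branch of the other along $\partial V$'' requires Mather's sector analysis at the hyperbolic point---showing that a branch actually lies in $\partial V$, not merely accumulates on it---which you flag as the ``main obstacle'' but do not supply. Without that, you cannot conclude a saddle connection rather than, say, a branch that repeatedly approaches and leaves $\partial V$.

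Your treatment of the elliptic case is also not a proof: the disk $D$ of (G3) need not be $f$-invariant (its boundary is made of pieces of $W^s(z')\cup W^u(z')$ for a periodic $z'$, but $D$ itself can move), so the sentence ``iteration of $\gamma$ under $f$ (which fixes $D$ to itself\dots)'' is not justified, and the claimed contradiction does not follow. In Mather's argument the elliptic case is handled by a different mechanism (and in \cite{KLecN} by a genuinely new argument adapted to (G3)); you would need to consult those references to fill this in.
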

The proof of the theorem was using a slightly different condition than (G3) but was extended to our situation in \cite{KLecN}. In the same article the following was shown (see  \cite[Theorem E]{KLecN}):

\begin{theorem}\label{t.genericconsequences}
If $V$ is a regular domain invariant by $f\in \cG^r_{\omega}(S)$, then $f$ has no periodic point on the frontier of $V$ in $S$.
\end{theorem}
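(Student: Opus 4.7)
The plan is to derive Theorem \ref{t.genericconsequences} from Theorem \ref{t.pe-genericconsequences} by showing that a periodic point of $f$ on the frontier of $V$ in $S$ would yield a periodic point of $\widehat f$ on $\partial\widehat V$, contradicting the prime-end statement. Suppose for contradiction that $z\in\partial V$ is periodic. Using that $\cG^r_\omega(S)$ is invariant under iteration, I replace $f$ by a suitable power and assume $f(z)=z$, that each connected component of $S\setminus V$ is $f$-invariant, and, in the hyperbolic case below, that each of the four branches of $z$ is fixed by $f$. By (G1), $z$ is either hyperbolic or elliptic.

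Suppose first that $z$ is hyperbolic. The local stable and unstable manifolds cut a small disk around $z$ into four open sectors, each $f$-invariant. Since $z\in\partial V$, at least one sector $Q$ meets $V$ arbitrarily close to $z$; a standard accessibility argument (pick a component of $V\cap Q$ whose closure contains $z$, choose points $p_n$ in it converging to $z$, and concatenate short arcs in that component between consecutive $p_n$) produces an arc $\gamma:[0,1]\to\overline V$ with $\gamma([0,1))\subset V$ and $\gamma(1)=z$. This arc selects a prime end $\xi$ of $V$ on some prime-end circle $C\subset\partial\widehat V$ with principal point $z$. The components of $V\cap Q$ accumulating at $z$ correspond to distinct ends of the regular domain $V$, hence are finite in number, and $f$ permutes them; after a further iterate $\xi$ is fixed by $\widehat f$, producing a periodic point of $\widehat f$ on $C$, contradicting Theorem \ref{t.pe-genericconsequences}.

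Suppose now that $z$ is elliptic. Applying (G3) to a small neighborhood of $z$ and iterating $f$ furnishes an $f$-invariant closed topological disk $D$ with $z\in\mathrm{int}(D)$, bordered by finitely many arcs of stable and unstable manifolds of a hyperbolic periodic point $z'$. Since $z\in\partial V$, the $f$-invariant open set $V\cap\mathrm{int}(D)$ is nonempty and accumulates at $z$; because $V$ is a regular domain and $\partial D$ is a piecewise-smooth Jordan curve, only finitely many components of $V\cap\mathrm{int}(D)$ can accumulate at $z$, and after a further iterate one such component $W$ is $f$-invariant. Building an accessing arc in $W$ as in the hyperbolic case gives a prime end of $V$ fixed by $\widehat f$, again contradicting Theorem \ref{t.pe-genericconsequences}.

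The hardest step is to rigorously attach a single prime end of the \emph{global} compactification $\widehat V$ to an accessing arc constructed inside a local piece $W\subset V$, and to control the $\widehat f$-orbit of this prime end. In the hyperbolic case this uses the local product structure near $z$ together with the fact that at most finitely many ends of $V$ can access $z$ from within a given sector; in the elliptic case one must further exclude infinitely many components of $V\cap\mathrm{int}(D)$ accumulating at $z$, which rests on the regular-domain hypothesis on $V$ together with the piecewise-smooth structure of $\partial D$ provided by (G3).
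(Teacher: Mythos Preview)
The paper does not prove this theorem; it is quoted as \cite[Theorem~E]{KLecN}. So there is no in-paper argument to compare against, and your attempt is an independent proof sketch. Unfortunately, several steps do not go through.

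In the hyperbolic case, your accessibility argument is incomplete: choosing $p_n\to z$ in a component $W$ of $V\cap Q$ and concatenating arcs in $W$ between consecutive $p_n$ need not produce an arc converging to $z$; this is exactly the phenomenon of non-accessible boundary points, and nothing in the definition of a regular domain (finite type, no isolated complementary points) prevents $\partial V$ from being locally complicated near $z$. Your finiteness claim is also unjustified: components of $V\cap Q$ accumulating at $z$ have no reason to correspond to ends of $V$ (ends are global objects, while these components sit inside a small disk), so there may be infinitely many of them. Even granting a fixed component $W$ and an accessing arc $\gamma\subset W$, you have not shown that $\gamma$ and $f(\gamma)$ determine the \emph{same} prime end: this requires that for every small neighborhood $U$ of $z$ the tails of $\gamma$ and $f(\gamma)$ lie in the same component of $V\cap U$, which does not follow from their lying in the same component of $V\cap Q$. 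In the elliptic case, condition (G3) provides a disk $D$ bordered by pieces of $W^s(z')\cup W^u(z')$, but it does not assert $f$-invariance of $D$; after passing to a power, the manifolds $W^{s/u}(z')$ are invariant, yet the specific arcs forming $\partial D$ need not be, so ``iterating $f$ furnishes an $f$-invariant closed topological disk $D$'' is not justified. The actual argument in \cite{KLecN} uses more of the structure of prime ends and of the local dynamics near periodic points, and in particular does not reduce to the elementary accessibility construction you propose.
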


An immediate consequence of Theorem \ref{t.pe-genericconsequences} is the following:

\begin{corollary}\label{c.e-genericconsequences}
Let $V$ be a regular open set invariant by $f\in \cG^r_{\omega}(S)$. If $\zeta$ is an end of $V$ of period $q$ (as a periodic point of $\check f)$, then for every $n\geq 1$, the Lefschetz index of   $\check f^{qn}$ at $\zeta$ is equal to $1$.
\end{corollary}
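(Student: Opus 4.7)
The plan is to prove the corollary in two stages: first, show that $\zeta$ is an isolated fixed point of $\check f^{qn}$; second, compute its Lefschetz index by a winding-number calculation in a prime-end collar chart.

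For the isolation, I would argue by contradiction. Suppose a sequence $(z_k)$ of fixed points of $\check f^{qn}$ distinct from $\zeta$ converges to $\zeta$ in $\check V$. Since the ends of $V$ form a finite discrete subset of $\check V$, eventually $z_k\in V$. Convergence to $\zeta$ in $\check V$ means the $z_k$ leave every compact subset of $V$, so by compactness of $S$ a subsequence converges in $S$ to a point $z_\infty$ on the frontier of $V$. By continuity $z_\infty$ is a fixed point of $f^{qn}$, hence a periodic point of $f$ on the frontier of $V$, contradicting Theorem \ref{t.genericconsequences}.

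For the index, observe that by Theorem \ref{t.pe-genericconsequences} the rotation number of $\widehat f^q|_C$ is irrational, hence so is that of $\widehat f^{qn}|_C$. Choose a lift of $\widehat f^{qn}|_C$ to $\R$ whose displacement function $\delta_0\colon S^1\to\R$ takes values in $(0,1)$; this is possible because any lift $\tilde h$ of an orientation-preserving circle homeomorphism whose rotation number lies in $(0,1)$ satisfies $x<\tilde h(x)<x+1$ for every $x$ (otherwise the rotation number would be $\leq 0$ or $\geq 1$). Fix an orientation-preserving collar chart $\chi\colon[0,r_0]\times S^1\to\widehat V$ with $\chi(\{0\}\times S^1)=C$, and compose with the polar map $q(r,\theta)=re^{2\pi i\theta}$ and the prime-end collapse $\widehat V\to\check V$ to obtain an orientation-preserving homeomorphism $\bar\phi\colon D_{r_0}\to\check V$ with $\bar\phi(0)=\zeta$. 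In this chart the map takes the form $F(re^{2\pi i\theta})=a(r,\theta)\,re^{2\pi i(\theta+\delta(r,\theta))}$ for $r>0$, with $a>0$ continuous and $\delta(r,\cdot)\to\delta_0$ as $r\to 0$. By uniform continuity and compactness of $S^1$, shrink $r_0$ so that $\delta(r,\theta)\in(0,1)$ on all of $[0,r_0]\times S^1$; this also ensures $\zeta$ is the only fixed point of $\check f^{qn}$ in $\bar\phi(D_{r_0})$. The Lefschetz index $i(\check f^{qn},\zeta)$ is then the winding on $\{|z|=r\}$ of the displacement $F(z)-z=z\bigl(a\,e^{2\pi i\delta}-1\bigr)$ around $0$, which splits as $1+W$, where $W$ is the winding of $\theta\mapsto a(r,\theta)\,e^{2\pi i\delta(r,\theta)}-1$. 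Since $a>0$ and $\delta\in(0,1)$, the complex number $a\,e^{2\pi i\delta}$ has argument in $(0,2\pi)$, so $a\,e^{2\pi i\delta}-1$ lies in $\C\setminus\R_{\geq-1}$, which is simply connected and avoids $0$. Hence $W=0$ and $i(\check f^{qn},\zeta)=1$.

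The main obstacle is the winding-number computation, since $\widehat f^{qn}$ is merely continuous (not differentiable) at $C$, so linear-algebra arguments at the fixed point are unavailable. The topological replacement is that the irrationality of the rotation number forces the angular part of the displacement to have definite sign, confining the auxiliary loop $\theta\mapsto a\,e^{2\pi i\delta}-1$ to a simply connected subset of $\C^{*}$; this yields $W=0$ cleanly. The whole argument uses only Theorem \ref{t.pe-genericconsequences} (to get irrationality) and Theorem \ref{t.genericconsequences} (to isolate $\zeta$).
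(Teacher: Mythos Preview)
Your proof is correct and follows the standard route that the paper implicitly invokes: the paper states the corollary as ``an immediate consequence'' of Theorem~\ref{t.pe-genericconsequences} without giving any argument, and what you have written is precisely the folklore computation behind that claim---irrationality of the prime-end rotation number forces the angular displacement to lie in $(0,1)$ on a small collar, and the resulting winding-number calculation gives index $1$. Your separate isolation step via Theorem~\ref{t.genericconsequences} is valid but redundant, since the condition $\delta\in(0,1)$ on the collar already rules out fixed points of $F$ other than the origin.
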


Note that if there exists $q\geq 2$ such that $f^q(V)=V$ (we will say that $V$ is periodic), we can apply the previous result to $f^q$ because $f^q\in \cG^r_{\omega}(S)$ if $f\in \cG^r_{\omega}(S)$.

Let us conclude this subsection with the following result, proved by Mather \cite{Ma} and extended in \cite[Corollary 8.9]{KLecN}
\begin{theorem}\label{t.genericconsequences2} The four branches of a hyperbolic periodic point $z$ of a diffeomorphism $f\in \cG^r_{\omega}(S)$ accumulate on $z$ and
 have the same closure in $S$.
\end{theorem}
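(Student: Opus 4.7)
The plan is to reduce the assertion, by passing to an iterate $f^q$ (which stays in $\cG^r_{\omega}(S)$), to the case where $z$ is a fixed point of $f$ with each of its four branches individually fixed by $f$. Both conclusions of the theorem will be obtained by the same contradiction scheme: if a conclusion fails, I produce a connected open $f^{q_0}$-invariant set $V\subset S$ with $z\in\partial V$; provided $V$ is a regular domain, Theorem~\ref{t.genericconsequences} applied to $f^{q_0}\in\cG^r_{\omega}(S)$ forbids periodic points on the frontier of $V$, contradicting the fact that $z\in\partial V$ is fixed.

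For the accumulation assertion, let $B$ be an unstable branch with parametrization $\gamma:[0,\infty)\to B\cup\{z\}$, $\gamma(0)=z$, and suppose $B$ does not accumulate on $z$: there is a disk $U$ containing $z$ with $\gamma(t)\notin U$ for all large $t$. Shrinking $U$ we may assume $\overline{B}\cap U$ is a single simple arc from $z$ to $\partial U$, so that $U\setminus\overline{B}$ is a slit disk and the component $V$ of $S\setminus\overline{B}$ whose closure meets $z$ locally coincides with that slit disk. Because $f$ fixes $z$, preserves $\overline{B}$, and preserves this local description, $V$ is $f$-invariant, which is the scheme applied with $q_0=1$.

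For the equal-closure assertion, assume $\overline{B_1}\neq\overline{B_2}$ for two branches of $z$. At least one inclusion $B_i\subset\overline{B_j}$ fails, so after relabelling there is a point $p\in B_2\setminus\overline{B_1}$ (first taking any witness in $\overline{B_2}\setminus\overline{B_1}$, then approximating by a point of $B_2$ nearby, using that $S\setminus\overline{B_1}$ is open). Writing $p=\gamma_2(t_0)$ on the parametrization of $B_2$ with unstable eigenvalue $\lambda$, let $V$ be the component of $S\setminus\overline{B_1}$ containing $p$; $V$ is $f^{q_0}$-invariant for some $q_0\ge 1$. If $B_2$ is unstable, the backward iterates $f^{-n q_0}(p)=\gamma_2(t_0/\lambda^{n q_0})$ lie in $V$ and converge to $z$; if $B_2$ is stable, the forward iterates $f^{n q_0}(p)\in V$ do the same. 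In either case $z\in\overline{V}\cap\overline{B_1}=\partial V$, and the scheme applies.

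The real obstacle is verifying that the open set $V$ is a regular domain, i.e., of finite type, since a priori $\overline{B}$ or $\overline{B_1}$ may be topologically intricate and leave $V$ with infinitely many ends. To handle this I would replace $V$, if necessary, by a carefully chosen $f^{q_0}$-invariant regular subdomain that still has $z$ in its frontier. The local building block is a small hyperbolic cusp at $z$: in linearizing coordinates the wedge $\Delta=\{x>0,\,y>0,\,xy<c\}$ is exactly $f$-invariant and lies in $V$, and its image level curves of the form $xy=$ const are preserved; extending $\Delta$ along the branches by iterates and then truncating against a finite piece of $\overline{B}$ (or $\overline{B_1}$) that is forced to be invariant by the construction, one produces an $f^{q_0}$-invariant open set of finite type whose frontier contains $z$, to which Theorem~\ref{t.genericconsequences} applies. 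Making this construction rigorous under the hypotheses (G1)--(G3) is the content of \cite{Ma,KLecN}.
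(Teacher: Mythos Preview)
The paper does not prove this theorem: it is quoted from Mather \cite{Ma} and \cite[Corollary 8.9]{KLecN}. Your overall strategy---produce a periodic regular domain with $z$ on its frontier and invoke Theorem~\ref{t.genericconsequences}---is exactly Mather's, and your argument for the accumulation part is clean: if $B$ fails to accumulate on $z$, then $\overline{B}\cap U$ is indeed a single arc for small $U$, the slit disk $U\setminus\overline{B}$ is connected, the component $V$ of $S\setminus\overline{B}$ containing it is the \emph{unique} component whose closure meets $z$, hence $f$-invariant, and you are done.

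However, you have misidentified the real obstacle. Regularity of $V$ is \emph{not} an issue: the paper states explicitly in Subsection~\ref{ss.regular} that if $K$ is a finite union of closed connected sets none reduced to a point, then every connected component of $S\setminus K$ is a regular domain. Since $\overline{B}$ and $\overline{B_1}$ are non-trivial continua, regularity of every complementary component is automatic. Your proposed workaround via invariant hyperbolic cusps is therefore unnecessary (and, as you yourself sense, hard to make precise).

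The genuine gap is elsewhere: in the equal-closure argument you assert without proof that the component $V$ of $S\setminus\overline{B_1}$ containing $p\in B_2$ is $f^{q_0}$-periodic. This does not follow from general principles---$S\setminus\overline{B_1}$ could a priori have infinitely many components, and the arc of $B_2$ joining $p$ to $f(p)$ might meet $\overline{B_1}$, so $p$ and $f(p)$ need not lie in the same component. The fix is short and uses that $f\in\mathrm{Diff}^r_\omega(S)$: the iterates $f^n(V)$ are components of the $f$-invariant open set $S\setminus\overline{B_1}$, hence pairwise equal or disjoint, and all have the same positive $\mu_\omega$-measure; since $\mu_\omega(S)<\infty$, only finitely many are distinct and $V$ is periodic. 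Once this is said, your argument goes through: $f^{-nq_0}(p)$ (or $f^{nq_0}(p)$) lies in $V$, converges to $z$ along $B_2$, so $z\in\partial V$, contradicting Theorem~\ref{t.genericconsequences}.
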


\subsection{Rotation vectors}\label{ss.rotation}

The Poincaré Theory on rotation number of circle maps has been extended in a certain way to higher dimension (see Schwartzman \cite{Sc} for the seminal article). It is well known that not every point has a definite rotation vector. However, it can be defined for some of them. Let $S$ be a compact surface (with or without boundary), $f:S\to S$ a homeomorphism isotopic to the identity, and consider an isotopy $I=(f_t)_{t\in[0,1]}$ from the identity to $f$. Denote by $I(z)$  the {\it trajectory} of a point $z\in S$ as being the path $I(z):t\mapsto f_t(z)$ and by $I^n(z)$ the concatenation of the paths $I(z)$, \dots, $I(f^{n-1}(z))$, resulting in a path joining $z$ to $f^n(z).$ Furnish $S$ with a Riemannian metric  and for any pair of points $x,y\in S$ choose a path $c(x,y)$ joining $x$ to $y$ with uniformly bounded length (for instance take a minimal geodesic arc joining $x$ to $y$). Consider $z\in S$  and denote by $[\gamma_n(z)]\in H_1(S,\R)$ the class of the loop $\gamma_n(z)$ obtained by concatenation of  $I^n(z)$ and $c(f^n(z),z)$. Then, if the following limit exists, it is called the rotation vector of $z:$
$$\mathrm{rot}_I(z)=\lim_n\frac{[\gamma_n(z)]}{n}\in H_1(S,\R).$$
Note that it is independent of the choice of the Riemannian metric.  If $S$ is the compact annulus $\A=\R/\Z\times[0,1]$ or the torus $\R^2/\Z^2$, the rotation vector may depend on the isotopy. Nevertheless, in these cases, if $I'$ is another isotopy from the identity to $f$ then $\mathrm{rot}_I(z)-\mathrm{rot}_{I'}(z)$ belongs to $\Z$ in the case of the annulus and to  $\Z^2$ in the case of the torus. In the first case we can define $\mathrm{rot}_f(z)=\mathrm{rot}_I(z)+\Z\in\R/\Z$ and in the second case we can define $\mathrm{rot}_f(z)=\mathrm{rot}_I(z)+\Z^2\in\R^2/\Z^2$. If $S$ is different from $\A$ or $\R^2/\Z^2$, then all isotopies are homotopic (or $H_1(S,\R)=0$ if $S$ is the $2$-sphere), and so the rotation vector does not depend on the isotopy but only on $f$ and we write it $\mathrm{rot}_f(z).$

We now recall a well known result on the annulus, due to Birkhoff, extending the classical Poincaré-Birkhoff Theorem (see \cite{Bi}). Let $f:\A\to \A$ be a homeomorphism isotopic to the identity (i.e. that preserves the orientation and the boundary components). We say that $f$ satisfies the \textit{intersection property} if for every essential simple loop $\lambda$ (which means non homotopic to zero) we have $f(\lambda)\cap\lambda\neq\emptyset.$ We say that $f$ satisfies the \textit{boundary twist condition} if $\rho_0=\mathrm{rot}_I(z)\neq\mathrm{rot}_I(z')=\rho_1$ for $z\in \R/\Z\times\{0\}$ and $z'\in\R/\Z\times\{1\}$ for some (and hence for every) isotopy $I$ from $\mathrm{Id}$  to $f.$

\begin{theorem}\label{t.PBtheorem}
Let $f:\A\to \A$ be a homeomorphism isotopic to the identity satisfying the boundary twist condition and the intersection property and $I$ an isotopy from $\mathrm{Id}$ to $f$.Then, for every rational $p/q$ between $\rho_0$ and $\rho_1$ there exists a periodic point $z$ such that $\mathrm{rot}_I(z)=p/q.$
\end{theorem}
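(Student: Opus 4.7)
This is the classical Poincar\'e--Birkhoff theorem, and I would follow Birkhoff's original topological strategy. The plan has three stages: reduction to a fixed-point problem on the universal cover, construction of a \emph{Birkhoff curve}, and extraction of the rotation number from the fixed point produced.

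First, I would pass to the universal cover $\widetilde{\A}=\R\times[0,1]$ with deck transformation $T(x,y)=(x+1,y)$. Lifting the isotopy $I$ to the unique isotopy $\widetilde I$ starting at $\mathrm{Id}$ and setting $\widetilde f=\widetilde I_1$, a $q$-periodic point $z$ of $f$ satisfies $\mathrm{rot}_I(z)=p/q$ precisely when some lift $\widetilde z$ is a fixed point of $g:=T^{-p}\circ\widetilde f^q$. Hence the theorem reduces to exhibiting a fixed point of $g$ in $\widetilde{\A}$. Note that $g$ commutes with $T$ and preserves the two boundary lines. The boundary twist condition, translated to the cover, gives (after possibly exchanging the boundaries) the strict inequalities
\[
\mathrm{pr}_1(g(x,0))-x<0<\mathrm{pr}_1(g(x,1))-x\quad\text{for all }x\in\R,
\]
where $\mathrm{pr}_1$ denotes the first coordinate. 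Moreover, the intersection property for $f^q$ (which is inherited from that for $f$) lifts to: every $T$-invariant simple topological arc in $\widetilde{\A}$ joining the two boundary lines meets its $g$-image.

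Second, I would carry out Birkhoff's curve construction. Consider the open set
\[
U=\{(x,y)\in\widetilde{\A}:\mathrm{pr}_1(g(x,y))<x\},
\]
and let $U_0$ be the connected component of $U$ containing the lower boundary; by the twist condition $U_0$ is disjoint from $\R\times\{1\}$, and $T(U_0)=U_0$. Define the \emph{upper frontier}
\[
\Gamma=\{(x,y)\in\widetilde{\A}:y=\sup\{t\in[0,1]:(x,t)\in\overline{U_0}\}\}.
\]
A standard topological analysis shows that $\Gamma$ is a $T$-invariant connected closed set separating the two boundary lines, that every point of $\Gamma$ satisfies $\mathrm{pr}_1(g(x,y))\le x$, and that points immediately above $\Gamma$ satisfy the reverse inequality. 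If $g$ had no fixed point on $\Gamma$, one could approximate $\Gamma$ by a $T$-invariant simple arc $\widetilde\lambda$ joining the two boundaries such that $g(\widetilde\lambda)$ lies strictly to the left of $\widetilde\lambda$; projecting to $\A$ would yield an essential simple loop $\lambda$ with $f^q(\lambda)\cap\lambda=\emptyset$, contradicting the intersection property of $f^q$. Hence $g$ must fix some point of $\Gamma$.

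The main obstacle is making this Birkhoff-curve argument fully rigorous: one has to verify that $\Gamma$ really is a continuum meeting neither boundary (a connectedness argument exploiting the structure of the complementary regions of $U_0$) and, crucially, that in the absence of a fixed point the weak inequality $\mathrm{pr}_1\circ g\le\mathrm{pr}_1$ on $\Gamma$ can be upgraded to a strict one-sided disjointness $g(\widetilde\lambda)\cap\widetilde\lambda=\emptyset$ on a nearby essential arc. Once a fixed point $\widetilde z_0\in\widetilde{\A}$ of $g$ is produced, the projection $z=\pi(\widetilde z_0)$ is a periodic point of $f$ of period dividing $q$, and the iteration $\widetilde f^{nq}(\widetilde z_0)=\widetilde z_0+(np,0)$ immediately yields $\mathrm{rot}_I(z)=p/q$ from the defining limit.
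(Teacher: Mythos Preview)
The paper does not supply its own proof of this statement: it is recalled as a classical result due to Birkhoff, with the reference~[Bi], so there is no in-paper argument to compare your sketch against. Your outline follows the traditional Birkhoff-curve strategy that this citation points to.

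There is, however, a genuine gap in your reduction step. The boundary twist condition as formulated in the paper says only that the \emph{rotation numbers} $\rho_0$ and $\rho_1$ of $f$ on the two boundary circles are distinct; it does not say that the lifted displacement $x\mapsto \mathrm{pr}_1(\widetilde f(x,i))-x$ has constant sign on either boundary. Passing to $g=T^{-p}\circ\widetilde f^{\,q}$ with $\rho_0<p/q<\rho_1$ yields boundary rotation numbers $q\rho_0-p<0<q\rho_1-p$, but a circle homeomorphism of negative rotation number may well have points of non-negative displacement. Hence the strict pointwise inequalities
\[
\mathrm{pr}_1(g(x,0))-x<0<\mathrm{pr}_1(g(x,1))-x
\]
that you assert are not justified, the set $U$ need not contain the whole lower boundary line, and the construction of $U_0$ and of the curve $\Gamma$ does not get off the ground. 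Birkhoff's original arguments assume the stronger pointwise twist. Under the hypothesis actually stated you must first reduce to that situation---for instance by replacing $g$ with a high iterate $g^{n}$, using that $\sup_{x}\bigl(\mathrm{pr}_1(g^{n}(x,0))-x\bigr)\to-\infty$ when the boundary rotation number is negative, and checking separately that the intersection property passes to $f^{nq}$---or else abandon the Birkhoff-curve route for one of the modern proofs (Franks, or Brouwer-line methods) that handle the rotation-number twist directly.
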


We turn our attention to \textit{rotation vectors of measures}.
Let $f$ be a homeomorphism of $S$ isotopic to the identity and ${\mathcal M}(f)$ the set of Borel probability measures that are invariant by $f$.   For every $\mu\in {\mathcal M}(f)$, we can define the {\it rotation vector} of $\mu$, it is an element $\mathrm{rot}_I(\mu)\in H_1(M,\R)$ defined as follows. If $\omega$ is a $C^1$ closed $1$-form, the integral $\int_{I(z)} \omega$ is well defined (even if the trajectory is not $C^1$) and depends continuously on $z$. The map
 $$\eta\mapsto \int_S\left(\int_{I(z)}\eta\right)\, d\mu(z)$$
 is a linear form on the space of $1$-forms that vanishes on the set of exact forms (because $\mu$ is invariant).  So it defines a linear form on the space $H^1(S,\R)$. Consequently, there exists $\mathrm{rot}_I(\mu)\in H_1(S,\R)$ such that for every $1$-form $\eta$, one has
 $$\int_S\left(\int_{I(z)}\eta\right)\, d\mu(z)=\langle [\eta], \mathrm{rot}_I(\mu)\rangle,$$
 where $[\eta]\in H^1(S,\R)$ is the cohomology class of $\eta$ and $\langle\enskip,\enskip\rangle$ the canonical form defined on $H^1(S,\R)\times H_1(S,\R)$. As a consequence of Birkhoff Ergodic Theorem, we can prove that $\mu$-almost every point has a rotation vector and that $$\mathrm{rot}_I(\mu)=\int_S \mathrm{rot}_I(z)\,d\mu.$$ We remark that if $\mu$ is an ergodic measure and $z$ is $\mu$-generic then $\mathrm{rot}_I(\mu)=\mathrm{rot}_I(z).$
In the case where $S=\A$, then $\mathrm{rot}_I(\mu)+\Z\in\R/\Z$ is independent of $I$, we denote it $\mathrm{rot}_f(\mu)$. Similarly, in the case where $S=\R^2/\Z^2$, then $\mathrm{rot}_I(\mu)+\Z^2\in\R^2/\Z^2$ is independent of $I$ and is denoted $\mathrm{rot}_f(\mu)$.  In the other cases, $\mathrm{rot}_I(\mu)$ does not depend on $I$, we will write $\mathrm{rot}_I(\mu)=\mathrm{rot}_f(\mu)$.

 Suppose that $\omega$ is a smooth volume form such that the associated measure $\mu_{\omega}$ is a probability measure. We will say that $f$ is {\it Hamiltonian} if $f$ preserves $\mu_{\omega}$ and if $\mathrm{rot}_f(\mu_{\omega})=0$. In case $f$ is a diffeomorphism of class $C^1$, it  is equivalent to say that $f$ is the time-one map of a time dependent Hamiltonian vector field ($\omega$ being considered as a symplectic form).

In what follows, $S$ is a smooth compact boundaryless orientable surface of genus $g\geq 2$. The following results are important for our goal:

 \begin{proposition} \label{prop:periodic}  We suppose that $f \in \mathrm{Diff}^1(S)$ preserves a Borel probability measure $\mu$ and that there exists $q\geq 2$ such that $f^q$ is isotopic to the identity. Then for every $\rho\in H_1(M,\R)$ we have:
$$f_{*,1}( \rho)\wedge \mathrm{rot}_{f^q}(\mu)= \rho\wedge \mathrm{rot}_{f^q}(\mu).$$
\end{proposition}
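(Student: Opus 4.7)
The plan is to reduce the stated equality to the single identity $f_{*,1}(\rho_\mu)=\rho_\mu$, where $\rho_\mu:=\mathrm{rot}_{f^q}(\mu)$. Indeed, $f$ is orientation-preserving (its top degree is $\pm 1$, and in the relevant setting $f$ preserves a measure coming from an area form), hence $f_{*,1}$ preserves the intersection form on $H_1(S,\R)$. Granting $f_{*,1}(\rho_\mu)=\rho_\mu$, one then has for every $\rho\in H_1(S,\R)$
$$f_{*,1}(\rho)\wedge\rho_\mu=f_{*,1}(\rho)\wedge f_{*,1}(\rho_\mu)=\rho\wedge\rho_\mu,$$
which is exactly the claim. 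So the real content is showing that $\rho_\mu$ is a fixed point of $f_{*,1}$.

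To establish this, the key idea is to conjugate the isotopy. Fix an isotopy $I=(f_t)_{t\in[0,1]}$ from $\mathrm{Id}$ to $f^q$ and introduce $I'=(f\circ f_t\circ f^{-1})_{t\in[0,1]}$. Since $f$ commutes trivially with $\mathrm{Id}$ and conjugates $f^q$ to itself, $I'$ is again an isotopy from $\mathrm{Id}$ to $f^q$. Because $g\ge 2$, the preliminaries guarantee that any two such isotopies are homotopic, so $\mathrm{rot}_{I'}(\mu)=\mathrm{rot}_I(\mu)=\rho_\mu$. On the other hand, the trajectory of $z$ under $I'$ is $t\mapsto f(f_t(f^{-1}(z)))$, so for any smooth closed $1$-form $\eta$ a direct change of variables in the line integral gives $\int_{I'(z)}\eta=\int_{I(f^{-1}(z))}f^*\eta$. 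Integrating against $\mu$ and using $f_*\mu=\mu$ to substitute $z\mapsto f(z)$ yields
$$\langle[\eta],\mathrm{rot}_{I'}(\mu)\rangle=\int_S\!\left(\int_{I(z)}f^*\eta\right)d\mu(z)=\langle f^*[\eta],\rho_\mu\rangle=\langle[\eta],f_{*,1}(\rho_\mu)\rangle.$$
Since $\eta$ is arbitrary this gives $\mathrm{rot}_{I'}(\mu)=f_{*,1}(\rho_\mu)$, and combining with the previous identification produces $f_{*,1}(\rho_\mu)=\rho_\mu$.

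The main delicate point will be the homotopy statement for isotopies from $\mathrm{Id}$ to $f^q$: it is exactly what turns $\mathrm{rot}_{I'}(\mu)$ and $\mathrm{rot}_I(\mu)$ into a literal equality rather than an equality modulo a lattice, and it is here that the hypothesis $g\ge 2$ is used in an essential way. The remaining computation, namely the change of variables in the double integral $\int_S\int_{I'(z)}\eta\,d\mu$, is routine once the formula $\int_{I'(z)}\eta=\int_{I(f^{-1}(z))}f^*\eta$ is set up correctly, and the rest is bookkeeping with the duality pairing $\langle\,\cdot\,,\,\cdot\,\rangle$ and the invariance of $\wedge$ under $f_{*,1}$.
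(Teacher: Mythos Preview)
Your proof is correct and its core computation---using the conjugated isotopy $I'=f\circ I\circ f^{-1}$ together with $f$-invariance of $\mu$ to obtain $f_{*,1}(\rho_\mu)=\rho_\mu$---is exactly what the paper does. Your concluding step is in fact more direct than the paper's: after proving $f_{*,1}(\rho_\mu)=\rho_\mu$, the paper detours through the invariant hyperplane $\rho_\mu^{\perp}$, observes that $f_{*,1}$ acts on the one-dimensional quotient $H_1(S,\R)/\rho_\mu^{\perp}$ by a real root of unity $\lambda\in\{\pm 1\}$, and then rules out $\lambda=-1$ by the very computation $f_{*,1}(\rho)\wedge\rho_\mu=f_{*,1}(\rho)\wedge f_{*,1}(\rho_\mu)=\rho\wedge\rho_\mu$ that you invoke immediately. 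So your argument short-circuits this detour; the quotient-space eigenvalue discussion in the paper is redundant once one has both $f_{*,1}(\rho_\mu)=\rho_\mu$ and the symplectic invariance of $\wedge$.
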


\begin{proof} For every $\rho\in H_1(M,\R)$, we will denote $\rho^{\perp}$ the space orthogonal to $\rho$  (relative to the intersection form $\wedge$). Recall that $\wedge$ is a symplectic form and $f_{*,1}$ a symplectic automorphism of $ H_1(M,\R)$.

We can suppose that $\mathrm{rot}_{f^q}(\mu)\not=0$, otherwise the conclusion is obvious. Let us prove first that $$f_{*,1}( \mathrm{rot}_{f^q}(\mu))= \mathrm{rot}_{f^q}(\mu).$$ Indeed, if $I=(f_t)_{t\in[0,1]}$ is an isotopy from $\mathrm{Id}$ to $f^q$, then $f\circ I\circ f^{-1}= (f\circ f_t\circ f^{-1})_{t\in[0,1]}$ is another isotopy from $\mathrm{Id}$ to $f^q$. Consequently, if $\eta$ is a closed $1$-form, we have\footnote{In the following calculations we assume that $f$ is of class $C^1.$ For a homeomorphism the calculations can be carried out as well with extra formalism. For the sake of simplicity we leave it in this form.}
$$\begin{aligned}
\langle [\eta], \mathrm{rot}_{f^q}(\mu)\rangle&= \int_S\left(\int_{f\circ I\circ f^{-1}(z)}\eta\right)\, d\mu(z)\\
&= \int_S\left(\int_{ I(f^{-1}(z))}f^*\eta\right)\, d\mu(z)\\
&= \int_S\left(\int_{ I(z)}f^*\eta\right)\, d(f^{-1}_*(\mu))(z)\\
&= \int_S\left(\int_{ I(z)}f^*\eta\right)\, d\mu(z)\\
&= \langle [f^*\eta], \mathrm{rot}_{f^q}(\mu)\rangle\\
&= \langle [\eta], f_{*,1}(\mathrm{rot}_{f^q}(\mu))\rangle. \end{aligned}
$$

 We deduce that
$$f_{*,1}( \mathrm{rot}_{f^q}(\mu))= \mathrm{rot}_{f^q}(\mu)$$
  and then that $$f_{*,1}( \mathrm{rot}_{f^q}(\mu)^{\perp})= \mathrm{rot}_{f^q}(\mu)^{\perp}$$
 and so $f_{*,1}$ induces a homothety on the one dimensional quotient space $H_1(S,\R)/ \mathrm{rot}_{f^q}(\mu)^{\perp}$. The ratio $\lambda$ of this homothety  is an eigenvalue of $f_{*,1}$. It is a root of unity because $f_{*,1}^q=\mathrm{Id}$, but it is also a real number and so either $\lambda=1$ or $\lambda =-1$.

 Suppose that $\lambda =-1$. For every $\rho\in H_1(M,\R)$, we have
 $$\begin{aligned}
 -\rho\wedge \mathrm{rot}_{f^q}(\mu)&=f_{*,1}( \rho)\wedge \mathrm{rot}_{f^q}(\mu)\\&=f_{*,1}( \rho)\wedge f_{*,1}( \mathrm{rot}_{f^q}(\mu))\\ &=\rho\wedge \mathrm{rot}_{f^q}(\mu).
 \end{aligned}$$ 
which contradicts the assumption $\mathrm{rot}_{f^q}(\mu)\not=0$.
We deduce that $\lambda=1$ and consequently get the lemma.
\end{proof}

The following corollary will permit to control the rotation vector for a perturbation on $f$ to create periodic orbits.

 \begin{corollary} \label{cor:periodic} We keep the assumptions of Proposition \ref{prop:periodic} and  consider $h \in \mathrm{Diff}^1(S)$ isotopic to the identity and preserving $\mu$. Then we have
$$\mathrm{rot}_{(h\circ f)^q}(\mu)\wedge\mathrm{rot}_{f^q}(\mu)=  q\,\mathrm{rot}_{h}(\mu)\wedge\mathrm{rot}_{f^q}(\mu).$$
\end{corollary}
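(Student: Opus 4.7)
The plan is to write $(h\circ f)^q$ as a product of diffeomorphisms isotopic to the identity and then track rotation vectors under concatenation. A straightforward induction on $q$ gives the factorisation
$$(h\circ f)^q = h\cdot (fhf^{-1})\cdot (f^2hf^{-2})\cdots (f^{q-1}hf^{-(q-1)})\cdot f^q.$$
Fix an isotopy $J=(h_t)_{t\in[0,1]}$ from $\mathrm{Id}$ to $h$. For each $k\geq 0$, the conjugated family $J_k=(f^k\circ h_t\circ f^{-k})_{t\in[0,1]}$ is an isotopy from $\mathrm{Id}$ to $f^khf^{-k}$. Repeating verbatim the change-of-variable computation carried out for $f\circ I\circ f^{-1}$ in the proof of Proposition \ref{prop:periodic} (using $f_*\mu=\mu$) yields
$$\mathrm{rot}_{J_k}(\mu)=f^k_{*,1}(\mathrm{rot}_h(\mu)).$$

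I would next record the additivity of rotation vectors under concatenation: if $I_1,I_2$ are isotopies from $\mathrm{Id}$ to $\mu$-preserving diffeomorphisms $g_1,g_2$, then the natural concatenation---first $I_2$, then $I_1$ precomposed with $g_2$---is an isotopy from $\mathrm{Id}$ to $g_1\circ g_2$ whose rotation vector equals $\mathrm{rot}_{I_1}(\mu)+\mathrm{rot}_{I_2}(\mu)$. This follows from the definition once one uses $(g_2)_*\mu=\mu$ in the change of variable. Since each partial product appearing on the right side of the factorisation preserves $\mu$, iterating this additivity rule (with the isotopies $J_0,J_1,\dots,J_{q-1}$ and any isotopy from $\mathrm{Id}$ to $f^q$) gives
$$\mathrm{rot}_{(h\circ f)^q}(\mu)=\sum_{k=0}^{q-1}f^k_{*,1}(\mathrm{rot}_h(\mu))+\mathrm{rot}_{f^q}(\mu).$$

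Finally I would wedge both sides with $\mathrm{rot}_{f^q}(\mu)$. The last summand vanishes because $\wedge$ is alternating, and Proposition \ref{prop:periodic} applied inductively shows $f^k_{*,1}(\rho)\wedge \mathrm{rot}_{f^q}(\mu)=\rho\wedge \mathrm{rot}_{f^q}(\mu)$ for every $\rho$ and every $k\geq 0$. Taking $\rho=\mathrm{rot}_h(\mu)$, each of the $q$ terms in the sum contributes $\mathrm{rot}_h(\mu)\wedge \mathrm{rot}_{f^q}(\mu)$, giving the desired identity
$$\mathrm{rot}_{(h\circ f)^q}(\mu)\wedge\mathrm{rot}_{f^q}(\mu)=q\,\mathrm{rot}_h(\mu)\wedge\mathrm{rot}_{f^q}(\mu).$$
The only point requiring care is the verification that every intermediate composition in the factorisation is $\mu$-preserving, so that the concatenation additivity applies at each step; once this bookkeeping is in place, the result is a telescoping consequence of Proposition \ref{prop:periodic}.
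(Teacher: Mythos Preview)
Your proof is correct and follows essentially the same approach as the paper: both establish the intermediate identity $\mathrm{rot}_{(h\circ f)^q}(\mu)=\mathrm{rot}_{f^q}(\mu)+\sum_{k=0}^{q-1}f_{*,1}^k(\mathrm{rot}_h(\mu))$ and then wedge with $\mathrm{rot}_{f^q}(\mu)$, invoking Proposition~\ref{prop:periodic} termwise. The only difference is organizational: the paper writes down the concatenated isotopy $K=I*_{0\leq k\leq q-1}\bigl((h\circ f)^k\circ J\circ f^{q-k}\bigr)$ and computes the integral directly, whereas you package the same computation as the algebraic factorization $(h\circ f)^q=h\,(fhf^{-1})\cdots(f^{q-1}hf^{-(q-1)})\,f^q$ together with the additivity lemma for rotation vectors.
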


\begin{proof} By Proposition \ref{prop:periodic}, it is sufficient to prove that
$$\mathrm{rot}_{(f\circ h)^q}(\mu) =\mathrm{rot}_{f^q}(\mu)+\sum_{k=0}^{q-1} f_{*,1}^k(\mathrm{rot}_{h}(\mu)).$$
If $I=(f_t)_{t\in[0,1]}$ and $I'=(f'_t)_{t\in[0,1]}$ are two continuous paths in $\mathrm{Diff}^1(S)$ such that $f_1=f'_0$ we will define the path $I*I'=(h_t)_{t\in[0,1]}$, where
$h_t=f_{2t}$ if $0\leq t\leq 1/2$ and $h_t=f'_{2t-1}$ if $1/2\leq t\leq 1$.  Then we can define by induction $I_1*I_2*\dots *I_n=I_1*(I_2*\dots *I_n)$ when it has a sense. Let $I$ be an isotopy from $\mathrm{Id}$ to $f^{q}$ and $J$ an isotopy from $\mathrm{Id}$ to $h$. The path
$$K=I*_{0\leq k\leq q-1} \left((h\circ f)^{k}\circ J\circ f^{q-k}\right)$$
is an isotopy from $\mathrm{Id}$ to $(h\circ f)^{q}$. Consequently, if $\eta$ is a closed $1$-form, we have
$$\begin{aligned}
&\langle [\eta], \mathrm{rot}_{(h\circ f)^q}(\mu)\rangle=\\&= \int_S\left(\int_{K(z)}\eta\right)\, d\mu(z)\\
&= \int_S\left(\int_{I(z)}\eta\right)\, d\mu(z)+\sum_{k=0}^{q-1} \int_S\left(\int_{ (h\circ f)^{k}\circ J\circ  f ^{q-k}(z)}\eta\right)\, d\mu(z)\\
&= \int_S\left(\int_{I(z)}\eta\right)\, d\mu(z)+\sum_{k=0}^{q-1} \int_S\left(\int_{  J(f ^{q-k}(z))}((h\circ f)^*)^{k}(\eta)\right)\, d\mu(z)\\
&= \int_S\left(\int_{I(z)}\eta\right)\, d\mu(z)+\sum_{k=0}^{q-1} \int_S\left(\int_{ J(z) }((h\circ f)^*)^{k}(\eta)\right)\, d (f^{q-k}_*(\mu))(z)\\
&= \int_S\left(\int_{I(z)}\eta\right)\, d\mu(z)+\sum_{k=0}^{q-1} \int_S\left(\int_{ J(z) }((h\circ f)^*)^{k}(\eta)\right)\, d\mu(z)\\
&= \langle [\eta], \mathrm{rot}_{f^q}(\mu)\rangle+\sum_{k=0}^{q-1} \langle [((h\circ f)^*)^{k}(\eta)], \mathrm{rot}_{h}(\mu)\rangle\\
&= \langle [\eta], \mathrm{rot}_{f^q}(\mu)+\sum_{k=0}^{q}  (h_*\circ f_{*,1})^k(\mathrm{rot}_{h}(\mu))\rangle\\
&= \langle [\eta], \mathrm{rot}_{f^q}(\mu)+\sum_{k=0}^{q}   f_{*,1}^k(\mathrm{rot}_{h}(\mu))\rangle.
\end{aligned}
$$ \end{proof}

\subsection{Brouwer lines and transverse foliations}\label{ss.foliation}

Let $\widetilde{f}$ be an orientation preserving homeomorphism of the plane with a discrete set of fixed points. An oriented  line $\widetilde{\phi}$ in the plane is an oriented proper embedding of the real line, and its complement has two connected components, say the left one $L(\widetilde{\phi})$ and the right one $R(\widetilde{\phi})$.  It  is called a \textit{Brouwer line} for $\widetilde{f}$ provided
$$\widetilde{f}(\overline{L(\widetilde{\phi})})\subset L(\widetilde{\phi}))$$
or equivalently if
$$\widetilde f^{-1}(\overline {R(\widetilde \phi)})\subset R(\widetilde \phi).$$

We will generalize this notion by defining a {\it singular Brouwer line} of $\widetilde f$ to be an oriented line $\widetilde\phi$ such that $$\widetilde f(\overline {L(\widetilde \phi)})\subset L(\widetilde \phi)\cup (\mathrm{fix}(\widetilde f)\cap \widetilde\phi)$$ or equivalently such that
$$\widetilde f^{-1}(\overline {R(\widetilde \phi)})\subset R(\widetilde \phi)\cup (\mathrm{fix}(\widetilde f)\cap \widetilde\phi).$$

 Let $S$ be surface of genus greater than $1$ and let $f:S\to S$ be a homeomorphism isotopic to the identity with finitely many fixed points.  Every isotopy $I=(f_t)_{t\in[0,1]}$ from $\mathrm{Id}$ to $f$ can be lifted to the universal covering space $\widetilde S$ into an isotopy $I=(f_t)_{t\in[0,1]}$ such that $\widetilde f_0=\mathrm{Id}$. The map $\widetilde f_1$ is independent of the chosen isotopy, it is the {\it natural lift} of $f$ and we denote it $\widetilde f$.
Define $\displaystyle\mathrm{fix}(I)=\bigcap_{t\in[0,1]} \mathrm{fix}(f_t)$ and $\mathrm{dom}(I)=S\setminus\mathrm{fix}(I)$. By finiteness of the fixed point set of $f$, there exists an integer $r$ such that:
\begin{itemize}
\item there exists an isotopy $I$ from $\mathrm{Id}$ to $f$ such that  $\#\mathrm{fix}(I)=r$;
\item there exists no isotopy $I$ from $\mathrm{Id}$ to $f$ such that  $\#\mathrm{fix}(I)>r$.
\end{itemize}
An isotopy $I$ from $\mathrm{Id}$ to $f$ such that  $\#\mathrm{fix}(I)=r$ is an example of a {\it maximal isotopy} (see \cite{BCLer} or \cite{J}) and one knows by \cite{Lec1} that there exists a topological oriented singular foliation $\mathcal F$ on $S$ such that:

\begin{itemize}
\item the singular set $\mathrm{sing}(\mathcal F)$ coincides with $\mathrm{fix}(I)$;
\item for every $z\in \mathrm{dom}(I)$, the trajectory $I(z)$ is homotopic in  $\mathrm{dom}(I)$, relative to the ends, to a path $\gamma_z$ positively transverse to $\mathcal F$, which means locally crossing each leaf from the right to the left.
\end{itemize}

The path $\gamma_z$ is not uniquely defined. In fact, it is uniquely defined up to a natural relation of equivalence. We call it a {\it transverse trajectory}. The foliation $\mathcal F$ is said to be {\it transverse to $I$}, it is not uniquely defined. One can lift the isotopy $I_{\vert\mathrm{dom}(I)}$ to an identity isotopy $\check I=(\check f_t)_{t\in[0,1]}$ on the universal covering space $\check{\mathrm{dom}}(I)$ of $\mathrm{dom}(I)$. Similarly $\mathcal F_{\vert\mathrm{dom}(I)}$ can be lifted to a non singular foliation $\check{\mathcal F}$ and so each leaf $\check\phi$ of $\check{\mathcal F}$ is an oriented line. Fix $\check z\in\check{\mathrm{dom}}(I)$, denote $z$ the projection of $\check z$ in $\mathrm{dom}(I)$ and $\phi_{\check z}$ the leaf containing $\check z$. The path $\gamma_z$ defined by the theorem can be lifted to a path joining $\check z$ to $\check f_1(\widetilde z)$ and positively transverse to $\check {\mathcal F}$. We deduce that $\check f_1(\check z)$ belongs to  $L(\phi_{\check z})$ and we can prove similarly that $\check f_1{}^{-1}(\phi_{\check z})$ belongs to  $R(\phi_{\check z})$. Consequently each leaf of $\check{\mathcal F}$ is a Brouwer line of $\check f_1$.  In other words $\mathcal F$ is lifted to a foliation by Brouwer lines of $\check f_1$ (see \cite{Lec1}).

\begin{lemma}\label{l.unlinkedfixedpoints}
Assume that $f\in\mathcal{G}^r_\omega(S)$ is isotopic to the identity and that the number of fixed points of $f$ is $2g-2.$ Then $f$ is isotopic to the identity relative to its fixed point set.
\end{lemma}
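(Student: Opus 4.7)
The plan is to combine the Lefschetz count of \S\ref{ss.lefschtez} with a Poincar\'e--Hopf argument applied to a foliation transverse to a maximal isotopy.

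First I would invoke the Lefschetz formula: since $f$ is isotopic to the identity and $\#\mathrm{fix}(f)=2g-2$, while under (G1) each index $i(f,z)$ equals $\pm 1$, the identity $\sum_{z\in\mathrm{fix}(f)} i(f,z)=2-2g$ forces $i(f,z)=-1$ at every fixed point. Thus each fixed point of $f$ is a hyperbolic saddle with two distinct positive eigenvalues, and its four branches are individually fixed by $f$.

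Next I would fix a maximal isotopy $I=(f_t)_{t\in[0,1]}$ from $\mathrm{Id}$ to $f$. By definition, $\mathrm{fix}(I)=\bigcap_t\mathrm{fix}(f_t)\subseteq \mathrm{fix}(f)$, and $I$ already fixes the points of $\mathrm{fix}(I)$ throughout; it is therefore enough to prove $\#\mathrm{fix}(I)=2g-2$, since this equality forces $\mathrm{fix}(I)=\mathrm{fix}(f)$ and makes $I$ itself the desired isotopy relative to $\mathrm{fix}(f)$. To obtain it, I would pick a singular oriented foliation $\mathcal F$ transverse to $I$ as in \cite{Lec1}. Its singular set equals $\mathrm{fix}(I)$, and Poincar\'e--Hopf yields
\[
\sum_{z\in\mathrm{fix}(I)} i(\mathcal F,z) \;=\; 2-2g.
\]

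The hard part will be the local identification $i(\mathcal F,z)=-1$ at every singularity $z\in\mathrm{fix}(I)$. Since $Df(z)$ has two distinct positive eigenvalues, I would use the freedom to modify $I$ by a homotopy of isotopies supported in a small disk around $z$ (which preserves $\mathrm{fix}(I)$, and hence maximality) in order to select a representative whose local rotation number at $z$ vanishes. The local model of \cite{Lec1} at such a rotation-free hyperbolic saddle is a standard $4$-prong saddle of $\mathcal F$, of foliation index $-1$. Ensuring that this local normalisation can be achieved simultaneously at all points of $\mathrm{fix}(I)$ while preserving the global maximality of $I$ is the main technical obstacle and rests on the flexibility in the construction of $\mathcal F$ together with the index identifications in \cite{BCLer}. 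Once this is in place, the display above reads $-\#\mathrm{fix}(I)=2-2g$, giving $\#\mathrm{fix}(I)=2g-2=\#\mathrm{fix}(f)$, so $\mathrm{fix}(I)=\mathrm{fix}(f)$ and $I$ is an isotopy from $\mathrm{Id}$ to $f$ relative to $\mathrm{fix}(f)$.
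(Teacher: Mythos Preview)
Your overall strategy---take a maximal isotopy $I$, a transverse foliation $\mathcal F$, and compare Poincar\'e--Hopf for $\mathcal F$ with the count $\#\mathrm{fix}(I)$---is exactly the paper's. The divergence is in how you handle the foliation indices, and there your argument has a genuine gap.

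You try to prove the \emph{equality} $i(\mathcal F,z)=-1$ at every $z\in\mathrm{fix}(I)$, and you propose to achieve this by modifying $I$ via a homotopy of isotopies supported near $z$ so that the local rotation number becomes zero. This does not work: a homotopy of isotopies with fixed endpoints $f_0=\mathrm{Id}$ and $f_1=f$ that keeps $z$ fixed throughout cannot change the local winding number of the isotopy at $z$; that winding is precisely a homotopy invariant of the path in the space of germs fixing $z$. So your ``normalisation'' step is empty, and the subsequent appeal to a local model is not justified. The reference to \cite{BCLer} does not help either: that paper establishes existence of maximal isotopies, not the index identity you need.

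The paper avoids all of this by asking for less. One does not need $i(\mathcal F,z)=-1$; one only needs $i(\mathcal F,z)\geq -1$. The relevant input is the result of \cite{Lec2}: for a foliation transverse to a maximal isotopy, if $i(\mathcal F,z)\neq 1$ then $i(\mathcal F,z)=i(f,z)$. Since every $z\in\mathrm{fix}(I)\subset\mathrm{fix}(f)$ has $i(f,z)=-1$ (your first paragraph), this says $i(\mathcal F,z)\in\{1,-1\}$, hence $i(\mathcal F,z)\geq -1$. Poincar\'e--Hopf then gives
\[
2-2g=\sum_{z\in\mathrm{fix}(I)} i(\mathcal F,z)\ \geq\ -\,\#\mathrm{fix}(I),
\]
so $\#\mathrm{fix}(I)\geq 2g-2$. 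Combined with $\mathrm{fix}(I)\subset\mathrm{fix}(f)$ and $\#\mathrm{fix}(f)=2g-2$, this forces $\mathrm{fix}(I)=\mathrm{fix}(f)$, which is the conclusion. (A posteriori every $i(\mathcal F,z)$ is indeed $-1$, but one should not try to prove that first.)
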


\begin{proof}
Consider a maximal isotopy $I$ as above and a transverse foliation $\mathcal F$. An important property of transverse foliations (see \cite{Lec2}), stated in our context, says that if the Hopf index $i({\mathcal F},z)$ of $z\in \mathrm{fix}(I)$ is different from $1$, then it is equal to the Lefschetz index $i(f,z)$. So
$$i({\mathcal F},z)\not=1\Rightarrow i({\mathcal F},z)=i(f,z)=-1.$$ Applying Hopf formula, one gets
$$2-2g=\chi(S)=\sum_{z\in\mathrm{fix}(I)}i({\mathcal F},z)\geq -r,$$
and so $r=2g-2$.
\end{proof}
\begin{remark*}
As explained in Section \ref{ss.lefschtez}, every map $h\in{\mathcal G}^r_{\omega}(S)$ isotopic to the identity has at least  $2g-2$ fixed points. A slight modification of Lefschetz formula would tell us that there are at least  $2g-2$ fixed points that are lifted to fixed points of the natural lift $\widetilde h$. The proof above is an alternate proof of this fact but says more: $h$ is isotopic to the identity relative to a set containing at least $2g-2$ fixed points.  In our situation this set is nothing but   $\mathrm{fix}_h (f)$. Consequently, $I$ is uniquely defined, up to a homotopy relative to the fixed point set, we will denote it $I=I_f$.
\end{remark*}

Let us state now a perturbation result about transverse foliations. A {\it cycle of connections} of $\mathcal F$ is  a simple loop, union of finitely many singular points $(z_i)_{i\in\Z/r\Z}$ and finitely many leaves $(\phi_i)_{i\in\Z/r\Z}$, such that the $\alpha$-limit set of $\phi_i$ is equal to $z_i$ and its $\omega$-limit set is equal to $z_{i+1}$.

\begin{lemma} \label{l:loops}
Assume that $f\in\mathcal{G}^r_\omega(S)$ is isotopic to the identity and that the number of fixed points of $f$ is $2g-2.$ There exists a foliation transverse to $I_f$, that satisfies one of the two following properties:

\begin{itemize}
\item it admits  a closed leaf;
\item it admits a cycle of connections.
\end{itemize}

\end{lemma}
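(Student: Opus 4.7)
The plan is to start with any foliation $\mathcal{F}$ transverse to $I_f$ (whose existence is supplied by the theory recalled before the lemma) and analyze its separatrix structure. By the calculation in the proof of Lemma \ref{l.unlinkedfixedpoints}, $\mathcal{F}$ has exactly $2g-2$ singularities, each of Hopf index $-1$. Since $\mathcal{F}$ is oriented, such a singularity is a topological saddle with four separatrices — two incoming and two outgoing, alternating around the singularity.

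My first step is a purely combinatorial argument. Suppose that every outgoing separatrix of $\mathcal{F}$ has a singularity as its $\omega$-limit, i.e.\ is a saddle connection. Form the directed graph $G$ whose vertices are the singularities and whose edges are the saddle connections, each oriented from its $\alpha$-limit to its $\omega$-limit. Every vertex of $G$ then has out-degree exactly $2$, and any finite directed graph with positive out-degree at every vertex contains a directed cycle. Such a directed cycle is, by definition, a cycle of connections of $\mathcal{F}$, so the lemma holds for $\mathcal{F}$ itself. We may therefore assume that some outgoing separatrix $\phi$ of some singularity $z_i$ has $\omega$-limit set $K$ disjoint from $\mathrm{sing}(\mathcal{F})$.

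By a Poincar\'e--Bendixson-type theorem for oriented foliations on closed surfaces (a theorem of Maier), such a $K$ is either a closed leaf — in which case the lemma is immediate — or an exceptional minimal set. In the exceptional case, I would modify $\mathcal{F}$ by a surgery supported in a small flow-box neighbourhood of an arc transverse to $\mathcal{F}$ meeting $K$, either collapsing the recurrent behavior onto a closed leaf or forcing $\phi$ itself to accumulate on a singularity. In the latter outcome, after the surgery every outgoing separatrix may be assumed to be a saddle connection, so the combinatorial argument of the previous paragraph produces a cycle of connections.

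The main obstacle is checking that such a local surgery can be performed without destroying transversality to $I_f$. Transversality is a homotopy-theoretic condition: it requires each trajectory $I_f(z)$ to be homotopic in $\mathrm{dom}(I_f)$, relative to its endpoints, to a path positively transverse to $\mathcal{F}$. This property is robust under modifications of $\mathcal{F}$ that keep the singular locus fixed and stay $C^{0}$-close to the identity on an isotopic neighbourhood of each trajectory. Since the surgery is localized away from $\mathrm{fix}(I_f)$ and the lifted foliation $\check{\mathcal{F}}$ consists of Brouwer lines of $\check f_1$ on $\check{\mathrm{dom}}(I)$, one can arrange the surgery so that this Brouwer-line property persists leafwise; verifying this carefully is the technical heart of the proof.
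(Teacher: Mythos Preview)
Your overall architecture matches the paper's: start from any transverse foliation, note that all singularities are index $-1$ saddles, run a combinatorial argument when the separatrix structure is finite, and perturb the foliation otherwise. Two points deserve attention.

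First, a small gap in your case split. You pass from ``not every outgoing separatrix has a singularity as $\omega$-limit'' to ``some separatrix has $\omega$-limit $K$ disjoint from $\mathrm{sing}(\mathcal F)$''. That inference is false as stated: the $\omega$-limit of a separatrix can be a polycycle, i.e.\ a union of singularities and connecting leaves. Of course in that case you already have a cycle of connections, so the case is harmless---but you should say so. The paper sidesteps this by using the cleaner dichotomy ``either every leaf is a connection, or there is a non-wandering leaf''.

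Second, and more seriously, the perturbation step is where the real content lies, and your sketch does not supply it. Appealing to ``$C^0$-closeness'' or to ``arranging that the Brouwer-line property persists'' is circular: the Brouwer-line property of $\check{\mathcal F}$ is exactly what encodes transversality to $I_f$, so you are assuming the conclusion. What the paper actually does (following Le Roux) is this. Pick a point $z_0$ on a non-wandering leaf and a flow-box chart $\psi:U_1\to[-1,1]^2$ sending $\mathcal F$ to the vertical foliation. The transverse trajectory $\gamma_z$ is only defined up to homotopy rel endpoints in $\mathrm{dom}(I_f)$; the claim is that for every $z\notin\mathrm{fix}(f)$ one can \emph{choose} a representative of $\gamma_z$ which either avoids a fixed smaller box $U_\varepsilon$ entirely or meets it in a single horizontal segment. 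This is proved by a finite-cover/compactness argument on $S$, using in an essential way the local dynamics of $\mathcal F$ near each saddle (trajectories of points near a singularity can be taken to stay near it). Once this is established, \emph{any} modification of $\mathcal F$ supported in $U_\varepsilon$ whose new leaves remain transverse to the horizontal direction is again transverse to $I_f$: the same representatives $\gamma_z$ are positively transverse to the new foliation. Since the original leaf through $z_0$ is non-wandering, it returns to $U_\varepsilon$, and one closes it up inside the box by such a horizontal-transverse modification, producing a closed leaf.

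So your plan is right, but the justification you give for why the surgery preserves transversality is not; replace it with the ``horizontal-crossing'' normalization of transverse trajectories described above.
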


\begin{proof} Let $\mathcal F$ be a foliation transverse to $I_f$. As explained previously, the singular points are all saddle points of $\mathcal F$. One of the following situation occurs:
\begin{itemize}
\item  for every leaf $\phi$, there exist singular points $z$ and $z'$ (that could be equal), such that the $\alpha$-limit set of $\phi$ is equal to $z$ and its $\omega$-limit set is equal to $z'$;
\item there exists a non wandering leaf.
\end{itemize}
In the first case, there exists a cycle of connections because there are finitely many singular points.

In the second case, we need to use a perturbation argument, which is a very slight modification of a perturbation result of Le Roux \cite{Ler} in the local case.

Let $\phi$ be a non wandering leaf,  $z_0$ a point on $\phi$ and $U_1$ a flow box of $\mathcal F$ containing $z_0$. We will consider a chart $\psi:U_1 \to [-1,1]^2$, sending $z_0$ onto $(0,0)$ and $\mathcal F_{\vert U_1}$ onto the vertical foliation oriented downward. We define a horizontal path on $U_1$ as a path sent onto a horizontal path by $\psi$.  For every $\varepsilon\in(0,1]$, we define $U_{\varepsilon}=\psi^{-1}([-\varepsilon, \varepsilon]^2)$.

Let $\gamma_{z_0}$ be a transverse trajectory of $z_0$. It passes through $z_0$ finitely many times. The point $z_0$ being not fixed by $f$, one can choose $\gamma_{z_0}$  such that it does not pass through $z_0$ but at its initial end. Furthermore, one can suppose that there exists $\varepsilon_0>0$ such that $\gamma_{z_0}$ intersects $U_{\varepsilon_0}$ on the horizontal segment $\psi^{-1}([0, \varepsilon_0]\times\{0\})$. Moreover there exists a neighborhood $V_{z_0}\subset U_{\varepsilon_0}$ of $z_0$ such that for every $z\in V_{z_0}$, a transverse trajectory $\gamma_{z}$  can be chosen to intersect $U_{\varepsilon_0}$ exactly on a horizontal segment. Taking a smaller value of $\varepsilon_0$ if necessary, $\gamma_{f^{-1}(z_0)}$ can be chosen to intersect $U_{\varepsilon_0}$ exactly on the horizontal segment $\psi^{-1}([-\varepsilon_0,0]\times\{0\})$  and there exists a neighborhood $V_{f^{-1}(z_0)}\subset S\setminus\mathrm{fix}(f)$ of $f^{-1}(z_0)$ such that for every $z\in V_{f^{-1}(z_0)}$, the transverse trajectory $\gamma_{z}$ can be chosen to intersect $U_{\varepsilon_0}$ exactly on a horizontal segment.

Similarly, for every point $z\in S\setminus(\mathrm{fix}(f)\cup\{z_0,f^{-1}(z_0)\})$, there exists a neighborhood $V_{z}\subset S\setminus\mathrm{fix}(f)$ of $z$ and $\varepsilon_z\in(0,\varepsilon_0]$ such for every $z'\in V_{z}$, the transverse trajectory $\gamma_{z'}$ can be chosen not to intersect $U_{\varepsilon_z}$.

The dynamics of $\mathcal F$ is well understood in a neighborhood of a singular point $z$ (see \cite{Ler}). For every neighborhood $U$ of $z$ there exists a neighborhood $V$ of $z$ such that for every $z'\in V\setminus\{z\}$, the transverse trajectory $\gamma_{z'}$ can be chosen to lie inside $U$. Consequently, for every $z\in \mathrm{fix}(f)$, there exists a punctured neighborhood $V_{z}\subset S\setminus\mathrm{fix}(f)$ of $z$, such for every $z'\in V_{z}$, the transverse trajectory $\gamma_{z'}$ can be chosen not to intersect $U_{1}$.

One can cover the compact set $S\setminus\cup_{z\in\mathrm{fix}(f)\cup\{z_0,f^{-1}(z_0)\}} V_z$ by a finite family $(V_{z_j})_{j\in J}$, where $z_j\in S\setminus\mathrm{fix}(f)\cup\{z_0,f^{-1}(z_0)\}$. Setting $\varepsilon=\min_{j\in J} \varepsilon_{z_j}$, one gets the following property: for every $z\not\in\mathrm{fix}(f)$, there exists a choice of $\gamma_{z'}$ that does not intersect $U_{\varepsilon}$ or intersect it in a horizontal path.

One can modify the foliation in the interior of $U_{\varepsilon}$. If the leaves inside $U_{\varepsilon}$ are transverse to the horizontal foliation, the property above says that the new foliation is still transverse to $I_f$. The leaf $\phi$ being non wandering, such a modification can be done with a closed leaf in the new foliation.

\end{proof}

The next result is important for our needs.
\begin{proposition} \label{prop:Brouwerlines}
Let $S$ be a surface of genus larger than $1$ and let $f\in \mathcal{G}^r_w(S)$ be isotopic to the identity such that $\#\mathrm{fix}_h (f)=2g-2.$ Then,
there exists an oriented loop $\phi\subset S$ non homologous to zero that is lifted to a Brouwer line (possibly singular) of $\widetilde f$.
\end{proposition}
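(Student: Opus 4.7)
The plan is to combine Lemmas~\ref{l.unlinkedfixedpoints} and~\ref{l:loops} to produce the required loop, verify the Brouwer line property from transversality, and then use area preservation and a rotation-vector argument to ensure the loop can be chosen non-homologous to zero.

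First, using Lemma~\ref{l.unlinkedfixedpoints}, take the canonical maximal isotopy $I=I_f$ from $\mathrm{Id}$ to $f$ rel $\mathrm{fix}_h(f)$, and apply Lemma~\ref{l:loops} to obtain a foliation $\mathcal{F}$ transverse to $I$ admitting either a closed leaf or a cycle of connections. In either case let $\phi$ denote the resulting oriented simple loop. I would then check that any lift $\widetilde\phi$ of $\phi$ to $\widetilde S$ is a (possibly singular) Brouwer line of $\widetilde f$: the argument quoted in the paragraph preceding Lemma~\ref{l.unlinkedfixedpoints}, showing each leaf of $\check{\mathcal F}$ is a Brouwer line of $\check f_1$, applies verbatim on $\widetilde S$ since the trajectory of any non-fixed point is homotopic to a positively transverse path, and in the cycle-of-connections case the saddles lying on $\widetilde\phi$ are fixed points of $\widetilde f$, yielding the singular variant.

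Showing $\phi$ is non-null-homotopic is easy: if it bounded a disk $D$, then $\mathcal F|_{\overline D}$ would be a foliation on $D$ whose interior singularities are all saddles of index $-1$ (as noted in the proof of Lemma~\ref{l.unlinkedfixedpoints}), contradicting $\chi(D)=1$ via Poincar\'e--Hopf.

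The main obstacle is to ensure $\phi$ is non-homologous to zero, i.e.\ non-separating. The key input is that $\mathrm{rot}_{I}(\mu_\omega)\neq 0$: since $\#\mathrm{fix}(f)=2g-2$ is strictly less than $2+2g$, Floer's lower bound for the number of fixed points of a non-degenerate Hamiltonian diffeomorphism of $S$ fails, so $f$ is not Hamiltonian and (on a surface of genus $\ge 2$) the rotation vector is nonzero. Assume for contradiction that $\phi$ separates $S=\overline S_1\cup_\phi \overline S_2$. Positive transversality forces trajectories of $I$ to cross $\phi$ only from $R(\phi)$ to $L(\phi)$, so $f(\overline{L(\phi)})\subset \overline{L(\phi)}$; area preservation then forces equality, and symmetrically $f$ preserves $\overline{R(\phi)}$. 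Since $[\phi]=0$ in each $H_1(\overline S_i,\R)$, Mayer--Vietoris gives $H_1(S,\R)=H_1(\overline S_1,\R)\oplus H_1(\overline S_2,\R)$, into which $\mathrm{rot}_{I}(\mu_\omega)$ decomposes, so a nonzero component lives in some $H_1(\overline S_i,\R)$. Iterating the construction of Lemmas~\ref{l.unlinkedfixedpoints}--\ref{l:loops} inside $\overline S_i$ with the restricted foliation $\mathcal F|_{\overline S_i}$, each step either produces a non-separating loop (at which point we are done) or strictly reduces the genus of the invariant subsurface carrying the nonzero rotation, so the process must terminate. The technical heart of the argument is justifying this iterative descent, particularly a version of Lemma~\ref{l:loops} on subsurfaces with boundary; the underlying principle is that a nonzero rotation vector cannot be fully confined by a separating closed leaf or cycle of connections.
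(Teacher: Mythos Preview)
Your setup and the verification that every lift of $\phi$ is a (singular) Brouwer line of $\widetilde f$ are correct and match the paper. The detour through Poincar\'e--Hopf for non-null-homotopy is fine but redundant once you have non-homologous to zero.

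The problem is in your treatment of ``$[\phi]\neq 0$''. You write that positive transversality gives $f(\overline{L(\phi)})\subset \overline{L(\phi)}$, conclude from area preservation that $f$ preserves both sides, and then launch into an inductive descent using Floer's bound, Mayer--Vietoris, and a version of Lemma~\ref{l:loops} on surfaces with boundary that you do not prove. This descent is both incomplete (as you acknowledge) and unnecessary. The inclusion you extract is too weak: what transversality of $\mathcal F$ to $I_f$ actually gives, when $\phi$ is a closed leaf or a cycle of connections bounding a subsurface $\Sigma$ on its left, is the \emph{strict} inclusion
\[
f(\overline\Sigma)\subset \mathrm{int}(\Sigma)\cup(\mathrm{fix}(f)\cap\phi),
\]
because a positively transverse path starting on $\phi$ (at a non-fixed point) enters $\mathrm{int}(\Sigma)$ immediately and can never recross $\phi$. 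This is already incompatible with $\mu_\omega$-preservation: take a non-fixed $z\in\phi$ and a small ball $B$ around it; then $f(B)\subset\mathrm{int}(\Sigma)$, so $f(B\cap(S\setminus\overline\Sigma))$ has positive measure and lies in $\mathrm{int}(\Sigma)\setminus f(\overline\Sigma)$, while $\mu_\omega(\mathrm{int}(\Sigma)\setminus f(\overline\Sigma))=0$. That is the entire argument in the paper: one line, no rotation vectors, no Floer, no induction.

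So the gap is that you stop at the non-strict containment and then build an elaborate and unfinished workaround, when the strict containment (which follows from the same transversality you already invoked) closes the proof immediately.
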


\begin{proof}

Let $\mathcal F$ be a foliation transverse to $I_f$ with a closed leaf or a cycle of connections $\phi$. This loop is not homologous to zero. Otherwise it would bound a surface $\Sigma$ on its left such that $f(\Sigma)\subset \mathrm{int}(\Sigma)\cup (\mathrm{fix}(f)\cap\phi)$. This would contradict the fact that $f$ preserves $\mu_{\omega}$.   One can lift the isotopy $I$ to an identity isotopy $\widetilde I=(\widetilde f_t)_{t\in[0,1]}$ on $\widetilde S$ joining $\mathrm{Id}$ to $\widetilde f$. Similarly $\mathcal F$ can be lifted to a singular foliation $\widetilde{\mathcal F}$. The leaf $\phi$ can be lifted to an oriented line  $\widetilde\phi$ which is either a leaf of $\widetilde{\mathcal F}$ or a union of leaves and singular points of $\widetilde{\mathcal F}$. In the first case, $\widetilde \phi$ is a Brouwer line; in the second case it is a singular Brouwer line. Indeed,  for every $\widetilde  z\in\widetilde{\mathrm{dom}}(I)$, the path $\gamma_z$ defined by the theorem joining the projection $z\in S$ of $\widetilde z$ to its image by $f$ can be lifted to a path joining $\widetilde z$ to $\widetilde f(\widetilde z)$ and positively transverse to $\widetilde {\mathcal F}$.  This concludes the proof.
\end{proof}

\subsection{Nielsen-Thurston Classification}\label{ss.NT}

\begin{definition}
A {\it Dehn twist map} of $S$ is an orientation preserving homeomorphism $h$ of $S$ that satisfies the following properties:

\begin{itemize}
\item there exists a non empty finite family $(A_{i})_{i\in I}$ of pairwise disjoint invariant essential closed annuli;

\item no connected component of $S\setminus \cup_{i\in I}A_i$ is an annulus;

\item $h$ fixes every point of $S\setminus \cup_{i\in I}A_i$;

\item for every $i\in I$, the map $h_{\vert A_i}$ is conjugate to $\tau^{n_i}$, $n_i\not=0$, where $\tau$ is the homeomorphism of $\T\times[0,1]$ that is lifted to the universal covering space by $\widetilde\tau:(x,y)\mapsto (x+y,y)$.

\end{itemize}

The annuli $A_i$ will be called  the {\it twisted annuli} and $n_i$ the {\it twist coefficients}.

\end{definition}

 By Thurston-Nielsen theory (see \cite{Th},\cite{CB}, \cite{FLP}), one knows that $f$ is isotopic to a homeomorphism $h$  such that

\begin{itemize}
\item there exists a finite family $(\lambda_{i})_{i\in I}$ of pairwise disjoint essential closed loops invariant by $h$;

\item no connected component of $S\setminus \cup_{i\in I}\lambda_i$ is an annulus;

\item the closure $C$ of such a component is invariant by a power $h^m$ of $h$ and $h^m{}_{\vert C}$ is either isotopic to a pseudo-Anosov map or a periodic map.

\end{itemize}
Consequently, one the following situation occurs:
\begin{enumerate}
\item there exists a pseudo-Anosov component;

\item  there is a power of $f$ that is isotopic to a Dehn twist map;

\item there is a power of $f$ that is isotopic to the identity.

\end{enumerate}

It is well known and folklore that the existence of a  pseudo-Anosov component implies the existence of infinitely many periodic points. This essentially follows from \cite{H} where he treats the case of a pseudo-Anosov map.

\section{An equivalence relation on the set of hyperbolic periodic points}\label{s:An equivalence relation on the set of hyperbolic periodic points}

We will say that two hyperbolic periodic points $z$ and $z'$ of
a map $f\in \cG^r_{\omega}(S)$  are {\it equivalent}, and we will write $z\sim z'$, if the branches of $z$ and the branches of $z'$ have the same closure. One gets an equivalence relation on $\mathrm{per}_h(f)$.  We will denote $\cE(f)$ the set of equivalence classes and for every $\kappa\in\cE(f)$, we will write $K(\kappa)$ for the closure of a branch of an element $z\in\kappa$. The branches of elements $z\in\kappa$ will be called {\it branches of $\kappa$}. The map $f$ acts naturally on $\cE(f)$ as a bijection and every orbit is finite (because $f^q(\kappa)=\kappa$ if $\kappa$ contains a fixed point of $f^q$) and so one can define the period of $\kappa$ as the cardinal of its orbit. Of course $\cE(f^q)=\cE(f)$, for every $q\geq 2$.

\medskip

Let us state two facts that will be needed frequently later.

\begin{proposition}\label{p.regulardomain-class}
If $V$ is a periodic regular domain of $f\in \cG^r_{\omega}(S)$, then every class $\kappa\in \cE(f)$ is included in $V$ or disjoint from $\overline V$. In the first situation the branches of $\kappa$ are all included in $V$, in the second situation they are all disjoint from $\overline V$.\end{proposition}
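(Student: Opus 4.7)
My plan is to split the proof into two stages. First I would prove the dichotomy for one specific hyperbolic periodic point $z\in\kappa$ and one specific branch $W$ of $z$: either $W\subset V$ with $z\in V$, or $W\cap\overline V=\emptyset$ with $z\in S\setminus\overline V$. Then I would propagate this dichotomy from the chosen pair $(z,W)$ to the whole class $\kappa$ and all of its branches, using the equivalence relation together with Theorem \ref{t.genericconsequences2}.

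Setting up the single-branch step: $V$ has some period $q_V$, $z$ has some period $q_z$, and $f^{q_z}$ permutes the four branches of $z$, so there is an integer $q\geq 1$ with $f^q(V)=V$, $f^q(z)=z$, and $f^q(W)=W$; note that $f^q\in\cG^r_\omega(S)$ since $f$ is. Theorem \ref{t.genericconsequences} applied to $f^q$ and the invariant regular domain $V$ rules out $z\in\partial V$, leaving exactly the two cases of the dichotomy.

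Executing the single-branch step: suppose first that $z\in V$. I would parametrize $W$ by an injective continuous ray $\gamma\colon(0,\infty)\to W$ with $\gamma(t)\to z$ as $t\to 0$, and define $t_0=\sup\{T>0:\gamma((0,T))\subset V\}$; openness of $V$ makes this supremum positive. Suppose for contradiction that $t_0<\infty$. Then $w:=\gamma(t_0)\in\overline V\setminus V=\partial V$. Now $\partial V$ is closed and $f^q$-invariant, and $W\subset W^u(z)$ (treating the unstable case; the stable one is symmetric), so the iterates $f^{-nq}(w)$ lie in $W\cap\partial V$ for every $n\geq 0$ and converge to $z$. The closedness of $\partial V$ forces $z\in\partial V$, contradicting $z\in V$. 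Hence $t_0=\infty$ and $W\subset V$. The case $z\in S\setminus\overline V$ is identical with $V$ replaced by the $f^q$-invariant open set $S\setminus\overline V$, and yields $W\cap\overline V=\emptyset$.

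Propagating to the class: by Theorem \ref{t.genericconsequences2}, for every $z'\in\kappa$ the closure of each branch $W'$ of $z'$ equals the common set $K(\kappa)$, and in particular contains $z'$. If we are in the first case above, then $K(\kappa)=\overline W\subset\overline V$, so any $z'\in\kappa$ satisfies $z'\in\overline V$; Theorem \ref{t.genericconsequences} eliminates $z'\in\partial V$, forcing $z'\in V$, and the single-branch step applied to $z'$ and each of its branches shows that every branch of $\kappa$ is contained in $V$. In the second case, $K(\kappa)\subset\overline{S\setminus\overline V}=S\setminus V$, so $z'\in\kappa$ gives $z'\in S\setminus V$; ruling out $z'\in\partial V$ once more forces $z'\in S\setminus\overline V$, and the single-branch step applied to $z'$ yields that every branch of $\kappa$ is disjoint from $\overline V$.

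The only real obstacle I anticipate is the single-branch step, and the key trick there is that $\partial V$, being closed and $f^q$-invariant, cannot be crossed by a branch without absorbing, in the limit under backward iterates of $f^q$, a point of $\partial V$ at $z$; this is forbidden by Theorem \ref{t.genericconsequences}. Everything beyond this step is a matter of transferring the information along $\kappa$ via Theorem \ref{t.genericconsequences2}.
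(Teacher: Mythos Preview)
Your proof is correct and follows essentially the same route as the paper: the key observation is that $\partial V$ is closed, periodic, and by Theorem~\ref{t.genericconsequences} contains no periodic point, so no branch can meet it (your backward-iteration argument is precisely how one justifies this), after which connectedness of the branches and the common closure $K(\kappa)$ propagate the conclusion to the whole class. One small slip: the equality $\overline{S\setminus\overline V}=S\setminus V$ need not hold for a regular domain, but you only use the inclusion $\overline{S\setminus\overline V}\subset S\setminus V$, which is immediate since $S\setminus V$ is closed.
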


\begin{proof}  The frontier of $V$ does not contain any periodic point by Theorem \ref{t.genericconsequences}. Moreover it is periodic. Consequently it does not meet any branch. The branches of a class $\kappa\in \cE(f)$, being connected and accumulating on every point of this class, we deduce the proposition. \end{proof}

\begin{corollary}\label{c.genericconsequences}
Fix $f\in \cG^r_{\omega}(S)$, $\kappa\in \cE(f)$ and $z'\in\mathrm{per}_h(f)$. If $z'$ belongs to $K(\kappa)$,  then it belongs to $\kappa$. \end{corollary}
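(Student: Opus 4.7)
Let $\kappa'\in\cE(f)$ be the equivalence class of $z'$; by Theorem~\ref{t.genericconsequences2}, $K(\kappa')=\overline{W^u(z')}$ equals the closure of any single branch of $z'$, and $z'\in K(\kappa')$. We must show $\kappa'=\kappa$, equivalently $K(\kappa)=K(\kappa')$. Suppose for contradiction $\kappa\neq\kappa'$. The plan is first to establish the one-sided inclusion $K(\kappa')\subset K(\kappa)$ by a local analysis at $z'$ using the hyperbolic splitting together with (G2), and then to obtain a contradiction via Proposition~\ref{p.regulardomain-class} applied to a suitable periodic regular domain disjoint from $K(\kappa')$.

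For the inclusion: pick an unstable branch $\Gamma$ of some $z\in\kappa$, so that $K(\kappa)=\overline\Gamma\ni z'$ by Theorem~\ref{t.genericconsequences2}, and $\Gamma$ accumulates on $z'$. Work in a small linearizing box $B$ around $z'$ with local stable/unstable coordinates. Either some point of $\Gamma\cap B$ lies on $W^s(z')$, whence (G2) forces a transverse heteroclinic intersection and the $\lambda$-lemma gives $W^u(z')\subset\overline\Gamma=K(\kappa)$; or, after replacing $f$ by a common power $g$ that fixes $z'$ and preserves $\Gamma$, every $w\in\Gamma\cap B$ close to $z'$ has both local coordinates nonzero, and the forward $g$-iterates of $w$ remain in $\Gamma$ and exit $B$ at points which, as $w\to z'$, converge to an endpoint of $W^u_{\mathrm{loc}}(z')$; the $g$-invariance of the closed set $K(\kappa)$ then spreads the conclusion to the whole orbit, giving $W^u(z')\subset K(\kappa)$. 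Either way, $K(\kappa')=\overline{W^u(z')}\subset K(\kappa)$.

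If $\kappa$ were entirely contained in $K(\kappa')$, the symmetric reasoning starting from some $z\in\kappa\cap K(\kappa')$ (with the roles of $\kappa$ and $\kappa'$ swapped) would yield $K(\kappa)\subset K(\kappa')$, hence $K(\kappa)=K(\kappa')$ and so $\kappa=\kappa'$, contrary to assumption. So fix $z\in\kappa\setminus K(\kappa')$ and let $V$ be the connected component of the open set $S\setminus K(\kappa')$ containing $z$. Since $K(\kappa')$ is closed and invariant under some power of $f$, the same power preserves $V$, so $V$ is periodic. One verifies that $V$ is in fact a regular domain: $K(\kappa')$ is a connected closed set without isolated points (being the closure of the injectively immersed branches of $z'$, each of which accumulates on every point of $\kappa'$), and the end-structure of $V$ is governed by the local hyperbolic dynamics around the finitely many orbits of $\kappa'$ bordering $V$.

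Now apply Proposition~\ref{p.regulardomain-class} to the class $\kappa$ and the periodic regular domain $V$: since $z\in\kappa\cap V\neq\emptyset$, we must have $\kappa\subset V$ with all branches of $\kappa$ lying in $V$, so $K(\kappa)\subset\overline V$. In particular $z'\in K(\kappa)\subset\overline V$; combined with $z'\in K(\kappa')$ and $V\cap K(\kappa')=\emptyset$, this forces $z'\in\overline V\setminus V=\partial V$. This places the periodic point $z'$ on the frontier of the invariant regular domain $V$, contradicting Theorem~\ref{t.genericconsequences}, so $\kappa'=\kappa$. The main obstacle is the regularity of $V$: the finite-type condition is not automatic from general topological considerations, and handling it cleanly may require either a careful argument exploiting the hyperbolic structure of $K(\kappa')$ along $\partial V$, or replacing $V$ by a slightly smaller explicitly-built periodic regular domain carved out around the orbit of $z$ and still disjoint from $K(\kappa')$.
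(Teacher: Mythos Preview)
Your proof is correct, but it takes a more laborious route than the paper's, and your stated ``main obstacle'' is not an obstacle at all.

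First, on regularity of $V$: the paper's preliminaries (Section~\ref{ss.regular}) already record that if $K$ is a union of finitely many connected closed sets, none reduced to a point, then every connected component of $S\setminus K$ is a regular domain. Here $K(\kappa')$ is the closure of a single branch, hence connected, closed, and not a singleton; so every component of $S\setminus K(\kappa')$ is automatically a regular domain. No local hyperbolic analysis along $\partial V$ is needed, and your caveat at the end can simply be dropped.

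Second, on the structure of the argument: your steps 3--7 (the regular-domain/frontier contradiction via Proposition~\ref{p.regulardomain-class} and Theorem~\ref{t.genericconsequences}) are exactly the paper's core move. The difference is how each proof handles the other inclusion. You obtain $K(\kappa')\subset K(\kappa)$ up front by a local $\lambda$-lemma argument at $z'$; the paper instead observes that the same frontier argument, pushed a bit further, gives not only $\kappa\subset K(\kappa')$ but $K(\kappa)\subset K(\kappa')$ (any branch of $\kappa$ meeting a component of $S\setminus K(\kappa')$ would force its endpoint onto the frontier), and then applies the identical reasoning with the roles of $\kappa$ and $\kappa'$ swapped (which is now legitimate since any $z\in\kappa$ lies in $K(\kappa')$). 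So the paper runs one purely topological argument twice, symmetrically, and never touches the $\lambda$-lemma. Your dynamical detour is valid but unnecessary once you realize the regular-domain machinery already delivers both inclusions.
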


\begin{proof}  Denote $\kappa'$ the class of $z'$. Every connected component of $S\setminus K(\kappa')$ is a periodic regular domain. By Proposition \ref{p.regulardomain-class} one deduces that either $\kappa$ is included in one of the components of $S\setminus K(\kappa')$, or included in $K(\kappa')$. The first situation is impossible, because the branches of $\kappa$ would be included in this component and the point $z'$ would belong to the frontier of the component, because accumulated by these branches. This contradicts Theorem \ref{t.genericconsequences}.  In fact the same contradiction occurs if there is a branch of $\kappa$ that meets a connected component of $S\setminus K(\kappa')$ because its end will belong to the frontier of this component. Consequently, we have $\kappa\subset K(\kappa)\subset K(\kappa')$. Replacing $z'$ by any point $z$ of $\kappa$, we deduce similarly that  $\kappa'\subset K(\kappa')\subset K(\kappa)$. The equality $K(\kappa)=K(\kappa')$ tells us that $\kappa=\kappa'$.  \end{proof}

\begin{corollary}\label{c.separationconsequences}
If $V$ is a periodic regular domain of $f\in \cG^r_{\omega}(S)$, that contains $p$ different classes $\kappa_i$, $1\leq i\leq p$, one can find a family $(V_i)_{1\leq i\leq p}$ of pairwise disjoint periodic regular domains such that $\kappa_i\subset V_i\subset V$.  \end{corollary}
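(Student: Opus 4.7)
The plan is to construct the $V_i$ inductively so that pairwise disjointness is automatic. Choose $q\ge1$ with $f^q(V)=V$ and $f^q(\kappa_i)=\kappa_i$ as a set for every $i$, and set $L_j:=K(\kappa_j)$. Define $V_1$ to be the connected component of $V\setminus\bigcup_{j>1}L_j$ containing $\kappa_1$, and for $2\le k\le p$ set
\[
V_k\ =\ \text{the connected component of}\ V\setminus\Bigl(\bigcup_{\ell<k}\overline{V_\ell}\ \cup\ \bigcup_{j>k}L_j\Bigr)\ \text{containing}\ \kappa_k.
\]
Disjointness is built in: for $\ell<k$ one has $V_k\cap\overline{V_\ell}=\emptyset$, so $V_k\cap V_\ell=\emptyset$.

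I would then verify by induction on $k$ three properties: $V_k$ is a periodic regular domain, $\kappa_k\subset V_k\subset V$, and $V_k\cap L_j=\emptyset$ for every $j>k$. Periodicity follows from the $f^q$-invariance of each removed piece — the $L_j$ manifestly, each $\overline{V_\ell}$ by induction. That $V_k$ is a regular domain is routine from Section \ref{ss.regular}: a connected component of the complement, inside a regular domain, of a finite union of closed sets without isolated points is again a regular domain, and both the $L_j$ (closures of 1-dimensional immersed submanifolds) and the $\overline{V_\ell}$ (closures of regular domains) qualify. The condition $V_k\cap L_j=\emptyset$ for $j>k$ holds because $L_j$ was removed.

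The substantive step is showing that $\kappa_k$ — and in fact all its branches — really lies in the open set of which $V_k$ is a component. For the periodic points themselves, Corollary \ref{c.genericconsequences} gives $\kappa_k\cap L_j=\emptyset$ for $j\ne k$; the induction hypothesis $V_\ell\cap L_k=\emptyset$ yields $V_\ell\cap\kappa_k=\emptyset$; and Theorem \ref{t.genericconsequences} forbids periodic points on $\partial V_\ell$, so $\overline{V_\ell}\cap\kappa_k=\emptyset$. The key move for the branches — and the only place where Proposition \ref{p.regulardomain-class} is really used — is to apply that proposition to each periodic regular domain $V_\ell$ with $\ell<k$: since $\kappa_k\not\subset V_\ell$, the dichotomy forces $\kappa_k$ \emph{together with all its branches} to be disjoint from $\overline{V_\ell}$. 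Likewise, applying Proposition \ref{p.regulardomain-class} to the component of $S\setminus L_j$ containing $\kappa_k$ (for $j>k$) shows that branches of $\kappa_k$ avoid $L_j$. Finally Theorem \ref{t.genericconsequences2}, combined with the equivalence relation, guarantees that $\kappa_k$ together with all its branches forms a single connected set, which therefore lies inside one connected component, namely $V_k$. The potential obstacle — a branch of $\kappa_k$ clinging to the frontier $\partial V_\ell$ — is thus cleanly ruled out by Proposition \ref{p.regulardomain-class}.
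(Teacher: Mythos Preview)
Your proof is correct and follows essentially the same inductive construction as the paper: the paper first builds auxiliary domains $W_i$ in $S$ (as components of $\bigcap_{j>i}W_i^j\setminus\bigcup_{j<i}\overline{W_j}$, where $W_i^j$ is the component of $S\setminus K(\kappa_j)$ containing $\kappa_i$) and then sets $V_i$ to be the component of $V\cap W_i$ containing $\kappa_i$, whereas you work directly inside $V$ and remove the $L_j$'s and earlier $\overline{V_\ell}$'s --- these are the same pieces in slightly different packaging. Your write-up is more explicit about the one point the paper leaves tacit, namely that $\kappa_k$ together with its branches forms a connected set lying entirely in the open set under consideration (via Proposition~\ref{p.regulardomain-class} and Theorem~\ref{t.genericconsequences2}), which is exactly what guarantees that ``the component containing $\kappa_k$'' is well-defined.
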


\begin{proof}  Fix $1\leq i<j\leq p$. Every connected component of $S\setminus K(\kappa_j)$ is a periodic regular domain and we know by
Corollary \ref{c.genericconsequences} that $\kappa_i\cap K(\kappa_j)=\emptyset$. So, by Proposition \ref{p.regulardomain-class}, there exists a connected component $W_i^j$ of $S\setminus K(\kappa_j)$ that contains $\kappa_i$. We define inductively a family $(W_i)_{1\leq i\leq p}$ of pairwise disjoint periodic regular domains such that $\kappa_i\subset W_i$ in the following way;
\begin{itemize}
\item  $W_1$ is the connected component of $\bigcap_{1< j\leq p} W_1^j$ that contains $\kappa_1$:
\item for every $i>1$,  $W_i$ is the connected component of $$ \left( \bigcap_{i<j\leq p} W_i^j\right)\setminus \left(\bigcup_{1\leq j<i} \overline{W_j}\right)$$ that contains $\kappa_i$.
\end{itemize}
To finish the proof, it is sufficient to define $V_i$ has being the connected component of $V\cap W_i$ that contains $\kappa_i$.
\end{proof}

Recall that the genus $g(V)$ of an open set $V\subset S$ is the largest integer $s$ such that we can find a family of simple loops $(\lambda_i)_{0\leq i<2s}$ satisfying:
\begin{itemize}
\item $\lambda_{2j}$ and $\lambda_{2j+1}$ intersect in a unique point;
\item $[\lambda_{2j}]\wedge[\lambda_{2j+1} ]=1$;
\item $\lambda_i\cap \lambda_{i'}=\emptyset$, if $i\not=i'$ and $\{i,i'\}$ is not a set $\{2j, 2j+1\}$.
\end{itemize}
Let us define now the {\it genus of class $\kappa \in\cE(f)$}, where $f\in\cG^r_{\omega}(S)$ as being the integer $g(\kappa)\in\{0,\dots, g\}$ uniquely defined by the following conditions:
\begin{enumerate}
\item $\kappa$ is contained in a periodic regular domain of genus $g(\kappa)$;
\item  $\kappa$ is not contained in a periodic regular domain of genus $<g(\kappa)$.
\end{enumerate}

The function $\kappa\mapsto g(\kappa)$ satisfies some additive properties. For example, in the statement of Corollary \ref{c.separationconsequences}, it holds:
$$\sum_{i=1}^pg(\kappa_i)\leq \sum_{i=1}^pg(V_i)\leq g(V).$$
 In particular, by Corollary \ref{c.separationconsequences}, if $V$ is a periodic regular domain of genus $g(\kappa)$ containing $\kappa$, then every class $\kappa'\subset V$ distinct from $\kappa$ has genus zero.  The following result is stronger:

\begin{proposition}\label{p.regulardomain-minimalgenus}
Let $\kappa \in\cE(f)$ be a fixed class of $f\in \cG^r_{\omega}(S)$ and $V$ a fixed regular domain of genus $g(\kappa)$ containing $\kappa$. Then, for every finite family  $(\kappa_i)_{i\in I}$ of classes  in $\cE(f)$ included in $V$ and distinct from $\kappa$, there exists a finite family $(\check D_j)_{j\in J}$ of periodic regular open disks of the end compactification $\check V$ of $V$ such that:

\begin{itemize}
\item the $\check D_j$, $j\in J$, are pairwise disjoint;
\item$\kappa\cap \overline{\check D_j}=\emptyset$, for every $j\in J$;
\item for every $i\in I$, there exists $j\in J$ such that $\kappa_i\subset \check D_j$;
\item for every $j\in J$, there exists $i\in I$ such that $\kappa_i\subset \check D_j$;
\item if $n_i$ is the period of $\kappa_i$ and if $\kappa_i\subset \check D_j$, then $\check f^{n_i}(\check D_j)=\check D_j$;
\item if  $\check f^{n_i}(\check D_j)=\check D_j$, one can extend $\check f^{n_i}{}_{\vert \check D_j}$ to the prime end compactification of $\check D_j$ by adding a circle with no periodic point.
\end{itemize}
\end{proposition}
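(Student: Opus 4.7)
The plan is to invoke Corollary \ref{c.separationconsequences} to split the classes into disjoint periodic regular domains, to use the equality $g(V)=g(\kappa)$ to force the pieces hosting the $\kappa_i$'s to be planar, and then to absorb superfluous topology into a neighbourhood of $\kappa$ until the remaining complement components in $\check V$ become disks. First I would replace $f$ by an iterate $f^n$ for which $\kappa$ and every $\kappa_i$ have period dividing $n$; since $\cE(f^n)=\cE(f)$ and $f^n\in\cG^r_{\omega}(S)$, this is harmless. Corollary \ref{c.separationconsequences} applied to the family $\{\kappa\}\cup\{\kappa_i\}_{i\in I}$ yields pairwise disjoint periodic regular domains $V_{\kappa}\ni\kappa$ and $V_i\ni\kappa_i$, all contained in $V$ and all $f^n$-invariant. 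The additivity property of the genus stated right after its definition gives
$$g(V_{\kappa})+\sum_{i\in I}g(V_i)\le g(V)=g(\kappa).$$
Since $\kappa\subset V_{\kappa}$ implies $g(V_{\kappa})\ge g(\kappa)$, equality forces $g(V_{\kappa})=g(\kappa)$ and $g(V_i)=0$ for every $i\in I$; by Proposition \ref{p.regulardomain-class} the branches of $\kappa$ lie entirely in $V_{\kappa}$, so $K(\kappa)\cap V_i=\emptyset$.

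Next, viewing $V_{\kappa}$ inside the closed surface $\check V$ of genus $g(\kappa)$, the set $\overline{V_{\kappa}}^{\check V}$ already carries all of $H_1(\check V)$ and the same additivity argument, applied now in $\check V$, shows that every connected component of $\check V\setminus\overline{V_{\kappa}}^{\check V}$ is planar. I then enlarge $V_{\kappa}$ into a larger $f^n$-invariant periodic regular open set $V_{\kappa}'\subset V$, still containing $\kappa$ and disjoint from every $V_i$, such that every component of $\check V\setminus\overline{V_{\kappa}'}^{\check V}$ containing some $\kappa_i$ is a topological open disk. Concretely, I absorb into $V_{\kappa}$, one at a time, each complement component disjoint from every $V_i$ (this preserves regularity, periodicity, disjointness from the $V_i$'s and the inclusion in $V$, and cannot increase the genus past $g(\check V)$); then, in each remaining component with more than one boundary circle in $\check V$, I absorb a collar of $V_{\kappa}$ along all but one of those circles. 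Let $J$ index the distinct resulting components meeting some $\kappa_i$ and let $\check D_j$ denote them.

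The six bullet points then follow: pairwise disjointness and the partition of the $\kappa_i$'s are built in; $\overline{\check D_j}^{\check V}\cap V_{\kappa}'=\emptyset$ since $V_{\kappa}'$ is open in $\check V$, hence $\overline{\check D_j}^{\check V}\cap\kappa=\emptyset$; the $f^n$-invariance of $V_{\kappa}'$ (combined with $n_i\mid n$) together with the fact that $\check D_j$ is the unique complement component containing $\kappa_i$ yields $\check f^{n_i}(\check D_j)=\check D_j$. For the prime-end condition, I turn $\check D_j$ into a genuine regular domain of $S$: setting
$$D_j^{\sharp}:=(\check D_j\cap V)\cup\bigcup_{e}\mathrm{int}(K_e),$$
where the union is over the ends $e$ of $V$ lying in $\check D_j$ and $K_e\subset S\setminus V$ is the component corresponding to $e$, gives a periodic $f^{n_i}$-invariant regular domain of $S$ whose prime-end compactification coincides with that of $\check D_j$ (the added circle is produced by $\partial V_{\kappa}'$, which lies entirely in $V$). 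Theorem \ref{t.pe-genericconsequences} applied to $D_j^{\sharp}$ delivers an irrational rotation number on this circle, and in particular no periodic point on it.

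The main obstacle is the enlargement step. The delicate point is that, after absorbing all components disjoint from the $V_i$'s, one must still ensure that the remaining components become disks; this uses the minimality $g(V)=g(\kappa)$ crucially, most cleanly through a handle decomposition of $\check V$ supported in $V_{\kappa}$ and an inductive absorption along each frontier circle that bounds, on the opposite side, a subsurface of $V_{\kappa}$ of genus zero containing no $\kappa_i$.
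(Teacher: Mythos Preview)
Your route diverges from the paper's and, as written, has two genuine gaps.

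The paper does not separate the classes via Corollary~\ref{c.separationconsequences} and then enlarge. Instead it cuts $V$ along the canonical $f$-invariant continuum $K(\kappa)$: the $V_j$ are simply the connected components of $(S\setminus K(\kappa))\cap V$ that meet some $\kappa_i$. The component $W$ of $V\setminus\bigcup_j\overline{V_j}$ containing $\kappa$ has genus $g(\kappa)$ by minimality, so $i_*:H_1(W,\R)\to H_1(\check V,\R)$ is onto; hence the image of $H_1(V_j,\R)\to H_1(\check V,\R)$ lies in the $\wedge$-orthogonal of everything and is zero, forcing $\overline{V_j}^{\check V}$ to be a disk $\check D_j$. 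No enlargement is needed, and since $K(\kappa)$ and $V$ are both $f$-invariant, the $V_j$ are genuinely permuted by $f$, so $\kappa_i\subset V_j$ and $f^{n_i}(\kappa_i)=\kappa_i$ immediately give $f^{n_i}(V_j)=V_j$.

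Your argument breaks at two points. First, the enlargement step is only sketched: absorbing closures of complementary components and ``collars along all but one circle'' is not shown to yield an open, regular, invariant set, and you yourself flag this as the main obstacle. Second, and more concretely wrong, the periodicity claim fails: you pass to $f^n$ and build $V_\kappa'$ to be $f^n$-invariant, then assert that $n_i\mid n$ together with uniqueness of the component containing $\kappa_i$ gives $\check f^{n_i}(\check D_j)=\check D_j$. But $\check f^{n_i}(\check D_j)$ is a component of $\check V\setminus\overline{\check f^{n_i}(V_\kappa')}$, not of $\check V\setminus\overline{V_\kappa'}$, and there is no reason these decompositions coincide unless $V_\kappa'$ is itself $f^{n_i}$-invariant, which your construction does not guarantee. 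The paper sidesteps this entirely because $K(\kappa)$ is $f$-invariant from the start.
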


\begin{proof} The set $\left(S\setminus K(\kappa)\right)\cap V$ is an open set invariant by $f$ whose connected components are regular. So, there exists a finite family $( V_j)_{j\in J}$ of periodic domains, connected components of $\left(S\setminus K(\kappa)\right)\cap V$ such that:

\begin{itemize}
\item the $V_j$, $j\in J$, are pairwise disjoint;

\item for every $i\in I$, there exists $j\in J$ such that $\kappa_i\subset V_j$;
\item for every $j\in J$, there exists $i\in I$ such that $\kappa_i\subset V_j$;
\item if $n_i$ is the period of $\kappa_i$ and if $\kappa_i\subset V_j$, then $f^{n_i}(V_j)=V_j$.
\end{itemize}

Moreover,  $\kappa\cap \overline{V_j}=\emptyset$, for every $j\in J$, and so the connected component $W$ of $V\setminus \left(\bigcup_{1\leq j<i} \overline{V_j}\right)$ that contains $\kappa$ is a  periodic regular domain of genus $g(\kappa)$, by definition of $g(\kappa)$. One deduces that every $V_j$ has genus $0$, it is a punctured disk.
The morphism $i_*: H_1(W,\R)\to H_1(\check V,\R)$ induced by the inclusion map $i:W\hookrightarrow \check V$ is onto, because $g(W)=g(\check V)$. So, for every $j\in J$,  the morphism $i'_*: H_1(V_j,\R)\to H_1(\check V,\R)$ induced by the inclusion map $i':V_j\hookrightarrow \check V$ is null , because its image is in the orthogonal of the image of $i_*$ for the intersection form $\wedge$. One deduces that the closure of $V_j$ in $\check V$ is a disk $\check D_j$, obtained by adding  the ends of $\overline V$ which are ends of $\check D_j$. The family $(\check D_j)_{j\in J}$ satisfies the conditions of the Proposition \ref{p.regulardomain-minimalgenus}.\end{proof}

\section{Minoration of the number of hyperbolic periodic points} \label{s:minoration}

Note that if $f\in \cG^r_{\omega}(S)$, then $\mathrm{fix}(f^n)$ is finite, for every $n\geq 1$. Moreover, the index  $i(f^n,z)$ can be easily computed as it was shown in Section \ref{ss.lefschtez}. We will begin by proving Proposition \ref{p.numberhyp}, which means proving that $\# \mathrm{per}_h(f)\geq 2g-2$ for every  $f\in \cG^r_{\omega}(S)$ with a strict inequality if there exists an elliptic periodic point.

\medskip
\begin{proof}[Proof of  Proposition \ref{p.numberhyp}.]  Applying Lefschetz formula to $f^n$, $n\geq1$, one gets
$$\sum_{z\in \mathrm{fix}(f^n)}i(f^n,z)= \sum_{i=0}^2 (-1)^i\mathrm{tr}( f_{*,i}^n) =2-\mathrm{tr}(f_{*,1}^n),$$
where $f_{*,i}$ denotes the endomorphism of the $i$-th homology group $H_i(S,\R)$ induced by $f$, and $ \mathrm{tr}( f_{*,i}^n)$ denotes the trace of $f_{*,i}^n$. Consequently, one gets
$$\# \mathrm{fix}_h(f^n)\geq -\sum_{z\in \mathrm{fix}(f^n)}i(f,z)= \mathrm{tr}(f_{*,1}^n)-2.$$
Denoting $(\xi_j)_{1\leq j\leq 2g}$ the complex eigenvalues of $f_{*,1}$, one knows that
$$ \mathrm{tr}(f_{*,1}^n)=\sum_{j=1}^{2g} \xi_j^n.$$
A rotation of $\R^{2g}/\Z^{2g}$ having only recurrent points, one can find $n$ arbitrarily large such that the argument of every $\xi_j^n$ belongs to $(-\pi/4,\pi/4)$. In that case, one gets
$$ \mathrm{tr}(f_{*,1}^n)\geq{\sqrt 2\over 2} \,\sum_{j=1}^{2g} \vert \xi_j\vert^n.$$
Consequently, if $\sup_{1\leq j\leq 2g} \vert \xi_j\vert>1$, then $\# \mathrm{per}_h(f)$ is infinite. Otherwise, every $\xi_j$ is on the unit circle because the determinant of $f_{*,1}$ is equal to $1$. Changing $\pi/4$ with $\delta$, arbitrarily small, one deduces that for every $\varepsilon>0$, there exists $n$ such that
$$ \mathrm{tr}(f_{*,1}^n)\geq(1-\varepsilon)\, \sum_{j=1}^{2g} \vert \xi_j\vert^n= (1-\varepsilon)2g.$$
The trace $\mathrm{tr}(f_{*,1}^n)$ being an integer, if $\varepsilon$ is chosen small enough, one deduces that
$$ \mathrm{tr}(f_{*,1}^n)=2g,$$
which means that $\xi_j^n=1$ for every $j$. Finally, one gets $\# \mathrm{fix}_h(f^n)\geq 2g-2$.
In this case,  $ \mathrm{tr}(f_{*,1}^{nq})=2g,$ for every $q\geq 1$.

Suppose now that $f$ has an elliptic periodic point of period $q$. One gets
$$\# \mathrm{fix}_h(f^{nq})> -\sum_{z\in \mathrm{fix}(f^{nq})}i(f^{nq},z)= \mathrm{tr}(f_{*,1}^{nq})-2 =2g-2.$$
\end{proof}

We will state now a local version of Proposition  \ref{p.numberhyp} related to a class $\kappa\in \cE(f)$.

\begin{proposition}\label{p.localLefschetz}
Let $f\in \cG^r_{\omega}(S)$. There are two possible situations:

\begin{itemize}
\item for every class $\kappa\in \cE(f)$ we have $\# \kappa>2g(\kappa)-2$;
\item there is no periodic regular domain but $S$, there is a unique class in  $\cE(f)$ and its cardinal is $2g-2$.
\end{itemize}
\end{proposition}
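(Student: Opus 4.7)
The plan is to fix a class $\kappa\in\cE(f)$, set $s=g(\kappa)$, pick a periodic regular domain $V\ni\kappa$ of genus $s$, and adapt the Lefschetz count of Proposition \ref{p.numberhyp} to the closed surface $\check V$ of genus $s$. I may assume $\#\kappa<\infty$, otherwise $\#\kappa>2s-2$ is immediate. By Proposition \ref{p.regulardomain-minimalgenus} the classes in $V$ distinct from $\kappa$ (finite in number, for otherwise the identity below forces $\#\kappa=\infty$) can be enclosed in pairwise disjoint periodic open disks $\check D_j\subset\check V$, disjoint from $\kappa$, each carrying an irrational prime-end rotation by Theorem \ref{t.pe-genericconsequences}. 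I choose an integer $N$ so that $\check f^N$ fixes every end $\zeta$ of $V$, every $\check D_j$ setwise, every point $z\in\kappa$, and its two branches (the last requires $N/d_z$ to be even whenever $Df^{d_z}(z)$ has a negative eigenvalue).

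Then for $n\ge 1$ and $z\in\kappa$, $i(\check f^{Nn},z)=-1$; ends contribute $+1$ by Corollary \ref{c.e-genericconsequences}; and Lefschetz on the prime-end compactification $\widehat{\check D_j}$ (a closed disk of Euler characteristic $1$ with no boundary fixed points, again by Theorem \ref{t.pe-genericconsequences}) gives $\sum_{w\in\mathrm{fix}(\check f^{Nn})\cap\check D_j}i(\check f^{Nn},w)=1$ for every $j$. Any remaining fixed point in $V$ is elliptic of index $+1$. Writing $d$, $e$, $E_n$ for the number of disks, of ends, and of those elliptic points of period dividing $Nn$, the Lefschetz formula on $\check V$ reads
\[
-\#\kappa+d+e+E_n\ =\ 2-\mathrm{tr}\bigl((\check f^{Nn})_{*,1}\bigr).
\]
The eigenvalue analysis of Proposition \ref{p.numberhyp} now applies: either some eigenvalue of $(\check f)_{*,1}$ has modulus $>1$, giving unbounded trace and thus $\#\kappa=\infty$, a contradiction; or all eigenvalues are roots of unity and the integrality of the trace together with the recurrence trick produces an $n$ with $\mathrm{tr}((\check f^{Nn})_{*,1})=2s$. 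Substituting,
\[
\#\kappa\ =\ 2s-2+d+e+E_n\ \ge\ 2s-2,
\]
with equality precisely when $d=e=E_n=0$ for that $n$.

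Suppose $\#\kappa=2s-2$. Then $e=0$ forces $V=S$ and $s=g$; $d=0$ forces $\kappa$ to be the unique class of $\cE(f)$; and $E_n=0$, together with the freedom to replace $n$ by any sufficiently divisible multiple (which preserves the trace condition while absorbing the period of any would-be elliptic periodic point), rules out every elliptic periodic point of $f$. Finally, let $V'\subsetneq S$ be a putative proper periodic regular domain. By Proposition \ref{p.regulardomain-class} either $\kappa\subset V'$, forcing $g(V')=g$ and, via the identity above applied to $V'$, $\#\kappa\ge 2g-1$ (since $e(V')\ge 1$), contradicting equality; or $\kappa\cap\overline{V'}=\emptyset$, in which case $V'$ contains no periodic points of $f$, and the identity applied to $V'$ reduces this to $V'$ being an invariant open annulus. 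Theorem \ref{t.genericconsequences2} and Proposition \ref{p.regulardomain-class} then force $\kappa$ to lie in one component of $S\setminus V'$; the interior of the other component is a periodic regular domain of some genus $g^*\ge 0$ with one end and no periodic points, and Lefschetz on its end compactification yields $1=2-2g^*$, an impossibility. The main obstacle I anticipate is exactly this final annulus step: the bookkeeping of ends, the parity conditions on the chosen iterate, and the possibility that $f$ swaps the two complementary components must be combined carefully to reach the Euler-characteristic contradiction.
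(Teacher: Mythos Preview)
Your Lefschetz count on $\check V$ is the right engine and matches the paper's computation, but two structural problems leave the argument incomplete.

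First, you enclose \emph{all} classes in $V$ distinct from $\kappa$ in disks $\check D_j$ \emph{before} choosing the iterate, and justify their finiteness by the parenthetical ``otherwise the identity below forces $\#\kappa=\infty$''. This is circular: the identity $\#\kappa=2s-2+d+e+E_n$ only holds once every hyperbolic fixed point of $\check f^{Nn}$ outside $\kappa$ already lies in one of your disks, and that is precisely what needs the family of classes to be finite. Nothing so far rules out $V$ containing infinitely many equivalence classes, and several classes can share a single component of $(S\setminus K(\kappa))\cap V$, so the number of disks need not grow with the number of classes. The paper sidesteps this by reversing the order: it first fixes $n$ with $\mathrm{tr}(\check f^n_{*,1})\ge 2g'$, and only then applies Proposition~\ref{p.regulardomain-minimalgenus} to the finitely many classes that contain a fixed point of $f^n$. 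The disks then depend on $n$, but a single $n$ suffices.

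Second, your treatment of the equality case is more laborious than necessary and has a gap. The paper's reduction is cleaner: it is enough to prove the strict inequality $\#\kappa>2g(\kappa)-2$ whenever the minimal-genus domain $V$ is not $S$; the entire second alternative then follows formally. If some $\kappa_0$ violates the strict inequality, its minimal-genus domain must be $S$, so $g(\kappa_0)=g$ and $\#\kappa_0=2g-2$. A second class would, via Corollary~\ref{c.separationconsequences}, place $\kappa_0$ in a proper periodic regular domain of genus $g$, contradicting the strict inequality there. And if $V'\subsetneq S$ is any periodic regular domain, then by Proposition~\ref{p.regulardomain-class} either $\kappa_0\subset V'$ (so $g(V')=g$ and the strict inequality on $V'$ gives a contradiction) or $\kappa_0$ lies entirely in a connected component $W$ of $S\setminus\overline{V'}$, which is again a proper periodic regular domain containing $\kappa_0$, reducing to the previous case. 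Your annulus route is therefore unnecessary, and as written it only treats the separating case: when $V'$ is a non-separating annulus there is no ``other component'' on which to run your Euler-characteristic contradiction.
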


\begin{proof} To get the proposition, it is sufficient to prove that if $\kappa$ is a class of genus $g'$ included in a periodic regular domain $V$ of genus $g'$ and if $V\not=S$, then $\# \kappa>2g'-2$. Of course one can suppose that $\kappa$ is finite. Replacing $f$ with a power of it, one can suppose that $V$ is invariant by $f$, that every point $z$ of $\kappa$ is fixed by $f$, that every branch of $\kappa$ is fixed by $f$ and that every end of $V$ is fixed by $\check f$. We write $E$ for the set of ends. For every $n\geq 1$, the set $\mathrm{fix}(\check f^n)$ is finite. Moreover, as explained in Corollary \ref{c.e-genericconsequences}, one can compute the index $i(\check f^n,z)$ of $z\in \mathrm{fix}(\check f^n)$:  \begin{itemize}
 \item
 it is equal to $1$ is $z\in E$;
\item it is equal to $1$ if $z\in V$ is an elliptic fixed point of $f^n$;
\item it is equal to $1$ if $z\in V$ is a hyperbolic fixed point of $f^n$ and its branches are fixed by $f^{2n}$ but not by $f^n$;
\item
 it is equal to $-1$ is $z\in V$ is a hyperbolic fixed point of $f^n$ and its branches are fixed by $f^n$.

 \end{itemize}

Lefschetz formula tells us that
$$\sum_{z\in \mathrm{fix}(\check f^n)}i(\check f^n ,z)= 2-\mathrm{tr}(\check f_{*,1}^n),$$
and we have seen in the proof of Proposition \ref{p.numberhyp} that there exists $n$ such that
$$ \mathrm{tr}(\check f_{*,1}^n)\geq 2g'.$$

There are finitely many classes included in $V$ that contains a fixed point of $f^n$. By Proposition \ref{p.regulardomain-minimalgenus}, there exists a finite family $(\check D_j)_{j\in I}$ of pairwise disjoint regular open disks of $\check V$, invariant by $\check f^n$, whose union contains the hyperbolic fixed points of $f^n$ in $V$ that do not belong to $\kappa$. Write $E_1$ for the set of fixed points of $f^n$ of index one belonging to $V$ but not to a $\check D_j$, $j\in J$ (meaning elliptic or hyperbolic with reflexion) and $E_2$ the set of ends of $V$ that do not belong to any $\check D_j$, $j\in J$.
We deduce from Lefschetz formula the following inequality:
$$\# \kappa\geq 2g'-2+\#E_1+\#E_2+\sum_{j\in J}\left(\sum_{z\in \check D_j, \check f^n(z) =z} i(\check f^n, z)\right)$$
By  Proposition  \ref{p.regulardomain-minimalgenus},  $\check D_j$ can be compactified by adding a circle of prime ends with an irrational rotation number. Consequenty, one has $$\sum_{z\in \check D_j, \check f^n(z) =z} i(\check f^n, z)=1.$$ One deduces that
$$\# \kappa\geq 2g'-2+\#E_1+\#E_2+\#J.$$
It remains to say that if V is not equal to $S$ it contains an end and so, at least one of the sets $E_2$ or $J$ is not empty.
\end{proof}

\section{A criteria of existence of homoclinic classes}\label{s.criteria}
Fix $f\in \cG^r_{\omega}(S)$ and $\kappa\in\cE (f)$. Suppose that for some $z\in \kappa$ there is an unstable branch of $z$ that intersects a stable branch of $z.$ By (G2) we know that these branches intersect transversally. Using the classic $\lambda$-lemma and the fact that all the branches of $z$ has the same closure we conclude that any stable branch of $z$ intersects any unstable branch of $z.$ Moreover, since all branches of $\kappa$ have the same closure, we deduce that every stable branch of $\kappa$ intersect every unstable branch of $\kappa$.  We will say that $\kappa$ is a {\it homoclinic class}.

In particular, if for some $\kappa\in \cE (f)$ there is a family $(z_i)_{i\in \Z/r\Z}$ in $\kappa$ such that for every $i\in\Z/r\Z$, there is an unstable branch of $z_i$ and a stable branch of $z_{i+1}$ that intersect, using again (G2) and the $\lambda$-lemma, we get that $\kappa$ is a homoclinic class.

Let state the main result of this section

\begin{proposition}\label{p.cycle} Let $f\in \cG^r_{\omega}(S)$ and $V$ a regular domain of genus $g'$. Every set of hyperbolic periodic points in $V$, whose cardinal is larger than $2g'$, contains at least one point whose equivalence class is homoclinic.
\end{proposition}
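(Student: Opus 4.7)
The plan is to argue by contradiction. I would assume that $\#P>2g'$ and that no point of $P$ lies in a homoclinic class, and then derive a contradiction by grouping $P$ according to the equivalence relation of Section \ref{s:An equivalence relation on the set of hyperbolic periodic points} and estimating class sizes.

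First I would write $P=\bigsqcup_{j=1}^{m}(P\cap\kappa_j)$, where $\kappa_1,\dots,\kappa_m$ are the distinct classes of $f$ meeting $P$. By the contradiction hypothesis, and the remark at the start of Section \ref{s.criteria} that being a homoclinic class is a property of the whole class rather than of individual points, every $\kappa_j$ is non-homoclinic. Applying Corollary \ref{c.separationconsequences} I would then choose pairwise disjoint periodic regular subdomains $V_1,\dots,V_m$ of $V$ with $\kappa_j\subset V_j$, for which $\sum_{j=1}^{m}g(V_j)\le g(V)=g'$.

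The main step is then to bound $\#\kappa_j$ for each non-homoclinic class in terms of $g(V_j)$. Fixing $j$ and replacing $f$ by a sufficiently high power so that each $z\in\kappa_j$ is fixed with fixed branches, I would work in the end compactification $\check V_j$, whose Euler characteristic is $2-2g(V_j)$, and write down the Lefschetz identity for $\check f^n$: each standard hyperbolic saddle in $\kappa_j$ contributes index $-1$, each end of $V_j$ contributes $+1$ by Corollary \ref{c.e-genericconsequences}, and by Proposition \ref{p.regulardomain-minimalgenus} the other classes contained in $V_j$ can be enclosed in pairwise disjoint periodic disks of $\check V_j$ each contributing total index $+1$. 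The crucial dynamical input is the criterion stated at the start of Section \ref{s.criteria}: since $\kappa_j$ is not homoclinic, the directed graph on $\kappa_j$ whose edges are $z\to z'$ whenever a branch of $W^u(z)$ meets a branch of $W^s(z')$ is acyclic. Combining this acyclicity with Theorem \ref{t.genericconsequences2} (all four branches of each $z\in\kappa_j$ accumulate on the whole of $K(\kappa_j)$) and with the fact from Theorem \ref{t.pe-genericconsequences} that the prime-end rotation numbers at the ends of $V_j$ are irrational, I would control the possible ``extra'' positive contributions to the Lefschetz sum and deduce an inequality of the form $\#\kappa_j\le 2\,g(V_j)$.

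Summing over $j$ would then give
\[
\#P \;\le\; \sum_{j=1}^{m}\#\kappa_j \;\le\; 2\sum_{j=1}^{m} g(V_j) \;\le\; 2g',
\]
contradicting $\#P>2g'$. The main obstacle I anticipate is precisely the intermediate upper bound on $\#\kappa$ for non-homoclinic $\kappa$: the Lefschetz computation on $\check V_j$ by itself only yields the lower bound $\#\kappa\ge 2g(\kappa)-2$ already used in Proposition \ref{p.localLefschetz}, so extracting an upper bound requires genuinely using the laminar geometry of the branches in the absence of transverse homoclinic crossings, together with the prime-end rigidity of Theorem \ref{t.pe-genericconsequences}, to rule out the extra positive index contributions that would otherwise allow $\#\kappa$ to exceed $2g(V_\kappa)$.
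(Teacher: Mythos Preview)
Your reduction to the single-class inequality $\#\kappa_j\le 2g(V_j)$ is logically sound: together with Corollary~\ref{c.separationconsequences} and the genus additivity $\sum_j g(V_j)\le g'$, it would indeed yield the contradiction $\#P\le 2g'$. The problem is that the method you propose for proving that inequality cannot deliver it.

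The Lefschetz identity on $\check V_j$ gives
\[
-\#\kappa_j+\big(\text{positive contributions}\big)=2-2g(V_j),
\]
so $\#\kappa_j=2g(V_j)-2+(\text{positive contributions})$. To obtain an \emph{upper} bound $\#\kappa_j\le 2g(V_j)$ you would need the positive contributions (ends of $V_j$, elliptic periodic points, other classes in $V_j$, hyperbolic points with reflection) to total at most~$2$. Nothing in the non-homoclinic hypothesis bounds these: the acyclicity of the heteroclinic graph on $\kappa_j$, Theorem~\ref{t.genericconsequences2}, and the prime-end irrationality of Theorem~\ref{t.pe-genericconsequences} all concern the branches of points of $\kappa_j$; they say nothing about how many ends $V_j$ has, how many elliptic periodic points lie in $V_j$, or how many other classes are contained in $V_j$. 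The Lefschetz computation is exactly what gave the \emph{lower} bound in Proposition~\ref{p.localLefschetz}, and it cannot be reversed into an upper bound in this way. You yourself identify this as ``the main obstacle'', and it is fatal to the approach as written: the inequality $\#\kappa\le 2g(V_\kappa)$ for non-homoclinic $\kappa$ is in fact true, but it is a \emph{consequence} of the proposition, not something provable by index counting.

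The paper's argument bypasses the class decomposition entirely and supplies the missing mechanism. One works directly with the whole family $(z_i)_{i\in I}$: choose small disks meeting the branches, build for each $z_i$ a simple loop $\lambda_i^u$ (resp.\ $\lambda_i^s$) from the two unstable (resp.\ stable) branches closed up inside the disk (resp.\ its image), and show that $\lambda_i^u\cap\lambda_j^s\subset W^u(z_i)\cap W^s(z_j)$. Working in $H_1(\check V,\Z_2)$, which has dimension $2g'$, the $\#I>2g'$ classes $[\lambda_i^s]$ are linearly dependent; on a minimal dependent subfamily $J$, the intersection pairing forces, for each $i\in J$, some $j\in J$ with $W^u(z_i)\cap W^s(z_j)\neq\emptyset$. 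This produces a heteroclinic cycle inside a single class, hence a homoclinic class. The key idea---a $\Z_2$-homological pigeonhole combined with a loop construction making $[\lambda_i^u]\wedge[\lambda_i^s]=1$ when there is no homoclinic intersection---is precisely what replaces the upper bound you were trying to extract from Lefschetz theory.
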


\begin{proof}

We suppose that $(z_i)_{i\in I}$ is a finite family of distinct hyperbolic periodic points in $V$ and that $\# I>2g'$. We want to prove that there exists $i\in I$ such that $z_i$ has a transverse homoclinic intersection. Replacing $f$ by a power of $f$, one can suppose that $V$ is invariant by $f$ and every $z_i$ fixed by $f$.

\begin{lemma}\label{l.disks} One can construct a family of closed disks $(D_i)_{i\in I}$ in $V$ such that:

\begin{enumerate}

\item $f(D_i)\cap D_i=\emptyset$;

\item $D_i=D_j$ if $z_i\sim z_j$;

\item $D_i\cap D_j=f(D_i)\cap D_j=\emptyset$ if $z_i\not\sim z_j$;

\item the branches of $z_i$ meet $D_i$;

\item the branches of $z_i$ do not meet $D_j$ if $z_i\not\sim z_j$.

\end{enumerate}
\end{lemma}
\begin{proof}
 One can find a subset $I'\subset I$ such that for every $i\in I$, there exists a unique $i'\in I'$ such that $z_i\sim z_{i'}$. For every $i'\in I'$, fix a point $z'_{i'}$ in a stable branch of $z_{i'}$, then a closed disk $D_{i'}\subset V$ neighborhood of $z'_{i'}$ and extend the family $(D_{i'})_{i'\in I'}$ to a family $(D_i)_{i\in I}$ that satisfies (2). By Corollary \ref{c.genericconsequences}, $z_i$ does not belong to the closure of the branches of $z_j$ if $z_i\not\sim z_j$. So the branches of $z_i$ do no intersect the closure of the branches of $z_j$. Consequently, if the neighborhoods $D_{i'}$ are chosen sufficiently small, then the five conditions are satisfied.\end{proof}

We construct a family of loops $(\lambda^u_i)_{i\in I}$ in the following way: we consider the first points $x_i^u$ and $y_i^u$ where the unstable branches meet $D_i$, we note $\alpha^u_i$ the simple path included in $W^u(z_i)$ joining $x_i^u$ to $y_i^u$. Of course, it  contains $z_i$. Then, we choose a simple path $\beta^u_i\subset D_i$ joining $y_i^u$ to $x_i^u$ and we define $\lambda^u_i=\alpha^u_i\beta^u_i$ by concatenation.

Similarly, we construct a family of loops $(\lambda^s_i)_{i\in I}$ in the following way: we consider the first points $x_i^s$ and $y_i^s$ where the stable branches meet $f(D_i)$, we note $\alpha^s_i$ the simple path included in $W^s(z_i)$ joining $x_i^s$ to $y_i^s$ . Then, we choose a simple path $\beta^s_i\subset f(D_i)$ joining $y_i^s$ to $x_i^s$ and we define $\lambda^s_i=\alpha^s_i\beta^s_i$.

Exceptionally, until the end of the section, the notation $[\lambda]$ will mean the homology class  in $H_1(\check V,\Z_2)$ and  $\wedge$ will mean the symplectic ``intersection form'' defined on $H_1(\check V,\Z_2)$.

\begin{lemma}\label{l.intersection} We have the following: \begin{enumerate}
\item if $W^u(z_i)\cap W^s(z_i)=\{z_i\}$, then $[\lambda^u_i]\wedge[\lambda^s_i]=1$;
\item if $i\not=j$ and $ W^u(z_i)\cap W^s(z_j)=\emptyset$, then $[\lambda^u_i]\wedge[\lambda^s_j]=0$;
\item if $i\not \sim j$, then $[\lambda^u_i]\wedge[\lambda^s_j]=0$.

\end{enumerate}
\end{lemma}

\begin{proof}
Note that for every $i$, $j$ in $I$, the sets $$f^{-1}(\alpha_i^u)\cap D_j,\enskip f(\alpha_i^s)\cap f(D_j),\enskip f(D_i)\cap D_j,$$ are all empty, as are the sets
$$\alpha_i^u\cap f(D_j),\enskip \alpha_i^s\cap D_j.$$
 Consequently $$\alpha^u_i\cap \beta^s_j=\alpha^s_i\cap \beta^u_j=\beta^s_i\cap \beta^u_j=\emptyset.$$
One deduces that
 $$\lambda^u_i\cap \lambda^s_j=\alpha^u_i\cap\alpha^s_j\subset W^u(z_i)\cap W^s(z_j),$$
which implies that
$$W^u(z_i)\cap W^s(z_i)=\{z_i\}\Rightarrow [\lambda^u_i]\wedge[\lambda^s_j]=1$$ and
$$ W^u(z_i)\cap W^s(z_j)=\emptyset\Rightarrow[\lambda^u_i]\wedge[\lambda^s_j]=0.$$
In particular
$$ z_i\not\sim z_j\Rightarrow W^u(z_i)\cap W^s(z_j)=\emptyset\Rightarrow[\lambda^u_i]\wedge[\lambda^s_j]=0.$$

\end{proof}

Continuing with the proof of Proposition \ref{p.cycle}, $H_1(\check V, \Z^2)$ has dimension $2g'$ since $g(V)=g'$. We conclude that the family $([\lambda^s_i])_{i\in I}$ is linearly dependent. So we can find a non empty  linearly dependent sub-family $([\lambda^s_i])_{i\in J}$ that is minimal for this property. This means that every $[\lambda^s_i]$, $i\in J$, is a linear combination of the $[\lambda^s_j]$, $j\in J\setminus\{i\}$.

\begin{lemma}\label{l.cycle} For every $i\in J$ there exists $j\in J$ such that one of the unstable branches of $z_i$ meets one of the stable branches of $z_j$.

\end{lemma}
\begin{proof} We will use Lemma \ref{l.intersection} twice. Suppose that $W^u(z_i)\cap W^s(z_j)=\emptyset$ for every $j\in J\setminus\{i\}$. Then we have $ [\lambda^u_i]\wedge [\lambda^s_j]=0$ for every $j\in J\setminus\{i\}$.
The class $[\lambda^s_i]$ being a linear combination of the $[\lambda^s_j]$, $j\in J\setminus\{i\}$, it implies that $[\lambda^u_i]\wedge [\lambda^s_i]=0$. We conclude that
$W^u(z_i)\cap W^s(z_i)\not=\{z_i\}$ .\end{proof}

To conclude the proof of Proposition \ref{p.cycle} it remains to say that there is a sub-family $(z_k)_{k\in \Z/r\Z}$ of the family $(z_i)_{i\in I}$ such that for every $k\in\Z/r\Z$, there is an unstable branch of $z_k$ and a stable branch of $z_{k+1}$ that intersect. The common class is homoclinic.\end{proof}

\begin{remark*} Looking at the particular case where $g'=0$, Proposition  \ref{p.cycle} asserts that if $S$ is the $2$-sphere, then if $f\in\cG^r_{\omega}(S)$,   the stable and unstable branches of  every hyperbolic periodic point intersect. In fact the proof above tells us the following: if $f$ is a diffeomorphism of class $C^1$ of the $2$-sphere and $z$ is a saddle hyperbolic fixed point such that the closure of its four branches contain a common point $z'\not\in \mathrm{fix}(f)$, then the stable and unstable manifolds of $z$ intersect. Indeed if $D$ is a closed disk containing $z$ in its interior and satisfying $f(D)\cap D=\emptyset$, one can construct two loops $\lambda^s$ and $\lambda^u$, containing neighborhoods of $z$ in $W^s(z)$ and $W^u(z)$ respectively, and such that $\lambda^s\cap \lambda^u\subset W^s(z)\cap W^u(z)$. The two loops must have point of intersection beside $z$ because $H_1(S^2,\Z_2)=0$. Note that we get Theorem \ref{t.main} in the case of the sphere.
\end{remark*}

\section{Proof of Theorem \ref{t.numberhyp}} \label{s:proof}

Theorem \ref{t.numberhyp} is an immediate consequence of Proposition \ref{p.localLefschetz} and of the following result:

\begin{proposition}\label{p.finitecovering}
Let $f\in \cG^r_{\omega}(S)$ and $\kappa\in\cE(f)$. If $\# \kappa> \max(0,2g(\kappa)-2)$, then $\kappa$ is a homoclinic class. \end{proposition}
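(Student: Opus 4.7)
The plan is to split on the genus $g := g(\kappa)$, with the serious case being $g \ge 1$: there, passing to a three-fold cyclic cover makes the ratio of periodic points to genus beat the threshold required by Proposition \ref{p.cycle}.

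If $g = 0$, by definition $\kappa$ sits in a periodic regular domain $V$ of genus zero. Since $\#\kappa > 0$, Proposition \ref{p.cycle} applied to $V$ already yields a point of $\kappa$ whose equivalence class is homoclinic, and that class must be $\kappa$ itself.

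Suppose then $g \ge 1$, so $\#\kappa \ge 2g - 1$. Fix a periodic regular domain $V$ of genus $g$ containing $\kappa$ and replace $f$ by $f^n$ so that $V$ and each point of $\kappa$ are individually fixed. Inside the end compactification $\check V$, a closed surface of genus $g$, the map $\check f^n$ acts; since $g \ge 1$, $H_1(\check V, \Z/3\Z) \ne 0$, so I would select a surjection $\phi : \pi_1(\check V) \to \Z/3\Z$ and form the associated connected three-fold cyclic cover $\pi : \widetilde V \to \check V$. Riemann--Hurwitz gives $g(\widetilde V) = 3g - 2$. The induced action of $\check f^n$ on the finite group $H_1(\check V, \Z/3\Z)$ has finite order, so, replacing $n$ by a further multiple if necessary, $\check f^n$ preserves $\ker \phi$ and admits a lift $\widetilde f : \widetilde V \to \widetilde V$. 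Each $z \in \kappa$ has three hyperbolic periodic preimages under $\pi$, so $\widetilde \kappa := \pi^{-1}(\kappa)$ satisfies
\[
\#\widetilde\kappa = 3\#\kappa \ge 3(2g - 1) = 6g - 3 > 6g - 4 = 2\, g(\widetilde V).
\]
Proposition \ref{p.cycle} applied to $\widetilde f$ on $\widetilde V$ (viewed as a regular domain of itself) and to the set $\widetilde\kappa$ then produces some $\widetilde z \in \widetilde \kappa$ whose $\widetilde f$-equivalence class is homoclinic. A transverse intersection $\widetilde x \ne \widetilde z$ of a stable and an unstable branch of $\widetilde z$ projects to a transverse intersection $x = \pi(\widetilde x)$ of stable and unstable branches of $z = \pi(\widetilde z) \in \kappa$; and $x \ne z$, since no periodic point other than $\widetilde z$ can lie in $W^s(\widetilde z)$. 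Hence $z \in \kappa$ admits a transverse homoclinic intersection, which forces $\kappa$ itself to be a homoclinic class by the opening of Section \ref{s.criteria}.

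The main obstacle is justifying the appeal to Proposition \ref{p.cycle} in the cover: $\check V$ carries no canonical smooth structure compatible with $\check f^n$, so neither does $\widetilde V$ with $\widetilde f$. This can be dealt with either by endowing $\check V$ (and via $\pi$ also $\widetilde V$) with a smooth structure for which $\widetilde f \in \cG^r_{\pi^*\omega}(\widetilde V)$ and invoking the cover-invariance of the generic conditions recorded in the introduction, or more robustly by observing that the proof of Proposition \ref{p.cycle} is purely topological-homological: it uses only (G1), (G2), the local disk construction, and the $\Z/2$-intersection pairing on $H_1(\widetilde V, \Z/2)$ (of dimension $2g(\widetilde V) = 6g - 4$), all of which survive lifting from $(V, f)$ to $(\widetilde V, \widetilde f)$.
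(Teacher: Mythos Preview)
Your approach shares the paper's core idea: pass to a finite cyclic cover to push the number of hyperbolic points past the $2g'$ threshold needed for Proposition~\ref{p.cycle}. The difference is in \emph{what} you cover. You take a three-fold cover of the end compactification $\check V$; the paper instead chooses two simple loops $\lambda_1,\lambda_2 \subset V$ with $[\lambda_1]\wedge[\lambda_2]=1$ and builds an $m$-fold cyclic cover $S'$ of the \emph{ambient surface} $S$ by cutting along $\lambda_1$. Because $\lambda_1,\lambda_2$ lie in $V$, the preimage $V'=\pi^{-1}(V)\subset S'$ is connected of genus $m(g'-1)+1$, and because $S'$ is a genuine closed surface covering $S$, a power of $f$ lifts (via Hall's theorem on finite-index subgroups) to a diffeomorphism $f'$ that is honestly an element of $\cG^r_{\pi^*\omega}(S')$. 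Proposition~\ref{p.cycle} then applies to $V'\subset S'$ with no further justification.

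Your route runs into the obstacle you flag in the last paragraph, and neither of your proposed fixes is sufficient as stated. Option~(a) asks for a smooth structure on $\check V$ making $\check f^n$ smooth and area-preserving; there is no reason the added end points (topological fixed points of Lefschetz index~$1$ with irrational prime-end rotation number) should become smooth elliptic or hyperbolic fixed points, so (G1) will generally fail. Option~(b) claims that the proof of Proposition~\ref{p.cycle} uses only (G1), (G2), and $\Z/2$-homology, but this understates what Lemma~\ref{l.disks} needs. To construct the disks $D_i$ one must know that all four branches of each $\widetilde z_i$ share a common accumulation point (Theorem~\ref{t.genericconsequences2}) and that non-equivalent points are separated from each other's branch closures (Corollary~\ref{c.genericconsequences}). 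Both of these rest on Mather's prime-end result (Theorem~\ref{t.pe-genericconsequences}) and hence on (G3); they are not purely (G1)--(G2) facts, and it is not clear they survive lifting to $\widetilde V$ when $\widetilde f$ is only a homeomorphism near the end points. The paper's choice to cover $S$ rather than $\check V$ is precisely what makes these issues disappear.
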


\begin{proof} Let $V$ be a periodic regular domain of genus $g'=g(\kappa)$ containing $\kappa$. If $g'=0$, one can apply Proposition \ref{p.cycle}: every point of $\kappa$ has a homoclinic transverse intersection and so $\kappa$ is a homoclinic class. We suppose from now on that $g'>0$, which implies that there exist a least $2g'-1$ points in $\kappa$. We fix $m$ such that
$$m(2g'-1)>2m(g'-1)+2.$$
It is sufficient to take $m\geq 3$. We can find two simple loops $\lambda_1$ and $\lambda_2$ in $V$ with a unique intersection point and real algebraic intersection number equal to $1$.
We denote $S'$ the $m$-sheet covering space of $S$ obtained by cutting and gluing cyclically $m$ copies of $S$ along $\lambda_1$. We denote $\pi: S'\to S$ the covering projection. We know that $S'$ is a closed surface of genus $m(g-1)+1$ and that $V'=\pi^{-1}(V)$ is a  regular domain of genus $m(g'-1)+1$ , because it is connected. Let $z_0$ be a fixed point of $f$ and $z'_0$ be a lift of $z_0$ in $S'$. We denote $\pi_1(S',z'_0)$ the fundamental group of $S'$ with base point $z'_0$ and $\pi_1(S,z_0)$ the fundamental group of $S$ with base point $z_0$. The image $\pi_*(\pi_1(S',z'_0))$ is a subgroup of index $m$ of $\pi_1( S, z_0)$ and $f_*$ acts as a permutation on the set of subgroup of index $m$ of $\pi_1(S,z_0)$. The fundamental group $\pi_1(S,z_0)$ being of finite type, one knows by a theorem of Hall \cite{Hal} that $\pi_1(S,z_0)$ contains finitely many subgroups of index $m$ and consequently, that there exists $q\geq 1$ such that $f_*^q$ fixes $\pi_*(\pi_1(S',z'_0))$. By the lifting theorem, one deduces that $f^q$ can be lifted to a diffeomorphism $f'$ of $S'$ that fixes $z'_0$. Note now that:

\begin{itemize}
\item  $f'$ preserves the lifted form $\pi_*(\omega)$;

\item  $f'$ satisfies the conditions (G1), (G2) and (G3);

\item  $V'$ is periodic;

\item $\pi^{-1}(\kappa)\subset \mathrm{per}_h(f')$;

\item $\#\pi^{-1}(\kappa)=m\#\kappa\geq m(2g'-1)>2m(g'-1)+2$.

\end{itemize}

By Proposition \ref{p.cycle}, there exists a point of $\pi^{-1}(\kappa)$ with  homoclinic intersection. It projects onto a point of $\kappa$ with the same property. So $\kappa$ is a homoclinic class.\end{proof}

\begin{remark*}
Observe that Theorem \ref{t.numberhyp} implies Theorem \ref{t.main} in the case that there is a pseudo Anosov component in the Nielsen-Thurston decompositon of $f$.
\end{remark*}

Recall that if $f\in \cG^r_{\omega}(S)$, then $\#\mathrm{per}_h(f)=2g-2$ if and only if $\#\mathrm{per}(f)=2g-2$. Let us conclude this section by explaining the dynamical properties of a diffeomorphism $f\in \cG^r_{\omega}(S)$ such that $\#\mathrm{per}_h(f)=2g-2$.

\begin{proposition}\label{p.dynamicalproperties}  Let $f\in \cG^r_{\omega}(S)$ such that $\#\mathrm{per}_h(f)=2g-2$. Then:
\begin{enumerate}
\item there is no periodic regular domain but $S$;
\item every periodic continuum is a point or $S$;
\item every invariant  open set is connected, has genus $2g$ and its complement is contained in an open disk $D$;
\item $f$ is transitive;
\item there is a unique class in $\cE(f)$;
\item every branch is dense;
\item a stable branch and an unstable branch do not intersect.
\end{enumerate}
\end{proposition}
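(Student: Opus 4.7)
The plan is to force the dichotomy in Proposition \ref{p.localLefschetz} into its second alternative via a horseshoe argument, which delivers (1) and (5) immediately, and then to derive the remaining properties from the resulting rigidity. Proposition \ref{p.numberhyp} at the outset rules out elliptic periodic points: any would force the strict inequality $\#\mathrm{per}_h(f)>2g-2$.

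I then apply Proposition \ref{p.localLefschetz}. If we were in the ``strict'' alternative, every class $\kappa\in\cE(f)$ would satisfy $\#\kappa>2g(\kappa)-2$; combined with $\#\kappa\geq 1$ this gives $\#\kappa>\max(0,2g(\kappa)-2)$, so Proposition \ref{p.finitecovering} would make every class a homoclinic class. A transverse homoclinic intersection produces a Smale horseshoe and hence infinitely many periodic orbits, contradicting $\#\mathrm{per}_h(f)=2g-2$. So we must be in the second alternative, which gives (1) and (5) at once. Property (6) follows: the connected components of $S\setminus K(\kappa)$ are periodic regular domains (as used in the proof of Corollary \ref{c.separationconsequences}), and (1) leaves no room for any except $S$, so $S\setminus K(\kappa)=\emptyset$ and every branch is dense. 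Property (7) is a final instance of the horseshoe argument: an intersection of a stable and an unstable branch would, by the discussion in Section \ref{s.criteria}, make $\kappa$ a homoclinic class and again produce infinitely many periodic orbits.

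For (2), I would argue by contradiction. Let $K$ be a periodic continuum with $K\neq\{\text{point}\}$ and $K\neq S$; after passing to a power assume $f(K)=K$. First, $K$ has empty interior: otherwise, by (6), some branch $B$ meets $\mathrm{int}(K)$, so $\mathrm{int}(K)\cap B$ is a non-empty open-in-$B$ subset invariant under a suitable iterate $f^Q$; since $f^Q$ acts on the branch $B\cong(0,\infty)$ as a dilation, its only non-empty invariant open subset is $B$ itself, giving $B\subseteq\mathrm{int}(K)$ and $K\supseteq\bar B=S$, a contradiction. Applying the same invariance-of-a-branch principle to any connected component $V$ of $S\setminus K$, some dense branch must lie in $V$, so $\bar V=S$ and there is no room for another component; thus $S\setminus K$ is connected. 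One is then reduced to $V:=S\setminus K$ being a single connected $f^q$-invariant open set whose frontier is the nowhere-dense continuum $K$, and (1) forbids $V$ from being a regular domain (else $V=S$ and $K=\emptyset$), so $V$ must have infinite type. The main obstacle is to exclude this infinite-type case: I would attempt to produce a proper periodic regular sub-domain of $V$, contradicting (1), by exploiting the permutation action of $f^q$ on the ends of $V$ together with Theorem \ref{t.pe-genericconsequences}, or by appealing to the transverse foliation machinery of Section \ref{ss.foliation} to show no such wild continuum $K$ can be invariant under an area-preserving map satisfying our generic hypotheses.

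Once (2) is established, (3) is straightforward: for any non-empty invariant open set $U$, each connected component of $S\setminus U$ is a periodic continuum, hence by (2) a single point (it cannot be $S$ since $U\neq\emptyset$), and being periodic it lies in $\kappa$. So $S\setminus U$ is a finite subset of $\kappa$, contained in an open disk (any finite subset of a connected surface is), and $U$ is the corresponding finitely-punctured surface, matching the stated topological invariants. Finally (4) follows: for $x\notin\kappa$, the set $S\setminus\overline{\{f^n(x):n\in\Z\}}$ is invariant open, so by (3) its complement is finite, forcing $\overline{\{f^n(x):n\in\Z\}}=S$ and transitivity.
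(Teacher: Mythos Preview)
Your arguments for (1), (5), and (6) are correct and match the paper. The remaining items contain real gaps.

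For (2), the obstacle you describe does not exist. Section~\ref{ss.regular} already records the topological fact you need: if $K$ is a finite union of closed connected sets none of which is a point, then every connected component of $S\setminus K$ is a regular domain. In particular the complement of a continuum automatically has components of finite type; there is no infinite-type case to exclude. With this, (2) is immediate from (1): if $K\neq S$ is a periodic continuum not reduced to a point, the components of $S\setminus K$ are regular, and they are periodic because $f$ preserves area (a component and its images have equal measure, so the orbit is finite). This contradicts (1).

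For (7), your appeal to Section~\ref{s.criteria} only handles the case where a stable and an unstable branch of the \emph{same} point meet, or where there is a full heteroclinic cycle. A single transverse intersection $W^u(z)\cap W^s(z')\neq\emptyset$ with $z\neq z'$ does not by itself produce a cycle, and the $\lambda$-lemma alone will not close it up. The paper's argument is genuinely different: from pieces of the two intersecting branches one builds a simple loop $\lambda$. If $[\lambda]=0$ it bounds, and denseness of the branches forces every stable branch to cross the unstable arc and every unstable branch to cross the stable arc, yielding a homoclinic point. If $[\lambda]\neq 0$ one passes to the associated $2$-fold cover (as in the proof of Proposition~\ref{p.finitecovering}) and repeats the argument there.

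For (3) and (4), your ordering and your argument for (3) are both problematic: you assert that each component of $S\setminus U$ is a periodic continuum, but $S\setminus U$ may have infinitely many components and there is no reason $f$ acts periodically on them. The paper instead derives (4) directly from (2) (components of the invariant open set $U$ are periodic by the area argument, hence have closure $S$, so $U$ is dense), and proves (3) by a covering argument: for each essential simple loop $\lambda$ one lifts to the associated $2$-fold cover, where the lifted $f$ still has the minimal number of periodic points, forcing the preimage of $U$ to be connected and hence $H_1(U,\R)\to H_1(S,\R)$ to be surjective. The disk containing $S\setminus U$ is then extracted from this.
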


\begin{proof} The assertions (1) and (5) are immediate consequences of Proposition \ref{p.localLefschetz} and Proposition \ref{p.finitecovering} and the assertion (2) an immediate consequence of (1). The assertion (4) follows from (2). Indeed, to prove that $f$ is transitive, one needs to prove that every invariant open set is dense. Fix a connected component. It is periodic, so its closure is a continuum not reduced to a point. By (2), it is equal to $S$. The assertion (6) also follows from (2) because the closure of a branch is a continuum not reduced to a point.

Now, let us prove (3). If $V$ is an invariant open set,  its connected components are periodic and not reduced to  a point. They are dense, and so $V$ is connected. If $\lambda$ is a simple loop non homologous to zero, and if $S'$ is the $2$-fold covering space obtained by pasting two copies of $S$ along $\lambda$, then the lift of $V$ is connected. Indeed, as explained in the proof of Proposition \ref{p.finitecovering}, there exists $q\geq 1$ such that $f^q$ can be lifted to a diffeomorphism $f'\in \cG^r_{\pi^*(\omega')}(S')$, where $\pi: S'\to S$ is the covering projection. This lift has exactly $4g-4$ hyperbolic periodic points and so $\pi^{-1}(V)$ is connected. This implies that there exists a loop $\lambda'\subset V$ such that $[\lambda']\wedge [\lambda]>0$.   Thus, one can find a compact connected surface with boundary $\Sigma\subset V$ such that the image of the morphism $(i')_*: H_1(\Sigma,\R)\to H_1(S,\R)$ induced by the inclusion $i': \Sigma\hookrightarrow S$  is equal to the image of the morphism $i_*: H_1(V,\R)\to H_1(S,\R)$ induced by the inclusion $i: V\hookrightarrow S$. So, for every boundary circle $\lambda$ of $\Sigma$ and every loop $\lambda'\subset V$, one has $[\lambda']\wedge [\lambda]=0$. One deduces that $[\lambda]=0$, so $\lambda$ bounds a subsurface $\Sigma'$ whose interior is disjoint from $\Sigma$. But for every $\lambda'\subset\Sigma'$ and every $\lambda\subset V$, one has $[\lambda']\wedge [\lambda]=0$, and so $[\lambda']=0$. One deduces that $\Sigma'$ is a closed disk. Consequently, the complement of $V$ can be included in a finite union of closed disks. Then, it is easy to construct a unique disk that contains this complement.

It remains to prove (7). Suppose that $z$ and $z'$ are hyperbolic periodic points such that one of the stable branches of $z$, denoted $\Gamma^u$,  meets one of the unstable branches of $z'$, denoted $\Gamma^s$.
By property (G2) the branches have a transverse intersection and so it is possible to construct a simple loop $\lambda$, concatenation of a sub-path of $\Gamma^u$ and of a sub-path of $\Gamma^s$. If $\lambda$ is homological to zero, its bounds two subsurfaces.  The denseness of the branches implies that every stable branch meets $\Gamma^u$ and every unstable branch meets $\Gamma^s$ and so there are homoclinic intersections, in contradiction with the hypothesis. If $\lambda$ is not homological to zero, we consider the $2$-fold covering space $S'$ as above. The denseness of the branches of $f'$, implies that every branch of $f'$ intersects one of the two lifts of $\lambda$ and consequently, every branch of $f$ meets $\lambda$. We conclude as in the first case.
 \end{proof}

\section{Dehn twist maps and the intersection property}\label{s.dehn}

The main object of this section is the study of homeomorphisms isotopic to a Dehn twist map. We will suppose that $g\geq 2$ in the whole section. We recall the definition of a Dehn twist map.

\begin{definition}
A {\it Dehn twist map} of $S$ is an orientation preserving homeomorphism $h$ of $S$ that satisfies the following properties:

\begin{itemize}
\item there exists a non empty finite family $(A_{i})_{i\in I}$ of pairwise disjoint invariant essential closed annuli;

\item no connected component of $S\setminus \cup_{i\in I}A_i$ is an annulus;

\item $h$ fixes every point of $S\setminus \cup_{i\in I}A_i$;

\item for every $i\in I$, the map $h_{\vert A_i}$ is conjugate to $\tau^{n_i}$, $n_i\not=0$, where $\tau$ is the homeomorphism of $\T\times[0,1]$ that is lifted to the universal covering space $\R\times[0,1]$ by $\widetilde\tau:(x,y)\mapsto (x+y,y)$.

\end{itemize}


\end{definition}

We will fix from now on a Dehn  twist map $h$ and a homeomorphism $f$ isotopic to $h$. We will denote $(A_{i})_{i\in I}$ the family of twisted annuli and  $(n_i)_{i\in I}$  the family of twist coefficients.

The closure of a connected component of $S\setminus \cup_{i\in I}A_i$  is a surface with non empty boundary. Choose an annulus $A=A_{i_0}$ and write $\lambda$ and $\lambda'$ its boundary circles. The circle $\lambda$ belongs to the closure $\Sigma$ of
a connected component of $S\setminus \cup_{i\in I}A_i$  and similarly the circle $\lambda'$ belongs to the closure $\Sigma'$ of
another connected component. It may happen that $\Sigma=\Sigma'$.  Let $\widetilde A$ be a connected component of $\widetilde\pi^{-1}(A)$, where $\widetilde\pi: \widetilde S\to S$ is the universal covering space. The stabilizer of $\widetilde A$ in the group $G$ of covering automorphisms is a cyclic group generated by an element $T_0\not=\mathrm{Id}$. The boundary of $\widetilde A$ is the union of two lines $\widetilde \lambda$ and $\widetilde\lambda'$ that lift $\lambda$ and $\lambda'$ respectively. There exists a unique lift $\widetilde h$ of $h$ that fixes every point of $\widetilde\lambda$.  This lift coincides on $\widetilde \lambda'$ with $T_0^{n_i}$ or with $T_0^{-n_i}$. Replacing $T_0$ with $T_0^{-1}$  if necessary, one can suppose that the first situation occurs. There exists a unique connected component $\widetilde \Sigma$ of $\widetilde\pi^{-1}(\Sigma)$ that contains $\widetilde \lambda$ and a unique connected component $\widetilde \Sigma'$ of $\widetilde\pi^{-1}(\Sigma')$ that contains $\widetilde \lambda'$. Moreover $\widetilde h$ fixes every point
 of $\widetilde \Sigma$ and coincides with  $T_0^{n_i}$ on $\widetilde \Sigma'$. Note that $\widetilde h$ commutes with $T_0$ and so lifts a homeomorphism $\widehat h$ of the open annulus $\widehat S=\widetilde S/T_0$. The map $\widehat h$ is isotopic to the identity, which means that it preserves the orientation and fixes the two ends of $\widehat S$. There exists an isotopy from $h$ to $f$, uniquely defined up to homotopy (because $g\geq 2$) and this isotopy can be lifted to an isotopy from $\widetilde h$ to a lift $\widetilde f$ of $f$. Here again $\widetilde f$ commutes with $T_0$ and lifts a homeomorphism $\widehat f$ of $\widehat S$ isotopic to the identity.

 One can furnish $S$ with a Riemannian metric of negative curvature, suppose that $\widetilde S$ is nothing but the disk $\D=\{z\in \C\,\vert\, \vert z\vert <1\}$ and that $G$ is a group of M\H obius automorphisms of $\D$. It is well known that $\widetilde h$ and $\widetilde f$ can be extended to homeomorphisms of  $\overline{\D}=\{z\in \C\,\vert\, \vert z\vert \leq1\}$ and that their extensions coincide on  $\S=\{z\in \C\,\vert\, \vert z\vert =1\}$. The extension is defined by the following property: if $\widetilde \gamma$ is a geodesic of $\D$ joining $\alpha\in \S$ to $\omega\in \S$ that lifts a closed geodesic $\gamma$ of $S$, and if $\gamma'$ is the unique closed geodesic freely homotopic to $f(\gamma)$, there exists a unique lift $\widetilde\gamma'$ of $\gamma'$ that remains at bounded distance of $\widetilde f(\widetilde\gamma)$ and this geodesic joins $\widetilde f(\alpha)$ to $\widetilde f(\omega)$.  Let us give an equivalent interpretation. For every $T\in G$, there exists $T'\in G$ such that for every $z\in \widetilde S$, the images by $\widetilde h$ and $\widetilde f$ of a path joining $z$ to $T(z)$ join $\widetilde h(z)$ to $T'(\widetilde h(z))$ and $\widetilde f(z)$ to $T'(\widetilde f(z))$ respectively. Writing $T'=\widetilde h_*(T)=\widetilde f_*(T)$, one gets a morphism $\widetilde h_*=\widetilde f_*: G\to G$. Now, as we suppose that $G$ consist of M\H obius transformations, we know that the $\alpha$-limit set of a non trivial element $T\in G$ is reduced to a point $\alpha(T)$ and the $\omega$-limit set is reduced to a point $\omega(T)$. The extension is defined by the following property: for every $T\in G\setminus\{\mathrm{Id})$, one has
 $\widetilde f(\alpha(T))=\alpha(\widetilde f_*(T))$ and $\widetilde f(\omega(T))=\omega(\widetilde f_*(T))$.

 In the situation above, the points $\alpha(T_0)$ and $\omega(T_0)$ are  distinct and fixed by $\widetilde f$ and the quotient space $\left(\overline \D\setminus \{\alpha(T_0), \omega(T_0)\}\right)/T_0$ is a compact annulus $\overline{\widehat S}$ that compactifies $\widehat S$. The maps $\widehat f$ and $\widehat h$ can be extended to this compact annulus and have the same extension on the boundary circles. The fact that $\Sigma$ is not an annulus implies that one of the  two component of $\S\setminus \{\alpha(T_0), \omega(T_0)\}$ meets the closure of $\widetilde \Sigma$ in $\overline \D$. As $\widetilde \Sigma$ consists of fixed points of $\widetilde h$, we deduce that $\widetilde h$ fixes some points of this component and $\widetilde f$ fixes the same points. Similarly, $\widetilde f\circ T_0^{-n_i}$ fixes some points of the other component.  Consequently, $\widehat f$ admits fixed points on both boundary circles of $\overline{\widehat A}$ and so,  the rotation numbers (as elements of $\T$) of both circles are equal to $0$. But the difference between the real rotation numbers is non zero, it is equal to $n_i$. The map $\widehat f$ satisfies a boundary twist condition.

\begin{proposition} \label{prop:intprop} If $f$ has finitely many periodic points, then there exists an essential simple loop $\widehat \lambda''$ such that $\widehat f(\widehat \lambda'')\cap\widehat \lambda''=\emptyset$.\end{proposition}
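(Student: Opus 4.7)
The plan is to argue by contradiction: assume every essential simple loop $\widehat\lambda''$ of $\overline{\widehat S}$ satisfies $\widehat f(\widehat\lambda'')\cap\widehat\lambda''\neq\emptyset$, and deduce that $f$ must have infinitely many periodic points. The boundary twist condition for $\widehat f$ (real boundary rotation numbers $0$ and $n_{i_0}$) was established just above the statement. Combined with the assumed intersection property, Theorem~\ref{t.PBtheorem} will produce, for every rational $p/q$ strictly between $0$ and $n_{i_0}$, a periodic point $\widehat z_{p/q}\in\widehat S$ of $\widehat f$ with rotation number exactly $p/q$, yielding infinitely many $\widehat f$-periodic orbits with pairwise distinct rotation numbers.

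Each such orbit projects via $\widehat\pi\colon\widehat S\to S$ to a periodic orbit of $f$, so the key step is to show that two $\widehat f$-periodic lifts of points in the same $f$-orbit must have rotation numbers differing by an integer multiple of $n_{i_0}$. Granted this, since all Birkhoff rotation numbers lie in the interval $(0,n_{i_0})$ of length $n_{i_0}$, distinct rotation numbers in this interval correspond to distinct $f$-orbits, producing infinitely many periodic orbits of $f$ and the desired contradiction.

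To establish the key step, I would first iterate $\widehat f$ (which preserves rotation numbers) to reduce to two lifts $\widehat z_1=[\widetilde z]$ and $\widehat z_2=[T\widetilde z]$ of the same point $z\in S$, with $f^k(z)=z$ and $T\in G=\pi_1(S)$. If $Q$ is a common multiple of the two $\widehat f$-periods, the equivariance $\widetilde f^Q\circ T=\widetilde f_*^Q(T)\circ\widetilde f^Q$ combined with $\widetilde f^Q(\widetilde z)=T_0^{N_1}\widetilde z$ and $\widetilde f^Q(T\widetilde z)=T_0^{N_2}(T\widetilde z)$ yields the identity $\widetilde f_*^Q(T)\,T_0^{N_1}=T_0^{N_2}\,T$ in $G$. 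Abelianizing and using that $f$ is isotopic to the Dehn twist $h$, so that $\widetilde f_*$ acts on $H_1(S,\Z)$ by the shearing $[\beta]\mapsto [\beta]+n_{i_0}(\beta\cdot\lambda_{i_0})[\lambda_{i_0}]$ with $[T_0]=[\lambda_{i_0}]$, the identity reduces to $N_2-N_1=Q\,n_{i_0}([T]\cdot[\lambda_{i_0}])$, so the rotation difference $N_2/Q-N_1/Q$ lies in $n_{i_0}\Z$.

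The main obstacle will be that this $H_1$-level computation requires $[\lambda_{i_0}]\neq 0$ in $H_1(S,\Z)$, and so fails when $\lambda_{i_0}$ is separating. To handle that case one would have to argue inside $\pi_1(S)$ itself: in the hyperbolic surface group $G$ the centralizer of the primitive loxodromic element $T_0$ reduces to $\langle T_0\rangle$, and together with the explicit Dehn twist formula for $\widetilde f_*$ on $\pi_1(S)$, this should force $T\in\langle T_0\rangle$ whenever $\widetilde f_*^Q(T)\,T_0^{N_1}\,T^{-1}\in\langle T_0\rangle$; equivalently, each $f$-orbit admits at most one $\widehat f$-periodic lift in $\widehat S$, which is enough to conclude by the same counting argument.
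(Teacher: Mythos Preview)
Your overall strategy --- contradict the intersection property, apply the Poincar\'e--Birkhoff theorem to produce infinitely many $\widehat f$-periodic orbits with pairwise distinct rational rotation numbers, then push them down to $f$-periodic orbits --- is the same as the paper's. The divergence is in the final counting step, and there your argument has a real gap.

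Your formula for the action of $\widetilde f_*=h_*$ on $H_1(S,\Z)$ is incorrect: the Dehn twist map $h$ in the paper twists along a whole family $(A_i)_{i\in I}$ of annuli, so $h_*$ acts by $[\beta]\mapsto [\beta]+\sum_{i\in I}n_i(\beta\cdot\lambda_i)[\lambda_i]$, not by a single shear along $\lambda_{i_0}$. With the correct formula, abelianizing $\widetilde f_*^Q(T)\,T_0^{N_1}=T_0^{N_2}\,T$ gives $(N_2-N_1)[\lambda_{i_0}]=Q\sum_i n_i([T]\cdot[\lambda_i])[\lambda_i]$, and the right-hand side need not be a multiple of $n_{i_0}[\lambda_{i_0}]$. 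For instance, on a genus $\geq 3$ surface one can take two disjoint non-isotopic curves $\lambda_{i_0},\lambda_j$ with $[\lambda_{i_0}]=[\lambda_j]$ and twist coefficients $n_{i_0}=3$, $n_j=-2$; then the rotation difference comes out to $1\notin 3\Z$, so two Birkhoff orbits with rotation numbers in $(0,3)$ could well project to the same $f$-orbit. Your fallback $\pi_1$-argument (``this should force $T\in\langle T_0\rangle$'') is only asserted, not proved, and with several twist curves it is not clear it holds.

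The paper avoids all of this with a short compactness argument: the periodic orbits with rotation number in $[1/3,2/3]$ must all meet a fixed compact set $\widehat K\subset\widehat S$, since otherwise a sequence of such orbits would accumulate on a boundary circle and force its rotation number into $[1/3,2/3]$, contradicting that the boundary rotation numbers are integers. Because $\widehat K$ is compact, the covering projection $\widehat\pi$ is uniformly finite-to-one on $\widehat K$, so infinitely many periodic points in $\widehat K$ yield infinitely many periodic points of $f$. This bypasses any algebraic analysis of how lifts of a single $f$-orbit can be periodic.
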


\begin{proof} If
$\widehat f$ satisfies the intersection property, which means that there is no essential simple closed loop $\widehat \lambda''$ such that $\widehat f(\widehat \lambda'')\cap\widehat \lambda''=\emptyset$, then by Theorem \ref{t.PBtheorem} we have that for every rational number $p/q\in(0,n_i)$, written in an irreducible way, there exists a periodic point $\widehat z$ of period $q$ and rotation number $p/q$, which means lifted by a point $\widetilde z$ such that $\widetilde f^q(\widetilde z)=T_0^p(\widetilde z)$. There exists a compact set $ \widehat K\subset \widehat S$ that meets every periodic orbit of rotation number $\rho\in[1/3,2/3]$. Indeed, if this is not the case, one can find a sequence of periodic orbits $( \widehat O_n)_{n\geq 0}$ that converges in the Hausdorff topology to a  subset of a boundary circle of $\overline{\widehat S}$,  such that the rotation number of $ \widehat O_n$ belongs to $[1/3,2/3]$. But this would imply that the rotation number of this boundary circle also belongs to $[1/3,2/3]$ and we know that it is equal to an integer. Consequently $ \widehat K$ contains infinitely many periodic points. The set $ \widehat K$ being compact, there exists $r\geq 1$ such that every point $z\in S$ has at most $r$ preimages in $ \widehat K$ by the covering projection $\widehat\pi :\widehat S\to S$. As $\widehat\pi$ sends periodic points of $\widehat f$ onto periodic points of $f$, we can conclude that $f$ has infinitely many periodic points.
\end{proof}

Let us show an example where the intersection property must be verified.

\begin{proposition} \label{prop:zerohom} Suppose that the boundary circles of $A_{i_0}$ are homologous to zero and that $f$ preserves a finite Borel measure $\mu$ with total support. Then $\widehat f$ satisfies the intersection property and so $f$ has infinitely many periodic points.
\end{proposition}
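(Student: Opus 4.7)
The plan is a proof by contradiction. Suppose $\widehat f$ violates the intersection property: there is an essential simple loop $\widehat\lambda''\subset\widehat S$ with $\widehat f(\widehat\lambda'')\cap\widehat\lambda''=\emptyset$. Since $\widehat f$ is isotopic to the identity on the open annulus $\widehat S$ and hence fixes its two ends, after possibly replacing $f$ by $f^{-1}$ we may arrange $\widehat f(\overline{\widehat U})\subsetneq\widehat U$, where $\widehat U$ is one of the two sub-annuli cut off by $\widehat\lambda''$. Then $\widehat W:=\widehat U\setminus\widehat f(\overline{\widehat U})$ is a nonempty open annular region between $\widehat\lambda''$ and $\widehat f(\widehat\lambda'')$ with compact closure in $\widehat S$, and the iterates $\widehat A_n:=\widehat f^n(\overline{\widehat W})$, $n\in\Z$, form a pairwise (essentially) disjoint family of compact annuli tiling $\widehat S$.

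The strategy is to push back to $S$ and combine $\mu$-invariance with the null-homology hypothesis. Pull back $\mu$ to a $\sigma$-finite, $\widehat f$-invariant, full-support Borel measure $\widehat\mu$ on $\widehat S$ by local injectivity: on any Borel set $\widehat B$ on which $\widehat\pi$ restricts to a homeomorphism onto its image, set $\widehat\mu(\widehat B):=\mu(\widehat\pi(\widehat B))$, extended by countable additivity. Each $\widehat A_n$ is compact, so $\widehat\mu(\widehat A_n)=\widehat\mu(\widehat A_0)<\infty$, and full support of $\widehat\mu$ together with the nonempty interior of $\widehat W$ gives $\widehat\mu(\widehat A_0)>0$. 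Each projected loop $\widehat\pi(\widehat f^n(\widehat\lambda''))$ is freely homotopic to $\lambda$ in $S$, hence (by the null-homology hypothesis) null-homologous, so each projection $\widehat\pi(\widehat A_n)$ is a $2$-chain in $S$ bounded by the null-homologous $1$-cycle $\widehat\pi(\widehat f^{n+1}(\widehat\lambda''))-\widehat\pi(\widehat f^n(\widehat\lambda''))$. The goal is then to show that the aggregated ``signed'' $\mu$-mass of these projected $2$-chains is bounded by a fixed multiple of $\mu(S)<\infty$, yet each contributes a uniform positive amount, producing a contradiction for $n$ large.

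The main obstacle is making this accounting precise, because the projection $\widehat\pi:\widehat A_n\to S$ need not be injective and one must track multiplicities carefully: naively, $\widehat\mu(\widehat U)=\infty$, so one cannot simply argue via $\widehat f$-invariance on $\widehat U$. Here the null-homology hypothesis enters structurally. Since $\lambda$ separates $S$ into subsurfaces $\Sigma_1,\Sigma_2$, an analysis of the double cosets $\langle T_0\rangle\backslash G/\pi_1(\Sigma_j)$ using the fact that centralizers in the closed hyperbolic surface group $G$ are cyclic shows that the only component of $\widehat\pi^{-1}(\Sigma_j)$ with non-trivial $T_0$-stabilizer is the distinguished one bounded by the core curve $\widehat\lambda_0=\widetilde\lambda/T_0$; every other component is simply connected and hence embeds in the open annulus $\widehat S$ as a topological disk. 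The plan is to exploit this topological rigidity to relate the uniform positive $\widehat\mu$-mass $\widehat\mu(\widehat A_0)$ with the $\mu$-mass of the projections $\widehat\pi(\widehat A_n)$ in a summable way, and to use Poincar\'e recurrence on $(S,f,\mu)$ to force the pairwise intersection of the $\widehat\pi(\widehat A_n)$ to be controlled modulo the null-homology, summing to at most $\mu(S)<\infty$. With the intersection property for $\widehat f$ thus established, Proposition \ref{prop:intprop} in its contrapositive form then yields the existence of infinitely many periodic points of $f$.
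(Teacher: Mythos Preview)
Your proposal sets up the right contradiction strategy but does not actually prove the proposition: the central step is left as a ``plan''. After producing the fundamental domain $\widehat W$ and the tiling $\widehat A_n=\widehat f^n(\overline{\widehat W})$, you acknowledge that the projection $\widehat\pi$ need not be injective on $\widehat A_n$, state a structural claim about the components of $\widehat\pi^{-1}(\Sigma_j)$, and then write that ``the plan is to exploit this topological rigidity \dots'' and to use Poincar\'e recurrence to bound the total projected mass. None of this is carried out, and it is not clear how the double-coset/disk analysis you sketch converts into the promised bound $\sum \mu(\widehat\pi(\widehat A_n))\le C\,\mu(S)$. As written, the argument stops precisely at the point where the null-homology hypothesis must do its work.

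The paper's proof shows how to use that hypothesis directly, and it is worth comparing because it bypasses all of the multiplicity accounting you are struggling with. Since the projection $\lambda''=\widehat\pi(\widehat\lambda'')$ is freely homotopic to $\lambda$ and hence null-homologous in $S$, there is a \emph{dual function} $\delta_{\lambda''}:S\setminus\lambda''\to\Z$, well defined up to an additive constant, with $\delta_{\lambda''}(z_2)-\delta_{\lambda''}(z_1)$ equal to the algebraic intersection number with $\lambda''$ of any path from $z_1$ to $z_2$. One then computes, for $z\notin\lambda''\cup f^{-1}(\lambda'')$,
\[
\delta_{\lambda''}(f(z))-\delta_{\lambda''}(z)
=\sum_{\tau\in G/\langle T_0\rangle}\Bigl(\delta_{\widetilde f^{-1}(\tau(\widetilde\lambda''))}(\widetilde z)-\delta_{\widetilde f_*^{-1}(\tau)(\widetilde\lambda'')}(\widetilde z)\Bigr),
\]
and each summand is $\le 0$ because $\overline{L(\widetilde f_*^{-1}(\tau)(\widetilde\lambda''))}\subset \widetilde f^{-1}(L(\tau(\widetilde\lambda'')))$; moreover some summand is strictly negative on a nonempty open set. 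Thus $\delta_{\lambda''}\circ f-\delta_{\lambda''}$ is a bounded, $\mu$-a.e.\ defined function on $S$ which is $\le 0$ and $<0$ on a set of positive measure, contradicting $\int(\delta_{\lambda''}\circ f-\delta_{\lambda''})\,d\mu=0$. This is exactly the ``signed mass'' bookkeeping you were aiming for, but performed pointwise on $S$ rather than through the cover; the dual function is the right object because it encodes the null-homology hypothesis in a way that survives the non-injectivity of $\widehat\pi$.
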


\begin{proof} We will argue by contradiction. If $\widehat f$ does not satisfy the intersection property, there exists an essential simple loop $\widehat\lambda''$ of $\widehat S$ such that $\widehat f(\widehat\lambda'')\cap \widehat \lambda''=\emptyset$. Of course $\widehat\lambda''$ can be perturbed in such a way that it projects onto a loop $\lambda''\subset S$ satisfying $\mu(\lambda'')=0$. The loop $\widehat \lambda''$ can be lifted in a line $\widetilde\lambda''$ of $\widetilde S$ such that $\widetilde f(\widetilde\lambda'')\cap \widetilde\lambda''=\emptyset$. Let us orientate $\widetilde\lambda''$ in such a way that $\widetilde f(\widetilde\lambda'')$ is on the left of $\widetilde\lambda''$.

This implies that $\ \overline{L(\widetilde \lambda'')}\subset \widetilde f^{-1}(L(\widetilde \lambda''))$ and consequently that $ \overline{L(T(\widetilde \lambda''))}\subset \widetilde f^{-1}(L(\widetilde f_*(T)(\widetilde \lambda'')))$ for every $T\in G$.

One can define a {\it dual function} $\delta_{\lambda''}$ on the complement of $\lambda''$ because $\lambda''$ is homologous to zero, being freely homotopic to the boundary circles of $A_{i_0}$. Indeed, since $\lambda''$ is homologous to zero, the intersection number of a path with $\lambda''$ (once is well defined) depends only on the end points of the path; hence one can define a function $\delta_{\lambda''}: S\setminus\{\lambda''\}\to \Z$ such that $\delta_{\lambda''}(z_2)-\delta_{\lambda''}(z_1)$ is equal to the intersection number of a path joining $z_1$ to $z_2$ with $\lambda''.$ This dual function is well defined up to an additive constant.

Let us proof that $\delta_{\lambda''}(f(z))\leq \delta_{\lambda''}(z)$ if $z\not\in \lambda''\cup f^{-1}(\lambda'')$ and that there exists a non empty open set $U\subset S\setminus  (\lambda''\cup f^{-1}(\lambda''))$ such that $\delta_{\lambda''}(f(z))<\delta_{\lambda''}(z)$ if $z\in U$.
Fix $z\not\in \lambda''\cup f^{-1}(\lambda'')$ and choose a lift $\widetilde z\in \widetilde S$ of $z$. We have the following formula:
$$\begin{aligned}\delta_{\lambda''}(f(z))-\delta_{\lambda''}(z)= &\# \{T\in G\,\vert \,\widetilde f(\widetilde z)\in R(T(\widetilde \lambda''))\enskip \mathrm{and} \enskip \widetilde z\in L(T(\widetilde \lambda''))\}\\
-&\# \{T\in G\,\vert \,\widetilde f(\widetilde z)\in L(T(\widetilde \lambda''))\enskip \mathrm{and} \enskip \widetilde z\in R(T(\widetilde \lambda''))\}.
\end{aligned}$$
We can write the formula in a different way. Denote $G/\langle T_0\rangle$ the set of left cosets, where  $\langle T_0\rangle$ is the cyclic group generated by $T_0$, and keep the notation $\widetilde f_*$ for the map $\tau=T\langle T_0\rangle\mapsto \widetilde f_*(\tau)=\widetilde f_*(T)\langle T_0\rangle$. If $\tau =T\langle T_0\rangle$, write $\tau(\widetilde \lambda'')=T(\widetilde \lambda'')$. If $\widetilde \lambda$ is an oriented line of $\widetilde S$, write $\delta_{\widetilde \lambda}$ for the map equal to $1$ on $R(\widetilde \lambda)$ and $0$ on $L(\widetilde \lambda)$.  Noting that the sums below have finitely many non zero terms, we have
$$\begin{aligned}&\enskip\enskip\enskip\enskip\enskip\delta_{ \lambda''}(f(z))-\delta_{\lambda''}(z)=\\ &= \sum_{\tau\in G/\langle T_0\rangle} \delta_{\tau (\widetilde \lambda'')} (\widetilde f(\widetilde z))-\delta_{\tau (\widetilde \lambda'')} (\widetilde z)\\
 &= \sum_{\tau\in G/\langle T_0\rangle} \delta_{\widetilde f^{-1}(\tau (\widetilde \lambda''))} (\widetilde z)-\delta_{\tau (\widetilde \lambda'')} (\widetilde z)\\
 &=  \sum_{\tau\in G/\langle T_0\rangle} \delta_{\widetilde f^{-1}(\tau (\widetilde \lambda''))} (\widetilde z)-\delta_{\widetilde f_*^{-1}(\tau) (\widetilde \lambda'')} (\widetilde z)+\sum_{\tau\in G/\langle T_0\rangle} \delta_{\widetilde f_*^{-1}(\tau) (\widetilde \lambda'')} (\widetilde z)-\delta_{\tau (\widetilde \lambda'')} (\widetilde z)\\
  &=  \sum_{\tau\in G/\langle T_0\rangle} \delta_{\widetilde f^{-1}(\tau (\widetilde \lambda''))} (\widetilde z)-\delta_{\widetilde f_*^{-1}(\tau) (\widetilde \lambda'')} (\widetilde z).
\end{aligned}$$
Each function $\delta_{\widetilde f^{-1}(\tau (\widetilde\lambda''))} -\delta_{\widetilde f_*^{-1}(\tau) (\widetilde\lambda'')} $ is non positive and takes a negative value on a non empty open set because $\overline{L(\widetilde f_*^{-1}(\tau) (\widetilde \lambda''))}\subset \widetilde f^{-1}( L((\tau (\widetilde \lambda'')))$. We deduce that  $\delta_{\lambda''} \circ f -\delta_{\lambda''} $  is a non positive function defined $\mu$ everywhere and that takes a negative value on a set of positive measure. This contradicts the fact that $f$ preserves $\mu$.
\end{proof}

\begin{remark}
 Let us give another proof that consists in showing that $\lambda''$ is a simple loop disjoint from its image by $f$. This loop will bound two surfaces, an attracting one and a repelling one, which of course is impossible because $f$ preserves $\mu$. The given orientation on $\lambda''$ induces naturally an orientation on the boundary circle $\lambda$ of $A_{i_0}$. Denote $\Xi$ and $\Xi'$ the subsurfaces bounded by $\lambda$ located on its left and on its right respectively. We denote $\widetilde\Xi$ the connected component of $\widetilde \pi^{-1}(\Xi)$, that contains $\widetilde\lambda$. It is a surface whose boundary is a union of images of $\widetilde \lambda$ by covering automorphisms. Moreover $\widetilde\Xi$ is located on the left side of every boundary lines. In particular, the stabilizer $H$ of $\widetilde\Xi$ in $G$ acts transitively on the set of boundary lines. Note also that $H$ is the group of covering automorphisms of $\Xi$. Let us prove that $T(\widetilde \lambda'')\cap \widetilde\lambda''=\emptyset$ for every $T\in H\setminus \langle T_0\rangle$. We can index the boundary lines of   $\widetilde\Xi$ by classes $\tau\in H/\langle T_0 \rangle$. Note that if $\tau\in H/\langle T_0 \rangle$ and $\tau'\in H/\langle T_0 \rangle$ are distinct, then $\overline{R(\tau(\widetilde \lambda''))}\cap \overline{R(\tau'(\widetilde \lambda''))}$ is compact. Every line $\tau(\widetilde \lambda'')$, $\tau\not=\langle T_0 \rangle$, projects in $\widehat S$ onto a line of $\widehat S$ joining the end of $\widehat S$ on the left of $\widehat \lambda''$ to the same end, we will write it $\tau(\widehat \lambda'')$.  Note that there are finitely many lines $\tau(\widehat \lambda'')$ that intersect $\widehat \lambda''$.

 Consider the set
 $$\widehat K=\overline {R(\widehat \lambda'')}\cap\left(\bigcup_{\tau \in (H/\langle T_0\rangle)\setminus\langle T_0\rangle} \overline{R(\tau (\widehat \lambda''))}\right).$$
 It is compact  and we have $\widehat f^{-1}(\widehat K) \subset \mathrm{int} (\widehat K)$. Indeed, if $\widehat z\in  \overline {R(\widehat \lambda'')}\cap\overline{R(\tau (\widehat \lambda''))}$, then $\widehat f^{-1}(\widehat z)\in  R(\widehat \lambda'')\cap R(\widetilde f_*^{-1}(\tau) (\widehat \lambda''))$. We deduce that $\mathrm{int} (\widehat K)\setminus \widehat f^{-1}(\widehat K) $ is a wandering open set of $\widehat f$, whose backward orbit is contained in $\widehat K$.
 The fact  that  $\widehat f$ preserves a locally finite measure with total support implies that $\widehat f^{-1}(\widehat K) = \mathrm{int} (\widehat K)$.  This implies that $\widehat K$ is open and compact and so is empty. This clearly implies that no line  $\tau(\widehat \lambda'')$,  $\tau \in H/\langle T_0\rangle\setminus\langle T_0\rangle$, meets $\widehat \lambda''$.  Denoting $\widetilde\Xi'$ the connected component of $\widetilde\pi^{-1}(\Xi')$ that contains $\widetilde\lambda$ and $H'$ the stabilizer of $\widetilde\Xi'$ in $ G$, we prove similarly that no line $\tau(\widetilde \lambda'')$, $\tau \in H'/\langle T_0\rangle\setminus\langle T_0\rangle$, meets $\widetilde \lambda''$. We have a tiling of $\widetilde S$ by images of $\widetilde{\Xi}$ or $\widetilde{\Xi'}$  by covering automorphisms and it is easy to deduce that no line $\tau(\widetilde \lambda'')$, $\tau \in G/\langle T_0\rangle\setminus\langle T_0\rangle$, meets $\widetilde \lambda''$. It is also easy to see that $\widetilde f(\widetilde \lambda'')$ is included in the interior of $\widetilde \Xi$ and so does not meet any line $\tau(\widetilde \lambda'')$. This means that $f(\lambda'')\cap\lambda''=\emptyset.$

\medskip
The interest of this second proof is that it permits to deal with the case  where the boundary circles of $A_{i_0}$ are not homologous to zero. Denote $\check S$ the cyclic cover of $S$ associated to $\lambda$, which means the surface $\widetilde S / H$, where $H$ is the normal subgroup defined has follows:
$T\in H$ if for every $\widetilde z$, the path joining $\widetilde z$ to $T(\widetilde z)$ projects onto a loop $\lambda''$ such that $\lambda\wedge\lambda''=0$. The covering $\widehat S$ is an intermediate covering between $\widetilde S$ and $\check S$. Denote $\check \lambda$ the image of $\widehat\lambda$ by the covering projection and $\check f$ the projected map. Of course we cannot conclude that $\widehat f$ has the intersection property because $\lambda$ and $\lambda'$ do not bound a surface, but we can conclude that if $\widehat\lambda''\in \widehat S$ is a simple loop disjoint from its image by $\widehat f $, and $f$ preserves a finite measure with total support, then $\widehat \lambda''$ projects in $\check S$ onto a simple loop $\check \lambda''$ homotopic to $\check\lambda$ and disjoint from its image by $\check f$. Indeed $\check \lambda$ bounds two subsurfaces and the second proof of  Proposition \ref{prop:zerohom} can be transcribed word to word. We can state this as a proposition.
\end{remark}

\begin{proposition} \label{prop:nonzerohom} Suppose that the boundary circles $\lambda$ and $\lambda'$ of $A_{i_0}$ are not homologous to zero and that $f$ preserves a finite measure with total support $\mu$. If $\widehat f$ does not satisfy the intersection property, then there exists a simple loop  $\check\lambda''$  in the cylic covering space $\check S$ associated to $\lambda$ that project in $S$ onto a loop homotopic to $\lambda$ and that is disjoint by the lift $\check f$ of $f$ that is lifted by $\widehat f$.
\end{proposition}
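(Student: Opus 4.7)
The plan is to transcribe the second proof of Proposition \ref{prop:zerohom} verbatim in the cyclic cover $\check S$, exploiting the fact that even though $\lambda$ does not bound in $S$, a lift $\check\lambda\subset\check S$ does bound two subsurfaces, so that a dual function of the type used in that proof becomes available in $\check S$.

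I would start from the hypothesis: fix an essential simple loop $\widehat\lambda''\subset\widehat S$ with $\widehat f(\widehat\lambda'')\cap\widehat\lambda''=\emptyset$, perturb it if necessary so that its projection to $S$ has $\mu$-negligible trace, orient it so that $\widehat f(\widehat\lambda'')\subset L(\widehat\lambda'')$, and take a lift $\widetilde\lambda''\subset\widetilde S$. Since $\widehat\lambda''$ is essential in the annulus $\widehat S$, its stabilizer in $G$ is exactly $\langle T_0\rangle$. Next I would pick a lift $\check\lambda\subset\check S$ of $\lambda$; because $T_0\in H$ (as $\lambda\wedge\lambda=0$), this is a closed simple curve, and by the very construction of the cyclic cover it separates $\check S$ into two subsurfaces $\check\Xi$ and $\check\Xi'$. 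Lifting $\check\Xi$ to the connected component $\widetilde\Xi\subset\widetilde S$ containing $\widetilde\lambda$, let $H_\Xi\subset H$ be its stabilizer in $G$; this group will play the role of the group called $H$ in the second proof of Proposition \ref{prop:zerohom}, and $H_{\Xi'}$ is defined analogously.

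The main step is the compactness argument. I would consider
$$\widehat K=\overline{R(\widehat\lambda'')}\cap\Bigl(\bigcup_{\tau\in(H_\Xi/\langle T_0\rangle)\setminus\{\mathrm{Id}\}}\overline{R(\tau(\widehat\lambda''))}\Bigr)\subset\widehat S,$$
which is compact for exactly the same reason as in the prior proof (only finitely many translates $\tau(\widehat\lambda'')$ meet $\widehat\lambda''$), and verify the inclusion $\widehat f^{-1}(\widehat K)\subset\mathrm{int}(\widehat K)$. Invariance of the pullback of $\mu$, which is locally finite with total support on $\widehat S$, forces the wandering open set $\mathrm{int}(\widehat K)\setminus\widehat f^{-1}(\widehat K)$ to be empty and hence $\widehat K=\emptyset$. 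This yields $T(\widetilde\lambda'')\cap\widetilde\lambda''=\emptyset$ for every $T\in H_\Xi\setminus\langle T_0\rangle$; the symmetric argument on the right side handles $H_{\Xi'}$. The tiling of $\widetilde S$ by $H$-translates of $\widetilde\Xi$ and $\widetilde\Xi'$ then upgrades these two statements to $T(\widetilde\lambda'')\cap\widetilde\lambda''=\emptyset$ for every $T\in H\setminus\langle T_0\rangle$. Running the identical argument with $\widetilde f(\widetilde\lambda'')$ in place of one copy of $\widetilde\lambda''$ yields moreover $T(\widetilde\lambda'')\cap \widetilde f(\widetilde\lambda'')=\emptyset$ for every $T\in H$.

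Projecting down to $\check S$, these two facts say exactly that the image $\check\lambda''$ of $\widehat\lambda''$ is a simple loop disjoint from $\check f(\check\lambda'')$; and since both $\check\lambda''$ and $\check\lambda$ are represented by $T_0$ under the identification $\pi_1(\check S)\simeq H$, they are freely homotopic, which completes the proposition. The only step requiring genuine care will be the measure-theoretic inequality, because on $\widehat S$ the lifted measure is merely locally finite rather than finite; but this is precisely the point the author addresses in the second proof of Proposition \ref{prop:zerohom} (which is why compactness of $\widehat K$ is emphasised there), and this is what makes it possible to transcribe the argument word for word as the remark preceding the statement announces.
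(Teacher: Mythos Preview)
Your proposal is correct and follows essentially the same approach as the paper. Indeed, the paper's own ``proof'' of this proposition is the remark immediately preceding it, which says that in the cyclic cover $\check S$ the lift $\check\lambda$ bounds two subsurfaces and that therefore ``the second proof of Proposition~\ref{prop:zerohom} can be transcribed word to word''; your write-up carries out precisely this transcription, correctly identifying the stabilizers $H_\Xi,H_{\Xi'}\subset H$ that play the role of the groups $H,H'$ in that proof, and correctly noting that the measure argument goes through because $\widehat K$ is compact even though the lifted measure on $\widehat S$ is only locally finite.
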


\section{Proof of Theorem \ref{t.numberminhyp}}\label{s.numberminhyp}

\bigskip
Let us state now the key result of the section

\begin{proposition} \label{prop:nonzerohomperiodic} If $f\in\mathcal{G}^r_{\omega}(S)$ is isotopic to a Dehn twist map, then $f$ has infinitely many periodic points.
\end{proposition}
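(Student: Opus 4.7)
The plan is to argue by contradiction: assume $f$ has only finitely many periodic points and derive an absurdity by combining Propositions \ref{prop:intprop}, \ref{prop:zerohom}, and \ref{prop:nonzerohom}. Fix any twisted annulus $A_{i_0}$ in the Dehn twist model to which $f$ is isotopic, with boundary circle $\lambda$, and form the open annulus cover $\widehat S$ together with the distinguished lift $\widehat f$ set up at the start of Section \ref{s.dehn}. By Proposition \ref{prop:intprop}, the finiteness assumption forces $\widehat f$ to fail the intersection property: there is an essential simple loop $\widehat\lambda''\subset\widehat S$ with $\widehat f(\widehat\lambda'')\cap\widehat\lambda''=\emptyset$. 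I now split on whether $\lambda$ is homologous to zero in $S$: if $\lambda\sim 0$, then Proposition \ref{prop:zerohom} applied with the finite $f$-invariant Borel measure $\mu_\omega$ of total support says that $\widehat f$ \emph{must} satisfy the intersection property, a direct contradiction.

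So assume $\lambda$ is not homologous to zero, hence non-separating in $S$. Proposition \ref{prop:nonzerohom} then supplies a simple loop $\check\lambda''$ in the cyclic cover $\check S$ associated to $\lambda$, projecting in $S$ onto a loop homotopic to $\lambda$, and satisfying $\check f(\check\lambda'')\cap\check\lambda''=\emptyset$, where $\check f$ is the natural lift of $f$ to $\check S$ through $\widehat f$. Since the lift $\check\lambda$ of $\lambda$ is compact and separates $\check S$ into two non-compact half-surfaces (each containing one end of the infinite-cyclic cover), so does the homotopic curve $\check\lambda''$; write $\check S=\check\Xi\sqcup\check\lambda''\sqcup\check\Xi'$. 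Orientation preservation of $\check f$ combined with disjointness of $\check\lambda''$ from $\check f(\check\lambda'')$ yields, up to possibly swapping $\check\Xi$ and $\check\Xi'$, a strict inclusion $\check f(\check\Xi)\subsetneq\check\Xi$, the compact annulus $\check\Xi\setminus\check f(\check\Xi)$ being bounded by $\check\lambda''$ and $\check f(\check\lambda'')$ and carrying positive $\check\mu_\omega$-measure.

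This is exactly the strict-attractor configuration of the second proof of Proposition \ref{prop:zerohom}, but now played out on the cover $\check S$, which has infinite $\check\mu_\omega$-measure—so the naive ``strict attractor contradicts invariance of a probability measure'' argument does not apply directly. To finish, the idea is to push the picture down to the compact base $S$ via the commutation $\check f\circ T=T\circ\check f$ with the generator $T$ of the deck group: pick a small open set $U\subset\check\Xi\setminus\check f(\check\Xi)$ lying inside a single fundamental domain of $T$, so that $\mu_\omega(\pi(U))=\check\mu_\omega(U)>0$; the forward iterates $\check f^n(U)\subset\check\Xi$ are then pairwise disjoint by construction, and one argues that the projections $f^n(\pi(U))\subset S$ remain pairwise disjoint—any overlap would require some $\check f^n(U)\cap T^k(U)\neq\emptyset$ with $k\neq 0$, which is incompatible with the separating property of $\check\lambda''$ and the non-vanishing of the twist coefficient $n_{i_0}$. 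But then $\sum_n \mu_\omega(f^n(\pi(U)))\le \mu_\omega(S)<\infty$ is impossible. The main obstacle is precisely this last disjointness verification: turning the attracting picture on the infinite-mass cover $\check S$ into a finite-mass contradiction on $S$ requires a careful bookkeeping of how $\check f$ shifts sheets relative to $T$, essentially a rotation-number-style analysis along the cyclic direction.
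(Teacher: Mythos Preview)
Your reduction is sound up to and including the invocation of Proposition~\ref{prop:nonzerohom}: assuming only finitely many periodic points, passing (as the paper also does) to a power so that $\#\mathrm{per}_h(f)=2g-2$, ruling out the separating case by Proposition~\ref{prop:zerohom}, and obtaining a simple separating loop $\check\lambda''\subset\check S$ with $\check f(\check\lambda'')\cap\check\lambda''=\emptyset$ is exactly how the paper begins.

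The gap you flag is real, and I do not believe it can be closed along the lines you sketch. Your proposed endgame tries to manufacture a wandering open set for $f$ on $S$ out of the strict inclusion $\check f(\check\Xi)\subsetneq\check\Xi$. But the projections $f^n(\pi(U))$ have no reason to be pairwise disjoint: commutation $\check f\circ\check T=\check T\circ\check f$ (which does hold, since the core curves of the twisted annuli are pairwise disjoint and hence have vanishing mutual intersection number) reduces the question to whether $\check f^{\,p}(U)\cap \check T^k(U)=\emptyset$ for all $p\neq0$, $k\in\Z$, and this fails in general. Note also two technical oversights: first, $\check\lambda''$ need not be disjoint from \emph{all} its $\check T$-translates---the paper only uses that $\check T^r(\check\lambda'')\cap\check\lambda''=\emptyset$ for $r$ large; second, your argument never invokes the genericity conditions (G1)--(G3) beyond what Propositions~\ref{prop:zerohom}--\ref{prop:nonzerohom} already used, so if it worked it would prove the stronger statement that \emph{any} area-preserving homeomorphism isotopic to a Dehn twist on a surface of genus $g\ge2$ has infinitely many periodic points. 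That stronger statement is not what is being claimed, and the paper's proof leans heavily on the $\mathcal G^r_\omega$ hypotheses.

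The paper's route from the same starting point is entirely different and uses the fine structure available under $\mathcal G^r_\omega$. By Proposition~\ref{p.dynamicalproperties} the (finitely many) hyperbolic fixed points have dense branches with no heteroclinic intersections. Two lemmas are then proved on the cyclic cover: the fixed points of $f$ lift to fixed points of $\check f$ (via a Lefschetz count on the end-compactification of a fundamental region between $\check\lambda''$ and $\check T^r(\check\lambda'')$), and the unstable (resp.\ stable) branches of these lifted fixed points cross every translate $\check T^{kr}(\check\lambda'')$ on one side and none on the other. The contradiction is then obtained in the universal cover $\widetilde S$: one builds an arc $\widetilde\alpha^u$ in an unstable manifold and an arc $\widetilde\alpha^s$ in a stable manifold, each joining adjacent lifts of $\check\lambda''$, and uses the boundary twist of $\widetilde f$ on the circle at infinity (the rotation numbers on the two sides of $\widetilde\lambda''$ differ by $n_{i_0}\neq0$) to force, for large $n$, the image $\widetilde f^n(\widetilde\alpha^u)$ to be separated from its endpoints by $\widetilde\alpha^s$ together with two lifts of $\check\lambda''$---impossible, since stable and unstable branches do not meet. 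In short, the measure plays no role at this stage; what is used is the combinatorics of branches relative to the family of lifted curves and the twist on the ideal boundary.
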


\begin{proof} We will argue by contradiction by supposing that there are finitely many periodic points. Taking a power of $f$ if necessary we can suppose that there exist exactly $2g-2$ fixed saddle points, with fixed stable and unstable branches,  and no other periodic point. By Proposition \ref{p.dynamicalproperties}, the branches are dense and do not intersect. By Proposition \ref {prop:zerohom}, the boundary circles of the twisted annuli are not homologous to zero.  Choose an annulus  $A_{i_0}$ in the family $(A_i)_{i\in I}$ of twisted annuli. There is no loss of generality by supposing that the twist coefficient $n_{i_0}$ is positive.  Let $\check\lambda''$ be a loop  associated to  $A_{i_0}$, defined by Proposition \ref{prop:nonzerohom}. We keep the notations introduced in Section  \ref{s.dehn} and denote $\check T$ a generator of the group of covering automorphisms of the covering map $\check \pi: \check S\to S$. If $r\geq 1$ is large enough, then $\check T^r(\check \lambda'')\cap \check  \lambda''=\emptyset$. Replacing $\check T$ with its inverse, we can always suppose that $\check T^r(\check\lambda'')$ is on the left of $\check \lambda''$.

\begin{lemma} \label{lemma:liftedtofixed} The fixed points of $f$ are all lifted to fixed points of $\check f$.
\end{lemma}
\begin{proof}
If $\check z$ lifts a fixed point $z$ of $f$, there exists $s\in\Z$ such that $\check f(\check z)=\check T^s(\check z)$ and so $\check f^n(\check z)=\check T^{ns}(\check z)$ for every $n\geq 1$. Consequently, to prove that $\check z$ is fixed, it is sufficient to prove that it is periodic. For every integers $m<n$, denote $\check S_{[m,n]}$ the compact surface bordered by $\check T^{mr}(\check \lambda'')$ and $\check T^{nr}(\check \lambda'')$. The open surface $\bigcup_{k\in\Z} \check f^k(\check S_{[0,1]})$ is invariant and contains two ends, one sink and one source. The sink admits an attracting annular neighborhood on the left of $\check T^r(\check\lambda'')$ and the source a repelling annular neighborhood on the right of $\check\lambda''$. The end compactification of  $\bigcup_{k\in\Z} \check f^k(\check S_{[0,1]})$ is a closed surface of genus  $r(g-1)$and the extended map has a sink and a source,  both of them having a Lefschetz index equal to one for all the iterates of the extended map. Applying what has been done in Section  \ref{s:minoration} to the extended map on this end compactification, we deduce that there exist at least $r(2g-2)$ periodic points beside the two ends, all of them in the interior of $ \check S_{[0,1]}$. But there exists at most $r(2g-2)$ lifts of fixed points of $f$ in the interior of $ \check S_{[0,1]}$. Consequently there exists exactly $r(2g-2)$ lifts of fixed points of $f$ in the interior of $ \check S_{[0,1]}$, all of them periodic, and so, all of them fixed. We immediately deduce the conclusion of the lemma.\end{proof}

\begin{lemma} \label{lemma:liftedtofixed2} The unstable branches of a fixed point of $\check f$ inside $\check S_{[m,n]}$ meet every $\check T^{kr}(\check \lambda'')$, $k\geq n$, and do not meet any $T^{kr}(\check \lambda'')$, $k\leq m$.  Its stable branches meet every $\check T^{kr}(\check \lambda'')$, $k\leq m$, and do not meet any $\check T^{kr}(\check \lambda'')$, $k\geq n$.
\end{lemma}
\begin{proof} It is sufficient to prove the first sentence, the proof of the other one being similar. The fact that the unstable branches of a fixed point of $\check f$ inside $\check S_{[m,n]}$ do not meet any $\check T^{kr}(\check \lambda'')$, $k\leq m$, is an immediate consequence of the fact that every loop $\check T^{k}(\check \lambda'')$, is sent on its left by $\check f$.  Fix $k\geq n$ and consider the closed surface $\dot S= \check S/\check T^{(k+1-m)r}$. The map $\check f$ lifts a map $\dot f$, which itself is a lift of $f$, and $\dot f$ belongs to ${\mathcal G}^r_{\dot \pi ^*(\omega)}(\dot S)$, where $\dot\pi: \dot S\to S$ is the covering projection. The map $\dot f$ having finitely many periodic points, we know that the branches of its periodic points are dense. Consequently, every branch of a fixed point of $\check f$ inside $\check S_{[m,n]}$ must intersect a manifold $\check S_{[k+p(k+1-m),\,k+1+p(k+1-m)]}$, where $p\in\Z$.  If this branch is unstable it cannot meet any $\check S_{[k+p((k+1-m),\,k+1+p(k+1-m)]}$, $p<0$, and so it meets a manifold  $\check S_{[k+p(k+1-m),\,k+1+p(k+1-m)]}$, where $p\geq 0$. Consequently, it intersects $\check T^{kr}(\check \lambda'')$.
\end{proof}

Choose a fixed point $\check z$ of $\check f$ in $\check S_{[-1,0]}$ and an unstable branch of $\check z$. Consider the first point $\check x$ where the branch meet $\check \lambda''$ and the first point $\check y$ where the other unstable branch meet $\check T^r(\check \lambda'')$. Note $\check \alpha^u$ the sub-path of $W^u(\check z)$ that joins $\check x$ to $\check y$. Fix a  lift $\widetilde \lambda''$ of $\check\lambda''$ to the universal covering space and denote $\widetilde S_{[-1,0]}$ the connected component of the preimage of  $\check S_{[-1,0]}$ by the universal covering map, that contains $\widetilde \lambda''$ in its boundary. Choose a lift $\widetilde z\in \widetilde S_{[-1,0]}$ of $\check z$ and denote  $\widetilde\alpha^u$ the lift of $\check\alpha^u$ that contains $\widetilde z$. It joins a lift $\widetilde x$ of $\check x$ to a lift $\widetilde y$ of $\check y$. The algebraic intersection number between $\check \lambda''$ and the unstable path joining $\check z$ to $\check y$ being non zero, there exists a unique lift of $\check\lambda''$ that belongs to the boundary of $\widetilde S_{[-1,0]}$ and that separates $\widetilde z$ and $\widetilde y$. Replacing $\widetilde z$ with another lift if necessary, one can suppose that  $\check \lambda''$ itself separates $\widetilde z$ and $\widetilde y$. The point $\widetilde x$ belongs to a lift of $\check \lambda''$ lying on the boundary of $\widetilde S_{[-1,0]}$. This lift must be different from $\widetilde \lambda''$. Otherwise, at least one of the stable branches of $\widetilde z$ should intersect  $\widetilde \lambda''$, which  is impossible because the stable branches of $\check z$ do not meet $\check\lambda''$.

Similarly,  choose a fixed point $\check z'$ of $\check f$ in $\check S_{[0,1]}$ that projects in $S$ on the same fixed point $z$ of $f$ than $\check z$. Then note $\widetilde S_{[0,1]}$ the connected component of the preimage of  $\check S_{[0,1]}$ by the universal covering map, that contains $\widetilde \lambda''$ in its boundary.  One can find a lift  $\widetilde z'\in \widetilde S_{[0,1]}$ of $\check z'$ and a subpath $\widetilde\alpha^s$ of the stable manifold of $\widetilde z'$ that contains $\widetilde z'$ and that joins a point $\widetilde x'$ belonging to a lift of $\check \lambda''$ lying on the boundary of $\widetilde S_{[0,1]}$ but different from  $\check \lambda''$ to a point $\widetilde y'$ belonging to a lift of $\check T^{-r}(\check\lambda'')$ separated from $\widetilde z'$ by $\widetilde \lambda''$.

The boundary of $\widetilde S_{[-1,0]}$ is a disjoint union of images of $\widetilde \lambda''$ by covering automorphisms, the ones that bound $\widetilde S_{[-1,0]}$ on their right side being the lifts of $\check \lambda''$, the ones that bound $\widetilde S_{[-1,0]}$ on their left side being the lifts of $\check T^{-r}(\check \lambda'')$. There is a natural order on the set of lifts of $\widetilde \lambda''$ bounding $\widetilde S_{[-1,0]}$ and different from $\widetilde \lambda''$: say that {\it $T'(\widetilde \lambda'')$ is below $T''(\widetilde \lambda'')$  relative to  $\widetilde \lambda''$} if there exist two disjoint paths $\widetilde \alpha'$ and $\widetilde \alpha''$ in  $\widetilde S_{[-1,0]}$ , the first one joining a point of $T'(\widetilde \lambda'')$ to a point $\widetilde z\in\widetilde \lambda''$, the second one joining a point of $T''(\widetilde \lambda'')$ to  $T_0(\widetilde z)$. Similarly $\widetilde S_{[0,1]}$ is a disjoint union of images of $\widetilde \lambda''$ by covering automorphisms, the ones that bound  $\widetilde S_{[0,1]}$ on their left side being the lifts of $\check \lambda''$, the ones that bound  $\widetilde S_{[0,1]}$ on their right side being the lifts of $\check T^{r}(\check \lambda'')$. Here again, there is a natural order on the sets of lifts different from $\widetilde \lambda''$. Write $T_1(\widetilde \lambda'')$ the lift of $\check\lambda''$ that contains $\widetilde x$ and $T_2(\widetilde \lambda'')$ the lift of $\check T^{r}(\check\lambda'')$ that contains $\widetilde y$.  Write $T'_1(\widetilde \lambda'')$ the lift of $\check T^{-r}(\check\lambda'')$ that contains $\widetilde y'$ and $T'_2(\widetilde \lambda'')$ the lift of $\check\lambda''$ that contains $\widetilde x'$. Replacing $\widetilde z$ with $T_0^{-s}(\widetilde z)$, $s$ large, if necessary, one can always suppose that the line $T_0(T_1(\widetilde \lambda''))$ is below $T'_1(\widetilde \lambda'')$ relative to $\widetilde \lambda''$.  The choice of the lift $\widetilde f$, which fixes some points at the right of $\widetilde\lambda''$ on the boundary circle of $\widetilde S$, implies that for every $n\geq 0$, the line $f_*^n(T_1)(\widetilde \lambda'')$ is below $T_0(T_1(\widetilde \lambda''))$ relative to $\widetilde \lambda''$ and so below  $T'_1(\widetilde \lambda'')$ relative to $\widetilde \lambda''$. But if $n$ is large enough, then $f_*^n(T_2)(\widetilde \lambda'')$ is above $T'_2(\widetilde \lambda'')$ relative to $\widetilde \lambda''$ because $\widetilde f$ coincides with $T_0^{n_{i_0}}$ on some points at the left of $\widetilde\lambda''$ on the boundary circle. The ends $\widetilde f^n(\widetilde x)$ and $\widetilde f^n(\widetilde y)$ of $\widetilde f^n(\widetilde\alpha ^u)$ belong respectively to $L(f_*^n(T_1)(\widetilde \lambda''))$ and $L(f_*^n(T_2)(\widetilde \lambda''))$. Moreover $\widetilde f^n(\widetilde\alpha ^u)$ does not intersect $\widetilde\alpha ^s$ because $z$ has no homoclinic intersection. Finally, $\widetilde f^n(\widetilde\alpha ^u)$ does not intersect neither $R(T'_1(\widetilde\lambda''))$ nor $R(T'_2(\widetilde\lambda''))$ because $\widetilde f^{-n}(R(T'_1(\widetilde\lambda'')))\subset R(f_*^{-n}(T'_1)(\widetilde\lambda''))$ 	and $\widetilde f^{-n}(R(T'_2(\widetilde\lambda'')))\subset R(f_*^{-n}(T'_2)(\widetilde\lambda''))$.We have found a contradiction because the sets $L(f_*^n(T_1)(\widetilde \lambda''))$ and  $L(f_*^n(T_2)(\widetilde \lambda''))$ are separated by $R(T'_1(\widetilde\lambda''))\cup R(T'_2(\widetilde\lambda''))\cup\widetilde\alpha^s$.

\end{proof}We are now ready to prove Theorem  \ref{t.numberminhyp}.

\begin{proof}[Proof of  Theorem \ref{t.numberminhyp}] The case where $g\leq 1$ has been done in the introduction and so we will suppose that $g\geq 2$.  Let  $f\in \mathcal{G}^r_{\omega}(S)$ be such that $\# \mathrm{per}_h(f)=\max(0, 2g-2)$.  By Thurston-Nielsen decomposition theorem (see Section \ref{ss.NT}), one gets that one of the following situations occurs:
\begin{enumerate}
\item there exists a pseudo-Anosov component in the decomposition of $f$;

\item  there is a power of $f$ that is isotopic to a Dehn twist map;

\item there is a power of $f$ that is isotopic to the identity.

\end{enumerate}

The existence of a  pseudo-Anosov component implies the existence of infinitely many periodic points. Moreover we have seen in this section that the same conclusion occurs in the second situation. So, the only possibility is the third one: there is a power of $f$ that is isotopic to the identity. \end{proof}

\section{Dynamics of maps $f\in{\mathcal G}^r_{\omega}(S)$ isotopic to identity and such that $\#\mathrm{per}_h (f)=2g-2$.} \label{s:dynamics-isotopic-identity}

We will suppose in this section that $S$ has genus greater than $1$ and we fix $f\in{\mathcal G}^r_\omega$ isotopic to the identity such that $\#\mathrm{per}_h(f)=2g-2.$
We denote $\widetilde f$ the natural lift to the covering space. A branch of a hyperbolic periodic point of $f$, respectively $\widetilde f$, will be called a branch of $f$, respectively $\widetilde f$.  Recall what is known about the dynamics of $f$ (see Lemma \ref{l.unlinkedfixedpoints}, Proposition  \ref{prop:Brouwerlines} and Proposition  \ref{p.dynamicalproperties}):

\begin{itemize}
\item there is no periodic point beside the hyperbolic fixed points;
\item $f$ is isotopic to the identity relative to its fixed point set;
\item the branches of $f$ are fixed and dense;
\item the branches do not intersect;
\item $f$ is transitive;
\item there exists an oriented loop $\phi$ non homologous to zero that is lifted to a Brouwer line (possibly singular) of $\widetilde f$.
\end{itemize}

Note that  $[\phi]\wedge \mathrm{rot}_f(\mu)\geq 0$  for every $f$-invariant probability measure $\mu$ and that  $[\phi]\wedge \mathrm{rot}_f(\mu_{\omega})>0$.  Indeed, if $\widehat S$ is the cyclic covering associated to $\phi$, one can lift the isotopy $I$ to an identity isotopy $\widehat I=(\widehat f_t)_{t\in[0,1]}$ on $\widehat S$ joining $\mathrm{Id}$ to a lift $\widehat f$ of $f$. The loop $\phi $ can be lifted to a loop $\widehat\phi$ that bounds a surface $\widehat \Sigma$ on its left such that $f(\widehat \Sigma)\subset \mathrm{int}(\widehat\Sigma)\cup (\mathrm{fix}(\widehat f)\cap\widehat\phi)$ and we have
$$[\phi]\wedge \mathrm{rot}_f(\mu)=\widehat \mu \left(\widehat \Sigma\setminus\widehat f(\widehat\Sigma)\right),$$
where $\widehat \mu$ is the measure on $\widehat S$ that lifts $\mu$.

The goal of this section is to prove the following:
\begin{proposition} \label{prop:nonhomologicalintersection} If $\mu$ and $\mu'$ are two $f$-invariant ergodic probability measures such that  $[\phi]\wedge \mathrm{rot}_f(\mu)>0$  and  $[\phi]\wedge \mathrm{rot}_f(\mu')> 0$, then $\mathrm{rot}_f(\mu)\wedge \mathrm{rot}_f(\mu')= 0$.   \end{proposition}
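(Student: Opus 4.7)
My plan is to pass to the infinite cyclic covering $\widehat{\pi}\colon\widehat{S}\to S$ associated to $\phi$, in which $\phi$ lifts to an essential simple loop $\widehat{\phi}$. Let $T$ generate the deck group and let $\widehat{f}$ be the lift of $f$ that is isotopic to the identity via the lift $\widehat{I}_f$ of $I_f$. The singular Brouwer property of $\widetilde{\phi}$ in the universal cover passes to $\widehat{S}$: writing $\widehat{\Sigma}$ for the subsurface that $\widehat{\phi}$ bounds on its left, one has
$$\widehat{f}(\widehat{\Sigma})\subset\mathrm{int}(\widehat{\Sigma})\cup(\widehat{\phi}\cap\mathrm{fix}(\widehat{f})).$$
The translates $\widehat{\Sigma}_k:=T^k\widehat{\Sigma}$ form a strictly decreasing sequence, and each $\widehat{\phi}_k:=T^k\widehat{\phi}$ is a singular Brouwer loop of $\widehat{f}$.

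Next, I apply the flux identity $[\phi]\wedge \mathrm{rot}_f(\mu)=\widehat{\mu}(\widehat{\Sigma}\setminus\widehat{f}(\widehat{\Sigma}))$ recalled at the start of the section, together with Birkhoff's ergodic theorem for the indicator of the fundamental domain $\widehat{D}=\widehat{\Sigma}\setminus T\widehat{\Sigma}$. Writing $v=\mathrm{rot}_f(\mu)$, the hypothesis $[\phi]\wedge v>0$ yields that $\mu$-a.e.\ $z\in S$ admits a lift $\widehat{z}\in\widehat{D}$ with $\widehat{f}^n(\widehat{z})\in T^{k_n(z)}\widehat{D}$ and $k_n(z)/n\to[\phi]\wedge v$; in particular the forward $\widehat{f}$-orbit of $\widehat{z}$ enters and never leaves any $\widehat{\Sigma}_N$. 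The analogous linear drift statement holds for $\mu'$-generic $z'$ at rate $[\phi]\wedge v'>0$.

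The key reduction is to express $v\wedge v'$ as an asymptotic geometric intersection number. Fix $\mu$-generic $z$ and $\mu'$-generic $z'$, and consider the closing loops $\gamma_n(z),\gamma_n(z')$ from Section \ref{ss.rotation}, whose homology classes satisfy $[\gamma_n(z)]/n\to v$ and $[\gamma_n(z')]/n\to v'$. By continuity and bilinearity of the intersection form on $H_1(S,\mathbb{R})$,
$$v\wedge v'=\lim_{n\to\infty}\frac{1}{n^2}\bigl([\gamma_n(z)]\wedge[\gamma_n(z')]\bigr),$$
so it suffices to show the signed geometric intersection of these loops is $o(n^2)$. Lifting to $\widetilde{S}$, intersections correspond to pairs $(j,k)\in\{0,\dots,n-1\}^2$ together with a deck element $U\in G$ such that $\widetilde{f}^j(\widetilde{z})$ is close to $U\widetilde{f}^k(\widetilde{z}')$. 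The Brouwer property applied to the $G$-invariant family $\{U\widetilde{\phi}\}_{U\in G}$ of pairwise disjoint (singular) Brouwer lines forces both lifted trajectories to be monotone across each such line; combined with the linear upward drift in $\widehat{S}$ from the previous step, this means that the projected trajectories of $z$ and $z'$ share a given fundamental strip $\widehat{\Sigma}_k\setminus\widehat{\Sigma}_{k+1}$ for only $O(1)$ consecutive time-indices, giving only $O(n)$ relevant $(j,k)$-pairs.

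The main obstacle will be making this last step fully rigorous: to genuinely get $O(n)$ pairs each with uniformly bounded signed contribution, I expect to need a careful height-function bookkeeping on $\widehat{S}$ using the translates $\widehat{\phi}_k$ as level sets, combined with an argument showing that intersections between two trajectory-segments confined to distinct strips cancel algebraically (for instance because such intersections factor through the closing arcs of $\gamma_n$ and have uniformly bounded total contribution). Dividing the resulting $O(n)$ bound by $n^2$ and letting $n\to\infty$ then delivers $v\wedge v'=0$.
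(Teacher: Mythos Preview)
Your overall strategy matches the paper's: reduce $\mathrm{rot}_f(\mu)\wedge\mathrm{rot}_f(\mu')=0$ to showing that certain closing loops $\Lambda_n$, $\Lambda'_{n'}$ with $[\Lambda_n]\sim n\,\mathrm{rot}_f(\mu)$ and $[\Lambda'_{n'}]\sim n'\,\mathrm{rot}_f(\mu')$ have algebraic intersection $o(nn')$. The gap is in the last step, and it is real, not just a matter of bookkeeping.

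First, the claim that the trajectories spend ``$O(1)$ consecutive time-indices'' in each strip $\widehat\Sigma_k\setminus\widehat\Sigma_{k+1}$ is not justified: the drift $k_n(z)/n\to[\phi]\wedge v$ only gives an \emph{average} crossing rate, and the time between successive crossings of $\widehat\phi_k$ and $\widehat\phi_{k+1}$ is not uniformly bounded. More seriously, even if it were, controlling the time spent in a strip does not control intersection numbers: each strip has positive genus, and two short trajectory segments of generic points can have arbitrarily large algebraic intersection there. Your proposed cancellation of intersections ``confined to distinct strips'' has no mechanism behind it---the lifts of $\phi$ constrain how each trajectory crosses them, but say nothing about how two \emph{orbit} trajectories cross each other.

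The paper closes exactly this gap by replacing your generic orbits with pieces of \emph{stable and unstable branches} of the hyperbolic fixed points. Under the standing hypothesis $\#\mathrm{per}_h(f)=2g-2$, Proposition~\ref{p.dynamicalproperties}(7) says stable and unstable branches never intersect. So if $\Lambda_n=\alpha_n\beta_n$ with $\alpha_n$ a segment of an unstable branch and $\beta_n$ a short arc on a lift of $\phi$, and $\Lambda'_{n'}=\alpha'_{n'}\beta'_{n'}$ with $\alpha'_{n'}$ a segment of a stable branch, then $\alpha_n\wedge\alpha'_{n'}=0$ for free. The remaining terms $\alpha_n\wedge\beta'_{n'}$, $\beta_n\wedge\alpha'_{n'}$, $\beta_n\wedge\beta'_{n'}$ are controlled by a separate lemma (Lemma~\ref{lemma:boundedness}) bounding the length on $\widetilde\phi_0$ of the set of points lying on a branch that also meets a second given lift $\widetilde\phi'_0$. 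That lemma in turn rests on Lemmas~\ref{lemma:distinct}--\ref{lemma:comparable}, which show that the lifts of $\phi$ met by a single branch are totally ordered and that admissible sequences never cross---properties that are specific to branches and have no analogue for generic orbit segments. The denseness of branches (Proposition~\ref{p.dynamicalproperties}(6)) guarantees that such $\Lambda_n$, $\Lambda'_{n'}$ with the correct homology classes can actually be built. Your outline contains no substitute for either the non-intersection property or this boundedness lemma.
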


 Every lift $\widetilde \phi$ of $\phi$ being a line, the orientation of $\phi$ induces an orientation on $\widetilde \phi$ and so defines a natural order $\leq_{\widetilde \phi}$ on $\widetilde \phi $.  We will denote $T_{\widetilde \phi}$ the generator of the stabilizer of  $\widetilde \phi$ in the group $G$ of covering automorphisms induced by the orientation, meaning that $\widetilde z<_{\widetilde \phi}T_{\widetilde \phi} (\widetilde z)$ for every $\widetilde z\in \widetilde\phi$.  We say that a subset of $\widetilde\phi$ {\it has length $\leq m$} if it is contained in the interval of $\widetilde\phi$ joining a point $\widetilde z\in \widetilde\phi$ to $T_{\widetilde \phi}{}^m(\widetilde z)$. The genus of $S$ being larger than $1$, one knows that if $\widetilde \phi$ and $\widetilde \phi'$ are two distinct lifts of $\phi$, then $T_{\widetilde \phi}\not=T_{\widetilde \phi'}$. We have a natural order $\preceq$ on the set of lifts of $\phi$, setting $$\widetilde\phi\preceq \widetilde\phi' \Leftrightarrow R(\widetilde \phi)\subset R(\widetilde \phi')\Leftrightarrow L(\widetilde \phi')\subset L(\widetilde \phi).$$
Finally, say that two different lifts  $\widetilde \phi$ and $\widetilde \phi'$ are {\it adjacent} if they belong to the boundary of the same connected component of $\widetilde\pi^{-1}(S\setminus\phi)$, where $\widetilde \pi: \widetilde S\to S$ is the covering projection.

\medskip

Every lift $\widetilde \phi$ of $\phi$ being a (possibly singular) Brouwer line, the forward orbit of $\widetilde z\in L(\widetilde \phi)$ stays in $L(\widetilde \phi)$ and the backward orbit of $\widetilde z\in R(\widetilde \phi)$ stays in $R(\widetilde \phi)$. Say that the orbit of $\widetilde z\in\widetilde S$ {\it crosses $\widetilde \phi$ at time $k\in\Z$} if $\widetilde f^{k-1}(\widetilde z)\in R(\widetilde \phi)$ and $\widetilde f^k(\widetilde z)\in \overline{L(\widetilde \phi)}$.
The set of lifts of $\phi$ crossed by the orbit of $\widetilde z$ can be indexed by an interval (possibly empty) $J$ of $\Z$, defining an increasing sequence $(\widetilde \phi_k)_{k\in J}$, where two consecutive lifts are adjacent. This sequence will be called the {\it $\phi$-trajectory of $\widetilde z$}. Note that the sequence of crossing times is non decreasing.
By definition we will say that a sequence $(\widetilde \phi_k)_{k\in J}$, is {\it admissible} if there exists $\widetilde z\in\widetilde S$ such that $(\widetilde \phi_k)_{k\in J}$ is a sub-sequence of the $\phi$-trajectory of $\widetilde z$. Note that if this sequence is finite, there exists a neighborhood $U$  of $\widetilde z$ such that $(\widetilde \phi_k)_{k\in J}$ is a sub-sequence of the $\phi$-trajectory of $\widetilde z'$, if $\widetilde z'\in U$. Observe that in the case where $\widetilde z$ projects onto a bi-recurrent point $z$ of $f$, its $\phi$-trajectory is either empty or bi-infinite ($J$ coincides with $\Z$). The last case appears for example if the orbit of $z$ is dense. In this case, the $\phi$-trajectory of $\widetilde z$ contains as a subsequence, the image by a covering automorphism of any given finite admissible sequence. As a consequence, we know that every admissible sequence $(\widetilde \phi_k)_{1\leq k\leq n}$ can be extended to an admissible sequence  $(\widetilde \phi_k)_{0\leq k\leq n+1}$ (we will use this fact later). For a similar reason, if $\widetilde\Gamma$ is a branch of $\widetilde f$  and $(\widetilde \phi_k)_{k\in J}$ is an admissible sequence, there exists $T\in G$  and $\widetilde z\in T(\widetilde\Gamma)$ such that $(\widetilde \phi_k)_{k\in J}$ is a sub-sequence of the $\phi$-trajectory of  $\widetilde z$.

\medskip
Let us define a last notion. Say that there is a {\it crossing} between two admissible paths $(\widetilde \phi_k)_{k\in J}$ and $(\widetilde \phi'_{k'})_{k'\in J'}$ if there exist $k_1<k_2$ in $J$ and $k'_1<k'_2\in J'$ such that:
 \begin{itemize}
 \item $\widetilde \phi_{k_1}\subset L(\widetilde\phi'_{k'_1})$ and $\widetilde \phi'_{k'_1}\subset L(\widetilde\phi_{k_1})$;
  \item $\widetilde \phi_{k_2}\subset R(\widetilde\phi'_{k'_2})$ and $\widetilde \phi'_{k'_2}\subset R(\widetilde\phi_{k_2})$;
    \item if $\gamma$ and $\gamma'$ are paths in the closure of $L(\widetilde\phi_{k_1})\cap L(\widetilde\phi'_{k'_1})\cap R(\widetilde\phi_{k_2}) \cap R(\widetilde\phi'_{k'_2})$ joining $\widetilde \phi_{k_1}$ to  $\widetilde \phi_{k_2}$ and $\widetilde \phi'_{k'_1}$ to  $\widetilde \phi'_{k'_2}$ respectively, then $\gamma\cap\gamma'\not=\emptyset$.
 \end{itemize}

\begin{lemma} \label{lemma:unboundedness}  Every branch of $\widetilde f$ is unbounded (meaning non relatively compact) and meets infinitely many lifts of $\phi$. \end{lemma}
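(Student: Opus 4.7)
The plan is to combine the admissibility observation recorded at the end of Subsection~\ref{ss.foliation} with an infinite admissible sequence of pairwise distinct lifts of $\phi$ furnished by the hypothesis $[\phi]\wedge\mathrm{rot}_f(\mu_{\omega})>0$.

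First, I build such a sequence. By the ergodic decomposition of $\mu_{\omega}$, some ergodic $f$-invariant probability measure $\mu$ satisfies $[\phi]\wedge\mathrm{rot}_f(\mu)>0$, and a $\mu$-generic (hence bi-recurrent) point $z_0$ has $[\phi]\wedge\mathrm{rot}_f(z_0)>0$. In the cyclic cover $\widehat S$ associated to $\phi$, the forward orbit of a lift of $z_0$ drifts strictly to the left of $\widehat\phi$ at positive asymptotic rate, so lifted to $\widetilde S$ the forward orbit of a lift $\widetilde z_0$ crosses an infinite, strictly increasing sequence of lifts of $\phi$; each such lift is a (singular) Brouwer line of $\widetilde f$ and hence can be crossed at most once by a forward orbit, so these lifts are pairwise distinct. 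This step is where I see the main obstacle: it requires the one-way Brouwer-line structure to upgrade the purely algebraic information $[\phi]\wedge\mathrm{rot}_f(\mu_{\omega})>0$ into a monotone stacking of crossings at pairwise \emph{distinct} lifts of $\phi$.

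Next, given a branch $\widetilde\Gamma$ of $\widetilde f$ and any integer $N\ge 1$, the finite truncation $(\widetilde\phi_k)_{1\le k\le N}$ of this sequence is admissible, so the observation from Subsection~\ref{ss.foliation} provides $T\in G$ and $\widetilde w\in T(\widetilde\Gamma)$ whose $\phi$-trajectory contains $(\widetilde\phi_k)_{1\le k\le N}$ as a subsequence. For every $k$, the crossing condition places consecutive forward iterates of $\widetilde w$ in $R(\widetilde\phi_k)$ and $\overline{L(\widetilde\phi_k)}$ respectively; since $T(\widetilde\Gamma)$ is $\widetilde f$-invariant and connected while $\widetilde\phi_k$ separates $\widetilde S$, the translate $T(\widetilde\Gamma)$ must meet $\widetilde\phi_k$. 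Applying $T^{-1}$, the branch $\widetilde\Gamma$ meets each of the $N$ pairwise distinct lifts $T^{-1}(\widetilde\phi_k)$. Since $N$ is arbitrary, $\widetilde\Gamma$ meets infinitely many lifts of $\phi$.

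Finally, the family of lifts of $\phi$ is locally finite in $\widetilde S$, since $\phi$ is a closed submanifold of $S$: any compact subset of $\widetilde S$ meets only finitely many connected components of $\widetilde\pi^{-1}(\phi)$. Hence $\widetilde\Gamma$, meeting infinitely many lifts, cannot be relatively compact, which gives the unboundedness.
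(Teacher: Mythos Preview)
Your proof is correct and follows essentially the same route as the paper's: produce a bi-infinite admissible sequence, use the density-of-branches observation (stated just before this lemma in Section~\ref{s:dynamics-isotopic-identity}, not in Subsection~\ref{ss.foliation}) to realize any finite piece of it inside a translate of the given branch, and conclude. The only difference is cosmetic: you manufacture the bi-infinite sequence from a $\mu$-generic point via $[\phi]\wedge\mathrm{rot}_f(\mu_\omega)>0$, whereas the paper simply takes a lift of a point with dense orbit (transitivity is among the standing hypotheses), which gives the same conclusion with less work.
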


\begin{proof} Let $\widetilde\Gamma$ be a branch of $\widetilde f$ and $n$ a positive integer. Let $\widetilde z_*$ be a lift of  a point whose orbit is dense. Its $\phi$-trajectory being infinite contains an admissible sequence $(\widetilde \phi_k)_{1\leq k\leq n}$. As explained above, there exists $T\in G$ and $\widetilde z\in T(\widetilde\Gamma)$ such that $(\widetilde \phi_k)_{1\leq k\leq n}$ is a subsequence of the $\phi$-trajectory of $\widetilde z$. This implies that $T(\widetilde\Gamma)$ meets at least $n$ lifts of $\phi$. The branch $\widetilde\Gamma$ itself shares the same property. Consequently it meets infinitely many lifts of $\phi$ and must be unbounded.
\end{proof}

\begin{lemma} \label{lemma:distinct}  Two different branches of a fixed point $\widetilde z$ of $\widetilde f$ do not meet a common lift of $\phi$ except the lift of $\phi$ that contains $\widetilde z$ if such a lift exists. \end{lemma}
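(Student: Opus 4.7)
The plan is to argue by contradiction. Assume $\widetilde z\notin\widetilde\phi$ and that two distinct branches $\widetilde\Gamma_1,\widetilde\Gamma_2$ of $\widetilde z$ both meet a single lift $\widetilde\phi$. Up to swapping the roles of stable/unstable branches with $L(\widetilde\phi)/R(\widetilde\phi)$, I may assume $\widetilde z\in L(\widetilde\phi)$. The first step is to show that no unstable branch of $\widetilde z$ can meet $\widetilde\phi$: if $\widetilde w\in W^u(\widetilde z)\setminus\{\widetilde z\}$ lay on $\widetilde\phi$, then $\widetilde w$ would not be a fixed point of $\widetilde f$ (the only fixed point on $W^u(\widetilde z)$ is $\widetilde z$ itself), so the singular Brouwer line property for $\widetilde\phi$ gives $\widetilde f^{-n}(\widetilde w)\in R(\widetilde\phi)$ for every $n\geq 1$; but $\widetilde f^{-n}(\widetilde w)\to\widetilde z\in L(\widetilde\phi)$, contradicting $\overline{R(\widetilde\phi)}\cap L(\widetilde\phi)=\emptyset$. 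Hence both $\widetilde\Gamma_1$ and $\widetilde\Gamma_2$ must be stable branches.

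Next I would build a Jordan curve in $\widetilde S$. Let $\widetilde w_i$ be the point of $\widetilde\Gamma_i\cap\widetilde\phi$ closest to $\widetilde z$ along the branch, and let $\widetilde\alpha_i$ be the subarc of $\widetilde\Gamma_i$ from $\widetilde z$ to $\widetilde w_i$. Because $W^s(\widetilde z)=\widetilde\Gamma_1\cup\{\widetilde z\}\cup\widetilde\Gamma_2$ is an injectively immersed line, $\widetilde\beta:=\widetilde\alpha_1\cup\{\widetilde z\}\cup\widetilde\alpha_2$ is a simple arc from $\widetilde w_1$ to $\widetilde w_2$ through $\widetilde z$, and by the minimality of the $\widetilde w_i$ one has $\widetilde\beta\cap\widetilde\phi=\{\widetilde w_1,\widetilde w_2\}$. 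Concatenating $\widetilde\beta$ with the subarc $\widetilde\eta$ of $\widetilde\phi$ joining $\widetilde w_1$ to $\widetilde w_2$ produces a Jordan curve in the plane $\widetilde S$, bounding a topological disk $D$. A local inspection near $\widetilde z\in L(\widetilde\phi)$ (which lies on $\widetilde\beta$ but not on $\widetilde\phi$) shows $D\subset L(\widetilde\phi)$.

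The contradiction then comes from the unstable directions at $\widetilde z$. Since $W^u(\widetilde z)$ is transverse to $W^s(\widetilde z)$ at $\widetilde z$ by hyperbolicity, exactly one of the two unstable branches of $\widetilde z$, say $\widetilde\Gamma^u$, enters $D$ in a neighborhood of $\widetilde z$. By Lemma \ref{lemma:unboundedness}, $\widetilde\Gamma^u$ is unbounded while $D$ is bounded, so $\widetilde\Gamma^u$ must exit $D$ through $\partial D=\widetilde\beta\cup\widetilde\eta$ at some first point $\widetilde v\neq\widetilde z$. If $\widetilde v\in\widetilde\eta\subset\widetilde\phi$, this contradicts the first step (no unstable branch of $\widetilde z$ meets $\widetilde\phi$); if $\widetilde v\in\widetilde\beta\subset W^s(\widetilde z)$, then $\widetilde v$ is a non-trivial intersection between a stable and an unstable branch of the same fixed point, contradicting item (7) of Proposition \ref{p.dynamicalproperties}.

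The main delicate point I expect is the planar-topology bookkeeping of the second step: one must verify carefully that $\widetilde\beta\cup\widetilde\eta$ is a genuine Jordan curve (using that two distinct stable branches of $\widetilde z$ share only the point $\widetilde z$, coming from $W^s(\widetilde z)$ being an injectively immersed line) and that the bounded component $D$ of its complement lies on the correct side of $\widetilde\phi$. Once this setup is in place, both possibilities for the exit point $\widetilde v$ are closed off by tools already available in the paper, namely the singular Brouwer line property of $\widetilde\phi$ and the non-intersection of stable and unstable branches in the minimal-cardinality setting.
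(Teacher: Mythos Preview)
Your proof is correct and follows essentially the same approach as the paper's: the Brouwer-line property rules out the mixed stable/unstable case (the paper phrases this as forcing $\widetilde z\in R(\widetilde\phi)\cap L(\widetilde\phi)$), and for two branches of the same type one builds a Jordan curve from the branch segments and a subarc of $\widetilde\phi$, trapping a branch of the opposite type in the bounded disk, which contradicts Lemma~\ref{lemma:unboundedness}. The only cosmetic difference is that you swap the roles of stable and unstable and make explicit why the trapped branch cannot exit (invoking item~(7) of Proposition~\ref{p.dynamicalproperties} and your first step), whereas the paper leaves this implicit.
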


\begin{proof} A stable and an unstable branch of a fixed point $\widetilde z$ of $\widetilde f$ cannot meet both the same lift $\widetilde\phi$ of $\phi$ if $\widetilde z\not\in \widetilde \phi$. Indeed, one should have $\widetilde z\in R(\widetilde \phi)\cap  L(\widetilde \phi)$. Suppose now that the two unstable branches of  $\widetilde z$ meet the same lift $\widetilde\phi$ of $\phi$ and that $\widetilde z\not\in \widetilde \phi$. Write  $\widetilde x$ and $\widetilde y$ the first points where the two branches reach $\widetilde\phi$. One gets a simple closed curve, union of the segment of the unstable manifold of $\widetilde z$ that joins $\widetilde x$ to $\widetilde y$ and of the segment of $\widetilde \phi$ that joins $\widetilde y$ to $\widetilde x$. The bounded component of the complement of this curve must contain a stable branch of $\widetilde z$, in contradiction with Lemma \ref{lemma:unboundedness}. For the same reasons,  two stable branches of  $\widetilde z$ cannot meet the same lift $\widetilde\phi$ of $\phi$ if $\widetilde z\not\in \widetilde \phi$.\end{proof}

\begin{lemma} \label{lemma:choiceoflift} For every fixed point $\widetilde z$ of $\widetilde f$, every stable branch $\widetilde\Gamma^s$ of $\widetilde z$, every unstable branch $\widetilde\Gamma ^u$ of $\widetilde z$ and every finite admissible sequence $(\widetilde \phi_k)_{1\leq k\leq n}$, there exist $T$, $T'$ in $G$ such that
\begin{itemize}
\item $T(\widetilde z)\in R(\widetilde\phi_1)$;
\item $T'(\widetilde z)\in L(\widetilde\phi_n)$;
\item $(\widetilde \phi_k)_{1\leq k\leq n}$ is a subsequence of the $\phi$-trajectory of a point of $T(\widetilde \Gamma^u)$;
\item $(\widetilde \phi_k)_{1\leq k\leq n}$ is a subsequence of the $\phi$-trajectory of a point of $T'(\widetilde \Gamma^s)$.
\end{itemize}
\end{lemma}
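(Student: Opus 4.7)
The plan is to obtain both conclusions by first enlarging the given admissible sequence by a single ``buffer'' lift at the relevant end, and then propagating, via the (singular) Brouwer line property, the location of the fixed point from the orbit of a well-chosen point of the branch. Since the two statements are symmetric under reversal of time, I concentrate on the unstable branch; the stable case is obtained from it by running the argument with $\widetilde f^{-1}$ in place of $\widetilde f$.

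First I would apply the extension principle recalled in the text and enlarge $(\widetilde\phi_k)_{1\leq k\leq n}$ on the left into an admissible sequence $(\widetilde\phi_k)_{0\leq k\leq n}$, with $\widetilde\phi_0\prec\widetilde\phi_1$ strictly. Then I would invoke the remark stated just before the lemma, namely that if $\widetilde\Gamma$ is a branch of $\widetilde f$ and $(\widetilde\phi_k)$ is a finite admissible sequence, there exist $T\in G$ and a point of $T(\widetilde\Gamma)$ whose $\phi$-trajectory contains that sequence as a subsequence. Applied to $\widetilde\Gamma^u$ and to the enlarged sequence, this furnishes $T\in G$ and $\widetilde w\in T(\widetilde\Gamma^u)$ such that $(\widetilde\phi_k)_{0\leq k\leq n}$ is a subsequence of the $\phi$-trajectory of $\widetilde w$.

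Let $k_0$ be the crossing time of $\widetilde\phi_0$ by the orbit of $\widetilde w$, so $\widetilde f^{k_0-1}(\widetilde w)\in R(\widetilde\phi_0)$. The singular Brouwer line inclusion $\widetilde f^{-1}(\overline{R(\widetilde\phi_0)})\subset \overline{R(\widetilde\phi_0)}$ gives by induction that $\widetilde f^j(\widetilde w)\in \overline{R(\widetilde\phi_0)}$ for every $j\leq k_0-1$. Since $\widetilde w$ lies on the unstable manifold of $T(\widetilde z)$, one has $\widetilde f^j(\widetilde w)\to T(\widetilde z)$ as $j\to-\infty$, and closedness of $\overline{R(\widetilde\phi_0)}$ yields $T(\widetilde z)\in\overline{R(\widetilde\phi_0)}$. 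A short topological step, based on the fact that distinct lifts of $\phi$ are disjoint and that $\widetilde\phi_0\prec\widetilde\phi_1$ strictly, shows that $\widetilde\phi_0\subset R(\widetilde\phi_1)$, and therefore $\overline{R(\widetilde\phi_0)}=R(\widetilde\phi_0)\cup\widetilde\phi_0\subset R(\widetilde\phi_1)$, which forces $T(\widetilde z)\in R(\widetilde\phi_1)$ as required.

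The stable branch case is symmetric: I would enlarge the sequence on the right to $(\widetilde\phi_k)_{1\leq k\leq n+1}$, pick $T'\in G$ and $\widetilde w'\in T'(\widetilde\Gamma^s)$ realizing this enlarged sequence, and propagate forward the inclusion $\widetilde f(\overline{L(\widetilde\phi_{n+1})})\subset \overline{L(\widetilde\phi_{n+1})}$, together with the convergence of the forward orbit of $\widetilde w'$ to $T'(\widetilde z)$, to conclude that $T'(\widetilde z)\in\overline{L(\widetilde\phi_{n+1})}\subset L(\widetilde\phi_n)$. The main subtlety I expect is purely technical: the Brouwer property alone only places the limit in the closed half-plane, while the conclusion demands the open one. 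This is exactly the role of the buffer lifts $\widetilde\phi_0$ and $\widetilde\phi_{n+1}$, whose strict separation from $\widetilde\phi_1$ and $\widetilde\phi_n$ absorbs the possibility that $T(\widetilde z)$ or $T'(\widetilde z)$ sits on the lift $\widetilde\phi_1$ or $\widetilde\phi_n$ itself.
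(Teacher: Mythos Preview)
Your argument is correct and follows essentially the same route as the paper: extend the admissible sequence by a buffer lift, use the density-of-branches remark to find $T$ and a point $\widetilde w\in T(\widetilde\Gamma^u)$ realizing the extended sequence, then push backward along the unstable orbit using the (singular) Brouwer-line inclusion to place $T(\widetilde z)$ in $\overline{R(\widetilde\phi_0)}\subset R(\widetilde\phi_1)$. The paper extends on both ends simultaneously and is more terse about the limiting step (it just writes ``if $n$ is sufficiently large''), whereas you make the backward-invariance and the convergence $\widetilde f^j(\widetilde w)\to T(\widetilde z)$ explicit; these are cosmetic differences only.
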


\begin{proof}Let  $(\widetilde \phi_k)_{0\leq k\leq n+1}$ be an admissible extension of $(\widetilde \phi_k)_{1\leq k\leq n}$. There  exist $T$, $T'$ in $G$ such that
\begin{itemize}

\item $(\widetilde \phi_k)_{0\leq k\leq n+1}$ is a subsequence of the $\phi$-trajectory of a point $\widetilde x\in T(\widetilde \Gamma^u)$;
\item $(\widetilde \phi_k)_{0\leq k\leq n+1}$ is a subsequence of the $\phi$-trajectory of a point $\widetilde x'\in T'(\widetilde \Gamma^s)$.
\end{itemize}

If $n$ is sufficiently large, then $\widetilde f^{-n}(\widetilde x)\in R(\widetilde\phi_0)$ and $\widetilde f^{n}(\widetilde x')\in L(\widetilde\phi_{n+1})$ and so
$ T(\widetilde z)\in \overline{R(\widetilde\phi_0)}$ and $T'(\widetilde z)\in \overline{L(\widetilde\phi_{n+1})}$. To conclude, observe that $ \overline{R(\widetilde\phi_0)}\subset R(\widetilde\phi_1)$ and $ \overline{L(\widetilde\phi_{n+1})}\subset L(\widetilde\phi_{n})$.\end{proof}

\begin{lemma} \label{lemma:noncrossing} There is no crossing between admissible sequences.  \end{lemma}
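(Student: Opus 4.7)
The plan is to argue by contradiction. Suppose a crossing exists, realized by points $\widetilde z,\widetilde z'\in\widetilde S$ whose $\phi$-trajectories contain respectively $\widetilde\phi_{k_1},\widetilde\phi_{k_2}$ and $\widetilde\phi'_{k'_1},\widetilde\phi'_{k'_2}$ with the prescribed configuration; let $a_1<a_2$ and $a'_1<a'_2$ be the corresponding crossing times. The strategy is to produce two transverse trajectories connecting the relevant pairs of lifts, use the crossing hypothesis to force them to meet inside the inner region
\[
R=\overline{L(\widetilde\phi_{k_1})\cap L(\widetilde\phi'_{k'_1})\cap R(\widetilde\phi_{k_2})\cap R(\widetilde\phi'_{k'_2})},
\]
and then convert this intersection into a dynamical contradiction via the forcing machinery for transverse paths, invoking the Brouwer property of one of the four lifts.

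First I would upgrade the isotopy paths to paths positively transverse to the lifted foliation, using Lemma~\ref{l:loops} to fix a foliation $\mathcal F$ transverse to $I_f$ in which $\phi$ appears as a closed leaf or a cycle of connections; its lift $\widetilde{\mathcal F}$ then has every leaf a Brouwer line of $\widetilde f$, and each $\widetilde\phi_{k_i},\widetilde\phi'_{k'_j}$ is a (possibly singular) Brouwer line formed from leaves and singular points. Concatenating the transverse trajectories of the iterates $\widetilde f^{n}(\widetilde z)$ for $a_1-1\le n\le a_2-1$ produces a path $\gamma$ positively transverse to $\widetilde{\mathcal F}$ that starts in $\overline{R(\widetilde\phi_{k_1})}$ and ends in $\overline{L(\widetilde\phi_{k_2})}$; positive transversality combined with the Brouwer character of the lifts of $\phi$ forces $\gamma$ to meet each of $\widetilde\phi_{k_1},\widetilde\phi_{k_2}$ exactly once, so after truncation $\gamma$ is a path from $\widetilde\phi_{k_1}$ to $\widetilde\phi_{k_2}$ entirely contained in $\overline{L(\widetilde\phi_{k_1})\cap R(\widetilde\phi_{k_2})}$. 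Construct $\gamma'$ analogously from the orbit of $\widetilde z'$. Truncating further so that the relevant sub-arcs lie in $R$, the paths $\gamma$ and $\gamma'$ connect opposite pairs of boundary arcs of $R$, and the third clause in the definition of a crossing then delivers a common point $\widetilde w\in\gamma\cap\gamma'\cap R$. At $\widetilde w$ both paths are positively transverse to $\widetilde{\mathcal F}$, so they cross the leaf of $\widetilde{\mathcal F}$ through $\widetilde w$ in the same direction, giving an $\widetilde{\mathcal F}$-transverse intersection of the two trajectories.

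The hardest step, which I expect to be the main obstacle, is turning this $\widetilde{\mathcal F}$-transverse intersection into a contradiction with the Brouwer property. The plan is to invoke the realization direction of the forcing theorem for transverse paths from \cite{Lec1}: the concatenation of the initial segment of $\gamma$ up to $\widetilde w$ with the terminal segment of $\gamma'$ from $\widetilde w$ onwards is a positively transverse path which, because of the $\widetilde{\mathcal F}$-transverse intersection, is admissible and is realized (up to $\widetilde{\mathcal F}$-equivalence) as a piece of the transverse trajectory of a genuine orbit of $\widetilde f$. The $\phi$-trajectory of this new orbit would then start by crossing $\widetilde\phi_{k_1}$ (entering $L(\widetilde\phi_{k_1})$) and later cross $\widetilde\phi'_{k'_2}$; the relative position of the four lifts in the crossing configuration forces the new orbit, on its way from $\widetilde\phi_{k_1}$ to $\widetilde\phi'_{k'_2}$, to cross one of $\widetilde\phi'_{k'_1}$ or $\widetilde\phi_{k_2}$ in the direction forbidden by its Brouwer property, yielding the desired contradiction. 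The delicate part is to verify that the transverse intersection at $\widetilde w$ satisfies the hypotheses of the forcing theorem and to identify, among the four lifts, the one that provides the unambiguous Brouwer obstruction in the geometric configuration at hand.
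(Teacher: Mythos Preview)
Your approach is genuinely different from the paper's, and in its present form it has two real gaps, both in the ``hardest step'' you flagged.

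First, your justification of an $\widetilde{\mathcal F}$-transverse intersection is incorrect. You write that at the common point $\widetilde w$ both paths are positively transverse to $\widetilde{\mathcal F}$ and hence cross the leaf through $\widetilde w$ in the same direction, ``giving an $\widetilde{\mathcal F}$-transverse intersection''. This is the opposite of what $\mathcal F$-transversality means: two positively transverse paths meeting a leaf locally agree rather than cross each other in the sense of \cite{LecT}. The notion of $\mathcal F$-transverse intersection is global --- it requires the two paths to separate on \emph{both} sides of a common leaf. The crossing hypothesis of the lemma does encode exactly such a separation (using the four lifts $\widetilde\phi_{k_1},\widetilde\phi_{k_2},\widetilde\phi'_{k'_1},\widetilde\phi'_{k'_2}$), but you never use it this way; instead you try to deduce transversality from a pointwise intersection, which does not work.

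Second, even if one grants the $\mathcal F$-transverse intersection and the forcing step (which is in \cite{LecT}, not \cite{Lec1}), your proposed contradiction fails. The concatenated orbit goes from $L(\widetilde\phi_{k_1})$ to $L(\widetilde\phi'_{k'_2})$. But under the crossing configuration one has $R(\widetilde\phi_{k_1})\subset L(\widetilde\phi'_{k'_1})$ and $L(\widetilde\phi'_{k'_2})\subset R(\widetilde\phi_{k_2})$, so this orbit never needs to cross $\widetilde\phi'_{k'_1}$ or $\widetilde\phi_{k_2}$ at all, and certainly not in the forbidden direction. The same holds for the other concatenation. There is no Brouwer obstruction coming from the four lifts. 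The correct way to extract a contradiction from forcing is different: an $\mathcal F$-transverse intersection produces a topological horseshoe, hence infinitely many periodic points, contradicting $\#\mathrm{per}_h(f)=2g-2$. This is precisely the alternate route sketched in Section~\ref{s:alternate} of the paper.

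The paper's own proof avoids forcing theory entirely. It uses Lemma~\ref{lemma:choiceoflift} to place an unstable branch of some lift $\widetilde z$ along the first admissible sequence and a stable branch of some lift $\widetilde z'$ along the second. From the unstable manifold of $\widetilde z$ and half-lines of two lifts of $\phi$ it builds a line $\lambda$; from the stable manifold of $\widetilde z'$ and two other half-lines it builds a line $\lambda'$. The crossing configuration forces $\lambda$ and $\lambda'$ to intersect, but any such intersection would lie in $W^u(\widetilde z)\cap W^s(\widetilde z')$, which is empty by Proposition~\ref{p.dynamicalproperties}(7). This argument is elementary and self-contained; your route, once repaired as above, is valid but relies on the heavier machinery of \cite{LecT}.
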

\begin{proof} We argue by contradiction. Suppose that there exist two admissible paths $(\widetilde \phi_k)_{k\in J}$ and $(\widetilde \phi'_{k'})_{k'\in J'}$ and integers  $k_1<k_2$ in $J$ and $k'_1<k'_2\in J'$ such that:
 \begin{itemize}
 \item $\widetilde \phi_{k_1}\subset L(\widetilde\phi'_{k'_1})$ and $\widetilde \phi'_{k'_1}\subset L(\widetilde\phi_{k_1})$;
  \item $\widetilde \phi_{k_2}\subset R(\widetilde\phi'_{k'_2})$ and $\widetilde \phi'_{k'_2}\subset R(\widetilde\phi_{k_2})$;
    \item if $\gamma$ and $\gamma'$ are paths in the closure of $L(\widetilde\phi_{k_1})\cap L(\widetilde\phi'_{k'_1}) \cap R(\widetilde\phi_{k_2})\cap R(\widetilde\phi'_{k'_2})$ joining $\widetilde \phi_{k_1}$ to  $\widetilde \phi_{k_2}$ and $\widetilde \phi'_{k'_1}$ to  $\widetilde \phi'_{k'_2}$ respectively, then $\gamma\cap\gamma'\not=\emptyset$.
 \end{itemize}

Choose a fixed point $z$ of $f$,  denote $\Gamma_1^s$, $\Gamma_2^s$ its stable branches and $\Gamma_1^u$, $\Gamma_2^u$ its unstable branches. By Lemma  \ref{lemma:choiceoflift},  there exists a lift $\widetilde z\in R(\widetilde \phi_{k_1})$ of $z$ and a point of the branch $\widetilde \Gamma_1^u$ of $\widetilde z$ that lifts $\Gamma_1^u$, whose $\phi$-trajectory contains  $(\widetilde \phi_k)_{k_1\leq k\leq k_2}$.  Let $\widetilde y$ be the first point where $\widetilde\Gamma_1^u$ meets $\widetilde\phi_{k_2}$. Consider the other unstable branch $\widetilde\Gamma_2^u$ of $\widetilde z$ and denote $\widetilde \phi$ the first lift of $\phi$ met by $\widetilde\Gamma_2^u$  (different from the lift that contains $\widetilde z$ if such a lift exists). Denote $\widetilde x$ the first point where $\widetilde\Gamma_2^u$ meets $\widetilde\phi$.  The leaf $\widetilde\phi$ is contained in $R(\widetilde\phi_{k_1})$ by Lemma  \ref{lemma:distinct} and one gets a line $\lambda$ as the union of one of the half lines of $\widetilde\phi$ delimited by $\widetilde x$, the segment of the unstable manifold of $\widetilde z$ joining $\widetilde x$ to $\widetilde y$ and one of the half lines of $\widetilde\phi_{k_2}$ delimited by $\widetilde y$. Similarly,  there is  a lift $\widetilde z'$ of $z$ in $L(\widetilde\phi'_{k'_2})$ such that the stable branch $\widetilde\Gamma'{}^s_1$ of $\widetilde z'$ that lifts $\Gamma'{}^s_1$ intersects $R(\widetilde\phi'_{k'_1})$. Let $\widetilde y'$ be the first point where $\widetilde\Gamma'{}^s_1$ meets $\widetilde\phi'_{k'_1}$. Consider the other stable branch $\widetilde\Gamma'{}^s_2$ of $\widetilde z'$, the first lift $\widetilde\phi'$ of $\phi$ met by $\widetilde\Gamma'{}^s_2$  (different from the lift that contains $\widetilde z$ if such a lift exists) and the first point $\widetilde x'$ where $\widetilde\Gamma'{}^s_2$ meets $\widetilde\phi'$.  The leaf $\widetilde\phi'$ is contained in $L(\widetilde\phi'_{k'_2})$ and one gets a line $\lambda'$ as the union of one of the half lines of $\widetilde\phi'$ delimited by $\widetilde x'$, the segment of the stable manifold of $\widetilde z'$ joining $\widetilde x'$ to $\widetilde y'$ and one of the half lines of $\widetilde\phi'_{k'_1}$ delimited by $\widetilde y'$. The contradiction comes from the fact that $\lambda$ and $\lambda'$ must be disjoint. The key points in the previous proof  are the following:
 \begin{itemize}
 \item the unstable manifold of $\widetilde z$ does not meet the stable manifold of $\widetilde z'$;
\item the unstable manifold of $\widetilde z$ meets $\widetilde\phi_{k_1}$ and $\widetilde\phi_{k_2}$ but not $\widetilde\phi_{k'_1}$ because $\widetilde z \in L(\widetilde\phi_{k'_1})$;
\item  the unstable manifold of $\widetilde z$ could meet $\widetilde\phi_{k'_2}$ but not $\widetilde \phi'$ because  $\widetilde z \in L(\widetilde\phi')$;
\item the stable manifold of $\widetilde z'$ meets $\widetilde\phi_{k'_1}$ and $\widetilde\phi_{k'_2}$ but not $\widetilde\phi_{k_2}$ because $\widetilde z' \in R(\widetilde\phi_{k_2})$;
\item  the stable manifold of $\widetilde z'$ could meet $\widetilde\phi_{k_1}$ but not $\widetilde \phi$ because  $\widetilde z' \in R(\widetilde\phi)$.
\end{itemize}
\end{proof}

We deduce the following:

\begin{lemma} \label{lemma:comparable} The lifts of $\phi$ met by a stable or an unstable branch of a fixed point $\widetilde z$ of $\widetilde f$ (and different from the lift that contains $\widetilde z$ if such a lift exists) are all comparable (for the order $\preceq$). \end{lemma}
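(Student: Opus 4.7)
The plan is by contradiction: I treat an unstable branch $\widetilde{\Gamma}^u$ of $\widetilde{z}$; the stable case follows by the symmetric argument swapping the roles of $L$ and $R$. Suppose two lifts $\widetilde{\phi},\widetilde{\phi}'$ met by $\widetilde{\Gamma}^u$, neither containing $\widetilde{z}$, are incomparable. Since $\phi$ is an essential simple loop, distinct lifts in $\widetilde{S}$ are disjoint, so $\widetilde{\phi}\cap\widetilde{\phi}'=\emptyset$. My first step is to locate $\widetilde{z}$: for any crossing $\widetilde{x}\in\widetilde{\Gamma}^u\cap\widetilde{\phi}$, the point $\widetilde{x}$ is non-fixed (it lies on $W^u(\widetilde{z})\setminus\{\widetilde{z}\}$), so the (possibly singular) Brouwer-line property yields $\widetilde{f}^{-n}(\widetilde{x})\in R(\widetilde{\phi})$ for every $n\geq1$; since these iterates converge to $\widetilde{z}$ along the branch, we obtain $\widetilde{z}\in\overline{R(\widetilde{\phi})}$, and using $\widetilde{z}\notin\widetilde{\phi}$, $\widetilde{z}\in R(\widetilde{\phi})$. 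The same argument applied to $\widetilde{\phi}'$ yields $\widetilde{z}\in R(\widetilde{\phi})\cap R(\widetilde{\phi}')$.

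Incomparability of two disjoint oriented lines in $\widetilde{S}$ falls into one of two back-to-back configurations: (A) $\widetilde{\phi}\subset L(\widetilde{\phi}')$ and $\widetilde{\phi}'\subset L(\widetilde{\phi})$, or (B) $\widetilde{\phi}\subset R(\widetilde{\phi}')$ and $\widetilde{\phi}'\subset R(\widetilde{\phi})$. Case (A) is ruled out immediately: $R(\widetilde{\phi})$ is adjacent to $\widetilde{\phi}\subset L(\widetilde{\phi}')$ and disjoint from $\widetilde{\phi}'\subset L(\widetilde{\phi})$, so $R(\widetilde{\phi})\subset L(\widetilde{\phi}')$, whence $R(\widetilde{\phi})\cap R(\widetilde{\phi}')=\emptyset$, contradicting the location of $\widetilde{z}$ obtained above.

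For case (B) I parameterize $\widetilde{\Gamma}^u$ as $\gamma\colon[0,\infty)\to\widetilde{S}$ with $\gamma(0)=\widetilde{z}$, and let $t<t'$ be the first parameters (WLOG) at which $\gamma$ meets $\widetilde{\phi}$ and $\widetilde{\phi}'$ respectively. Just past $t$ the branch enters $L(\widetilde{\phi})$; in configuration (B), $L(\widetilde{\phi})\subset R(\widetilde{\phi}')$ is disjoint from $\widetilde{\phi}'$, so to reach $\gamma(t')\in\widetilde{\phi}'$ the branch must first return to $R(\widetilde{\phi})$ via a further crossing of $\widetilde{\phi}$ at some $t''\in(t,t')$, and the Brouwer forward-invariance of $L(\widetilde{\phi})$ applied at $\gamma(t'')$ forces the branch to oscillate across $\widetilde{\phi}$ repeatedly between $t''$ and $t'$. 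I would then invoke Lemma~\ref{lemma:choiceoflift} to realize these forced crossings as sub-sequences of $\phi$-trajectories of points on suitable translates of $\widetilde{\Gamma}^u$, producing two admissible sequences whose relative configuration exhibits a crossing of the type forbidden by Lemma~\ref{lemma:noncrossing}, completing the contradiction. The hard part will be case (B): the back-to-back right configuration is not obstructed by local Brouwer considerations alone (the "middle region" $R(\widetilde{\phi})\cap R(\widetilde{\phi}')$ containing $\widetilde{z}$ is non-empty), and the contradiction must come from the global no-crossing principle applied to admissible sequences built out of the forced oscillations of $\widetilde{\Gamma}^u$ across $\widetilde{\phi}$.
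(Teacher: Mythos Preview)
Your setup and the handling of configuration~(A) are fine, and you correctly reduce to configuration~(B) with $\widetilde z\in R(\widetilde\phi)\cap R(\widetilde\phi')$ and $L(\widetilde\phi)\cap L(\widetilde\phi')=\emptyset$. The gap is in case~(B). The sentence ``the Brouwer forward-invariance of $L(\widetilde\phi)$ applied at $\gamma(t'')$ forces the branch to oscillate across $\widetilde\phi$ repeatedly between $t''$ and $t'$'' conflates the branch (a curve) with an orbit. The Brouwer property tells you that the \emph{orbit} of $\gamma(t'')$ goes into $L(\widetilde\phi)$, i.e.\ that $\widetilde f(\gamma(t''))\in L(\widetilde\phi)$; this point lies somewhere further along $\widetilde\Gamma^u$, but there is no reason it lies between parameters $t''$ and $t'$, and nothing prevents the curve $\widetilde\Gamma^u$ from crossing $\widetilde\phi$ only finitely many (even just two) times on $[t,t']$. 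More importantly, even if you could force oscillations of the branch, you have not produced admissible sequences: the crossing notion of Lemma~\ref{lemma:noncrossing} concerns $\phi$-trajectories of \emph{orbits}, and you give no construction of two orbits whose $\phi$-trajectories satisfy the three crossing conditions. Invoking Lemma~\ref{lemma:choiceoflift} does not help here, since that lemma only realizes a \emph{given} admissible sequence inside a translate of a branch; it does not manufacture the required pair.

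The missing idea, which the paper supplies, is to bring in the \emph{stable} branches of $\widetilde z$. By Lemma~\ref{lemma:distinct} the two stable branches first hit two distinct lifts $\widetilde\phi_1,\widetilde\phi'_1$ with $\overline{R(\widetilde\phi_1)}\cap\overline{R(\widetilde\phi'_1)}=\emptyset$ (so they sit in the ``configuration~(A)'' pattern), and both $\overline{R(\widetilde\phi_1)}$, $\overline{R(\widetilde\phi'_1)}$ lie in $R(\widetilde\phi)\cap R(\widetilde\phi')$. Now the $\lambda$-lemma is used: a small neighborhood of a point of $\widetilde\Gamma^u$ in $L(\widetilde\phi)$, iterated backward, accumulates on both stable branches, hence meets $R(\widetilde\phi_1)$ and $R(\widetilde\phi'_1)$. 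This yields genuine orbits going from $R(\widetilde\phi_1)$ and from $R(\widetilde\phi'_1)$ into $L(\widetilde\phi)$, and likewise into $L(\widetilde\phi')$. The pair of admissible sequences $(\widetilde\phi_1,\widetilde\phi)$ and $(\widetilde\phi'_1,\widetilde\phi')$ then satisfies exactly the crossing definition (start in configuration~(A), end in configuration~(B)), contradicting Lemma~\ref{lemma:noncrossing}. Your attempt tries to extract everything from $\widetilde\Gamma^u$ alone; the proof really needs the stable side together with the $\lambda$-lemma to pass from the geometry of branches to admissible sequences of orbits.
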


\begin{proof}We will give the proof for unstable branches, the case of stable branches being similar. Here again we argue by contradiction and suppose that  an unstable branch $\widetilde\Gamma^u$ of a fixed point $\widetilde  z$ meets two non comparable lifts $\widetilde\phi_2$ and $\widetilde\phi'_2$ of $\phi$, and that $\widetilde z\not \in \widetilde\phi_2\cup\widetilde\phi'_2$. Consequently $\widetilde z$ belongs to $R(\widetilde\phi_2)\cap R(\widetilde\phi'_2)$. Consider the two stable branches  $\widetilde \Gamma^s$ and $\widetilde \Gamma'^s$ of $\widetilde z$ and denote $\widetilde\phi_1$ and $\widetilde\phi'_1$  the first lifts of $\phi$ met by $\widetilde \Gamma^s\setminus\{\widetilde z\}$ and $\widetilde \Gamma'^s\setminus\{\widetilde z\}$ respectively. Note that the sets $\overline{R(\widetilde\phi_1)}$ and $\overline{ R(\widetilde\phi'_1)}$ are disjoint and both contained in $R(\widetilde\phi_2)$ and $R(\widetilde\phi'_2)$. By assumption one can find a point $\widetilde x\in \widetilde\Gamma^u\cap L(\widetilde \phi_2)$. Choose a neighborhood $\widetilde U\subset L(\widetilde \phi_2)$ of  $\widetilde x$. By the $\lambda$-lemma, one knows that there exists $n\geq 1$ such that $\widetilde f^{-n}(\widetilde U)\cap  R(\widetilde \phi_1) \not=\emptyset$ and $\widetilde f^{-n}(\widetilde U)\cap  R(\widetilde \phi'_1)\not=\emptyset$.  So there is a point in $R(\widetilde \phi_1) $  whose forward orbit reaches   $L(\widetilde \phi_2)$ and  a point in $R(\widetilde \phi'_1) $  whose forward orbit reaches   $L(\widetilde \phi_2)$. For similar reasons,   there is a point in $R(\widetilde \phi_1) $  whose forward orbit reaches   $L(\widetilde \phi'_2)$ and  a point in $R(\widetilde \phi'_1) $ whose forward orbit reaches   $L(\widetilde \phi'_2)$. By this implies that there is a crossing. \end{proof}

Consequently, to every unstable branch $\widetilde \Gamma^u$ of a fixed point $\widetilde z$ is associated an increasing sequence  $(\widetilde \phi_k)_{k\geq 1}$, where $\widetilde \phi_k$ is the $k$-th lift of $\phi$ met by $\widetilde \Gamma^u$, and distinct from the lift of $\phi$ that contains $\widetilde z$ if such a lift exists. Similarly, to every stable branch $\widetilde \Gamma^s$ is associated an increasing sequence  $(\widetilde \phi_k)_{k\leq -1}$. This sequence will be called the {\it $\phi$-trajectory} of the branch. It can be defined by the following:
\begin{itemize}
\item the $\phi$-trajectory of an orbit on the branch is a sub-sequence of the $\phi$-trajectory of the branch if it does not contain the eventual lift of $\widetilde\phi$ that contains $\widetilde z$;
\item every finite sequence of the $\phi$-trajectory of the branch is a sub-sequence of the $\phi$-trajectory of a least one orbit on the branch.
\end{itemize}
Note also, by Lemma \ref{lemma:choiceoflift}, that every finite admissible sequence is a sub-sequence of the $\phi$-trajectory of the image of the branch by a covering automorphism. To conclude with the remarks, note as well that the intersection of a branch with a lift is not necessarily closed (a branch has no reason to be proper).

\begin{lemma} \label{lemma:boundedness}  There exists an integer $A$ such that if $\widetilde\phi_0$ and $\widetilde \phi'_0$ are two different lifts of $\phi$, then the set of points of $\widetilde\phi_0$ that belong to a stable or an unstable branch of $\widetilde f$ that meets $\widetilde \phi'_0$ has length at most $A$. \end{lemma}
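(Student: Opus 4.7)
The strategy is to argue by contradiction using the non-crossing Lemma \ref{lemma:noncrossing}, exploiting the fact that there are only finitely many orbits of branches of $\widetilde f$ under the covering group $G$. Suppose no such $A$ exists. Then for each integer $n$ we find two distinct lifts $\widetilde\phi_0^{(n)},\widetilde\phi_0^{\prime(n)}$ of $\phi$ and branches of $\widetilde f$ meeting both, whose intersection points on $\widetilde\phi_0^{(n)}$ have length $\geq n$. In particular we can pick two such branches $\widetilde\Gamma_n^1$ and $\widetilde\Gamma_n^2$ whose intersection points $\widetilde x_n^j\in\widetilde\Gamma_n^j\cap\widetilde\phi_0^{(n)}$ are at $T_{\widetilde\phi_0^{(n)}}$-distance $\geq n$ apart. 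Since $f$ has exactly $2g-2$ hyperbolic fixed points and therefore $4(2g-2)$ branches downstairs, the number of $G$-orbits of branches of $\widetilde f$ is finite, so by passing to a subsequence we may assume that $\widetilde\Gamma_n^1 = T_n^1\widetilde\Gamma$ and $\widetilde\Gamma_n^2 = T_n^2\widetilde\Gamma'$ for two fixed representative branches $\widetilde\Gamma,\widetilde\Gamma'$ and covering automorphisms $T_n^1,T_n^2\in G$.

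Second, I normalize the picture. After applying a single element of $G$ independent of $n$, we may assume $\widetilde\phi_0^{(n)}=\widetilde\phi_0$ is a fixed lift contained in the $\phi$-trajectory of $\widetilde\Gamma$, and further, by composing with suitable powers of $T_{\widetilde\phi_0}$, that $\widetilde x_n^1$ remains in a bounded piece of $\widetilde\phi_0$. The hypothesis on the spread then becomes: $\widetilde x_n^2$ goes to infinity along $\widetilde\phi_0$, so $T_n^2\in G$ ranges over an infinite subset. For each $n$, by Lemma \ref{lemma:choiceoflift} I extend the admissible sub-sequence $(\widetilde\phi_0,\widetilde\phi_0^{\prime(n)})$ (read inside the trajectory of $\widetilde\Gamma_n^j$) to a full bi-infinite admissible trajectory supported by an orbit lying on $\widetilde\Gamma_n^j$, and Lemma \ref{lemma:comparable} guarantees that all lifts of $\phi$ met by $\widetilde\Gamma_n^j$ are totally ordered by $\preceq$, so the trajectory is indeed a chain of lifts.

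Third, I apply Lemma \ref{lemma:noncrossing} to the two admissible trajectories associated to $\widetilde\Gamma_n^1$ and $\widetilde\Gamma_n^2$. Both trajectories contain $\widetilde\phi_0$ and $\widetilde\phi_0^{\prime(n)}$, but between these two lifts each trajectory crosses a chain of intermediate lifts determined by the reference branch and the respective translate $T_n^j$. Because $T_n^2\neq T_n^1$ and the joint stabilizer $\mathrm{Stab}(\widetilde\phi_0)\cap\mathrm{Stab}(\widetilde\phi_0^{\prime(n)})$ is trivial (different lifts have different cyclic stabilizers generated by distinct primitive elements of $G$), these intermediate lifts cannot coincide for the two trajectories as $n\to\infty$. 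Combined with the drift of entry points along $\widetilde\phi_0$, this forces two indices $k_1<k_2$ in the first trajectory and $k_1'<k_2'$ in the second satisfying the hypotheses of a crossing, namely $\widetilde\phi_{k_1}\subset L(\widetilde\phi'_{k_1'})$, $\widetilde\phi'_{k_1'}\subset L(\widetilde\phi_{k_1})$, the reversed inclusions at $k_2,k_2'$, and disjoint connecting paths that must meet because the corresponding sub-arcs of $\widetilde\Gamma_n^1$ and $\widetilde\Gamma_n^2$ are separated by the spread on $\widetilde\phi_0$. This contradicts Lemma \ref{lemma:noncrossing}.

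The main obstacle is the third step: converting the geometric statement that the intersection points on $\widetilde\phi_0$ drift apart into a genuine crossing of the two admissible trajectories, which requires tracking how the intermediate lifts appearing between $\widetilde\phi_0$ and $\widetilde\phi_0^{\prime(n)}$ are transported by $T_n^1$ versus $T_n^2$, and using the triviality of the joint stabilizer to prevent $T_n^1$ and $T_n^2$ from differing only by a stabilizer element. Uniformity of the bound $A$ over all pairs of lifts will then follow automatically, since the argument only used the finite list of representative branches and their $\phi$-trajectories, quantities that depend solely on $f$.
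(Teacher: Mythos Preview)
Your strategy differs from the paper's proof, which is constructive rather than by contradiction: the paper fixes an auxiliary unstable branch $\widetilde\Gamma^u$ whose $\phi$-trajectory contains an extension $(\widetilde\phi_{-1},\widetilde\phi_0,\widetilde\phi_1)$ of the trivial admissible sequence, builds from it a line $\widetilde\lambda$ made of a half of $\widetilde\phi_{-1}$, a segment $\widetilde\alpha\subset\widetilde\Gamma^u$, and a half of $\widetilde\phi_1$, and then uses the translates $T_{\widetilde\phi_0}^k(\widetilde\lambda)$ as pairwise disjoint barriers.  Any branch meeting both $\widetilde\phi_0$ and $\widetilde\phi'_0$ can cross at most one such barrier with $k'\neq k$ (where $k$ is determined by the position of $\widetilde\phi'_0$), and this traps its intersection with $\widetilde\phi_0$ between $T_{\widetilde\phi_0}^{k-2}(\widetilde z_-)$ and $T_{\widetilde\phi_0}^{k+2}(\widetilde z_+)$, giving a uniform bound depending only on the length of $\widetilde\alpha\cap\widetilde\phi_0$.

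Your proposal, by contrast, has a genuine gap at the step you yourself identify as the main obstacle.  First, the claim that ``the intermediate lifts cannot coincide for the two trajectories'' is false: by Lemma~\ref{lemma:comparable} the lifts met by a branch are totally ordered by $\preceq$, so the lifts strictly between $\widetilde\phi_0$ and $\widetilde\phi'_0$ in the $\phi$-trajectory of either branch are exactly the lifts $\widetilde\psi$ with $\widetilde\phi_0\prec\widetilde\psi\prec\widetilde\phi'_0$ (or the reverse), a set determined by the geometry alone and identical for both branches.  Any distinction must therefore come from the predecessors of $\widetilde\phi_0$ or the successors of $\widetilde\phi'_0$ in the two trajectories, not from intermediate lifts.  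Second, even granting that the predecessors $\widetilde\psi_1,\widetilde\psi_2$ on the right of $\widetilde\phi_0$ are distinct (which the drift on $\widetilde\phi_0$ does suggest), you have no control over the successors $\widetilde\chi_1,\widetilde\chi_2$ on the left of $\widetilde\phi'_0$: nothing prevents $\widetilde\chi_1=\widetilde\chi_2$, and even if they differ, you must still verify that $\widetilde\psi_1,\widetilde\psi_2,\widetilde\chi_1,\widetilde\chi_2$ are arranged in the alternating cyclic order on the boundary of $L(\widetilde\psi_1)\cap L(\widetilde\psi_2)\cap R(\widetilde\chi_1)\cap R(\widetilde\chi_2)$ required by the definition of a crossing.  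The ``drift of entry points along $\widetilde\phi_0$'' establishes none of this.  In short, the passage from ``intersection points far apart on $\widetilde\phi_0$'' to ``crossing of admissible sequences'' is asserted but not proved, and the supporting claim about intermediate lifts is incorrect.
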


\begin{proof} Like in the proof of Proposition  \ref{prop:nonzerohomperiodic}, the line $\widetilde\phi_0$ being oriented, induces a natural order on the set of lifts of $\phi$ on the left of $\widetilde\phi_0$ and adjacent to $\widetilde\phi_0$ and  an order on the set of lifts of $\phi$ on the right of $\widetilde\phi_0$ and adjacent to $\widetilde\phi_0$ (we can say that a lift is above or below another lift relative to $\widetilde\phi_0$). There is no loss of generality by supposing that $\widetilde \phi'_0$ is on the left of $\widetilde\phi_0$ and adjacent to $\widetilde\phi_0$. The trivial sequence $(\widetilde\phi_k)_{k=0}$ being obviously admissible can be extended to an admissible sequence $(\widetilde\phi_k)_{-1\leq k\leq 1}$. Moreover, one can find an unstable branch $\widetilde\Gamma^u$ such that its $\phi$-trajectory contains $(\widetilde\phi_k)_{-1\leq k\leq 1}$. Let $\widetilde y$ be the first point where $\widetilde\Gamma^u$ meets $\widetilde\phi_{1}$ and $\widetilde x$ the last point where  $\widetilde\Gamma^u$ meets $\widetilde\phi_{-1}$ before reaching $\widetilde\phi_1$. We get a line $\widetilde\lambda$ by considering a half line of $\widetilde\phi_{-1}$ delimited by $\widetilde x$, then the segment $\widetilde \alpha$ of $\widetilde\Gamma^u$ joining $\widetilde x$ to $\widetilde y$, then a half line of $\widetilde\phi_{1}$ delimited by $\widetilde y$. The genus of $S$ being greater than $1$, the lifts $T_{\widetilde\phi_{0}}{}^k(\widetilde\phi_{-1})$, $k\in\Z$, are all distinct and contained in $R(\widetilde\phi_{0})$. Similarly, the lifts $T_{\widetilde\phi_{0}}{}^k(\widetilde\phi_{1})$, $k\in\Z$, are all distinct and contained in $L(\widetilde\phi_{0})$. Using Lemma  \ref{lemma:comparable}, one deduces that $\widetilde\lambda\cap T_{\widetilde \phi_0}{}^k(\widetilde\lambda)=\emptyset$ for every $k\not=0$. Denote $\widetilde z_-$ and $\widetilde z_+$ the smallest and the largest point of $\widetilde \alpha\cap \widetilde\phi_0$ for the order $\leq_{\widetilde\phi_0}$. There exists an integer $k$ such that  $\widetilde\phi'_0$ is above $T_{\widetilde\phi_0}{}^{k-1}(\widetilde\phi_1)$ relative to $\widetilde\phi_0$ and below $T_{\widetilde\phi_0}{}^{k+1}(\widetilde \phi_1)$ relative to $\widetilde\phi_0$.  By Lemma \ref{lemma:comparable}, a branch of $\widetilde f$ that meets $\widetilde\phi_0$ and $\widetilde\phi'_0$ cannot meet $T_{\widetilde\phi_0}{}^{k'}(\widetilde\phi_1)$ if $k'\not=k$. For the same reason it is different from $T_{\widetilde\phi_0}{}^{k'}(\widetilde\Gamma^u)$. Two different branches do not intersect, so a branch of $\widetilde f$ that meets $\widetilde\phi_0$ and $\widetilde\phi'_0$ is disjoint from $T_{\widetilde\phi_0}{}^{k'}(\widetilde\alpha)$  if $k'\not=k$. Finally, it meets at most one $T_{\widetilde\phi_0}{}^{k'}(\widetilde\phi_{-1})$, so it meets at most one line $T_{\widetilde\phi_0}{}^{k'}(\widetilde\lambda)$, $k'\not=k$. The lines $T_{\widetilde\phi_0}{}^{k-1}(\widetilde\lambda)$ and $T_{\widetilde\phi_0}{}^{k-2}(\widetilde\lambda)$ separate $\widetilde\phi'_0$ from every point of $\widetilde\phi_0$ smaller than $ T_{\widetilde\phi_0}{}^{k-2}(\widetilde z_-)$ and so every unstable branch that meets $\widetilde\phi_0$ and $\widetilde\phi'_0$  cannot intersect $\widetilde\phi_0$ at a point smaller than $ T_{\widetilde\phi_0}{}^{k-2}(\widetilde z_-)$. For the same reason, the lines $T_{\widetilde\phi_0}{}^{k+1}(\widetilde\lambda)$ and $T_{\widetilde\phi_0}{}^{k+2}(\widetilde\lambda)$ separate $\widetilde\phi'_0$ from every point of $\widetilde\phi_0$ larger than $ T_{\widetilde\phi_0}{}^{k+2}(\widetilde z_+)$ and so every branch that meets $\widetilde\phi_0$ and $\widetilde\phi'_0$  cannot intersect $\widetilde\phi_0$ at a point larger than $ T_{\widetilde\phi_0}{}^{k+2}(\widetilde z_+)$. We have proved that there exists $A$ such that the set of points of $\widetilde\phi_0$ that belong to a branch that meets $\widetilde \phi'_0$ has length $\leq A$.   \end{proof}

We are ready now to prove Proposition \ref{prop:nonhomologicalintersection}.
\begin{proof}[Proof of Proposition \ref{prop:nonhomologicalintersection}]
We will argue by contradiction and suppose that $$[\phi]\wedge \mathrm{rot}_f(\mu)>0, \enskip [\phi]\wedge \mathrm{rot}_f(\mu')> 0, \enskip \mathrm{rot}_f(\mu)\wedge \mathrm{rot}_f(\mu')\not= 0.$$
We consider a $\mu$-generic point $z$ and a $\mu'$-generic point $z'$, then two lifts $\widetilde z$, $\widetilde z'$ of $z$, $z'$ respectively. Perturbing $\phi$ if necessary,
we can always suppose that $z$ and $z'$ do not belong to $\phi$ \footnote{In fact, it is not necessary to perturb $\phi$, but notations are simpler to deal with in case the points do not belong to the leaf} . The $\phi$-trajectory of $\widetilde z$ and $\widetilde z'$ must be infinite because $[\phi]\wedge \mathrm{rot}_f(z)>0$ and $[\phi]\wedge \mathrm{rot}_f(z')>0$. We denote them $(\widetilde\phi_k)_{k\in\Z}$ and $(\widetilde\phi'_k)_{k\in\Z}$  respectively, supposing that $\widetilde z\in L(\widetilde \phi_0)\cap R(\widetilde \phi_1)$ and  $\widetilde z'\in L(\widetilde \phi'_0)\cap R(\widetilde \phi'_1)$. We can find a closed disk $\widetilde D\subset L(\widetilde \phi_0)\cap R(\widetilde \phi_1)$, neighborhood of $\widetilde z$ such $(\widetilde \phi_k)_{0\leq k\leq 1}$ is a subsequence of the $\phi$-trajectory of every point in $D$. We define similarly a neighborhood $D'$ of $\widetilde z'$. We can find $n$, $n'$ arbitrarily large and $S_n$, $S'_{n'}$  covering automorphisms such that

\begin{itemize}
\item
$\widetilde f^n(\widetilde z)\in S_n(\mathrm{int}(D))$,
\item $\widetilde f^{n'}(\widetilde z')\in S'_{n'}(\mathrm{int}(D'))$,
\item $[S_n]\sim n \,\mathrm{rot}_f(\mu)$,
\item $[S'_{n'}]\sim n' \,\mathrm{rot}_f(\mu')$,
\item $k_n=[\phi]\wedge[S_n]\sim n \,[\phi]\wedge \mathrm{rot}_f(\mu)$,
\item $k_{n'}=[\phi]\wedge[S'_{n'}]\sim n' \,[\phi]\wedge \mathrm{rot}_f(\mu')$,
\item $[S_n]\wedge [S'_{n'}]\sim n n' \,\mathrm{rot}_f(\mu)\wedge \mathrm{rot}_f(\mu')$.
\end{itemize}

Note that $\widetilde \phi_{k_n}=S_n(\widetilde \phi_{0})$ and $\widetilde \phi_{k_n+1}=S_n(\widetilde \phi_{1})$. As explained before, there exists an unstable branch $\widetilde\Gamma^u_n$ whose $\phi$-trajectory contains $(\widetilde \phi_k)_{0\leq k\leq k_{n+1}}$. Let $\widetilde y_n$ be the first intersection point of this branch with $\widetilde \phi_{k_n}$ and $\widetilde x_n$ the last point where the branch meets  $\widetilde \phi_{k_0}$ before reaching $\widetilde \phi_{k_n}$. Denote $\widetilde\alpha_n$ the segment of $\widetilde\Gamma^u_n$ that joins $\widetilde x_n$ to $\widetilde y_n$ and $\widetilde\beta_n$ the segment of $\widetilde \phi_{k_n}$ that joins $\widetilde y_n$ to $S_n(\widetilde x_n)$.  The branches   $\widetilde\Gamma^u_n$ and   $S_n(\widetilde\Gamma^u_n)$ meet $\widetilde \phi_{k_n}$ and $\widetilde \phi_{k_n+1}$ and so $\widetilde \beta_n$ has length $\leq A$ by Lemma  \ref{lemma:boundedness}. Note that $\widetilde\alpha_n$ and $\widetilde \beta_n$ projects onto paths $\alpha_n$ and $\beta_n$ and that $\Lambda_n =\alpha_n\beta_n$ is a loop that is lifted to a line $\widetilde\Lambda_n$, union of the translated of $\widetilde\alpha_n\widetilde \beta_n$ by the power of $S_n$.
There exists a stable branch $\widetilde\Gamma^s_{n'}$ whose $\phi$-trajectory contains $(\widetilde \phi'_{k'})_{0\leq k'\leq k'_{n+1}}$.  So we can define similarly a subpath $\widetilde\alpha'_{n'}$ of $\widetilde\Gamma^s_{n'}$ joining $\widetilde \phi'_{k'_0}$ to $\widetilde \phi'_{k'_{n'}}$, a sub-path $\widetilde\beta'_{n'}$ of  $\widetilde \phi'_{k'_{n'}}$, a line  $\widetilde \Lambda'_n$ and the projections $\alpha'_{n'}$, $\beta'_{n'}$, $\Lambda'_{n'}$. We will prove that
 $$\vert\Lambda_n\wedge \Lambda'_n\vert\leq 3A(k_n+k'_{n'})=O(n+n'),$$
 which contradicts the equality
  $$\Lambda_n\wedge \Lambda'_{n'} = [S_n]\wedge[S'_{n'}]\sim n n' \,\mathrm{rot}_f(\mu)\wedge \mathrm{rot}_f(\mu'),$$
 if  $n$ and $n'$ are large enough.
 The intersection number $\alpha_n\wedge \alpha'_{n'}$ is well defined and equal to zero, because a stable branch and an unstable branch do not intersect.  The intersection numbers $\alpha_n\wedge \beta'_{n'}$, $\beta_{n}\wedge \alpha'_{n'}$ and $\beta_n\wedge \beta'_{n'}$ are not necessarily defined but they are defined if we slightly enlarge $\beta_n$ and $\beta'_{n'}$ on $\Lambda_n$ and $\Lambda'_{n'}$ respectively, and slightly reduce $\alpha_n$ and $\alpha'_{n'}$. Let us do this, without changing the names of the paths.  It is sufficient to prove that
 $$\vert \alpha_n\wedge \beta'_{n'}\vert \leq 3A (k_n-1), \enskip \vert \beta_n\wedge \alpha'_{n'}\vert \leq 3A (k'_{n'}-1), \enskip \vert \beta_n\wedge \beta'_{n'}\vert \leq 3A.$$
We have the following formula, where the sum on the right has finitely many non zero terms
 $$\beta_n\wedge \beta'_{n'}=\sum_{S\in G}  \widetilde \beta_n\wedge S(\widetilde\beta'_{n'}).$$
Note now that each term in the sum belongs to $\{-1,0,1\}$. Note also that there are at most $3m$ non zero terms because $\widetilde \beta_n$ and $\widetilde\beta'_{n'}$ have length $\leq m$.
Similarly, one has
 $$\alpha_n\wedge \beta'_{n'}=\sum_{S\in G}  \widetilde \alpha_n\wedge S(\widetilde\beta'_{n'}).$$
Here again each term in the sum belongs to $\{-1,0,1\}$. Indeed, if $\widetilde \alpha_n\cap S(\widetilde\beta'_{n'})\not=\emptyset$, then $S(\widetilde\beta'_{n'})$ belongs to a lift $\widetilde \phi_{k}$, $1<k\leq k_n$,  and in that case $\widetilde \alpha_n\wedge S(\widetilde\beta'_{n'})=\widetilde \Lambda_n\wedge S(\widetilde\beta'_{n'})$. So it belongs to $\{-1,0,1\}$ because $\widetilde \Gamma_n$ is a line.  There are at most  $3A(k_n-1)$ non zero terms because the intersection of $\widetilde \alpha_n$ with a lift $\widetilde \phi_{k}$, $1<k\leq k_n$, has length $\leq A$ by Lemma  \ref{lemma:boundedness}, like $\widetilde\beta'_{n'}$.  The inequality $\vert \beta_n\wedge \alpha'_{n'}\vert \leq 3A (k'_{n'}-1)$ can be proven in the same way. \end{proof}

We will now generalize what has been done under a perturbative situation. Let $\mathcal H\subset \mathcal G^r_{\omega}(S)$ be a set satisfying the following:

\begin{itemize}

\item every $h\in \mathcal H$ coincide with $f$ in a neighborhood of the fixed point set;
\item for every $h\in \mathcal H$, the leaf $\phi$ is lifted into Brouwer lines (possibly singular)  of the natural lift of $h$;
\item  $\# \mathrm{per}_h(h)=2g-2$ for every $h\in{\mathcal H}$;
\item $\mathcal H$ is connected for the $C^0$-topology.
\end{itemize}

\begin{proposition} \label{prop:nonhomologicalintersectiongeneralized} Fix $h$ and $h'$ in $\mathcal H$ and $\mu$ and $\mu'$ ergodic probability measures invariant by $h$ and $h'$ respectively. If  $[\phi]\wedge \mathrm{rot}_h(\mu)>0$  and  $[\phi]\wedge \mathrm{rot}_{h'}(\mu')> 0$, then $\mathrm{rot}_h(\mu)\wedge\mathrm{rot}_{h'}(\mu')= 0$.   \end{proposition}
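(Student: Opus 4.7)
The plan is to reduce the two-map statement to a one-map statement, so that the proof of Proposition \ref{prop:nonhomologicalintersection} applies almost verbatim. All the structural lemmas of this section (Lemmas \ref{lemma:unboundedness}--\ref{lemma:boundedness}) were proved for an arbitrary map in $\mathcal{G}^r_\omega(S)$ having $\#\mathrm{per}_h=2g-2$ and with $\phi$ lifted to Brouwer lines, so they apply to each individual $h\in\mathcal{H}$; in particular, for each such $h$, branches are dense and pairwise disjoint.

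The key intermediate step is the claim that the set of finite admissible $\phi$-sequences is the same for every $h\in\mathcal{H}$. Openness in $\mathcal{H}$ is immediate: if a lift $\widetilde z$ witnesses admissibility of a finite sequence for $h$, a small $C^0$-perturbation of $h$ preserves the witness since only finitely many iterates are involved. For closedness along a continuous path $(h_s)_{s\in[0,1]}$ in $\mathcal{H}$, I would confine the witnesses $\widetilde z_s$ to a fundamental domain modulo the stabilizer of the given finite sequence in the group of covering automorphisms, and then extract a subsequential limit; since $\phi$ lifts to Brouwer lines of each $\widetilde h_s$, the monotone crossing pattern is preserved in the limit, so the limit point witnesses admissibility for the limit map. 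Connectedness of $\mathcal{H}$ then gives the claim. I expect this closedness step to be the main obstacle.

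Granting the claim, pick a $\mu$-generic point $z$ and a $\mu'$-generic point $z'$. Let $(\widetilde\phi_k)_{k\in\Z}$ and $(\widetilde\phi'_{k'})_{k'\in\Z}$ be the bi-infinite $\phi$-trajectories of lifts $\widetilde z$, $\widetilde z'$ under $\widetilde h$, $\widetilde{h'}$ respectively (both bi-infinite since both pairings with $[\phi]$ are positive). For every pair $n,n'$, the claim says the relevant finite truncations of these two trajectories are admissible for the single map $h$. Thus Lemma \ref{lemma:choiceoflift}, applied to $h$ alone, provides a translate of an unstable branch of $h$ realizing the first truncation, and a translate of a stable branch of $h$ realizing the second. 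Build loops $\Lambda_n$ and $\Lambda'_{n'}$ exactly as in the proof of Proposition \ref{prop:nonhomologicalintersection}, but now using branches of the single map $h$. Lemmas \ref{lemma:noncrossing} and \ref{lemma:boundedness} applied to $h$ then yield the linear bound $|\Lambda_n\wedge\Lambda'_{n'}|\leq 3A(k_n+k'_{n'})=O(n+n')$.

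On the other hand, the covering automorphisms $S_n,S'_{n'}$ that appear in the construction encode how the $\widetilde h$-orbit of $\widetilde z$ and the $\widetilde{h'}$-orbit of $\widetilde{z'}$ accumulate in the covering, so Birkhoff's ergodic theorem gives $[S_n]\sim n\,\mathrm{rot}_h(\mu)$ and $[S'_{n'}]\sim n'\,\mathrm{rot}_{h'}(\mu')$, with $k_n\sim n\,[\phi]\wedge\mathrm{rot}_h(\mu)$ and $k'_{n'}\sim n'\,[\phi]\wedge\mathrm{rot}_{h'}(\mu')$. Hence $\Lambda_n\wedge\Lambda'_{n'}=[S_n]\wedge[S'_{n'}]\sim nn'\,\mathrm{rot}_h(\mu)\wedge\mathrm{rot}_{h'}(\mu')$. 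Comparing with the linear bound as $n,n'\to\infty$ forces $\mathrm{rot}_h(\mu)\wedge\mathrm{rot}_{h'}(\mu')=0$, as desired.
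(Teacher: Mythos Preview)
Your overall strategy---reduce to a single map so that the counting argument of Proposition \ref{prop:nonhomologicalintersection} applies---is exactly what the paper does. The difference lies in how you establish the persistence of finite admissible $\phi$-sequences across $\mathcal H$, and this is where your argument has a genuine gap.

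Your closedness argument does not work as stated. You propose to confine the witnesses $\widetilde z_s$ to a fundamental domain for the stabilizer of the given finite sequence $(\widetilde\phi_k)_{1\le k\le n}$ in $G$ and extract a limit. But for a generic finite sequence this stabilizer is trivial, so there is no compactness to exploit; even after normalizing so that the witness lies in $\overline{L(\widetilde\phi_0)}\cap R(\widetilde\phi_1)$, this set is not relatively compact in $\widetilde S$. One could try to project to $S$ and pass to a limit there, but then the limit witness may correspond to a different lift, i.e.\ to a $G$-translate of the original sequence rather than the sequence itself. So the step you flagged as the ``main obstacle'' is in fact an obstacle.

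The paper bypasses this entirely. Instead of tracking orbit witnesses, it tracks \emph{branches}: for each branch $\widetilde\Gamma$ of a fixed point $\widetilde z$ of $\widetilde f$ there is a corresponding branch $\widetilde\Gamma(h)$ for every $h\in\mathcal H$ (since all $h$ agree with $f$ near the fixed points), and Lemma \ref{lemma:persistence} shows that the $\phi$-trajectory of $\widetilde\Gamma(h)$ is independent of $h$. The open-and-closed argument here is clean: if the first lift met by $\widetilde\Gamma(h)$ and by $\widetilde\Gamma(h')$ differ, they are non-comparable, hence by Lemma \ref{lemma:comparable} (applied to $h$) the branch $\widetilde\Gamma(h)$ cannot meet the other one; so the set of $h$ with a prescribed first lift is both open and closed. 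Since every finite admissible sequence (for any $h\in\mathcal H$) is a subsequence of the $\phi$-trajectory of some branch of $\widetilde h$, and these trajectories coincide with those of $\widetilde f$, all admissible sequences are realized by branches of the single map $f$. This is precisely your intermediate claim, but obtained without any compactness argument. From there the paper's conclusion is identical to yours.
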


\begin{proof} To every branch $\widetilde\Gamma$ of a fixed point $\widetilde z$ of $\widetilde f$ and to every $h\in{\mathcal H}$, there exists a unique branch  $\widetilde\Gamma(h)$ of  $\widetilde z$ for $\widetilde h$ that coincides with $\widetilde\Gamma$ in a neighborhood of $\widetilde z$. We begin with this important result:

\begin{lemma} \label{lemma:persistence} If $\widetilde\Gamma$ is a branch  of a fixed point $\widetilde z$, the $\phi$-trajectory of $\widetilde\Gamma(h)$ for $\widetilde h$ does not depend on $h$.  \end{lemma}

\begin{proof} For every $h$, write $\widetilde \phi(h)$ the first lift of $\phi$ met by $\widetilde\Gamma(h)$ and different from the lift that contains $\widetilde z$ if such a lift exists. If $\widetilde \phi(h)\not=\widetilde \phi(h')$, then $\widetilde\Gamma(h)$ does not meet $\widetilde \phi(h')$ because $\widetilde \phi(h')$ and  $\widetilde \phi(h)$
are not comparable. We deduce that for every lift $\widetilde\phi$ of $\phi$, the set of $h\in \mathcal{H}$ such that $\widetilde\phi(h)=\widetilde\phi$ is open and closed in $ \mathcal{H}$ . Consequently, by connectedness of $ \mathcal{H}$, the first lift of $\phi$ met by $\widetilde\Gamma(h)$ is independent of $h$. The same argument permits to prove that for every $n\geq 1$, the $n$-th  lift of $\phi$ met by $\widetilde\Gamma(h)$ is independent of $h$.
\end{proof}

To prove Proposition \ref{prop:nonhomologicalintersectiongeneralized} , suppose that there exist $h$, $h'$ in $\mathcal H$ and $\mu$, $\mu'$ ergodic probability measures invariant by $h$, $h'$ respectively such that
$$[\phi]\wedge \mathrm{rot}_h(\mu)>0, \enskip [\phi]\wedge\mathrm{rot}_{h'}(\mu')> 0,\enskip \mathrm{rot}_h(\mu)\wedge \mathrm{rot}_{h'}(\mu')\not=0.$$   Like in the proof of Proposition \ref{prop:nonhomologicalintersection}, we can find $n$, $n'$ arbitrarily large and $S_n$, $S'_{n'}$  covering automorphisms such that
\begin{itemize}
\item $[S_n]\sim n \,\mathrm{rot}_h(\mu)$,
\item $[S'_{n'}]\sim n' \,\mathrm{rot}_{h'}(\mu')$,
\item $k_n=[\phi]\wedge[S_n]\sim n \,[\phi]\wedge\mathrm{rot}_h(\mu)$,
\item $k_{n'}=[\phi]\wedge[S'_{n'}]\sim n' \,[\phi]\wedge \mathrm{rot}_{h'}(\mu')$,
\item $[S_n]\wedge[S'_{n'}]\sim n n' \,\mathrm{rot}_h(\mu)\wedge \mathrm{rot}_{h'}(\mu')$,
\item an admissible sequence $(\widetilde\phi_k)_{0\leq k\leq k_{n+1}}$ of $\widetilde h$ such that $\widetilde\phi_{k_n}=S_n({\widetilde\phi_0})$ and $\widetilde\phi_{k_{n+1}}=S_n({\widetilde\phi_1})$,
\item an admissible sequence $(\widetilde\phi'_{k'})_{0\leq k'\leq k_{n'+1}}$ of $\widetilde h'$ such that $\widetilde\phi'_{k'_{n'}}=S'_{n'}(\widetilde\phi'_0)$ and $\widetilde\phi_{k'_{n'+1}}=S'_{n'}(\widetilde\phi'_1)$.
\end{itemize}
The sequence $(\widetilde\phi_k)_{0\leq k\leq k_{n+1}}$ is a sub-sequence of the $\phi$-trajectory of an unstable branch of $h$ and so by Lemma \ref{lemma:persistence} is a sub-sequence of the $\phi$-trajectory of an unstable branch of $\widetilde  f$. Similarly  $(\widetilde\phi'_k)_{0\leq k\leq k_{n'+1}}$ is a sub-sequence of the $\phi$-trajectory of a stable branch of $\widetilde f$. We are in the same situation as in the proof of Proposition \ref{prop:nonhomologicalintersection}. We have a contradiction.\end{proof}

\section{Proof of Theorem \ref{t.isotopy}.}\label{s:proof-isotopic-identity}

We fix $f\in  \mathcal{G}^r_{\omega}(S)$ and suppose that there exists $q\geq 1$ such that $f^q$ is isotopic to the identity. We want to prove that there exists $f'\in  \mathcal{G}^r_{\omega}(S)$, arbitrarily close to $f$, such that $\# \mathrm{per}_h(f')>\max(0, 2g-2)$.

If $g=1$, the result was already known and explained in the introduction. We will suppose from now on that $g\geq 2$.

As explained in the previous section, if  $\# \mathrm{per}_h(f)=2g-2$, there exists a simple loop $\phi$ that is lifted to Brouwer lines (possibly singular) of the natural lift $\widetilde f_q$ of $f^q$. The rotation number $\mathrm{rot}_{f^q}(\mu_{\omega})$ is not equal to zero and more precisely we have $[\phi]\wedge \mathrm{rot}_{f^q}(\mu_{\omega})>0$. In fact, the rotation number $\mathrm{rot}_{f^q}(z)$ is defined $\mu_{\omega}$-almost everywhere, and we have $[\phi]\wedge \mathrm{rot}_{f^q}(z)\geq 0$ with a strict inequality on a set of positive measure. Let $\widetilde  \phi$ be a lift of $\phi$,  there exists an open disk $U$ that admits a lift $\widetilde  U$ whose closure belongs to $R(\widetilde  \phi)\cap (\widetilde f_q)^{-1}(L(\widetilde \phi))$.

\begin{lemma} For $\mu_{\omega}$-almost every point in $V=\bigcup_{k\in\Z} f^{-kq}(U)$, we have $[\phi]\wedge \mathrm{rot}_{f^q}(z)> 0$.
\end{lemma}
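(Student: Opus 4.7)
The plan is to show that each visit of the $f^q$-orbit of $z$ to $U$ forces one unit of drift in a cyclic cover adapted to $\phi$, so that $[\phi]\wedge\mathrm{rot}_{f^q}(z)$ dominates the Birkhoff time average of $\chi_U$; a saturation argument on the $f^q$-invariant set $V$ then shows this time average is $\mu_\omega$-a.e.\ positive on $V$.

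I would first pass to the cyclic cover $\widehat\pi\colon\widehat S\to S$ associated to $\phi$, well defined because $[\phi]\neq 0$, with deck group $\langle T\rangle\cong\Z$. The line $\widetilde\phi$ descends to a simple loop $\widehat\phi\subset\widehat S$, and the (possibly singular) Brouwer line property lifts to $\widehat f_q(\overline{\widehat\Sigma})\subset\widehat\Sigma\cup(\mathrm{fix}(\widehat f_q)\cap\widehat\phi)$, where $\widehat f_q$ is the canonical lift of $f^q$ and $\widehat\Sigma=L(\widehat\phi)$. The disk $\widetilde U$ projects to a lift $\widehat U\subset R(\widehat\phi)$ of $U$ with $\widehat f_q(\widehat U)\subset\widehat\Sigma$. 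Normalising $T$ so that $T\widehat\Sigma\supsetneq\widehat\Sigma$, for any lift $\widehat z$ of $z$ and every $n\ge 0$ there is a unique integer $j_n(\widehat z)$ with $\widehat f_q^n(\widehat z)\in T^{j_n(\widehat z)}\widehat\Sigma\setminus T^{j_n(\widehat z)-1}\widehat\Sigma$, and a direct comparison with the homological definition of the rotation vector yields
$$[\phi]\wedge\mathrm{rot}_{f^q}(z)=\lim_{n\to\infty}\frac{j_0(\widehat z)-j_n(\widehat z)}{n}$$
whenever $\mathrm{rot}_{f^q}(z)$ is defined.

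Next, for each $k$ with $f^{kq}(z)\in U$ there is an integer $m_k$ such that $\widehat f_q^k(\widehat z)\in T^{m_k}\widehat U\subset R(T^{m_k}\widehat\phi)$. By the Brouwer line property applied to $T^{m_k}\widehat\phi$, once the orbit of $\widehat z$ leaves $R(T^{m_k}\widehat\phi)$ it never returns, so the $m_k$ are pairwise distinct; moreover they all lie in $[j_n(\widehat z)+1,j_0(\widehat z)]$. Hence
$$\#\bigl\{0\le k<n:f^{kq}(z)\in U\bigr\}\;\le\;j_0(\widehat z)-j_n(\widehat z).$$
Applying Birkhoff's ergodic theorem to $\chi_U$ and dividing by $n$, one obtains for $\mu_\omega$-a.e.\ $z$
$$\chi^*_U(z):=\lim_{n\to\infty}\frac{1}{n}\sum_{k=0}^{n-1}\chi_U\bigl(f^{kq}(z)\bigr)\;\le\;[\phi]\wedge\mathrm{rot}_{f^q}(z).$$

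I would finish by a standard saturation argument. The set $W=\{\chi^*_U=0\}$ is $f^q$-invariant, and Birkhoff gives $\int_W\chi_U\,d\mu_\omega=\int_W\chi^*_U\,d\mu_\omega=0$, hence $\mu_\omega(W\cap U)=0$. Using the $f^q$-invariance of $\mu_\omega$ and of $W$, $\mu_\omega(W\cap f^{-kq}(U))=\mu_\omega(f^{-kq}(W\cap U))=0$ for every $k\in\Z$, so $\mu_\omega(W\cap V)=0$. Therefore $\chi^*_U>0$ and consequently $[\phi]\wedge\mathrm{rot}_{f^q}>0$ at $\mu_\omega$-almost every point of $V$. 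The main technical obstacle is the precise identification of the drift $j_0(\widehat z)-j_n(\widehat z)$ with $[\phi]\wedge[\gamma_n(z)]$ up to bounded error; this is a standard consequence of the Brouwer line property, which forces every crossing of a lift of $\phi$ to occur with the same sign, but the verification requires some care about the closing path used to define $[\gamma_n(z)]$.
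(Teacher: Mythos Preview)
Your proof is correct and follows a genuinely different route from the paper's. The paper works with the first-return map $\varphi:U\to U$ of $f^q$ and the return time $\tau$: it defines a displacement cocycle $\rho:U\to H_1(S,\R)$ by closing up each trajectory $I^{\tau(z)-1}(z)$ with a short arc inside $U$, observes that $[\phi]\wedge\rho(z)>0$ pointwise (each excursion from $U$ back to $U$ forces at least one positive crossing of $\phi$), checks that $\rho/\tau$ is bounded so that $\rho$ is integrable, and then writes $\mathrm{rot}_{f^q}(z)=\rho^*(z)/\tau^*(z)$ as a ratio of Birkhoff limits to conclude positivity directly on $U$, hence on $V$ by invariance. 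You instead work on the cyclic cover and bound $[\phi]\wedge\mathrm{rot}_{f^q}(z)$ from below by the Birkhoff average $\chi^*_U(z)$ of the indicator $\chi_U$, via the crossing count $j_0-j_n$, and then run a saturation argument to show $\chi^*_U>0$ $\mu_\omega$-a.e.\ on $V$. Your approach is slightly more elementary in that it avoids the induced system (and the verification that $\rho/\tau$ is bounded), but pays for this with the extra saturation step; the paper's argument is a touch cleaner because positivity on $U$ is immediate and propagation to $V$ is a one-liner. Both rest on the same geometric fact---each visit to $U$ forces one unit of positive drift across $\phi$---and either is perfectly adequate here. One small remark: your interval for the $m_k$ is off by one (they lie in $[j_n,\,j_0-1]$ rather than $[j_n+1,\,j_0]$), but the cardinality bound is unaffected.
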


\begin{proof}. The proof is classical. Write $\varphi:U\to U$ for the first return map of $f^{q}$ and $\tau :U\to \N\setminus\{0\}$ for  the time of first return. If $\mu$ is an invariant measure such that $\mu(U)>0$, the map $\varphi$ is defined $\mu$-almost everywhere on $U$ and preserves the measure $\mu_{\vert U}$. The map $\varphi$ is also defined $\mu$-almost everywhere and is $\mu$-integrable. Moreover, $\int_U \tau \, d\mu=\mu(V)$. One can construct a map $\rho : U\to H_1(M,\R)$ defined $\mu$-almost everywhere in the following way: if $\varphi(z)$ is well defined, one closes the trajectory $I^{\tau(z)-1}(z)$ with a path $\alpha$ contained in $U$ that joins $\varphi(z)$ to $z$, and set $\rho(z)=[I^{\tau(z)-1}(z)\alpha]$. The homology class of the loop $I^{\tau(z)-1}(z)\alpha$ is independent of the choice of $\alpha$. It is easy to prove that the map  $\rho/\tau$ is uniformly bounded on $U$ and consequently that $\rho$ is $\mu$-integrable. So, for $\mu$-almost every point, the Birkhoff means of $\rho$ and $\tau$ converges respectively to maps $\rho^*$ and $\tau^*$. If $\rho^*(z)$ and $\tau^*(z)$ are well defined, then the rotation vector $\mathrm{rot}(z)$ is well defined and equal to $\rho^*(z)/\tau^*(z)$. Note that the function $z\mapsto [\phi]\wedge\rho(z)$ is positive. One deduces that the fonction $z\mapsto [\phi]\wedge\rho^*(z)$ is also positive. The function $\tau^*$ being finite $\mu$-almost everywhere, one deduces that the map $z\mapsto [\phi]\wedge\mathrm{rot}(z)=  ([\phi]\wedge\rho^*(z))/\tau^*(z)$ is positive $\mu$-almost everywhere on $U$. Being invariant by $f^q$, it is positive on $V$. \end{proof}

\begin{proof}[Proof of Theorem \ref{t.isotopy}] By Proposition \ref{p.dynamicalproperties} , the complement of $V$ is included in a topological disk and so, by Alexander's trick, one can find a simple loop $\lambda\subset V$ homotopic to $\phi$. Consider a closed tubular neighborhood $W\subset V$  of $\lambda$. \footnote{In the case where $\phi$ is a non singular Brouwer line, Proposition \ref{p.dynamicalproperties} is not necessary, one can choose $\lambda$ to be equal to $\phi$  and $W$ to be a small neighborhood of $\phi$.}
 By compactness of $W$, there exists $K\in\Z$ such that $W\subset \bigcup_{-K\leq k\leq K} f^{-kq}(U)$. Moreover $W$ does not contain any fixed point of $f^q$. So, there exists a neighborhood $\mathcal W$ of $f$ in the set of homeomorphisms of $S$, furnished with the $C^0$-topology, such that if $f'$ belongs to $\mathcal W$ and coincides with $f$ outside $W$, then:

\begin{itemize}
\item  $W\subset \bigcup_{-K\leq k\leq K} f'{}^{-kq}(U)$;

\item $\phi$ is lifted to Brouwer lines (possibly singular) of the natural lift $\widetilde f'_q$ of $f'^q$.

\item  $\overline{ \widetilde U}\subset R(\widetilde \phi)\cap (\widetilde f'_q)^{-1}(L(\widetilde\phi))$.
\end{itemize}

One deduces that if $f'$ preserves $\mu_{\omega}$, then for $\mu_{\omega}$-almost every point in $W$, $ \mathrm{rot}_{f'{}^q}(z)$ is defined and satisfies $[\phi]\wedge \mathrm{rot}_{f'{}^q}(z)> 0$.

Consider now a divergence free smooth vector field supported on $W$ with an induced flow $(h_t)_{t\in\R}$ satisfying $\mathrm{rot}_{h_t}(\mu_{\omega})=t[\lambda]=t[\phi]$ and set $f_t=h_t\circ f$.  If $t$ is sufficiently small, then $f_t$ belongs to $\mathcal W$. So, for $\mu_{\omega}$-almost every point $z\in \bigcup_{k\in \Z} f_t{}^{-qk}(U)$, one has $[\phi]\wedge \mathrm{rot}_{f_t{}^q}(z)> 0$. But if $z\in V\setminus\bigcup_{k\in \Z} f'^{-qk}(U)$, then  $\mathrm{rot}_{f_t{}^q}(z)$ is defined if and only if $\mathrm{rot}_{f^q}(z)$ is defined and the two quantities are equal in that case. Consequently, for $\mu_{\omega}$-almost every point $z\in V$, one has $[\phi]\wedge \mathrm{rot}_{f_t{}^q}(z)> 0$.

By Corollary \ref{cor:periodic}, we know that if $t>0$ is small, then:
$$\begin{aligned}
\mathrm{rot}_{f_t{}^q} (\mu_{\omega}{}_{\vert V})\wedge \mathrm{rot}_{f^q} (\mu_{\omega}{}_{\vert V})&=q \,t[\phi]\wedge \mathrm{rot}_{f^q} (\mu_{\omega}{}_{\vert V})\\&=\int_V q\,t[\phi]\wedge \mathrm{rot}_{f^q} (z) \, d\mu_{\omega}(z)>0,
\end{aligned}$$and that
$$\mathrm{rot}_{f_t{}^q} (\mu_{\omega}{}_{\vert V})\wedge \mathrm{rot}_{f^q} (\mu_{\omega}{}_{\vert V})=\int_{V\times V}\mathrm{rot}_{f_t{}^q} (z)\wedge \mathrm{rot}_{f^q} (z') \, d\mu_{\omega}(z)d\mu_{\omega}(z').$$

Consequently, there exists an ergodic measure$\mu_t$ of $f_t$ and an ergodic measure  $\nu_t$ of $f$ such that
\begin{itemize}
\item $ [\phi]\wedge \mathrm{rot}_{f_t{}^q} (\mu_t)>0$;
\item $[\phi]\wedge \mathrm{rot}_{f^q} (\nu_t)>0$;
\item $\mathrm{rot}_{f_t{}^q} (\mu_t)\wedge \mathrm{rot}_{f^q} (\nu_t)>0$.
\end{itemize}

Fix $\varepsilon$ small. At least one of the following situations occurs:

\begin{itemize}
\item  the set $\mathcal{L}=\{f_t\,\vert\, t\in[0, \varepsilon]\}$ is included in $\mathcal{G}^r_{\omega}(S)$;

\item there exists  $t_0\in[0, \varepsilon]$ such that $\# \mathrm{per}(f_{t_0})>2g-2$;

\item there exists  $t_1\in[0, \varepsilon]$ such that $\# \mathrm{per}(f_{t_1})=2g-2$ and such that a stable and an unstable branch intersect.
\end{itemize}

In the first situation, we know by  Proposition \ref{prop:nonhomologicalintersectiongeneralized}  that there exists $t\in[0, \varepsilon]$ such that $\# \mathrm{per}_h(f_t)>2g-2$. In the second situation we can approximate $f_{t_0} $ by a map $f'\in \mathcal{G}^r_{\omega}(S)$ such that $\mathrm{per}_h(f')>2g-2$. In the last situation we can approximate $f_{t_1} $ by a map $f' \in\mathcal{G}^r_{\omega}(S)$ such that a stable and an unstable branch intersect and so we have $\mathrm{per}_h(f')>2g-2$ by Proposition \ref{p.dynamicalproperties}. In each situation we are done.
This concludes the proof of Theorem \ref{t.isotopy} and hence the proof of Theorem \ref{t.main}. \end{proof}

\section{An alternate proof of Proposition \ref{prop:nonhomologicalintersectiongeneralized} using forcing theory on transverse trajectories } \label{s:alternate}

Proposition \ref{prop:nonhomologicalintersection} is an immediate consequence of the much stronger following result of  G. Lellouch \cite{Lel}:

\begin{theorem}  \label{t:non-intersection} Let $S$ be a closed orientable surface of genus $g>1$. If a homeomorphism $h$ of $S$, isotopic to the identity, has two invariant measures  $\mu$ and $\mu'$ such that $\mathrm{rot}_h(\mu)\wedge \mathrm{rot}_h(\mu')\not= 0$, then it has a topological horseshoe. In particular $h$ has infinitely many periodic orbits and positive topological entropy.\end{theorem}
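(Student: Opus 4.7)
The plan is to deploy forcing theory on transverse trajectories, exactly as the final section of the paper hints. First I would fix an isotopy $I$ from $\mathrm{Id}$ to $h$ that is maximal in the sense of \cite{BCLer}, together with a singular foliation $\mathcal F$ transverse to $I$ provided by \cite{Lec1}. By the ergodic decomposition theorem it suffices to treat the case where $\mu$ and $\mu'$ are ergodic; I would then pick generic points $z, z'$ for them. The associated transverse trajectories $\gamma_z, \gamma_{z'}$, once lifted to $\widetilde S$, drift at linear rates $n \cdot \mathrm{rot}_h(\mu)$ and $n' \cdot \mathrm{rot}_h(\mu')$ modulo sublinear errors (Birkhoff), so long initial segments become very long curves in a suitable finite abelian cover.

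Next I would redo the homological bookkeeping already carried out in the proof of Proposition \ref{prop:nonhomologicalintersection}: in that cover the algebraic intersection number of the initial segments of length $n$ and $n'$ grows like $n n' \, \mathrm{rot}_h(\mu) \wedge \mathrm{rot}_h(\mu')$, which by hypothesis is non-zero. Since both trajectories are \emph{positively} transverse to the same foliation $\mathcal F$, each geometric crossing is automatically an $\mathcal F$-transverse intersection. Hence for all large $n, n'$ there is an $\mathcal F$-transverse intersection between the transverse trajectories of $z$ and $z'$, and in fact arbitrarily many such intersections as the lengths grow.

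The heart of the argument is then to bootstrap a crossing between \emph{two} trajectories into an $\mathcal F$-transverse \emph{self}-intersection of a single trajectory. This is where I would use Poincar\'e recurrence: the orbits of $z$ and $z'$ return close to their starting points, and by the concatenation lemmas of forcing theory one can piece together along the crossing point an admissible transverse path that is realized by a single orbit and whose transverse trajectory has an $\mathcal F$-transverse self-intersection. The non-collinearity of $\mathrm{rot}_h(\mu)$ and $\mathrm{rot}_h(\mu')$ is essential here, as it prevents the recurrent closures from being "parallel" and forces a genuine self-crossing.

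Finally, I would invoke the main topological result of forcing theory: the existence of a single transverse trajectory with an $\mathcal F$-transverse self-intersection implies that $h$ has a topological horseshoe, and in particular infinitely many periodic orbits and positive topological entropy. The main obstacle, as always with this circle of ideas, is the passage from two distinct crossing trajectories to one self-crossing trajectory; the preceding two paragraphs are either standard or already present in the proof of Proposition \ref{prop:nonhomologicalintersection}, so all the real work is concentrated in that recurrence-and-concatenation step.
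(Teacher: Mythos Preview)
First, note that the paper does not actually prove this theorem: it is quoted as a result of Lellouch \cite{Lel}, and the paper only remarks that its proof ``uses forcing theory on transverse trajectories of transverse foliations (see \cite{LecT})''. So there is no in-paper proof to compare against, only the indication that your general framework (maximal isotopy, transverse foliation, forcing theory) is the intended one.

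Your outline has a genuine gap at the sentence ``Since both trajectories are positively transverse to the same foliation $\mathcal F$, each geometric crossing is automatically an $\mathcal F$-transverse intersection.'' This is false, and it is precisely the heart of the matter. Two paths positively transverse to $\mathcal F$ can intersect geometrically (even with non-zero local algebraic intersection) without intersecting $\mathcal F$-transversally: the simplest example is two equivalent transverse paths in a trivially foliated box that happen to cross. The $\mathcal F$-transverse intersection condition of \cite{LecT} is a statement about the \emph{leaves} met by the lifts near the crossing --- roughly, that the initial leaf of one lift lies strictly on one side of the other lift and its terminal leaf strictly on the opposite side, and symmetrically --- and this cannot be read off from a single geometric crossing. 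The homological count you borrow from Proposition~\ref{prop:nonhomologicalintersection} does show that the lifted trajectories cannot be made disjoint, but the passage from ``cannot be made disjoint'' to ``have an $\mathcal F$-transverse intersection'' requires a structural analysis of how leaves met by one trajectory sit relative to the other; this is exactly the non-trivial content of Lellouch's argument. (Observe, for contrast, that the paper's own proof of Proposition~\ref{prop:nonhomologicalintersection} never produces an $\mathcal F$-transverse intersection: it derives a contradiction by bounding intersection numbers using the very special features of that setting --- dense branches, no heteroclinic intersections, a single Brouwer loop $\phi$ --- none of which you have here.)

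Your later ``bootstrap'' step, turning a crossing between two distinct trajectories into a self-crossing of a single admissible path, is also where substantial work hides (it is handled in \cite{LecT, Lel} via the realization/concatenation machinery and is not a one-line recurrence argument), but the earlier gap is the decisive one: without first establishing an honest $\mathcal F$-transverse intersection, there is nothing for the forcing theorem to act on.
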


The proof of Theorem \ref{t:non-intersection} uses forcing theory on transverse trajectories of transverse foliations (see \cite{LecT}). In Section \ref{s:dynamics-isotopic-identity} we have been able to give a weaker version (Proposition  \ref{prop:nonhomologicalintersection}), sufficient for our purpose, by taking advantages of some properties of our map: the existence of a simple loop that is lifted to singular Brouwer lines,  the transitivity of the map, the denseness of the branches, the absence of homoclinic or heteroclinic intersections.
The key result for proving Theorem \ref{t.isotopy} is Proposition \ref{prop:nonhomologicalintersectiongeneralized} which is a generalization of Proposition \ref{prop:nonhomologicalintersection}. Similarly  (see \cite{Lel}) Theorem  \ref{t:non-intersection} can be generalized in the following way (explanations will be given later concerning the vocabulary):

\begin{theorem} \label{t:Lellouch}Let $S$ be a closed orientable surface of genus $g>1$ and $f$, $h$ two homeomorphisms of $S$ isotopic to the identity. We suppose that $I_f$ and $I_h$ are maximal isotopies of $f$ and $h$ respectively, satisfying the following:
\begin{itemize}
\item $I_f$ and $I_h$ have the same fixed point set;
\item there exists a foliation $\mathcal F$ that is transverse to $I_f$ and to $I_g$;
\item the admissible paths of $I_f$ and $I_h$ are the same;
\item there exists an invariant ergodic measure $\nu_f$ of $f$ and an invariant ergodic measure $\nu_h$ of $h$ such that $\rho_{f}(\nu_f)\wedge \rho_{h}(\nu_h)\not=0$.
\end{itemize}
 Then there exist two admissible paths (possibly equal) that intersect $\mathcal F$-transversally.
\end{theorem}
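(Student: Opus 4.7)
The plan is to follow the forcing-theoretic strategy used by Lellouch to prove Theorem \ref{t:non-intersection}, adapted to the present two-map setting. The crucial simplification provided by the hypotheses is that the same oriented singular foliation $\mathcal F$ is transverse to both $I_f$ and $I_h$ and that the classes of admissible transverse paths of the two isotopies coincide. Therefore, although the dynamics is provided alternately by $f$ and $h$, one may work with a single foliation on $S$, lift it to a non-singular foliation $\widetilde{\mathcal F}$ on the universal cover of the common domain $\mathrm{dom}(I_f)=\mathrm{dom}(I_h)$, and analyse the transverse trajectories associated to $f$-orbits and $h$-orbits in a unified way.

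First, applying Birkhoff's ergodic theorem, I would pick a $\nu_f$-generic point $z_f$ and a $\nu_h$-generic point $z_h$ whose infinite transverse trajectories $\gamma^f$ (for $f$) and $\gamma^h$ (for $h$) realise the rotation vectors $\rho_f(\nu_f)$ and $\rho_h(\nu_h)$ respectively: long sub-paths of $\gamma^f$ and $\gamma^h$ lift to paths in the universal cover whose endpoints are displaced by covering automorphisms whose homology classes are, to leading order, $n\,\rho_f(\nu_f)$ and $m\,\rho_h(\nu_h)$. The hypothesis $\rho_f(\nu_f)\wedge \rho_h(\nu_h)\neq 0$ then forces, for $n$ and $m$ large and after applying suitable covering translates, an algebraic intersection number between these lifted sub-paths growing like $nm\,\rho_f(\nu_f)\wedge \rho_h(\nu_h)$, hence non-zero.

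The core step is to upgrade such an algebraic intersection of two paths transverse to $\widetilde{\mathcal F}$ into a genuine $\widetilde{\mathcal F}$-transverse intersection point. This is exactly the content of the forcing lemma of \cite{LecT}: two paths positively transverse to $\widetilde{\mathcal F}$ that intersect with non-zero algebraic count must in fact cross $\widetilde{\mathcal F}$-transversally at some point, for otherwise one could homotope them, while staying in the class of transverse paths, to become disjoint, contradicting the intersection count. Projecting down to $S$, the two sub-paths realising this $\widetilde{\mathcal F}$-transverse intersection are a finite sub-trajectory of $\gamma^f$ and a finite sub-trajectory of $\gamma^h$; each is admissible for its own isotopy, hence by the coincidence hypothesis for both $I_f$ and $I_h$, which yields the required pair of admissible paths intersecting $\mathcal F$-transversally.

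The main obstacle I expect is the transfer of the one-map forcing lemma of \cite{LecT} to this mixed setting. Its single-map proof relies on local surgery of transverse paths near a non-$\widetilde{\mathcal F}$-transverse crossing to produce disjoint transverse paths, using that all paths involved are admissible for the same foliation and the same isotopy. Here the two sub-paths come from different dynamics, so one must verify that the surgery still yields paths that are admissible. The conceptual point is that, precisely because the sets of admissible paths of $I_f$ and $I_h$ coincide, the surgery can be performed inside this common class and the usual contradiction is preserved; once this is carefully checked, the rest of the proof amounts to the same bookkeeping of rotation vectors as in \cite{Lel}.
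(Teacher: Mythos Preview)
First, note that the paper does not prove this theorem: it is quoted from Lellouch's thesis \cite{Lel} and used as a black box in Section~\ref{s:alternate}. The closest the paper comes to a proof is the analogous argument in Section~\ref{s:dynamics-isotopic-identity} for the special Brouwer-line setting (Propositions~\ref{prop:nonhomologicalintersection} and~\ref{prop:nonhomologicalintersectiongeneralized}), so that is the natural benchmark for your sketch.

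Your overall strategy --- pick generic points for $\nu_f$ and $\nu_h$, close up long pieces of their transverse trajectories into loops whose homology classes approximate $n\,\rho_f(\nu_f)$ and $m\,\rho_h(\nu_h)$, and use $\rho_f(\nu_f)\wedge\rho_h(\nu_h)\neq 0$ to force a quadratic growth of intersection number --- is indeed Lellouch's strategy, and it parallels what the paper does with Brouwer lines in Section~\ref{s:dynamics-isotopic-identity}. The gap is in your ``core step''. The statement you attribute to \cite{LecT}, namely that two paths positively transverse to $\widetilde{\mathcal F}$ with non-zero algebraic intersection must intersect $\widetilde{\mathcal F}$-transversally, is not a lemma in \cite{LecT}; the fundamental lemma there runs in the opposite direction ($\mathcal F$-transverse intersection $\Rightarrow$ new admissible paths, horseshoes, periodic orbits). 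More importantly, the intersection number you compute from the rotation vectors is a homological intersection in $S$, i.e.\ a sum over all deck translates, and it is realised by \emph{loops} obtained by closing the transverse sub-trajectories with auxiliary arcs. A priori the non-zero intersection could live entirely on these closing arcs or on mixed pieces, not on the transverse parts; and even for the transverse parts, topological intersection of two lifts in the plane does not automatically give an $\widetilde{\mathcal F}$-transverse intersection when the paths share leaves.

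The actual work, both in \cite{Lel} and in the paper's own Brouwer-line version, is the contrapositive: assuming there is \emph{no} $\mathcal F$-transverse intersection among admissible paths, one proves a uniform bound of the type of Lemma~\ref{lemma:boundedness} (length $\leq A$ of the trace on any leaf/line), which in turn forces the closing and mixed contributions to grow only like $O(n+m)$, contradicting the $nm$ growth of the homological intersection. Your sketch skips precisely this bound, and the ``surgery'' paragraph does not recover it: the difficulty is not making the surgered paths admissible (your hypothesis on coincidence of admissible paths handles that), but establishing the linear estimate that isolates the transverse-path contribution. Without an analogue of Lemma~\ref{lemma:boundedness} in the foliated setting, the argument does not close.
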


Previously to the proof of Theorem 1.8 given in Sections  \ref{s:dynamics-isotopic-identity}
 and \ref{s:proof-isotopic-identity}, we wrote a proof based on  forcing theory on transverse trajectories of transverse foliations and Theorem \ref{t:Lellouch}. We will briefly expose the proof, pointing the links with the arguments of Section \ref{s:dynamics-isotopic-identity}.

Fix $f\in\cG^r_{\omega}(S)$ isotopic to the identity, such that $\mathrm{per}_h(f)=2g-2$. By Lemma \ref{l.unlinkedfixedpoints}, one can find an isotopy $I_f$ from $\mathrm{Id}$ to $f$ that fixes every fixed point of $f$ (such an isotopy is uniquely defined up to homotopy). Let  $\mathcal F$ be a foliation transverse to $I_f$. It was recalled in Subsection \ref{ss.foliation} that for every point $z\not\in\mathrm{fix}(f)$, there exists a path $\gamma_z$ joining $z$ to $f(z)$, homotopic to $I_f(z)$ and positively transverse to $\mathcal F$, which means that every leaf of the foliation $\check {\mathcal F}$ obtained by lifting $\mathcal F\vert_{S\setminus \mathrm{fix}(f)}$ to the universal covering space $\check S$ of $S\setminus \mathrm{fix}(f)$ is a Brouwer line of the natural lift $\check f$ of $f\vert_{S\setminus \mathrm{fix}(f)}$.
The path $\gamma$ is not uniquely defined. Nevertheless, if $\gamma'$ is another choice, then $\gamma$ and $\gamma'$ can be lifted in $\check S$ to paths transverse to $\check{\mathcal{F}}$ that meet exactly the same leaves. We will say that $\gamma$ and $\gamma'$ are {\it equivalent}. We will write $\gamma=I_{\mathcal F}(z)$ and call this path the {\it transverse trajectory of $z$},  it is defined up to equivalence. For every integer $n\geq 1$ we can define
$I_{\mathcal F}^n(z)=\prod_{0\leq k<n} I_{\mathcal F}(f^k(z))$. Moreover, we can define
$$ I_{\mathcal F}^{+}(z)=\prod_{k\geq 0} I_{\mathcal F}(f^k(z)), \enskip I_{\mathcal F}^{-}(z)=\prod_{k<0} I_{\mathcal F}(f^k(z)), \enskip I_{\mathcal F}^{\pm}(z)=\prod_{k\in\Z} I_{\mathcal F}(f^k(z)).$$
We will say that a transverse path is {\it admissible} if it is equivalent to a path $I_{\mathcal F}^n(z)$, $n\geq 1$, $z\in S\setminus\mathrm{fix}(f)$.

The fact that $f$ is transitive implies that for every $z$ and $z'$, the paths $I_{\mathcal F}^{\pm}(z)$ and $I_{\mathcal F}^{\pm}(z')$ {\it do not intersect $\mathcal F$-transversally} (otherwise by the fundamental result of \cite{LecT} our map  would have positive topological entropy and infinitely many periodic points). This means that if $\check \gamma:\R\to \check S$ and $\check \gamma':\R\to \check S$ are lifts of $I_{\mathcal F}^{\pm}(z)$ and $I_{\mathcal F}^{\pm}(z')$ respectively, then there exist two transverse paths equivalent to $\check \gamma$ and $\check \gamma'$ respectively that do not intersect. In other words there is no crossing among the leaves met by $\check \gamma$ and $\check \gamma'$. Lemma \ref{lemma:noncrossing} is a reminder of this fact.

Every fixed point of $f$ being hyperbolic with fixed branches, a classification theorem of Le Roux \cite{Ler} tells us that the dynamics of $\mathcal F$ in a neighborhood of a fixed point $z_0$ is also saddle-like: it consists of four hyperbolic sectors separated by four parabolic sectors (possibly reduced to a single leaf) alternatively attracting and repelling. The link between the dynamics of $\mathcal F$ and the dynamics $f$ in a neighborhood of $z_0$ is expressed in the following result:
\begin{proposition}
\label{prop:branches}We have the following:

\begin{enumerate}

\item If  $\Gamma^s$ is a stable branch of $z_0$, there exists a neighborhood $\Gamma^s_{\mathrm {loc}} $ of $z_0$ in $\Gamma^s$ contained in a hyperbolic sector such that for every $z\in\Gamma^s_{\mathrm {loc}} $, the transverse  trajectory $I_{\mathcal F}^{+}(z)$ can be represented by a path joining  $z$ to $z_0$ and included in a hyperbolic sector.

\item If  $\Gamma^u$ is an unstable branch of $z_0$, there exists a neighborhood $\Gamma^u_{\mathrm {loc}} $ of $z_0$ in $\Gamma^u$ contained in a hyperbolic sector such that for every $z\in\Gamma^u_{\mathrm {loc}} $, the transverse  trajectory $I_{\mathcal F}^{-}(z)$ can be represented by a path  joining  $z_0$ to $z$ and included in a hyperbolic sector.

\item  every hyperbolic sector contains exactly one such a local branch.
\end{enumerate}
\end{proposition}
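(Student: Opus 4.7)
My approach is to treat the three assertions through Le Roux's local classification of transverse foliations around a saddle-like singular point, combined with a shrinking-representative argument for transverse trajectories. The overall strategy is: first, identify the local geometry of $\mathcal{F}$ near $z_0$; second, construct a concrete transverse representative of $I_{\mathcal{F}}^{+}(z)$ that converges to $z_0$; third, rule out parabolic sectors as its eventual home; finally, count and match branches to sectors.

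More precisely, by Le Roux's local model I may fix a neighborhood $V$ of $z_0$ in which $\mathcal{F}_{\vert V}$ decomposes into four hyperbolic sectors $H_1,\dots,H_4$ and four parabolic sectors $P_1,\dots,P_4$ arranged cyclically, alternating attracting and repelling, and separated by eight separatrices. Each $H_i$ is bounded by an ``incoming'' separatrix (having $z_0$ as $\alpha$-limit) and an ``outgoing'' separatrix (having $z_0$ as $\omega$-limit), and every interior leaf of $H_i$ enters through the first and exits through the second without touching $z_0$. For a stable branch $\Gamma^{s}$ of $z_{0}$, pick $z\in \Gamma^{s}_{\mathrm{loc}}$ close to $z_0$ so that the entire forward orbit $(f^{k}(z))_{k\geq 0}$ lies in $V$ and converges to $z_{0}$. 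Since the isotopy $I_f$ fixes $z_0$ and is small on small neighborhoods, the trajectory arc $I_f(f^{k}(z))$ is contained in a neighborhood of $f^{k}(z)$ whose diameter tends to zero; choosing transverse representatives $\gamma_{k}$ of $I_{\mathcal{F}}(f^{k}(z))$ homotopic in $\mathrm{dom}(I_f)$ to these arcs and lying in shrinking neighborhoods of the segment $[f^{k}(z),f^{k+1}(z)]$ yields a concatenation $\gamma = \gamma_{0}\gamma_{1}\gamma_{2}\cdots$ which is positively transverse to $\mathcal{F}$, has $z$ as initial point, and accumulates only at $z_{0}$. Reparametrizing and appending the endpoint $z_0$, one obtains a path from $z$ to $z_0$ representing $I_{\mathcal{F}}^{+}(z)$.

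The main obstacle is the final sector-identification step: showing that an eventual subpath of $\gamma$ lies in a single hyperbolic sector $H_{i}$. I plan to argue by contradiction. In the local model for a repelling parabolic sector $P$, the interior leaves all have $z_0$ as their $\alpha$-limit and flow outwards, so a positively transverse path crossing them from right to left is forced monotonically away from $z_{0}$ along the parametrization of the leaves; hence $\gamma$ cannot accumulate on $z_{0}$ inside $P$. In the model for an attracting parabolic sector, the leaves all accumulate on $z_{0}$ from one side, and a transverse path approaching $z_0$ would have to cross infinitely many such leaves in finite parameter-time in a way that contradicts the standing non-$\mathcal{F}$-transversality of $I_{\mathcal{F}}^{\pm}(z)$ with itself (the later iterates would force an $\mathcal{F}$-transverse self-intersection with an earlier segment of $\gamma$ approaching the other boundary separatrix of $P$). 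This excludes parabolic sectors; hence the tail of $\gamma$ stays in some $H_{i}$, and shrinking $\Gamma^{s}_{\mathrm{loc}}$ if necessary the initial segment does too. Continuous dependence of the construction on $z\in\Gamma^{s}_{\mathrm{loc}}$ and connectedness of $\Gamma^{s}_{\mathrm{loc}}\setminus\{z_{0}\}$ force the index $i$ to be constant along $\Gamma^{s}_{\mathrm{loc}}$.

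Assertion (2) follows by applying (1) to $f^{-1}$ with the reversed isotopy $I_f^{-1}$, whose transverse foliation is $\mathcal{F}$ with reversed orientation and the same sector decomposition (attracting and repelling parabolic sectors swap roles, hyperbolic sectors are preserved). For (3), I would argue by injectivity and counting: the correspondence in (1)--(2) assigns to each of the four branches (two stable, two unstable) a hyperbolic sector; two distinct branches cannot share a sector because otherwise their local representatives would be forced by the sector geometry to either coincide or $\mathcal{F}$-transversally cross near $z_0$, contradicting respectively the fact that different branches are disjoint and the absence of $\mathcal{F}$-transverse self-intersections of transverse trajectories. With four branches and exactly four hyperbolic sectors, the assignment is a bijection, which proves (3).
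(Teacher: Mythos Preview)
The paper does not give its own proof of this proposition. It appears in Section~11, the brief outline of an alternate approach via forcing theory, immediately after the sentence ``a classification theorem of Le Roux [Ler] tells us that the dynamics of $\mathcal F$ in a neighborhood of a fixed point $z_0$ is also saddle-like\dots''; the proposition is then stated as expressing the link between the local dynamics of $\mathcal F$ and of $f$, and the text passes directly to its consequences. In other words, the authors treat it as a result contained in, or following from, Le~Roux's local analysis \cite{Ler}, and do not supply an argument.

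Your strategy---fix Le~Roux's local model, build a concrete transverse representative $\gamma$ of $I_{\mathcal F}^{+}(z)$ that accumulates only on $z_0$, then locate its tail in a single sector---is a reasonable way to try to extract the statement from \cite{Ler}. The genuine gap is your exclusion of the \emph{attracting} parabolic sectors. For the repelling case your claim is essentially right: leaves have $z_0$ as $\alpha$-limit, and a positively transverse path crossing them is pushed angularly and must leave the sector before reaching $z_0$. But in an attracting parabolic sector the leaves have $z_0$ as $\omega$-limit, and there is no a~priori obstruction to a positively transverse path approaching $z_0$ while crossing them. Your appeal to ``non-$\mathcal F$-transversality of $I_{\mathcal F}^{\pm}(z)$ with itself'' is not substantiated: you do not exhibit the two subpaths that would intersect $\mathcal F$-transversally, and it is not clear they exist from the sector geometry alone. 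What actually rules this case out is the homotopy constraint in $\mathrm{dom}(I_f)$: since the isotopy fixes $z_0$, each arc $I_f(f^k(z))$ is small and does not wind around $z_0$, so $\gamma$ has zero winding about $z_0$; combined with the fact that in an attracting parabolic sector the only way for a transverse path to approach $z_0$ is to accumulate angular displacement toward a bounding separatrix, one obtains the contradiction. You gesture at part of this but do not carry it out. Similarly, in (3) your injectivity argument (``two distinct branches in the same sector would coincide or $\mathcal F$-transversally cross'') needs more: two disjoint transverse paths in the same hyperbolic sector approaching $z_0$ need not intersect $\mathcal F$-transversally, so you must use the actual dynamics (e.g.\ that a stable and an unstable branch in the same sector would force $z_0$ to lie on both sides of a nearby leaf).
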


 Note that if $z$ and $z'$ belong to $\Gamma^s_{\mathrm {loc}} $, then of one the paths $I_{\mathcal F}^{+}(z)$, $I_{\mathcal F}^{+}(z')$ is a subpath of the other one (up to equivalence). Nevertheless, saying that $I_{\mathcal F}^{+}(z)$ is a subpath of $I_{\mathcal F}^{+}(z')$ does not mean that $z$ is closer to $z_0$ than $z'$ on $\Gamma^s_{\mathrm {loc}} $. Indeed $\Gamma^s_{\mathrm {loc}} $ is not necessarily transverse to the foliation.  The following result is much stronger:

\begin{lemma}
\label{l:transversebranch}
For every stable leaf $\Gamma^s$ and unstable leaf $\Gamma^u$ of $z_0$,  there exists transverse paths  $\Gamma^s_{\mathcal F}:\R\to S\setminus\mathrm{fix}(f)$ and $\Gamma^u_{\mathcal F}:\R\to S\setminus\mathrm{fix}(f) $, uniquely defined (up to reparametrization and equivalence) such that: \begin{itemize}
\item every trajectory $I_{\mathcal F}^{\pm}(z)$, $z\in \Gamma^s$ is a subpath of $\Gamma^s_{\mathcal F}$;
\item every trajectory $I_{\mathcal F}^{\pm}(z)$, $z\in \Gamma^u$ is a subpath of $\Gamma^u_{\mathcal F}$;
\item for every $m\leq 0$, there exists $z\in\Gamma^s$ such that $\Gamma^s_{\mathcal F}\vert_{[-m, +\infty)}$ is a subpath of $I_{\mathcal F}^{\pm}(z)$;
\item for every $m\geq 0$, there exists $z\in\Gamma^u$ such that $\Gamma^u_{\mathcal F}\vert_{(-\infty, m]}$ is a subpath of $I_{\mathcal F}^{\pm}(z)$.
\end{itemize}
\end{lemma}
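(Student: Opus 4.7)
Plan. The cases of stable and unstable branches are symmetric under time reversal (applied to $f$, the isotopy and the foliation), so I focus on the construction of $\Gamma^u_{\mathcal{F}}$. First I would reduce to the local situation: every $z \in \Gamma^u$ admits some $n \geq 0$ with $f^{-n}(z) \in \Gamma^u_{\mathrm{loc}}$, as $z_0$ is a repelling endpoint of the homeomorphism $f_{\vert \Gamma^u}$; and points on the same $f$-orbit give rise to transverse trajectories $I_{\mathcal{F}}^{\pm}(\cdot)$ equivalent up to reparametrization. It therefore suffices to analyze $I_{\mathcal{F}}^{\pm}(z)$ for $z \in \Gamma^u_{\mathrm{loc}}$.

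For such $z$, Proposition \ref{prop:branches}(2) represents $I_{\mathcal{F}}^{-}(z)$ by a finite transverse path $\gamma_z$ from $z_0$ to $z$ entirely inside the hyperbolic sector $\mathcal{S}$ at $z_0$ containing $\Gamma^u_{\mathrm{loc}}$. Using Le Roux's local normal form for transverse foliations at hyperbolic points \cite{Ler}, the leaves inside $\mathcal{S}$ form a totally ordered family parametrized by the position between the two boundary separatrices, and any transverse path from $z_0$ to a point $z \in \mathcal{S}$ crosses an initial interval of these leaves determined by the leaf through $z$. Consequently, for $z_1, z_2 \in \Gamma^u_{\mathrm{loc}}$ the paths $\gamma_{z_1}$ and $\gamma_{z_2}$ are automatically nested in the subpath relation: one of them is equivalent to a terminal subpath of the other.

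The more delicate step is to extend this local nesting of the $\gamma_z$'s to nesting of the full bi-infinite trajectories $I_{\mathcal{F}}^{\pm}(z)$. Here I would invoke the non-$\mathcal{F}$-transverse-intersection property of pairs of trajectories, established in the discussion preceding the lemma via the contrapositive of \cite{LecT}: otherwise one produces a topological horseshoe and infinitely many periodic points, contradicting $\# \mathrm{per}_h(f) = 2g-2$. Two trajectories sharing the common backward initial segment coming out of $z_0$ along $\mathcal{S}$ cannot subsequently split into $\mathcal{F}$-inequivalent futures without creating a transverse intersection, and so they must coincide as transverse paths. Given this global compatibility, I would define $\Gamma^u_{\mathcal{F}}$ as the direct limit of the family $\{I_{\mathcal{F}}^{\pm}(z) : z \in \Gamma^u_{\mathrm{loc}}\}$: a single bi-infinite transverse path $\R \to S \setminus \mathrm{fix}(f)$ whose arbitrarily long left-initial segments are supplied by $\gamma_z$'s for $z$ taken far along $\Gamma^u$, and whose right tail is supplied by any forward orbit on $\Gamma^u$.

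The four listed properties then follow from the construction: (i) every $I_{\mathcal{F}}^{\pm}(z)$ with $z \in \Gamma^u$ is a subpath of $\Gamma^u_{\mathcal{F}}$ by the reduction to $\Gamma^u_{\mathrm{loc}}$ and the global nesting; (ii) the initial-segment condition is built in, since for any $m \geq 0$ one can take $z \in \Gamma^u$ far enough that $\gamma_{f^{-n}(z)}$ extends past parameter $m$; (iii) uniqueness is a standard envelope argument, since two candidate paths share every left-infinite initial segment via properties (i) and (ii) and therefore coincide up to reparametrization and equivalence. The main obstacle is the global compatibility step that upgrades local nesting inside the hyperbolic sector to nesting of full bi-infinite trajectories; this is precisely where the non-transverse-intersection property, and hence the forcing-theoretic rigidity of our setting, enters the argument.
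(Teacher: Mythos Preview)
Your overall architecture---reduce to $\Gamma^u_{\mathrm{loc}}$, get local nesting of the $I_{\mathcal F}^{-}(z)$'s from the sector picture, then upgrade to nesting of the full $I_{\mathcal F}^{\pm}(z)$'s and take a direct limit---matches the paper's. The gap is in the upgrading step. You assert that two trajectories which share the common backward segment emanating from $z_0$ in the hyperbolic sector and then split into inequivalent futures must have an $\mathcal F$-transverse intersection. This is not true as stated: in the lift $\check S$, both paths come out of the same end $\check z_0$ and cross the same initial family of leaves, so at the first leaf where they separate their \emph{pasts} lie on the same side. That is precisely the configuration that is \emph{not} an $\mathcal F$-transverse intersection in the sense of \cite{LecT}; a crossing requires the two paths to approach the common leaf from opposite sides (and likewise to leave towards opposite sides). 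Two transverse paths forming a ``Y'' with a shared stem do not cross $\mathcal F$-transversally.

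The paper closes this gap with a genuinely dynamical ingredient you did not invoke: the $\lambda$-lemma, exactly as in the proof of Lemma~\ref{lemma:comparable}. If $I_{\mathcal F}^{\pm}(z)$ and $I_{\mathcal F}^{\pm}(z')$ are not nested, one uses the two \emph{stable} branches at $z_0$ (which leave $z_0$ in two distinct directions, giving the missing ``other side'') together with the $\lambda$-lemma to produce points $w$ near $z$ and $w'$ near $z'$ whose forward transverse trajectories $I_{\mathcal F}^{+}(w)$ and $I_{\mathcal F}^{+}(w')$ realize all four combinations of ``stable past''/``unstable future''. That forces a genuine $\mathcal F$-transverse intersection between admissible paths, contradicting the no-crossing property. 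So the non-intersection hypothesis alone is not enough; you need the hyperbolic local dynamics at $z_0$ to manufacture the crossing, and your sketch should insert that argument where you currently claim the splitting itself yields the contradiction.
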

To prove this lemma, it is sufficient to prove that if two points $z$ and $z'$ are on the same stable or unstable branch then either $I_{\mathcal F}^{\pm}(z)$ is  a subpath of $I_{\mathcal F}^{\pm}(z')$ or $I_{\mathcal F}^{\pm}(z')$ is a subpath of $I_{\mathcal F}^{\pm}(z)$. If it is not the case (for an unstable branch), by an argument very similar to what is done in the proof of Lemma  \ref{lemma:comparable} , one can find two points $w$ and $w'$ close to $z$ and $z'$ respectively such that $I_{\mathcal F}^{+}(w)$ and $I_{\mathcal F}^{+}(w')$ have a $\mathcal F$-transverse intersection, which is impossible.

We will call $\Gamma^s_{\mathcal F}$ the {\it transverse stable branch} associated to $\Gamma^s$ and $\Gamma^u_{\mathcal F}$ the {\it transverse unstable branch} associated to $\Gamma^u$.  Using the denseness of the branches, we get the following properties of the transverse branches (note that similar results have been proven in Section \ref{s:dynamics-isotopic-identity}):

\begin{lemma} \label{l:transverse-denseness}
Let $\Gamma_{\mathcal F}$ be a transverse branch. Then

\begin{enumerate}
\item every admissible path is equivalent to a subpath of  $\Gamma_{\mathcal F}$;

\item  $\Gamma_{\mathcal F}$ crosses every leaf of $\mathcal F$ infinitely many often.
\end{enumerate}
\end{lemma}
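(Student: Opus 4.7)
\medskip
\textbf{Proof plan.} Two ingredients are at hand: (i) density of every branch (Proposition \ref{p.dynamicalproperties}), and (ii) the \emph{no $\mathcal F$-transverse intersection} property satisfied by every pair of extended transverse trajectories $I_{\mathcal F}^\pm(z),\, I_{\mathcal F}^\pm(z')$, which follows from transitivity of $f$ together with the forcing theorem of \cite{LecT} — any such transverse intersection would yield positive topological entropy and infinitely many periodic orbits, contradicting the assumption $\#\mathrm{per}_h(f)=2g-2$. My plan is to shadow an arbitrary admissible path by the transverse trajectory of a point on the branch, using (i), and then to upgrade the shadowing to an equivalence of transverse paths using (ii); assertion (2) then follows by applying (1) to finite subpaths of the transverse trajectory of a dense orbit.

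\medskip
\textbf{Proof of (1).} Let $\gamma$ be an admissible path, equivalent to $I_{\mathcal F}^n(z)$ for some $n\geq 1$ and $z\in S\setminus\mathrm{fix}(f)$, and let $\Gamma$ be the (stable or unstable) branch underlying $\Gamma_{\mathcal F}$. By density of $\Gamma$ we pick $z_k\in\Gamma$ with $z_k\to z$; by continuity of $f$ we have $f^j(z_k)\to f^j(z)$ for $0\leq j\leq n$. Lift $I_{\mathcal F}^n(z)$ to $\check S$ as a transverse path $\check\gamma$ and lift each $I_{\mathcal F}^n(z_k)$ to a path $\check\gamma_k$ whose endpoints are close to those of $\check\gamma$. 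For $k$ large, $\check\gamma$ and $\check\gamma_k$ traverse the same ordered sequence of leaves of $\check{\mathcal F}$ in their interior, with only a bounded discrepancy near the endpoints that is absorbed by the equivalence relation. Indeed, a discrepancy in interior leaf sequences would force the extensions $I_{\mathcal F}^\pm(z)$ and $I_{\mathcal F}^\pm(z_k)$ to have an $\mathcal F$-transverse intersection, contradicting (ii). Hence $I_{\mathcal F}^n(z)\sim I_{\mathcal F}^n(z_k)$ for $k$ large. Since $z_k\in\Gamma$, Lemma \ref{l:transversebranch} gives that $I_{\mathcal F}^n(z_k)$ is equivalent to a subpath of $\Gamma_{\mathcal F}$, and therefore so is $\gamma$.

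\medskip
\textbf{Proof of (2).} Choose $z_\star\in S\setminus\mathrm{fix}(f)$ with dense orbit (available by transitivity). For any leaf $\phi$ of $\mathcal F$, the orbit of $z_\star$ visits both sides of $\phi$ infinitely often, so by continuity the bi-infinite path $I_{\mathcal F}^\pm(z_\star)$ crosses $\phi$ infinitely often. Every finite subpath $I_{\mathcal F}^n(z_\star)$ is admissible, hence by (1) is equivalent to a subpath of $\Gamma_{\mathcal F}$; a nesting/compatibility argument on these embeddings (using once more that (ii) rules out transverse crossings between $I_{\mathcal F}^\pm(z_\star)$ and $\Gamma_{\mathcal F}$) promotes this to an embedding of $I_{\mathcal F}^\pm(z_\star)$ itself as a bi-infinite subpath of $\Gamma_{\mathcal F}$. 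Thus $\Gamma_{\mathcal F}$ crosses $\phi$ infinitely often.

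\medskip
\textbf{Main obstacle.} The critical step is passing from the continuity estimate "$I_{\mathcal F}^n(z_k)$ is close to $I_{\mathcal F}^n(z)$" to the strict equivalence "they cross the same ordered sequence of leaves of $\check{\mathcal F}$" demanded by the definition. A rigorous implementation requires a careful $\mathcal F$-transverse intersection analysis in $\check S$, verifying that any interior discrepancy between the leaf sequences of $\check\gamma$ and $\check\gamma_k$ necessarily produces a transverse intersection of the extended trajectories so that hypothesis (ii) applies. Endpoint behavior also needs care, since the endpoints of $\check\gamma$ and $\check\gamma_k$ lie on distinct but nearby leaves and must be reconciled using the endpoint-sliding freedom built into the equivalence relation.
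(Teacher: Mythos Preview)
Your overall strategy for (1)---density of the branch $\Gamma$ combined with Lemma~\ref{l:transversebranch}---is the paper's intended route. The gap is in the specific technical claim. You assert that $I_{\mathcal F}^n(z)\sim I_{\mathcal F}^n(z_k)$ for $k$ large, but this equivalence is \emph{false} in general: the leaf of $\check{\mathcal F}$ through $\check z_k$ is not the leaf through $\check z$, and similarly at the other endpoint, so the two lifted paths do not meet the same set of leaves. The equivalence relation does not ``absorb'' endpoint discrepancies, and the no-transverse-intersection property is of no help here, since the issue is a shift of the initial and terminal leaf, not a crossing.

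What you actually need---and what makes the argument much simpler---is that $I_{\mathcal F}^n(z)$ be equivalent to a \emph{subpath} of $I_{\mathcal F}^{\pm}(z_k)$, and this follows from the Brouwer-line property alone. Let $\check\phi_-$ and $\check\phi_+$ be the leaves through $\check z$ and $\check f^n(\check z)$. Since each leaf of $\check{\mathcal F}$ is a Brouwer line of $\check f$, one has $\check f^{-1}(\check z)\in R(\check\phi_-)$ and $\check f^{n+1}(\check z)\in L(\check\phi_+)$; both conditions are open, hence hold with $\check z$ replaced by any nearby $\check z_k$. Thus the lift of $I_{\mathcal F}^{n+2}(f^{-1}(z_k))$ starts in $R(\check\phi_-)$ and ends in $L(\check\phi_+)$, so it crosses both $\check\phi_-$ and $\check\phi_+$ and therefore contains $\check\gamma$ as a subpath. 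Since $f^{-1}(z_k)\in\Gamma$, Lemma~\ref{l:transversebranch} finishes (1). No appeal to forcing theory is needed. For (2), once (1) is established you can argue directly: for any leaf $\phi$ and any $N$, a long enough finite piece of the transverse trajectory of a dense orbit is an admissible path crossing $\phi$ at least $N$ times, hence by (1) is a subpath of $\Gamma_{\mathcal F}$; the nesting/embedding argument for the full bi-infinite trajectory is unnecessary.
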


We will now generalize what has been done under a perturbative situation. Let $\mathcal H\subset \mathcal G_r$ be a set satisfying the following:

\begin{itemize}

\item every $h\in \mathcal H$ coincide with $f$ in a neighborhood of the fixed point set;
\item  $\# \mathrm{per}(h)=\# \mathrm{fix}_h(h)=2g-2$ for every $h\in{\mathcal H}$;
\item for every $h\in \mathcal H$, the foliation $\mathcal F$ is transverse to $h$ ;
\item $\mathcal H$ is connected for the $C^0$-topology.
\end{itemize}

Let $\Gamma$ be a branch of $f$ and $z$ the fixed point associated to this branch. For every $h\in\mathcal H$, there is a branch $\Gamma(h)$ that coincide with $\Gamma$ in a neighborhood of $z$. In other words, we have $\Gamma(h)_{\mathrm{loc}}= \Gamma_{ \mathrm{loc}}$. Moreover we can define the associated transverse branch $\Gamma_{\mathcal F}(h)$. We have the following result analogous to Lemma \ref{lemma:persistence}:

\begin{lemma} \label{l:transverse-branches} For every $h\in \mathcal{H}$, the branch $\Gamma_{\mathcal F}(h)$ is equivalent to $\Gamma_{\mathcal F}$ and consequently, the admissible transverse paths of $f$ and $h$ are the same.
\end{lemma}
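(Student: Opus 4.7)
The plan is to imitate Lemma~\ref{lemma:persistence} via an open--closed argument on the connected set $\mathcal H$. Fix a branch $\Gamma$ at the common fixed point $z_{0}$, choose a lift $\check z_{0}$ in the universal cover $\check S$ of $S\setminus\mathrm{fix}(f)$, and for each $h\in\mathcal H$ let $\check\Gamma_{\mathcal F}(h)$ denote the lift of $\Gamma_{\mathcal F}(h)$ emanating from $\check z_{0}$. Since every element of $\mathcal H$ agrees with $f$ on a neighbourhood of $\mathrm{fix}(f)$, the germ of $\check\Gamma_{\mathcal F}(h)$ at $\check z_{0}$ is independent of $h$. Because each $h\in\mathcal H$ satisfies $\#\mathrm{per}_{h}(h)=2g-2$, Proposition~\ref{p.dynamicalproperties} applies, so $h$ is transitive with dense branches, and by Lemma~\ref{l:transverse-denseness}(2) the path $\check\Gamma_{\mathcal F}(h)$ meets infinitely many leaves of $\check{\mathcal F}$; positive transversality lets us enumerate them in their natural order as $(\check\phi_{n}(h))_{n\ge 1}$. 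It therefore suffices to prove that, for every $N\ge 1$, the map $\Phi_{N}:h\mapsto(\check\phi_{1}(h),\dots,\check\phi_{N}(h))$ is locally constant on $\mathcal H$; connectedness will then yield $\Phi_{N}(h)=\Phi_{N}(f)$ for all $h$ and $N$, which is precisely the desired equivalence $\Gamma_{\mathcal F}(h)\equiv\Gamma_{\mathcal F}$.

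The key step is to realise $\check\phi_{1}(h),\dots,\check\phi_{N+1}(h)$ by a single compact transverse arc. Using Lemma~\ref{l:transversebranch} (fourth bullet), pick a point $z$ of the local branch $\Gamma_{\mathrm{loc}}$ and an integer $m\ge 1$ such that the lift of the finite transverse trajectory $I_{\mathcal F,h}^{m}(z)$ starting at $\check z_{0}$ meets exactly the leaves $\check\phi_{1}(h),\dots,\check\phi_{N+1}(h)$. This lifted arc is compact with only finitely many transverse crossings. For any $h'\in\mathcal H$ sufficiently $C^{0}$-close to $h$, the iterates $h'{}^{k}(z)$, $0\le k\le m$, lie in arbitrarily small neighbourhoods of $h^{k}(z)$; the concatenation $I_{h'}(z)\cdots I_{h'}(h'{}^{m-1}(z))$ is $C^{0}$-close to $I_{h}^{m}(z)$ in the same homotopy class, and its transverse representative lifts to a small perturbation of the previous arc, crossing exactly the same $N+1$ leaves in the same order. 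This gives $\Phi_{N}(h')=\Phi_{N}(h)$ and proves local constancy.

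The only delicate point is that a $C^{0}$-small perturbation must not jump the homotopy class of $I_{h'}^{m}(z)$ in $S\setminus\mathrm{fix}(f)$; this is where Lemma~\ref{l.unlinkedfixedpoints} enters, guaranteeing that the isotopy from $\mathrm{Id}$ to any $h'\in\mathcal H$ is unique up to homotopy relative to the common fixed-point set, and therefore can be chosen to vary continuously with $h'$ in the $C^{0}$ topology. I expect this to be the only real technical issue. Granted the equivalence $\Gamma_{\mathcal F}(h)\equiv\Gamma_{\mathcal F}$ for every branch, the last assertion of the lemma follows immediately: by Lemma~\ref{l:transverse-denseness}(1) every admissible path of $h$ is equivalent to a subpath of some $\Gamma_{\mathcal F}(h)$, hence to a subpath of $\Gamma_{\mathcal F}$, which is an admissible path of $f$; symmetry yields the converse inclusion.
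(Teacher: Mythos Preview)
Your overall strategy---an open--closed argument on the connected set $\mathcal H$, in direct analogy with Lemma~\ref{lemma:persistence}---is exactly what the paper intends. However, there is a genuine gap in the execution.

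You write that the leaves of $\check{\mathcal F}$ crossed by $\check\Gamma_{\mathcal F}(h)$ can be ``enumerated in their natural order as $(\check\phi_n(h))_{n\ge 1}$'', and that a finite transverse trajectory ``meets exactly the leaves $\check\phi_1(h),\dots,\check\phi_{N+1}(h)$''. This is not correct: $\check{\mathcal F}$ is a genuine foliation, and a positively transverse path crosses a \emph{continuum} of leaves, not a discrete sequence. You are implicitly importing the discrete structure of Section~\ref{s:dynamics-isotopic-identity}, where one works with the countable family of lifts of a single loop $\phi$ and can legitimately speak of the ``$n$-th lift met''; that is precisely what makes the proof of Lemma~\ref{lemma:persistence} work as written. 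In the foliation setting your functions $\Phi_N$ are not defined, and the claim that the perturbed finite trajectory crosses ``exactly the same $N+1$ leaves'' is meaningless: for $h'$ close to $h$ the endpoint $\check{h'}^{m}(\check z)$ lies on a different leaf, so $I^m_{\mathcal F,h'}(z)$ and $I^m_{\mathcal F,h}(z)$ are never equivalent.

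What does survive from your argument is the openness half: for any \emph{fixed} leaf $\check\phi$ of $\check{\mathcal F}$, the set $\{h\in\mathcal H:\check\Gamma_{\mathcal F}(h)\text{ crosses }\check\phi\}$ is open, because crossing is witnessed by a compact piece of orbit. The missing ingredient, playing the role of Lemma~\ref{lemma:comparable}, is the closedness direction. One way to get it: if $\check\Gamma_{\mathcal F}(h)$ does \emph{not} cross $\check\phi$, then---since it is nonetheless an unbounded ray in the leaf space of $\check{\mathcal F}$---it must cross some leaf $\check\psi$ such that no positively transverse path can cross both $\check\phi$ and $\check\psi$ (the two leaves lie on different branches of the simply connected leaf space past a common initial segment). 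The open set $\{h':\check\Gamma_{\mathcal F}(h')\text{ crosses }\check\psi\}$ then contains $h$ and is disjoint from $\{h':\check\Gamma_{\mathcal F}(h')\text{ crosses }\check\phi\}$, giving the clopen decomposition you need. Your deduction of the final clause from Lemma~\ref{l:transverse-denseness}(1) is correct once the first part is established.
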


It remains to use Theorem \ref{t:Lellouch} to get Theorem \ref{t.isotopy}.


\begin{thebibliography}{KLecN}







 \bibitem[Ad]{Ad} S. Addas Zanata :  Some extensions of the Poincar\'e-Birkhoff theorem to the cylinder and a remark on mappings of the torus homotopic to Dehn twists. {\it   Nonlinearity} {\bf 18}, (2005), no. 5, 2243--2260.


\bibitem[AsI]{AsI}
M. Asaoka, K. Irie:
A $C^{\infty}$ closing lemma for Hamiltonian diffeomorphisms of closed surfaces. \textit{arXiv:1512.06336}(2015).




\bibitem[BCLer]{BCLer}
F. B\' eguin, S. Crovisier,  F. Le Roux: Fixed point sets of isotopies on surfaces  \textit{ J. Eur. Math. Soc.}, to appear.


\bibitem[Bi]{Bi} G.-D. Birkhoff, An extension of Poincar\'e's last geometric theorem,\emph{ Acta Math.} \textbf{47} (1926), 297-311.
\bibitem[C]{C} S. Crovisier,
Birth of homoclinic intersections: a model for the central dynamics of partially hyperbolic systems. \emph{Ann. of Math.} \textbf{172} (2010), 1641--1677.


\bibitem[CB]{CB} A. Casson, S. Bleiler, Automorphisms of surfaces after Nielsen and Thurston. \textit{Lon-
don Mathematical Society Student Texts}, \textbf{9}. Cambridge University Press, Cambridge,
1988. iv+105 pp. ISBN: 0-521-34203-1

\bibitem[CZ]{CZ} CC. Conley, E. Zehnder: The Birkhoff-Lewis fixed point theorem and a conjecture of V. I. Arnol'd. \textit{ Invent. Math. }. \textbf{73} (1983), no1, 33--49


\bibitem[Fl]{Fl}
 A. Floer: Proof of the Arno'd conjecture for surfaces and generalizations to certain K\"ahler manifolds. \textit{Duke Math. J}. \textbf{53} (1986), no. 1, 1Ð-32.

\bibitem[FLP]{FLP} A. Fathi, F. Laudenbach, V. Poénaru, Travaux de Thurston sur les surfaces.
\textit{Astérisque}, \textbf{66-67}. Société Mathématique de France, Paris, 1979. 284 pp.

\bibitem[FrLec]{FrLec} J. Franks, P.  Le Calvez: Regions of instability for non-twist maps. \textit{Ergodic Theory Dynam. Systems} \textbf{23}  (2003), no. 1, 111--141.

\bibitem[Hal]{Hal}
M. Hall: A topology for free groups and related topics,
\textit{Annals  Math.}\textbf{52}(1950),127--139.

\bibitem[HaeRe]{HaeRe} A. Haefliger, G. Reeb: Vari\'et\'es (non s\'epar\'ees) \`a une dimension et structures feuillet\'ees du plan. \textit{Enseign. Math. }(2) {\bf 3} (1957) , 107--125

\bibitem[J]{J}
O. Jaulent: Existence d'un feuilletage positivement transverse \`a un hom\'eomorphisme de surface.
\textit{Ann. Inst. Fourier (Grenoble)}\textbf{ 64} (2014), no. 4, 1441--1476.



\bibitem[K]{K}
A. Koropecki:
Aperiodic invariant continua for surface homeomorphisms.
\textit{Math. Z.}, \textbf{266}, no. 1, (2010) 229--236.

\bibitem[KLecN]{KLecN}  A. Koropecki, P. Le Calvez, M.  Nassiri: Prime ends rotation numbers and periodic points. \textit{Duke Math. J.} \textbf{164} (2015), no. 3, 403--472.


\bibitem[H]{H} M. Handel, Global shadowing of Pseudo-Anosov homemorphisms. \textit{Ergodic Theory \& Dynamical Systems}, \textbf{5}(3), 1985, 373-377.

\bibitem[K]{K} A. Katok, Lyapunov exponents, entropy and periodic orbits for diffeomorphisms. \textit{Inst. Hautes \'Etudes Sci. Publ. Math.}, \textbf{51}, 1980, 137-173.

\bibitem[Lec1]{Lec1}
P. Le Calvez:
Une version feuillet\'ee \'equivariante du th\'eor\`eme de translation de Brouwer. \textit{Publ. Math. Inst. Hautes \'etudes Sci.} \textbf{102} (2005), 1--98.




\bibitem[Lec2]{Lec2}  P. Le Calvez: Periodic orbits of Hamiltonian homeomorphisms of surfaces.
{\it Duke Math. J. } {\bf 133}, (2006) no. 1, 125-184.


\bibitem[LecT]{LecT}P. Le Calvez, F. Tal: Forcing theory for transverse trajectories of surface homeomorphisms.
{\it Invent. Math.} \textit{to appear},
{\it arXiv:1503.09127} (2016)

\bibitem[Lel]{Lel} G.Lellouch, PhD thesis, Sorbonne Universit\'e (2019).

\bibitem[Ler]{Ler}
 F. Le Roux: L'ensemble de rotation autour d'un point fixe. \textit{Ast\'erisque} \textbf{350} (2013), 109 pp.

\bibitem[McSa]{McSa}
D. McDuff,  D. Salamon: Introduction to symplectic topology. Second edition. \textit{Oxford Mathematical Monographs. The Clarendon Press, Oxford University Press, New York,} (1998).

\bibitem[Ma]{Ma} J. N. Mather, Invariant subsets for area preserving homeomorphisms of surfaces, \textit{Mathematical
analysis and applications, Part B, Adv. in Math. Suppl. Stud., vol. 7, Academic
Press, New York, }(1981), 531--562.


\bibitem[O1]{O1} F. Oliveira: On the generic existence of homoclinic points. \textit{Ergodic Theory Dynam. Systems} \textbf{7} (1987), no. 4, 567--595.

\bibitem[O2]{O2} F. Oliveira: On the $C^{\infty}$ genericity of homoclinic orbits. \textit{Nonlinearity } \textbf{13} (2000), no. 3, 653--662.



\bibitem[P]{P} D. Pixton: Planar homoclinic point \textit{J. Differential Equations} \textbf{44} (1982), no. 3, 365--382.

\bibitem[Po]{Po} Poincaré, H., 1899, Les Methodes Nouvelles de la Mécanique Celeste, Vols. 1—3 (Gauthier Villars, Paris).

\bibitem[Pu]{Pu} C. Pugh, The closing lemma, {\em Amer. J. Math.} {\bf 89} (1967) 956-1009.

\bibitem[R1]{R1} C. Robinson: Generic properties of conservative systems. \textit{Amer. J. Math}. \textbf{92} (1970), 562--603.

\bibitem[R2]{R2} C. Robinson: Closing stable and unstable manifolds on the two sphere. \textit{Proc. Amer. Math. Soc.}. \textbf{41} (1973), 299--303.


\bibitem[Sc]{Sc} S. Schwartzman, Asymptotic cycles, {\it Annals of Math.},\textbf{ 68} (1957), 270-284.




\bibitem[S]{S} S. Smale, Diffeomorphisms with many periodic points,
 {\em Differential and Combinatorial Topology}, Princeton Univ. Press, (1964), 63-80.

\bibitem[Si]{Si} J.-C Sikorav: Points fixes d'une application symplectique homologue ˆ l'identit\'e.  \textit{J. Differential Geom}. \textbf{22} (1985), no. 1, 49Ð79.

\bibitem[T]{T} F. Takens: Homoclinic points in conservative systems. \textit{ Invent. Math. }. \textbf{18} (1972), 267--292.


\bibitem[Th]{Th} W. Thurston, On the geometry and dynamics of diffeomorphisms of surfaces. \textit{Bull.
Amer. Math. Soc. (N.S.)} \textbf{19} (1988), no. 2, 417-431.

\bibitem[W]{W} H. Weiss: A remark on papers by Pixton and Oliveira: Genericity of symplectic diffeomorphisms of $S^2$ with positive topological entropy \textit{ J. Stat. Phys. }. \textbf{80} (1995), 481-485.




\bibitem[X1]{X1}
Z. Xia.
Area-preserving surface diffeomorphisms.
\textit{Comm. Math. Phys.}, \textbf{263} no. 3, 723--735, 2006.

\bibitem[X2]{X2}
Z. Xia. Homoclinic Points For Area-Preserving Surface Diffeomorphisms.
\textit{ArXiv Math} 0606291.

\bibitem[Z]{Z} E. Zehnder, Homoclinic points near elliptic fixed points, \textit{Comm. Pure Appl. Math.} \textbf{26}
(1973), 131--182.



\end{thebibliography}
\end{document}